\definecolor{darkblue}{rgb}{0,0,0.4} 
\tikzstyle{crossing}=[circle,fill=white,minimum height=6pt,inner sep=0pt, outer sep=0pt, style={transform shape=false}]
\numberwithin{equation}{section}
\theoremstyle{plain}
\newtheorem{thm}[equation]{Theorem}
\newtheorem*{thm*}{Theorem}
\newtheorem{prop}[equation]{Theorem}
\newtheorem{corollary}[equation]{Corollary}
\newtheorem{lem}[equation]{Lemma}
\theoremstyle{definition}
\newtheorem{rmk}[equation]{Remark}
\newtheorem{example}[equation]{Example}
\newtheorem{defn}[equation]{Definition}
\numberwithin{figure}{section}
\def\do#1{\csdef{c#1}{\mathcal{#1}}}
\newcommand{\frs}{\mathfrak{s}}
\newcommand{\AbFunc}{\mathfrak{F}}
\newcommand{\f}{\mathbb{F}}
\newcommand{\F}{\mathbb{F}}
\newcommand{\gr}{\mathrm{gr}}
\newcommand{\Kh}{\mathit{Kh}}
\newcommand{\Id}{\mathrm{Id}}
\newcommand{\bKom}{\mathbf{Kom}}
\newcommand{\Khr}{\mathit{Khr}}
\newcommand{\R}{\mathbb{R}}
\newcommand{\wt}[1]{\widetilde{#1}}
\newcommand{\eg}{\smash{\widetilde{g}_4}}
\newcommand{\ieg}{\smash{\widetilde{ig}_4}}
\newcommand{\Z}{\mathbb{Z}}
\newcommand{\spa}{\mathrm{span}}
\DeclareMathOperator{\Cone}{Cone}
\DeclareMathOperator{\Fix}{Fix}
\DeclareMathOperator{\Torso}{Tors}
\DeclareMathOperator{\id}{\mathrm{id}}
\DeclareMathOperator{\lk}{lk}
\DeclareMathOperator{\Hom}{Hom}
\DeclareMathOperator{\Tor}{Tor}
\DeclareMathOperator{\Kc}{\mathit{Kc}}
\DeclareMathOperator{\MKc}{\mathit{MKc}}
\DeclareMathOperator{\Br}{\mathscr{B}}
\DeclareMathOperator{\Hh}{\mathscr{H}}
\newcommand{\Kcr}{\mathit{Kcr}}
\newcommand{\Kcrm}{\Kcr^{-}}
\newcommand{\Kcm}{\Kc^{-}}
\newcommand{\Kchat}{\widehat{\Kc}}
\newcommand{\Mkc}{\mathit{MKc}^-}
\newcommand{\Mkcr}{\mathit{MKcr}^-}
\newcommand{\Kcinf}{\Kc^{\infty}}
\newcommand{\CKh}{\mathit{CKh}}
\newcommand{\frf}{\mathfrak{f}}
\newcommand{\frg}{\mathfrak{g}}
\newcommand{\frH}{\mathfrak{H}}
\newcommand{\Kob}{\mathrm{Kob}}
\newcommand*{\wackyenum}[1]{%
  \expandafter\@wackyenum\csname c@#1\endcsname%
}
\newcommand*{\@wackyenum}[1]{%
\ifcase#1\or{IR-1}\or{IR-2}\or{IR-3}\or{R-2}\or{M-2}\or{M-3}\or{R-1}\or{M-1}\else\@ctrerr\fi%
}
\AddEnumerateCounter{\wackyenum}{\@wackyenum}{53.13}
\newcounter{nparcount}
\def\ypar#1{\stepcounter{nparcount}%
  \colorbox{yellow!10}{$!^{\arabic{nparcount}}$}%
  \marginnote{\colorbox{yellow!10}{\begin{minipage}{2cm}\fontsize{5}{6}\selectfont\color{green!20!black}
${}^{\arabic{nparcount}}$#1\end{minipage}}}}
\def\mpar#1{\stepcounter{nparcount}%
  \colorbox{orange!10}{$!^{\arabic{nparcount}}$}%
  \marginnote{\colorbox{red!10}{\begin{minipage}{2cm}\fontsize{5}{6}\selectfont\color{green!20!black}
${}^{\arabic{nparcount}}$#1\end{minipage}}}}
\def\myref{\colorbox{green!30}{[REF]}}
\def\@setref#1#2#3{%
  \ifx#1\relax
   \protect\G@refundefinedtrue
   \nfss@text{\colorbox{green!30}{\myref}}%
   \@latex@warning{Reference '#3' on page \thepage \space
             undefined}%
  \else
   \expandafter#2#1\null
  \fi}
\def\@citex[#1]#2{\leavevmode
  \let\@citea\@empty
  \@cite{\@for\@citeb:=#2\do
    {\@citea\def\@citea{,\penalty\@m\ }%
     \edef\@citeb{\expandafter\@firstofone\@citeb\@empty}%
     \if@filesw\immediate\write\@auxout{\string\citation{\@citeb}}\fi
     \@ifundefined{b@\@citeb}{\colorbox{blue!30}{\reset@font REF}%
       \G@refundefinedtrue
       \@latex@warning
         {Citation `\@citeb' on page \thepage \space undefined}}%
       {\@cite@ofmt{\csname b@\@citeb\endcsname}}}}{#1}}
\title{Khovanov homology and equivariant surfaces}
\author{Maciej Borodzik}
\address{Institute of Mathematics\\ University of Warsaw
\\Warsaw, Poland}
\email{mcboro@mimuw.edu.pl}
\author{Irving Dai}
\address{Department of Mathematics\\The University of Texas at Austin\\ Austin, TX, USA}
\email{irving.dai@math.utexas.edu}
\author{Abhishek Mallick}
\address{Department of Mathematics\\Dartmouth College \\ Hanover, NH, USA}
\email{abhishek.mallick@dartmouth.edu}
\author{Matthew Stoffregen}
\address{Department of Mathematics\\Michigan State University\\ East Lansing, MI, USA}
\email{stoffre1@msu.edu}
\begin{document}

\maketitle

\begin{abstract}
We introduce a refinement of Bar-Natan homology for involutive links, extending the work of Lobb-Watson and Sano. We construct a new suite of numerical invariants and derive bounds for the genus of equivariant cobordisms between strongly invertible knots. Our invariants show that the difference between the equivariant slice genus and isotopy-equivariant slice genus can be arbitrarily large, whereas previously these were not known to differ.
\end{abstract}
\tableofcontents

\pagebreak 
\section{Introduction}
In addition to being of intrinsic interest \cite{Murasugi, Sakuma, Naik, ChaKo, ND, BI}, equivariant knots have formed a robust source of examples in the study of many topological questions, including the investigation of exotic phenomena and the theory of corks \cite{DHM, hayden-sundberg, DMS, levine2023new}. Numerous authors have studied equivariant knots in the context of Khovanov homology; see for example \cite{Watson, stoffregen-zhang, lobb-watson, borodzik-politarczyk-silvero, LSinvertible, Sano}. The current work introduces a new Khovanov-type invariant for strongly invertible knots (or, more generally, involutive links) especially suited for analyzing equivariant cobordisms. We are particularly interested in the difference between the notion of an equivariant cobordism and that of an \textit{isotopy-}equivariant cobordism see for example \cite{DMS}. While approaches such as \cite{DMS, Sano} have generally produced invariants insensitive to this distinction, here our goal is to define an equivariant version of Khovanov homology capable of distinguishing the two.

In this paper, we record an involution on a link by introducing (and generalizing) a $\Z/2\Z$-equivariant Khovanov-type homology theory. This is in contrast to \cite{DHM, DMS, Sano}, in which symmetries of an object are encoded via the $\iota$-complex/mapping cone formalism of Hendricks-Manolescu \cite{HM} and Hendricks-Manolescu-Zemke \cite{HMZ}. We demonstrate that this distinction in Khovanov homology produces strikingly different qualitative results. In particular, we use our invariants to show that the difference between the \textit{bona fide} equivariant genus and the isotopy-equivariant genus of a strongly invertible knot can be arbitrarily large, whereas previously these were not known to differ.

\subsection{Involutions on Khovanov homology} Let $(L, \tau)$ be an involutive link. Associated to $L$, we may form the \textit{Bar-Natan complex} $\Kcm(L)$, which is a doubly-graded chain complex over the polynomial ring $\f[u]$ (where $\f = \f_2$, the field of two elements). The involution $\tau$ induces an involution on $\Kcm(L)$, which by abuse of notation we also denote by $\tau$. 

The action of $\tau$ was utilized by Lobb-Watson in \cite{lobb-watson}, who constructed an additional filtration on the Khovanov complex of $L$ coming from the axis of symmetry. The Lobb-Watson filtration is defined by weighting each resolution differently depending on its placement relative to the fixed-point axis $\Fix(\tau)$. In \cite{lobb-watson}, it is shown that this additional filtration gives rise to a triply-graded refinement $\mathbb{H}^{i, j, k}(L)$ of Khovanov homology which is an involutive link invariant. Lobb-Watson show (as a \textit{non-}equivariant application) that their triply-graded refinement is capable of detecting pairs of mutant knots, even though the usual Khovanov homology (with coefficients in $\f$) is mutation-invariant \cite{bloom, wehrli}.

Using the functoriality and naturality of Khovanov homology, it is also possible to show that the homotopy class of the pair $(\Kcm(L), \tau)$ is an invariant of $(L, \tau)$. This program was essentially carried out by Sano in \cite{Sano}. See \cite{DMS} for the analogous program in the knot Floer setting developed by the second, third, and fourth authors.

\subsection{The Borel construction}
In the present work, we leverage two principal differences which endow the Khovanov formalism with additional structure not present in the knot Floer setting. The first is the fact that the action of $\tau$ is an actual involution, rather than a homotopy involution. This allows us to define the \textit{Borel complex} $\Kcm_Q(L)$ of $L$, which is given by

\[
\Kcm_Q(L) = (\Kcm(L) \otimes \F[Q], \partial_Q)
\]
where 
\[
\partial_Q = \partial + Q \cdot (1 + \tau).
\]
Writing the $\Z \times \Z$-grading on $\Kcm_Q(L)$ by $(\gr_h, \gr_q)$, our conventions are that
\[
\gr(\partial_Q) = \gr(\partial) = (1, 0) \quad \quad \gr(u) = (0, -2) \quad \quad \gr(Q) = (1, 0).
\]

The reader should compare the above construction with \cite[Theorem 1.1]{Sano}, in which the mapping cone invariant $\mathit{KhI}(L, \tau)$ is constructed. The crucial distinction is that $\mathit{KhI}(L, \tau)$ is formed by taking the truncated tensor product of the Khovanov complex with $\f[Q]/Q^2$ -- as in the involutive Heegaard Floer formalism of Hendricks-Manolescu \cite{HM} -- whereas our invariant is over the full ring $\f[Q]$. As we will see, this will lead to a more refined invariant capable of capturing more subtle equivariant behavior. We also refer the reader to \cite{chen-yang}, which gives several conjectures relating the Borel construction to other aspects of Khovanov homology. 

Importantly, it is \textit{not} a formal consequence of naturality or functoriality of Khovanov homology that (the homotopy class of) $\Kcm_Q(L)$ is a well-defined invariant. This is in contrast to the approach of \cite{DMS, Sano}. These latter constructions mimic the framework developed by Hendricks-Manolescu \cite{HM} and Hendricks-Manolescu-Zemke \cite{HMZ} in their work on involutive Heegaard Floer homology. In general, a theory constructed in this latter manner can be shown to produce a well-defined invariant by appealing to certain naturality and functoriality results, as we discuss in Section~\ref{sec:involutions-on-bar-natan}. However, in our case the analogous formal properties do not suffice to establish invariance of the Borel complex. Instead, following the work of Lobb-Watson \cite{lobb-watson}, we utilize an explicit analysis of equivariant Reidemeister moves.

\begin{thm}\label{thm:full_borel_invariant}
The homotopy type of the Borel complex $\smash{\Kcm_Q(L)}$ is an invariant of $(L, \tau)$ up to Sakuma equivalence. Moreover, if $\Sigma$ is an equivariant cobordism from $(L_1, \tau_1)$ to $(L_2, \tau_2)$, then for appropriate choices of data there exists a cobordism map
\[
\Kcm_Q(\Sigma) \colon \Kcm_Q(L_1) \rightarrow \Kcm_Q(L_2).
\]
The homotopy type of the reduced Borel complex $\smash{\Kcrm_Q(L)}$ is likewise an invariant of $(L, \tau)$, and there exists a reduced cobordism map
\[
\Kcrm_Q(\Sigma) \colon \Kcrm_Q(L_1) \rightarrow \Kcrm_Q(L_2).
\]
Both $\Kcm_Q(\Sigma)$ and $\Kcrm_Q(\Sigma)$ have grading shift $(0, -2g(\Sigma))$. Moreover, if $L_1$ and $L_2$ are strongly invertible knots and $\Sigma$ is connected, then $\Kcrm_Q(\Sigma)$ is local.
\end{thm}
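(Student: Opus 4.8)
The plan is to organize the proof as (i) verifying that $\partial_Q$ is a differential of degree $(1,0)$, (ii) invariance under equivariant Reidemeister moves, (iii) the cobordism maps and their grading, (iv) the reduced statements, and (v) locality. For (i) one expands
\[
\partial_Q^2 = \partial^2 + Q\bigl(\partial(1+\tau) + (1+\tau)\partial\bigr) + Q^2(1+\tau)^2 = Q(\partial\tau + \tau\partial) + Q^2(1+\tau^2),
\]
using $\partial^2 = 0$ and cancelling the repeated $\partial$-terms over $\f$; then $\partial\tau + \tau\partial = 0$ since $\tau$ is a chain map and $1 + \tau^2 = 0$ since $\tau$ is an honest involution, so $\partial_Q^2 = 0$, and $\gr(\partial_Q) = (1,0)$ because $\gr(Q) = (1,0)$ and $\tau$ preserves the bigrading. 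This step is exactly where the ``principal differences'' enter: it is essential that $\tau^2 = \id$ strictly, not merely up to homotopy.

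For (ii) I would invoke the fact (going back to Sakuma \cite{Sakuma}, in the form used by Lobb--Watson \cite{lobb-watson}) that equivariant diagrams of Sakuma-equivalent $(L,\tau)$ are connected by finitely many equivariant Reidemeister moves, each supported on the fixed-point axis or occurring as a $\tau$-exchanged pair of ordinary moves. The key formal observation is that if $f \colon \Kcm(L) \to \Kcm(L')$ is a chain homotopy equivalence which is \emph{strictly} $\tau$-equivariant, then $f \otimes \id_{\f[Q]}$ is automatically a chain map $\Kcm_Q(L) \to \Kcm_Q(L')$ --- it intertwines $\partial + Q(1+\tau)$ with $\partial' + Q(1+\tau')$ precisely because $\tau'f = f\tau$ --- and that if in addition the homotopy inverse $g$ and the homotopies $H$ realizing $gf \simeq \id$ and $fg \simeq \id$ are strictly equivariant, then $H \otimes \id_{\f[Q]}$ realizes the analogous homotopies over $\partial_Q$, since $(1+\tau)H + H(1+\tau) = \tau H + H\tau = 0$ over $\f$ for equivariant $H$. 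Thus (ii) reduces entirely to producing strictly equivariant chain-homotopy-equivalence data, at the level of the Bar--Natan cobordism category, for each equivariant move. For a $\tau$-exchanged pair this is immediate, as $\tau$ swaps the two local regions and one takes the delooping and Gaussian-elimination maps in the corresponding product form; for a move on the axis one works in the symmetric local tangle model, so that every cobordism map, delooping isomorphism, and elimination homotopy is manifestly invariant under the local reflection. I expect this move-by-move bookkeeping to be the principal obstacle and the most laborious part of the argument; it is precisely where equivariance merely up to homotopy --- which is enough for the $\f[Q]/Q^2$ truncation of \cite{Sano} --- fails to suffice, since every power of $Q$ records information. (If strict equivariance cannot be arranged directly for some move, one would instead correct $f$ by higher-order $Q$-terms and argue $Q$-adic convergence.)

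For (iii) and (iv), decompose an equivariant cobordism $\Sigma$ into equivariant elementary pieces --- births, deaths and saddles, each on the axis or in $\tau$-exchanged pairs --- each inducing a strictly equivariant chain map on $\Kcm$ by the same local-model considerations; tensoring with $\id_{\f[Q]}$ and composing defines $\Kcm_Q(\Sigma)$, with ``for appropriate choices of data'' absorbing the dependence on the chosen equivariant decomposition. The grading shift $(0,-2g(\Sigma))$ is inherited from the non-equivariant Bar--Natan cobordism map: the $q$-shift equals $\chi(\Sigma)$, is additive under composition and computed on elementary pieces, and is untouched by $\otimes\,\id_{\f[Q]}$ because $\gr(Q)$ has no $q$-component, while the $h$-shift vanishes. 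For the reduced statements one notes that, $L$ being strongly invertible, the axis of $\tau$ meets $L$; choosing a fixed point of $\tau$ as basepoint makes $\tau$ descend to $\Kcrm(L)$, and (ii)--(iii) go through verbatim with $\Kcrm$ in place of $\Kcm$.

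For (v), let $\Sigma$ be a connected cobordism between strongly invertible knots and invert $u$. The localized reduced Bar--Natan homology of a knot is free of rank one over $\f[u^{\pm1}]$ on the canonical Lee-type class, which $\tau$ fixes (being orientation-preserving), so on this rank-one module $\partial_Q = Q(1+\tau) = 0$ and $\Kcrm_Q(L)[u^{-1}] \simeq \f[u^{\pm1}, Q]$. A connected cobordism between knots induces a nonzero map of such rank-one modules --- composing $\Sigma$ with an orientation-reversed copy of itself gives a connected self-cobordism of $L_1$, which on localization is a unit times the identity by a degree count --- hence an isomorphism after inverting $u$; strict $\tau$-equivariance propagates this to the localized Borel complex, so $\Kcrm_Q(\Sigma)$ is local. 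The delicate point here will be checking that $\tau$ genuinely fixes the localized canonical generator and is compatible with the cobordism map under the rank-one identification, but that should reduce to the non-equivariant computation together with naturality of $\tau$.
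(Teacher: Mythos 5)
Your outline is sound in its formal parts (the check that $\partial_Q^2=0$, the observation that strictly $\tau$-equivariant equivalence data tensors up to $\f[Q]$, the composition of elementary equivariant cobordism maps, and the grading shift), and for the exchanged-pair moves IR1--IR3 and the on-axis R1, R2 moves your ``symmetric local model'' claim is exactly what the paper verifies in Lemma~\ref{lem:invariant-formal-case}. But there is a genuine gap at the heart of step (ii): for the mixed moves M1, M2, M3 (the moves in which a strand crosses the axis), strictly $\tau$-equivariant homotopy-equivalence data is \emph{not} available, and asserting that it is ``manifest'' in a symmetric tangle model is precisely the point that fails. Non-equivariantly M1 is an R3 move and M2 an R2 move whose Bar--Natan equivalences break the symmetry, and there is no averaging over $\f_2$; indeed the whole reason the Borel complex carries more information than the $\f[Q]/Q^2$ truncation is that equivariance up to homotopy does not upgrade to strict equivariance (compare Example~\ref{ex:borel-ex3}). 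The paper's proof instead constructs, for each M-move, a non-equivariant chain map $f_0$ together with a $\tau$-equivariant homotopy $f_1$ satisfying $\tau f_0+f_0\tau=\partial f_1+f_1\partial$, sets $f_Q=f_0+Qf_1$, and then must verify these are homotopy equivalences of Borel complexes: for M1 by explicit cobordism computations, for M2 via the non-obvious homotopy $G_Q=(1+QE)^{-1}G_0(f_Qg_Q+\id)$, and for M3 by abandoning the direct approach altogether in favor of a mapping-cone argument over the commuting saddles $s_\alpha$, $s_\beta$ together with an auxiliary sequence of moves (Section~\ref{sec:miinvariance}). Your parenthetical fallback (``correct $f$ by higher-order $Q$-terms'') gestures at the right mechanism, but it is not carried out, and it is exactly where the substance of the theorem lies; note also that once such an $f_Q$ exists one can shortcut the homotopy-inverse verifications via Lemmas~\ref{lem:quasi-iso-move} and \ref{lem:qitohe}, since $f_Q \bmod Q$ is an equivalence.

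Two secondary points. First, the reduced statements do not ``go through verbatim'': the (unreduced) cobordism maps need not carry the reduced subcomplex for $p_1$ into the reduced subcomplex for $p_2$ (no equivariant arc on $\Sigma$ need exist), so the paper defines $\Kcrm_Q(\Sigma)$ as $(\pi_{p_2}\otimes\id)\circ\Kcm_Q(\Sigma)\circ(i_{p_1}\otimes\id)$ using the $\tau$-equivariance of the Wigderson splitting; some such device is needed in your step (iv). You also omit the I-move and R-move, which are required for Sakuma equivalence (easy, but they must be mentioned). Second, your locality argument is incomplete as stated: the degree count only shows the localized self-composition map is $0$ or $u^{2g}$, so you still need the canonical-generator functoriality (Lee/Rasmussen, Proposition~\ref{thm:unreduced-local}, together with the behavior of $\pi_p$ on the classes $[\frs_o]$, $[\frs_{\overline{o}}]$) to rule out $0$; and passing from locality of the $Q=0$ reduction to locality of $\Kcrm_Q(\Sigma)$ is cleanest via the mod-$Q$ comparison and Lemma~\ref{lem:quasi-iso-move}, rather than by asserting that the localized Borel complex is literally $\f[u,u^{-1},Q]$ with $\tau$ acting trivially on the chain level.
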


Here, the reduced Borel complex $\smash{\Kcrm_Q(K)}$ is a half-dimensional summand of $\smash{\Kcm_Q(K)}$ constructed in the usual Khovanov-theoretic manner. However, our treatment of $\smash{\Kcrm_Q(\Sigma)}$ will actually be rather subtle: usually, defining a cobordism map between reduced Bar-Natan complexes requires a choice of path $\gamma$ on $\Sigma$. This turns out to be inconvenient for studying equivariant cobordisms, as no \textit{equivariant} path need exist. We thus construct our cobordism maps in a manner which avoids the need for $\gamma$, at the cost of naturality. Indeed, it should be stressed that we do not prove either $\Kcm_Q(\Sigma)$ or $\Kcrm_Q(\Sigma)$ are natural; e.g.,\ independent of a handle decomposition or other data. 

The locality condition on $\Kcrm_Q(\Sigma)$ means that it induces an isomorphism on homology after inverting $u$; see Definition~\ref{def:knotlike-borel}. As a consequence of locality, we define a new suite of numerical invariants associated to the reduced Borel complex and show that these constrain the genus of equivariant cobordisms between pairs of strongly invertible knots.

\begin{thm}\label{thm:equivariant_s_borel_intro}
Let $(K, \tau)$ be a strongly invertible knot. Then there are numerical invariants 
\[
\wt{s}_Q(K) \quad \text{and} \quad \wt{s}_{Q, A, B}(K),
\]
with the latter being defined for any pair of integers $0 \leq A < B$. These are an invariant of the equivariant concordance class of $(K, \tau)$. Moreover, if $\Sigma$ is an equivariant cobordism from $K_1$ and $K_2$, then
\[
\wt{s}_Q(K_1)-2g(\Sigma)\leq \wt{s}_Q(K_2) \quad \text{and} \quad \wt{s}_{Q,A,B}(K_1)-2g(\Sigma)\leq \wt{s}_{Q,A,B}(K_2).
\]
\end{thm}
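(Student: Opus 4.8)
The plan is to carry out a Bar--Natan analogue of Rasmussen's argument, with the reduced Borel complex $\Kcrm_Q$ in the role of the reduced Lee/Bar--Natan complex and with the locality statement of Theorem~\ref{thm:full_borel_invariant} replacing the usual rank-one computation of localized reduced Bar--Natan homology.

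\emph{Step 1: structure of the $u$-localized reduced Borel homology.} First I would pin down $H_*\big(\Kcrm_Q(K)[u^{-1}]\big)$ for a strongly invertible knot $K$. The localized reduced Bar--Natan homology $H_*\big(\Kcrm(K)[u^{-1}]\big)$ is free of rank one over $\f[u^{\pm 1}]$ and concentrated in a single homological grading; since its only grading-preserving automorphism is the identity (the degree-zero units of $\f[u^{\pm1}]$ over $\f=\f_2$ being trivial), $\tau$ acts as the identity on it. Filtering $\Kcrm_Q(K)[u^{-1}]=\big(\Kcrm(K)[u^{-1}]\otimes\f[Q],\ \partial+Q(1+\tau)\big)$ by powers of $Q$ gives a spectral sequence with $E_1$-page $\f[u^{\pm1}]\otimes\f[Q]$ and $d_1=Q\cdot(1+\tau_*)=0$. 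The key technical point is that the higher differentials also vanish — equivalently, that $\tau$ is $\f[u^{\pm1}]$-equivariantly homotopic to the identity on $\Kcrm(K)[u^{-1}]$ — which I would deduce from the locality of the identity cobordism together with the definition of a knotlike Borel complex. This identifies $H_*\big(\Kcrm_Q(K)[u^{-1}]\big)$ with $\f[u^{\pm 1},Q]$, free of rank one, and I fix a generator $\mathfrak g_K$.

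\emph{Step 2: definition of the invariants.} Using $\mathfrak g_K$, set
\[
\wt s_Q(K):=\max\big\{\,\gr_q(\xi)\ :\ \xi\in\Kcrm_Q(K)\ \text{a cycle whose image generates}\ H_*\big(\Kcrm_Q(K)[u^{-1}]\big)\,\big\}
\]
(possibly after a fixed normalizing shift). Such cycles exist — clear denominators in any chain representing $\mathfrak g_K$, noting that multiplying by a power of the unit $u$ keeps the localized class a generator — and the maximum is finite because $\Kcrm_Q(K)$ is finitely generated over $\f[u,Q]$ in the relevant homological grading while $u$ strictly lowers $\gr_q$ and $Q$ preserves it. For $0\le A<B$ one repeats the construction with the $Q$-adic subquotient $C_{A,B}:=Q^A\Kcrm_Q(K)\big/Q^B\Kcrm_Q(K)$, a finitely generated $\f[u]$-complex whose localization is free over $\f[u^{\pm1}]$ with distinguished generators $Q^A\mathfrak g_K,\dots,Q^{B-1}\mathfrak g_K$, and one defines $\wt s_{Q,A,B}(K)$ as the corresponding extremal quantum grading of a cycle in $C_{A,B}$ whose localized class is a cyclic generator; note that $\wt s_{Q,0,1}$ should recover the ordinary reduced Bar--Natan $s$-invariant. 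Invariance under Sakuma equivalence follows from Theorem~\ref{thm:full_borel_invariant}: a filtered homotopy equivalence of bidegree $(0,0)$ sends generating cycles to generating cycles and cannot lower $\gr_q$, so applying this to a homotopy inverse as well forces the two maxima to agree; the $\f[Q]$-equivariance of these maps propagates the argument to each $C_{A,B}$.

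\emph{Step 3: the cobordism bound.} Let $\Sigma$ be a connected equivariant cobordism from $(K_1,\tau_1)$ to $(K_2,\tau_2)$ and put $g=g(\Sigma)$. By Theorem~\ref{thm:full_borel_invariant} there is a chain map $\Kcrm_Q(\Sigma)\colon\Kcrm_Q(K_1)\to\Kcrm_Q(K_2)$ of grading shift $(0,-2g)$ which is local, i.e.\ a $u^{-1}$-isomorphism. Choosing a cycle $\xi_1\in\Kcrm_Q(K_1)$ with $\gr_q(\xi_1)=\wt s_Q(K_1)$ representing a localized generator, the image $\Kcrm_Q(\Sigma)(\xi_1)$ is a cycle in $\Kcrm_Q(K_2)$ whose localized class is $\Kcrm_Q(\Sigma)_*(\mathfrak g_{K_1})$ up to a unit, hence a generator by locality, and $\gr_q\big(\Kcrm_Q(\Sigma)(\xi_1)\big)\ge\wt s_Q(K_1)-2g$. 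By the defining maximum for $\wt s_Q(K_2)$ this gives $\wt s_Q(K_1)-2g\le\wt s_Q(K_2)$. Since $\Kcrm_Q(\Sigma)$ is $\f[Q]$-equivariant it descends to the subquotients $C_{A,B}$ and stays local there, so the same computation yields $\wt s_{Q,A,B}(K_1)-2g\le\wt s_{Q,A,B}(K_2)$. Taking $g=0$ for an equivariant concordance and applying the estimate to the reversed concordance as well shows both quantities are equivariant concordance invariants. The failure of $\Kcrm_Q(\Sigma)$ to be natural is harmless here, since only the (well-defined) homotopy type of $\Kcrm_Q(K)$ and the existence of one cobordism map with the stated properties are used.

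\emph{Expected main obstacle.} The crux is Step~1: establishing that $H_*\big(\Kcrm_Q(K)[u^{-1}]\big)$ is a rank-one free $\f[u^{\pm1},Q]$-module — that is, the collapse of the $Q$-adic spectral sequence, or equivalently the homotopy-triviality of $\tau$ on the localized reduced Bar--Natan complex — since the entire definition of $\wt s_Q$ presupposes a unique localized generator. A secondary difficulty is confirming that $\Kcrm_Q(\Sigma)$ retains enough locality after restriction to the $Q$-adic subquotients $C_{A,B}$ for the two-sided extremal-grading argument to remain valid there.
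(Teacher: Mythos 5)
Your Steps 2 and 3 are essentially the paper's own argument: the paper defines $\wt{s}_{Q}$ and $\wt{s}_{Q,A,B}$ as extremal quantum gradings of $u$-nontorsion classes in $H_*(\Kcrm_Q(K))$ and $H_*(\Kcrm_Q(K)/Q^B)$ (your $Q^A\Kcrm_Q/Q^B\Kcrm_Q$ packaging is a harmless variant of its ``classes localizing to $u^{-i/2}Q^A$ in $C_Q/Q^B$''), and then deduces the theorem in one line from the local cobordism map of grading shift $(0,-2g)$ supplied by Theorem~\ref{thm:full_borel_invariant}, exactly as you do. Your invariance-of-diagram and concordance arguments also match the paper's.

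The one point to clean up is your Step 1. The fact you call the crux --- $u^{-1}H_*(\Kcrm_Q(K))\cong\f[u,u^{-1},Q]$, free of rank one --- is precisely the paper's assertion that $\Kcrm_Q(D)$ is a knotlike Borel complex (made, without detailed proof, when $\Kcrm_Q$ is defined), so it is an input established before this theorem rather than part of its proof; but your proposed justification (``locality of the identity cobordism together with the definition of a knotlike Borel complex'') is circular as written, since that definition is exactly the property to be verified. The gap closes with a degree count: in the $Q$-power filtration of $u^{-1}\Kcrm_Q(K)$ the $E_1$-page is $\f[u,u^{-1}]\otimes\f[Q]$ with the $Q^b$-summand concentrated in homological degree $b$, so every $d_r$ with $r\geq 2$ vanishes for degree reasons and $d_1=Q(1+\tau_*)=0$; alternatively, transfer the Borel differential onto $H_*(u^{-1}\Kcrm(K))\cong\f[u,u^{-1}]$ via Lemma~\ref{lem:minimal-model-1} and note that the only grading-admissible differential on a single generator in homological degree $0$ is $x\mapsto cQx$, which violates $\partial_Q^2=0$ unless $c=0$. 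Your secondary worry is likewise easily dispatched: the $u$-localized cobordism map is a quasi-isomorphism between free bounded complexes, hence a homotopy equivalence, hence remains a quasi-isomorphism after applying $\otimes_{\f[Q]}\f[Q]/Q^{B-A}$, which is all the ``locality on subquotients'' your extremal-grading argument needs (the paper glosses over the same point).
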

\noindent
We refer to Definitions~\ref{def:strong-s} and \ref{def:strong-s-general} for the construction of $\wt{s}_Q(K)$ and $\wt{s}_{Q, A, B}(K)$.
Note that $\tilde{s}_Q(K)$ and $\tilde{s}_{Q, A, B}(K)$ are distinct from -- and should be considered further refinements of -- the usual numerical invariants constructed via the standard involutive methods of \cite{HM}. (See for example \cite[Definition 3.23]{Sano}.) These latter invariants are, of course, themselves equivariant refinements of the usual $s$-invariant. It is not difficult to find examples in which $\tilde{s}_Q(K)$ and $\tilde{s}_{Q, A, B}(K)$ contain strictly more information than the invariants of \cite{Sano}; see Section~\ref{sec:strong-s-examples} and Theorem~\ref{thm:kyle_knot_intro} below.
 
 \subsection{Equivariant and isotopy-equivariant slice genus}
 Let $(K, \tau)$ be an equivariant knot. Recall that the \textit{equivariant slice genus} $\eg(K)$ of $K$ is defined to be the minimum genus of a smoothly embedded surface $\Sigma$ in $B^4$ which has boundary $K$ and is sent to itself by the obvious extension $\tau_{B^4}$ of $\tau$ over $B^4$.\footnote{There is a slight difference in this paper versus (for example) \cite{DMS}: both here and in general, we will only consider the standard extension of $\tau$ over $B^4$. See Section~\ref{sec:topological-preliminaries} for a more precise discussion.} Equivariant sliceness and the equivariant genus have been studied at length by several authors \cite{Sakuma, ChaKo, ND, BI, DMS, AB, MillerPowell}, among others. In particular, it is known that the equivariant slice genus can be arbitrarily different from the usual slice genus, both in the periodic \cite{BI} and strongly invertible \cite{DMS, MillerPowell} cases. 
 
In \cite[Definition 2.8]{DMS}, a slightly different notion was introduced: we say that a slice surface $\Sigma$ for $K$ is \textit{isotopy}-equivariant if $\Sigma$ is smoothly isotopic rel boundary to $\tau_{B^4}(\Sigma)$. We can then define the \textit{isotopy-equivariant slice genus} $\ieg(K)$ to be the minimum genus over all such surfaces. Obviously, $\eg(K) \geq \ieg(K)$. It turns out that many bounds on the equivariant genus are in fact bounds on the isotopy-equivariant genus. While in several applications this is helpful -- for example, in \cite{DMS} such bounds are used to show that knot Floer homology can be used to detect exotic pairs of disks -- it is an interesting question as to whether these notions are distinct. Here, we use our invariants to exhibit a family of knots whose equivariant slice genus grows arbitrarily large but whose isotopy-equivariant slice genus remains bounded. Previously, these two notions were not known to differ.

\begin{thm}\label{thm:kyle_knot_intro}
Let $J = 17nh_{74}$ be the strongly invertible knot shown in Figure~\ref{fig:kyle}. Then
\[
\eg(\#_mJ) \geq \lceil m/2 \rceil \quad \text{but} \quad \ieg(\#_mJ) \leq 1
\]
for all $m \in \mathbb{N}$.\footnote{In fact, it is not difficult to show using \cite{DMS} that $\ieg(\#_mJ) \geq 1$, so the second inequality is an equality.}
\end{thm}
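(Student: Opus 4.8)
The statement packages two independent assertions, and I would attack them by entirely different routes: the lower bound $\eg(\#_mJ) \geq \lceil m/2\rceil$ via the numerical invariants of Theorem~\ref{thm:equivariant_s_borel_intro}, and the upper bound $\ieg(\#_mJ) \leq 1$ by an explicit surface construction. For the lower bound, note that an equivariant slice surface $\Sigma$ of genus $g$ for a strongly invertible knot $K$ is precisely an equivariant cobordism from $K$ to the unknot $U$; applying the genus inequality of Theorem~\ref{thm:equivariant_s_borel_intro} to such a $\Sigma$ gives $\wt{s}_{Q,A,B}(K) - 2g \leq \wt{s}_{Q,A,B}(U)$, hence $\eg(K) \geq \tfrac12\bigl(\wt{s}_{Q,A,B}(K) - \wt{s}_{Q,A,B}(U)\bigr)$, and similarly for $\wt{s}_Q$. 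So it suffices to exhibit, for each $m$, a pair $0 \leq A < B$ with $\wt{s}_{Q,A,B}(\#_mJ) - \wt{s}_{Q,A,B}(U) \geq 2\lceil m/2\rceil$. It is worth stressing at the outset that the coarser invariants built from $\f[Q]/Q^2$ (as in \cite{Sano}) cannot achieve this: as the comparison with \cite{DMS} in the footnote to Theorem~\ref{thm:kyle_knot_intro} indicates, those invariants only bound $\ieg$, which stays bounded here, so working over the full ring $\f[Q]$ is essential.

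To make the lower bound concrete I would: (i) compute $\Kcrm_Q(J)$ up to homotopy from an explicit diagram of $J = 17nh_{74}$, a finite (machine-assisted) task, obtaining a small model complex over $\f[u]$ together with its $1+\tau$ term; (ii) establish a connected-sum formula at the chain level, identifying $\Kcrm_Q(\#_mJ)$ up to homotopy with the $m$-fold tensor product of $\Kcrm_Q(J)$ over $\f[u]$, with the $Q$-differentials assembled in the standard Borel fashion — this extends the classical tensor-product formula for reduced Bar-Natan homology of connected sums, the extra content being compatibility with the $\tau$- and $Q$-structures; and (iii) feed the resulting tensor power into Definition~\ref{def:strong-s-general} and read off the growth, choosing $A$ and $B$ adapted to $m$. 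The ceiling in $\lceil m/2\rceil$ should reflect a $\Z/2$ periodicity in the way the relevant $Q$-tower of $\Kcrm_Q(J)$ (equivalently, a distinguished piece of the $u$-torsion) stacks up in the tensor power — so that the invariant jumps by $2$ only after every second connected summand. I expect step (ii), together with pinning down the exact constant $2\lceil m/2\rceil$ rather than mere linear growth, to be the main obstacle; step (i) is in principle routine but computationally heavy.

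For the upper bound the essential input is that $J$ is smoothly slice, which one verifies directly from a diagram; fix a slice disk $D$ for $J$, so that $\#_mJ$ bounds the boundary connected sum $D^{\natural m}$, arranged along the axis of symmetry. I would take $\Sigma_m$ to be $D^{\natural m}$ with a single trivial $1$-handle attached in the interior of $B^4$ — a genus-$1$ surface bounded by $\#_mJ$ — and then produce an ambient isotopy rel boundary carrying $\Sigma_m$ onto $\tau_{B^4}(\Sigma_m)$, which is $(\tau_{B^4}D)^{\natural m}$ with a handle. The mechanism is that the lone handle acts as a reusable resource: since $\ieg(J)\le 1$, a single stabilization makes $D$ ambiently isotopic rel boundary to $\tau_{B^4}D$, and one absorbs the non-equivariance of the first $D$-summand into the handle, frees the handle, and repeats for the remaining summands, no new genus being created at any stage. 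That one handle suffices no matter how large $m$ is, is exactly what pins $\ieg(\#_mJ)$ at $1$ (together with the footnote's lower bound) even though $\eg(\#_mJ)$ grows without bound; verifying that the symmetrizing isotopies can all be threaded through a single handle is the delicate point here, but it is a hands-on geometric argument rather than a homological one.
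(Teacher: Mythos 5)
Your overall architecture matches the paper's: the lower bound comes from Theorem~\ref{thm:equivariant_s_borel_intro} applied to an equivariant slice surface viewed as an equivariant cobordism to the unknot, with the growth $2\lceil m/2\rceil$ extracted from a tensor-power computation at $A=m$, $B=m+1$; the upper bound is exactly the paper's one-handle-recycling argument on $D^{\natural m}$ (Lemma~\ref{lem:isoeq-genus-one}). The genuine divergence is in how the lower bound gets its input. You propose to compute $\Kcrm_Q(J)$ (really, the full chain-level action of $\tau$) and then tensor; the paper explicitly states it could \emph{not} determine $\Kcrm_Q(J)$, and instead proves only that there is a local, grading-shift-zero map from the small five-generator Borel complex $C_Q$ of Example~\ref{ex:borel-ex4} into $\Kcrm_Q(J)$ (Lemma~\ref{lem:partial-computation-J}, an exhaustive constraint analysis using gradings and $\partial_Q^2=0$). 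To make that partial information survive connected sums, the paper needs the Koszul-duality machinery of Section~\ref{sec:koszul-borel} (local maps tensor to local maps, Lemma~\ref{lem:local-tensor-product}), and then does the explicit $\lceil m/2\rceil$ calculation inside $C_Q^m$ (Lemmas~\ref{lem:technical-calculation}--\ref{lem:s-tensor-bound}). Your route is cleaner \emph{if} step (i) succeeds, but it hinges on a computation the authors found infeasible, and if only the homological $\tau$-action plus the spectral sequence are available you will be forced into something like their local-map argument. Related caveat: be careful that ``tensor product of Borel complexes'' must mean applying the Borel construction to the tensor product of $\tau$-complexes (Theorem~\ref{thm:connected-sum-tau}); the Borel differential of the product acquires the cross term $Q(1+\tau_1)\otimes(1+\tau_2)$ (Lemma~\ref{lem:modified-product}) and is not determined by the factors' Borel complexes alone --- your phrase ``assembled in the standard Borel fashion'' is exactly where this must be made precise.

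On the upper bound, one step as you state it does not follow: $\ieg(J)\leq 1$ does not imply that an \emph{arbitrary} slice disk $D$ becomes isotopic rel boundary to $\tau_{B^4}(D)$ after a single stabilization; it only provides some genus-one surface isotopic to its image. The paper uses the specific symmetric pair $D$, $\tau_{B^4}(D)$ obtained by compressing the equivariant genus-one Seifert surface of Figure~\ref{fig:kyle}, for which uncompressing (i.e.\ one stabilization) returns both to the same pushed-in Seifert surface, giving $D\# h\simeq \tau_{B^4}(D)\# h$ directly. Moreover, the ``reusable handle'' step you flag as delicate is powered by a concrete input you should name: $\pi_1(B^4\setminus D^{\natural m})\cong\Z$, so the stabilizing $1$-handle is unique up to isotopy (the meridian ambiguity is absorbed as in \cite{CP,ConwayPowell}), which is what lets the handle be slid from summand to summand without creating genus. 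With the disk chosen as above and that uniqueness cited, your argument coincides with the paper's.
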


\begin{figure}[h!]
\includegraphics[scale = 0.55]{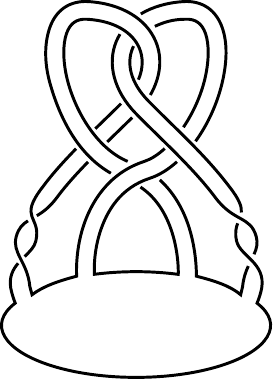}
\caption{The strongly invertible knot $J$ of Theorem~\ref{thm:kyle_knot_intro}. The involution $\tau$ is given by rotation around a vertical axis. Taken from the work of Hayden \cite{Hayden}.}\label{fig:kyle}
\end{figure}

Roughly speaking, the essential point here is that the invariants of \cite{DMS, Sano} are functorial with respect to isotopy-equivariant cobordisms. This will be the case for all equivariant invariants that follow the program of \cite{HM, HMZ}, which in fact only leverage the naturality and functoriality of the usual, non-equivariant TQFT structure. In contrast, only \textit{bona fide} equivariant cobordisms yield maps on the Borel complex. As we will see, invariants derived from the Borel construction are thus capable of distinguishing equivariant and isotopy-equivariant cobordisms. The proof of Theorem~\ref{thm:kyle_knot_intro} will require a connected sum formula for our invariants, which we give in Theorem~\ref{thm:connected-sum-tau}.

\begin{rmk}
  Here we show that $\eg$ and $\ieg$ differ in the smooth category. In the topological category, it turns out that $\eg$ and $\ieg$ can be shown to differ using Sakuma's $\eta$-polynomial \cite{Sakuma}. For this, we provide a proof that the $\eta$-polynomial is invariant under topological concordance in Appendix~\ref{sec:appendixa} and apply this to show that $\eg(J)\neq\ieg(J)$ for $J=17nh_{74}$, see Corollary~\ref{cor:J_topo}.
\end{rmk}

\subsection{Mixed complexes}

As discussed above, the second distinction between the Khovanov and knot Floer settings is the presence of the Lobb-Watson filtration coming from the axis of symmetry \cite{lobb-watson}. In the present work, we aim to incorporate this structure into our story by introducing an additional variable $W$ to the Borel construction in order to record the Lobb-Watson grading. 
This gives a refinement of $\Kcm_Q(L)$, which is a chain complex over variables $u,Q,W$. However, it turns out that the homotopy type of this complex is \emph{not} an invariant of a knot. This leads us to construct another, more complex refinemenet of $\Kcm_Q(L)$,
which we call the \textit{mixed complex} $\Mkc(L)$. The homotopy type of a mixed complex is still \textit{not} an invariant of $(L, \tau)$. Instead, it turns out that the appropriate notion of equivalence for mixed complexes is a relation called \textit{$Q$-equivalence}, which we introduce in Section~\ref{sec:3}:

\begin{thm}\label{thm:mixedcomplex}
The $Q$-equivalence class of the mixed complex $\Mkc(L)$ is an invariant of $(L, \tau)$ up to Sakuma equivalence. If $\Sigma$ is an equivariant cobordism from $(L_1, \tau_1)$ to $(L_2, \tau_2)$, then for appropriate choices of data there exists a cobordism map
\[
\Mkc(\Sigma) \colon \Mkc(L_1) \rightarrow \Mkc(L_2).
\]
The homotopy type of the reduced mixed complex $\Mkcr(K)$ is likewise an invariant of $(L, \tau)$, and there exists a reduced cobordism map
\[
\Mkcr(\Sigma) \colon \Mkcr(L_1) \rightarrow \Mkcr(L_2).
\]
Both $\Mkc(\Sigma)$ and $\Mkcr(\Sigma)$ have grading shift $(0, -2g(\Sigma), 0)$. Moreover, if $L_1$ and $L_2$ are strongly invertible knots and $\Sigma$ is connected, then $\Mkcr(\Sigma)$ is local.
\end{thm}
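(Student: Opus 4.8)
The plan is to mirror the proof of Theorem~\ref{thm:full_borel_invariant}, additionally carrying along the Lobb--Watson grading recorded by the variable $W$ and systematically replacing homotopy equivalence by $Q$-equivalence. Fixing an equivariant diagram $D$ for $(L,\tau)$, one first checks that the mixed complex $\Mkc(D)$ is a well-defined chain complex over $\f[u,Q,W]$ with the asserted tri-grading: this is a direct unwinding of the definitions of Section~\ref{sec:3}, using that $\tau$ is an honest involution (so the Borel differential $\partial + Q(1+\tau)$ squares to zero) together with the compatibility of $\tau$ with the $W$-filtration. At this stage $\Mkc(D)$ depends on $D$ and on auxiliary data (an ordering of crossings, and later a choice of equivariant handle decomposition); invariance and the cobordism maps are obtained only after passing to $Q$-equivalence classes, and we do not assert naturality.

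Next I would establish invariance under change of diagram. Two diagrams representing Sakuma-equivalent pairs are connected by a finite sequence of equivariant Reidemeister moves together with the moves generating Sakuma equivalence; each of these induces a homotopy equivalence of ordinary Borel complexes, exactly as in Theorem~\ref{thm:full_borel_invariant}. The new feature is that the explicit chain homotopy equivalences produced by the equivariant Reidemeister calculus need not preserve the $W$-filtration on the nose -- this is precisely why the naive $u,Q,W$ complex fails to be an invariant. The essential step is to check that these maps nevertheless preserve the $Q$-equivalence class of the mixed complex, i.e.\ that the failure to respect the $W$-grading is always of the controlled, ``higher order in $Q$'' type made precise in the definition of $Q$-equivalence in Section~\ref{sec:3}. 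I would verify this move by move, reusing the Lobb--Watson chain homotopies \cite{lobb-watson} and the Bar--Natan cobordism calculus and bounding the $W$-degree of each correction term.

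For the cobordism maps, decompose an equivariant cobordism $\Sigma$ from $(L_1,\tau_1)$ to $(L_2,\tau_2)$ into elementary equivariant pieces -- equivariant births, deaths, and saddles, interspersed with equivariant Reidemeister moves -- where by equivariant Morse theory each handle is taken either disjoint from $\Fix(\tau_{B^4})$, occurring in a symmetric pair, or symmetric about it. To each piece one assigns the corresponding Bar--Natan cobordism map promoted over $\f[u,Q,W]$, and $\Mkc(\Sigma)$ is defined as the composite; by the invariance step its $Q$-equivalence class is independent of the further choices needed to realize each piece, though it may still depend on the handle decomposition. Grading shifts add over the pieces: the $q$-shift accumulates to $-2g(\Sigma)$ just as in the Lee/Bar--Natan setting, while a direct check of the $W$-degrees of the elementary maps, using the placement of the handles relative to the axis, shows the $W$-shift of each piece vanishes, giving total shift $(0,-2g(\Sigma),0)$. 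The reduced statements follow by restricting to the half-dimensional summand $\Mkcr$; as with $\Kcrm(\Sigma)$ in Theorem~\ref{thm:full_borel_invariant}, the only subtlety is that a reduced cobordism map ordinarily requires a path $\gamma$ on $\Sigma$, which need not be equivariant, so one instead uses the path-free construction developed there, again at the cost of naturality.

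Finally, for locality: by Definition~\ref{def:knotlike-borel} it suffices to show that, when $L_1,L_2$ are strongly invertible knots and $\Sigma$ is connected, $\Mkcr(\Sigma)$ induces an isomorphism after inverting $u$. After inverting $u$ the underlying reduced Bar--Natan homology of a knot is free of rank one, and the induced map is linear over $\f[u,u^{-1},Q,W]$; connectedness of $\Sigma$ forces it to be nonzero in the appropriate degree, hence an isomorphism, exactly as in the corresponding step for $\Kcrm(\Sigma)$. I expect the main obstacle to be the invariance step: verifying that equivariant Reidemeister and Sakuma moves preserve the $Q$-equivalence class of $\Mkc$ is precisely where the finer structure of the mixed complex is genuinely used, and it requires a careful accounting of $W$-degrees through the Lobb--Watson homotopies rather than a formal appeal to naturality.
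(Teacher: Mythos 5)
Your high-level skeleton (move-by-move invariance, promotion of the Borel maps, composition of elementary equivariant cobordism maps, locality by reduction to the Borel case) matches the paper, but there are two genuine gaps. First, you mischaracterize the object and the equivalence relation, and this matters for the argument. The mixed complex $\Mkc(D)$ is not a single chain complex over $\f[u,Q,W]$ whose failure of invariance is ``higher order in $Q$''; it is the tuple $(W^{-1}\Kcm_{Q,W}(D),\,Q^{-1}\Kcm_{Q,W}(D),\,\eta)$, and a morphism is a triple $(f_Q,f_W,h)$ with $f_Q$ defined only after inverting $W$, $f_W$ defined only after inverting $Q$, and $h$ a homotopy between them over the double localization. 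The actual mechanism for the M1 and M2 moves is therefore not ``bound the $W$-degree of each correction term'': the Borel Reidemeister maps $f_Q$ are genuinely \emph{not} $\deg_k$-filtered, and one must produce a Lobb--Watson-style homotopy $h$ (which exists only after inverting $Q$, and is $\tau$-equivariant) so that $f_W=f_Q+[\partial_Q,h]$ \emph{is} filtered; the triple $(f_Q,f_W,h)$ is the mixed morphism, and $Q$-equivalence asks that the $Q$-side maps be homotopy inverse while admitting such $W$-side companions. Your plan never constructs the $W$-side maps $f_W$ or the comparison homotopies $h$, so it does not produce morphisms in the category where the invariance statement lives.

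Second, your treatment of M3 — ``verify move by move, reusing the Lobb--Watson chain homotopies'' — is exactly the step the paper says cannot be done that way: the mixed-complex formalism forces an entirely new M3 argument. The paper identifies $\Mkc(D)$ and $\Mkc(D')$ with mapping cones of the equivariant saddle morphisms $s_\alpha,s_\beta$ out of a common diagram $D_0$, compares both cones to $\mathrm{Cone}\bigl(\Mkc(D_0)\to\Mkc(D_5)\bigr)$ via an auxiliary sequence (an M1 move, an IR2 move, and a death), and controls all the homotopies through the ``chain homotopy up to defect $\delta$'' bookkeeping (homotopies defined only after inverting $W$, mapping $X_W$ into $(W^{-2\delta})Y_W$), together with Lemmas~\ref{lem:composition-defect-delta}--\ref{lem:isomorphic-cones}, to conclude $Q$-equivalence of the cones. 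Without some substitute for this defect formalism and the cone comparison, the M3 step of your outline has no proof; and relatedly, since the homotopies witnessing the Reidemeister equivalences only exist with nonzero defect, one cannot upgrade your move-by-move bounds into the required $Q$-equivalences by a purely formal appeal to the Borel-level statement. (Your locality step is fine once phrased as in the paper: the $Q$-side of $\Mkcr(\Sigma)$ is the $W$-localized reduced Borel cobordism map, which is local by Theorem~\ref{thm:full_borel_invariant}; ``connectedness forces it to be nonzero'' should be replaced by that reduction.)
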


One motivation for considering the Lobb-Watson refinement is that for small knots, neither the action of $\tau$ nor the Borel construction yields any interesting information. For example, we are unable to distinguish the different involutions on the figure-eight knot or the different involutions on the stevedore knot using the Borel construction, and we are likewise unable to prove that the stevedore knot is not equivariantly slice. In contrast, all of these results can be obtained using knot Floer homology \cite{DMS}. Since the Lobb-Watson refinement is capable of distinguishing different involutions on small knots, it is natural to alloy the construction of \cite{lobb-watson} with the Borel formalism in the hopes that this will give improved equivariant genus bounds. 

\begin{rmk}
It is possible to use the locality condition of Theorem~\ref{thm:mixedcomplex} to define an equivariant $s$-invariant in the setting of mixed complexes. Unfortunately, the authors do not currently have any example in which this gives more information than the equivariant $s$-invariants $\wt{s}_Q$ and $\wt{s}_{Q, A, B}$ of Theorem~\ref{thm:equivariant_s_borel_intro}. 
\end{rmk}

\subsection{Computer calculations} A computer program that calculates the action of $\tau$ on Khovanov homology and the second page of the Bar-Natan spectral sequence may be found at \cite{Khocomp}. While this does not generally suffice to determine the Borel or mixed complex, such computations can still be used to determine or bound other equivariant invariants. These include a weaker numerical invariant $\tilde{s}(K)$ (essentially constructed in \cite[Definition 3.23]{Sano}), which we define in Section~\ref{sec:tau-complex}. An abbreviated summary of the program output for various knots up to thirteen crossings can be found in Appendix~\ref{sec:different}. We give a discussion of how to use \cite{Khocomp}
 to constrain the Borel complex in Section~\ref{sec:borel-bootstrapping}. 

\subsection{Further applications} We also list some assorted observations resulting from studying the action of $\tau$. First, we provide a slightly more straightforward argument that involutions on Khovanov homology can detect mutant pairs of knots. While Lobb-Watson leverage their additional filtration to establish this result, we point out that some of their examples can be distinguished simply by calculating the action of $\tau$ via \cite{Khocomp}:

\begin{thm}\cite[Theorem 1.2]{lobb-watson}\label{thm:lw_knots}
The mutant knots $K_1$ and $K_2$ of Figure~\ref{fig:lobbwatson} have identical Khovanov and knot Floer homologies. However,
both $K_1$ and $K_2$ admit two strong involutions. These involutions for $K_1$ and $K_2$ induce different actions on $\Kh$,
so $K_1$ and $K_2$ are not isotopic.
\end{thm}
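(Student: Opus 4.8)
The plan is to reduce the statement to an explicit computation of the involution $\tau_*$ induced on Khovanov homology with coefficients in $\f_2$. That $K_1$ and $K_2$ have identical Khovanov and knot Floer homologies is a direct computation (as in \cite{lobb-watson}); the Khovanov statement also follows from the mutation invariance of $\Kh(-;\f_2)$ \cite{bloom, wehrli}. So the substance of the theorem is that $K_1$ and $K_2$ are nevertheless non-isotopic, and we detect this through their strong involutions. This is more direct than the original argument of \cite{lobb-watson}, which passes through the full triply-graded refinement; here the bare $\tau$-action will already suffice.

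The key input --- established in various forms by Lobb-Watson \cite{lobb-watson} and Sano \cite{Sano}, and reviewed in Section~\ref{sec:involutions-on-bar-natan} --- is that for a strongly invertible knot $(K, \tau)$ the induced endomorphism $\tau_*$ of $\Kh(K;\f_2)$ is an involution which is well-defined up to conjugation by a bigraded automorphism; equivalently, the isomorphism class of $(\Kh(K;\f_2), \tau_*)$ as a bigraded $\f_2$-vector space with involution is an invariant of $(K, \tau)$ up to Sakuma equivalence. Over $\f_2$ such a pair is classified, in each bidegree $(i,j)$, by the two integers $\dim \Kh^{i,j}(K)$ and $\mathrm{rank}\bigl(1 + \tau_*|_{\Kh^{i,j}(K)}\bigr)$, since an order-two operator in characteristic two is a direct sum of $1\times 1$ and $2\times 2$ Jordan blocks. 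Moreover, the finite set of Sakuma-equivalence classes of strong involutions on a fixed knot is itself an isotopy invariant of that knot, so if $K_1$ and $K_2$ were isotopic there would have to be a bijection between their two involutions carrying all of this bidegree-by-bidegree data to one another.

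The remaining step is computational. For each of the two strong inversions on $K_i$ I would read off from Figure~\ref{fig:lobbwatson} an explicit diagram $D$ invariant under a rotation $\tau$ about an axis meeting $\Fix(\tau)$, so that $\tau$ acts on $\CKh(D)$ and hence on $\Kh(K_i)$. Feeding these four equivariant diagrams to the program \cite{Khocomp} yields, in every bidegree, the pair $\bigl(\dim \Kh^{i,j},\, \mathrm{rank}(1+\tau_*)\bigr)$; one then checks that the unordered pair of tables arising from the two involutions on $K_1$ is distinct from the one arising from $K_2$. Since no such bijection can exist, $K_1$ and $K_2$ are not isotopic --- and, in passing, the four strong involutions are pairwise inequivalent.

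The hard part is not the calculation but the bookkeeping around well-definedness: one must know that $\tau_*$ up to conjugation is independent of the chosen equivariant diagram and of the auxiliary data entering the functorial maps on $\Kh$ (over $\f_2$ the usual sign ambiguity of the Khovanov cobordism maps is moot), and that the collection of strong involutions genuinely is a knot invariant in the sense above. Both follow from the naturality and functoriality of Khovanov homology combined with the equivariant Reidemeister calculus, and I would simply cite \cite{Sano} and Section~\ref{sec:involutions-on-bar-natan} for these. A secondary point is to confirm directly that the four symmetries in Figure~\ref{fig:lobbwatson} really are strong inversions of the knots in question and are a priori nonconjugate, which one can read off from the quotient orbifold data or from Sakuma's $\eta$-invariant.
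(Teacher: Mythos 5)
Your overall strategy is the same as the paper's: compute the induced involution $\tau_*$ on $\Kh(K_i;\f_2)$ for the displayed symmetric diagrams via \cite{Khocomp}, use the fact that the isomorphism class of $(\Kh,\tau_*)$ (equivalently, the bidegree-wise ranks of $1+\tau_*$) is an invariant of the pair up to Sakuma equivalence, and conclude non-isotopy. However, there is a genuine gap in how you close the argument. You assume that if $K_1$ and $K_2$ were isotopic there would be ``a bijection between their two involutions,'' i.e.\ that each knot has \emph{exactly} the two strong inversions visible in Figure~\ref{fig:lobbwatson}. Nothing in your plan establishes this. An isotopy $K_1\cong K_2$ only forces $(K_2,\tau_2)$ to be Sakuma equivalent to $(K_1,\tau)$ for \emph{some} strong inversion $\tau$ of $K_1$; if $K_1$ could carry a third, undepicted strong inversion, its $\tau_*$-data could match that of $\tau_2$ and no contradiction would result. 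Likewise your comparison of ``unordered pairs of tables'' is only an invariant of the knot if the depicted inversions exhaust the strong inversions on each side. The paper supplies exactly this missing input: $K_1$ and $K_2$ are hyperbolic, and by Sakuma \cite[Proposition 3.1]{Sakuma} a hyperbolic knot admits at most two strong inversions up to equivalence; hence the two displayed inversions $\tau_1,\tau_1'$ on $K_1$, once shown to be genuinely inequivalent, are all of them, and it suffices to check that one inversion $\tau_2$ on $K_2$ has $\ker(1+\tau_*)$-ranks differing from both.

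A related point: you relegate to a ``secondary point'' the verification that the symmetries are pairwise nonconjugate, and even suggest it will fall out of the $\Kh$ computation. In fact the computed tables for the two inversions on $K_1$ \emph{coincide} (Table~\ref{fig:LWa1_hom}), so the $\Kh$-action cannot distinguish $\tau_1$ from $\tau_1'$; yet their distinctness is load-bearing, since it is what guarantees (together with the at-most-two bound) that the list of strong inversions on $K_1$ is complete. This is why the paper proves Lemma~\ref{lem:distinguished} by computing Sakuma's $\eta$-polynomials for $\tau_1$ and $\tau_1'$. With the hyperbolicity/at-most-two input and the $\eta$-polynomial computation added, your plan becomes the paper's proof; without them it does not yield the stated conclusion.
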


\begin{figure}[h!]
  \begin{tikzpicture}
    \node at (-2.2,0){\includegraphics[width=3.5cm]{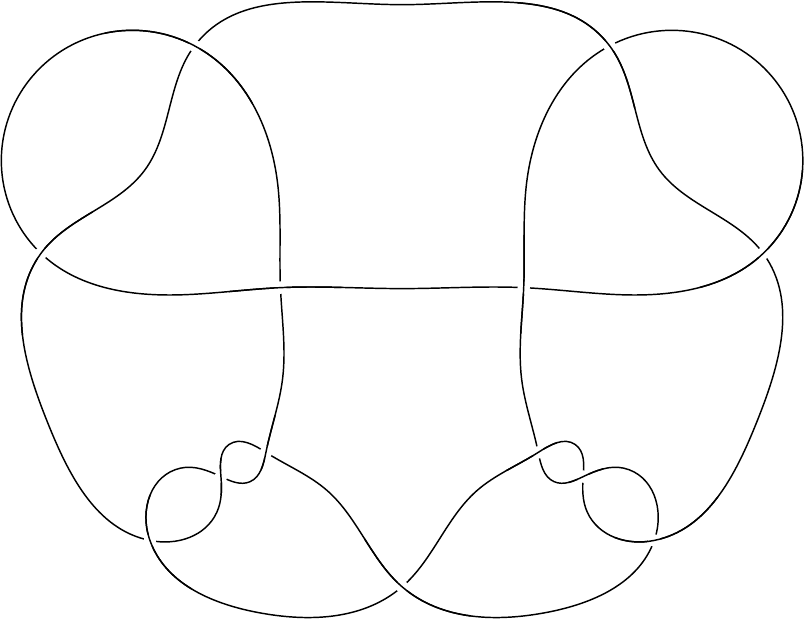}};
    \node at (3,0) {\includegraphics[width=3.4cm, angle=47]{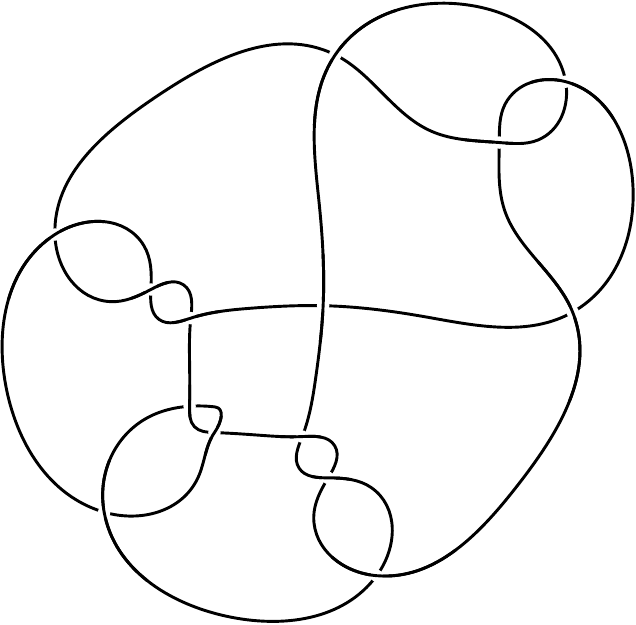}};
  \end{tikzpicture}
  \caption{The Lobb-Watson knots $K_1$ and $K_2$ of Theorem~\ref{thm:lw_knots}.}\label{fig:lobbwatson} 
\end{figure}

Next, we present an application to detecting \textit{equivariantly squeezed} knots. Recall from \cite{feller-lewark-lobb} that a knot $K \subset S^3$ is \emph{squeezed} if it appears as a slice of a genus-minimizing, connected cobordism from a positive torus knot $T^+$ to a negative torus knot $T^-$. It is natural to generalize this notion to equivariant knots by considering knots that arise as a slice of an equivariant genus-minimizing, connected cobordism from $T^+$ to $T^-$. We provide a straightforward obstruction as follows:

\begin{thm}\label{thm:equivariantly-squeezed}
Suppose $(K, \tau)$ is equivariantly squeezed. Then $(\Kcrm(K), \tau)$ is locally equivalent to $\f[u]_{s(K)}$. In particular, $\wt{s}(K) = s(K)$.
\end{thm}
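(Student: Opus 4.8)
The plan is to cut the squeezing cobordism along the $\tau$-invariant slice containing $K$ and transport the equivariant triviality of the two torus-knot ends across the two halves. Write $\Sigma$ for the given equivariant, connected, genus-minimizing cobordism from $(T^+,\tau)$ to $(T^-,\tau)$, with $K$ a $\tau$-invariant cross-section. Cutting $\Sigma$ along that cross-section expresses it as a composition $\Sigma_2\circ\Sigma_1$ with $\Sigma_1\colon(T^+,\tau)\to(K,\tau)$ and $\Sigma_2\colon(K,\tau)\to(T^-,\tau)$ connected equivariant cobordisms satisfying $g(\Sigma_1)+g(\Sigma_2)=g(\Sigma)$. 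Genus-minimality forces $g(\Sigma)=\tfrac12\bigl(s(T^+)-s(T^-)\bigr)$, the minimum allowed by the $s$-invariant inequality; combining this with the bounds $s(T^+)-s(K)\le 2g(\Sigma_1)$ and $s(K)-s(T^-)\le 2g(\Sigma_2)$ forces both to be equalities, so $s(K)=s(T^+)-2g(\Sigma_1)=s(T^-)+2g(\Sigma_2)$.

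The key input, which I would establish separately, is that $(\Kcrm(T^+),\tau)$ is locally equivalent to $\f[u]_{s(T^+)}$ and $(\Kcrm(T^-),\tau)$ is locally equivalent to $\f[u]_{s(T^-)}$. For this I would use that, for the strong inversions occurring here, the torus knots $T^\pm$ bound equivariant surfaces realizing their $4$-genus: deleting a $\tau$-symmetric disk from a $\tau$-symmetric genus-minimizing Seifert surface for $T^+$ produces an equivariant connected cobordism of genus $g_4(T^+)$ from the standard equivariant unknot $U$ to $(T^+,\tau)$. Applying the $\tau$-equivariant reduced cobordism maps (in the sense of \cite{Sano}, as made precise elsewhere in this paper) to this cobordism and to its reverse yields $\tau$-equivariant chain maps $\Kcrm(U)\to\Kcrm(T^+)$ and $\Kcrm(T^+)\to\Kcrm(U)$ of $q$-degrees $-s(T^+)$ and $+s(T^+)$, each an isomorphism after inverting $u$ since the cobordism is connected; after the evident regrading these witness the asserted local equivalence, and the $T^-$ case is symmetric. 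I expect this step --- checking that the torus knots appearing in the definition of ``equivariantly squeezed'' genuinely have equivariantly trivial reduced $\tau$-complexes --- to be the principal obstacle, though it should follow from known facts about equivariant $4$-genera of torus knots.

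Granting this, choose $\tau$-equivariant local maps $\iota^+\colon\f[u]_{s(T^+)}\to\Kcrm(T^+)$ and $\pi^-\colon\Kcrm(T^-)\to\f[u]_{s(T^-)}$ realizing those local equivalences. Then $\Kcrm(\Sigma_1)\circ\iota^+$ is a $\tau$-equivariant chain map $\f[u]_{s(T^+)}\to\Kcrm(K)$; it is local, being a composite of maps that are isomorphisms after inverting $u$ (using that $\Sigma_1$ is connected), and it has $q$-degree $-2g(\Sigma_1)$, so --- as $s(T^+)-2g(\Sigma_1)=s(K)$ --- after regrading it is a $\tau$-equivariant local map $\f[u]_{s(K)}\to\Kcrm(K)$. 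Symmetrically $\pi^-\circ\Kcrm(\Sigma_2)$ regrades to a $\tau$-equivariant local map $\Kcrm(K)\to\f[u]_{s(K)}$. The existence of $\tau$-equivariant local maps in both directions is exactly a local equivalence between $(\Kcrm(K),\tau)$ and $\f[u]_{s(K)}$, giving the first assertion; the equality $\wt{s}(K)=s(K)$ then follows because $\wt{s}$ depends only on the local equivalence class and $\wt{s}(\f[u]_m)=m$. (For the numerical statement alone one can bypass the chain-level discussion and simply feed $\Sigma_1$ and $\Sigma_2$ into the equivariant cobordism inequalities for $\wt{s}$, obtaining $s(K)\le\wt{s}(K)\le s(K)$.)
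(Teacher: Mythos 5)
Your global strategy is the same as the paper's: split the squeezing cobordism at the slice containing $K$, show the two torus-knot ends have equivariantly trivial reduced complexes, and compose shift-zero local maps $\f[u]_{s(T^+)}\to\Kcrm(T^+)\to\Kcrm(K)\to\Kcrm(T^-)\to\f[u]_{s(T^-)}$, with genus-minimality forcing the two shifted trivial complexes to coincide in grading $s(K)$. That part is fine. The genuine gap is in the step you yourself flag as the principal obstacle, and your proposed fix does not work: local equivalence requires local maps of grading shift \emph{zero} in both directions, and cobordism maps never raise the quantum grading. The map induced by the equivariant genus-$g_4(T^+)$ cobordism from $U$ to $T^+$ has $q$-degree $-2g_4(T^+)=-s(T^+)$, and so does the map induced by the reverse cobordism (your claimed degrees ``$-s(T^+)$ and $+s(T^+)$'' are wrong for the second one). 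After regrading, this construction produces a shift-zero local map $\Kcrm(T^+)\to\f[u]_{s(T^+)}$ but only a shift-zero local map $\f[u]_{-s(T^+)}\to\Kcrm(T^+)$; equivalently, it shows only $-s(T^+)\le\wt{s}(T^+)\le s(T^+)$, which is nowhere near local triviality of $(\Kcrm(T^+),\tau)$ in grading $s(T^+)$. The same problem sinks your parenthetical shortcut: the inequality $s(K)\le\wt{s}(K)$ needs $\wt{s}(T^+)=s(T^+)$ as input, which is exactly what is not yet proved.

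What is actually needed --- and what the paper's Lemma on torus knots supplies --- is a $\tau$-invariant, $u$-nontorsion cycle in $\Kcrm(T^+)$ sitting at the \emph{top} of the tower, i.e.\ in quantum grading $s(T^+)$, so that $1\mapsto$ this cycle defines the shift-zero map $\f[u]_{s(T^+)}\to\Kcrm(T^+)$. The paper gets this from the canonical Bar-Natan generator $\frs_o$ in the oriented resolution of an all-positive transvergent diagram of $T^+$: that resolution sits in minimal homological grading, so $\frs_o$ cannot be a boundary and generates the tower in grading $s(T^+)$, and it is $\tau$-invariant because the oriented resolution is symmetric and symmetric circles receive the same labels; the $T^-$ case follows by mirroring. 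This $\tau$-invariance of the Lee/Bar-Natan class is not a consequence of ``known facts about equivariant $4$-genera of torus knots'' --- no amount of information about equivariant surfaces alone produces a class in the correct (positive) quantum grading. If you insert the canonical-generator argument in place of your cobordism argument for the key lemma, the rest of your proof goes through essentially as in the paper.
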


Combined with \cite{Khocomp}, we easily obtain many examples of knots that are not equivariantly squeezed. In particular, even though the (regular) squeezedness of $10_{141}$ is unknown (see \cite[Prize]{feller-lewark-lobb}), we show: 

\begin{corollary}\label{cor:10_141}
  The strongly invertible knot $10_{141}$ of Figure~\ref{fig:10141} is not equivariantly squeezed.
\end{corollary}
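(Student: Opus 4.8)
The plan is to deduce Corollary~\ref{cor:10_141} directly from Theorem~\ref{thm:equivariantly-squeezed} by computing the action of $\tau$ on the reduced Bar-Natan homology of $10_{141}$ and checking that $(\Kcrm(10_{141}), \tau)$ is \emph{not} locally equivalent to $\f[u]_{s(10_{141})}$. Concretely, Theorem~\ref{thm:equivariantly-squeezed} says that equivariant squeezedness forces $\wt{s}(K) = s(K)$ (and in fact local triviality of the $\tau$-action on the reduced Bar-Natan complex). So to obstruct equivariant squeezedness it suffices to produce a single numerical consequence of local nontriviality that fails for $10_{141}$. The cleanest route is to invoke the computer program \cite{Khocomp}, which calculates the action of $\tau$ on (reduced) Khovanov homology and the second page of the Bar-Natan spectral sequence, and to read off that for the strong involution on $10_{141}$ shown in Figure~\ref{fig:10141} one has $\wt{s}(10_{141}) \neq s(10_{141})$.

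First I would recall that $s(10_{141}) = 0$ (the knot $10_{141}$ is slice — indeed it is well known to be smoothly slice), so that $\f[u]_{s(10_{141})} = \f[u]_0$. Next I would run \cite{Khocomp} on the diagram of Figure~\ref{fig:10141}, extracting the induced involution on $\Kcrm(10_{141})$ together with the filtration data needed to compute $\wt{s}$ (as defined in Section~\ref{sec:tau-complex} following \cite[Definition 3.23]{Sano}). The key computational output is that this $\tau$-action is \emph{not} locally equivalent to the trivial module $\f[u]_0$ — equivalently, $\wt{s}(10_{141})$ takes a value different from $0$. Feeding this into the contrapositive of Theorem~\ref{thm:equivariantly-squeezed} immediately yields that $(10_{141}, \tau)$ cannot be equivariantly squeezed, since equivariant squeezedness would force $\wt{s}(10_{141}) = s(10_{141}) = 0$.

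The main obstacle is really just the reliability and interpretation of the machine computation: one must verify that the diagram encoded for \cite{Khocomp} genuinely realizes the strong involution depicted in Figure~\ref{fig:10141} (and not some other symmetry of $10_{141}$), and that the program's output for the $\tau$-action and the Bar-Natan spectral sequence is being read off correctly to conclude $\wt{s} \neq s$. There is no conceptual difficulty beyond this — the obstruction is purely formal once the computed $\tau$-action is in hand. I would also remark that, because $\wt{s}$ is the weakest of our equivariant $s$-invariants, this also shows $(10_{141},\tau)$ is distinguished from any equivariantly squeezed knot at the level of the reduced Bar-Natan complex with its $\tau$-action, which is the statement we actually want. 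Finally, I would cross-reference Appendix~\ref{sec:different}, where the abbreviated program output for knots up to thirteen crossings (including $10_{141}$) is tabulated, so that the reader can locate the relevant computation.
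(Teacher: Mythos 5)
Your argument is correct and is essentially the paper's proof: the paper likewise deduces the corollary from Theorem~\ref{thm:equivariantly-squeezed} by citing the computation $\wt{s}(10_{141}) \neq s(10_{141})$ recorded in Appendix~\ref{sec:different} (where indeed $s = 0$ and, since the Bar-Natan spectral sequence degenerates at $E^2$, $\wt{s} = -2$). One small correction: $10_{141}$ is \emph{not} smoothly slice, but this is inessential, since your argument only uses $s(10_{141}) = 0$ (true, and tabulated) -- in fact all it really needs is $\wt{s} \neq s$.
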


\begin{figure}
  \includegraphics[width=3.3cm]{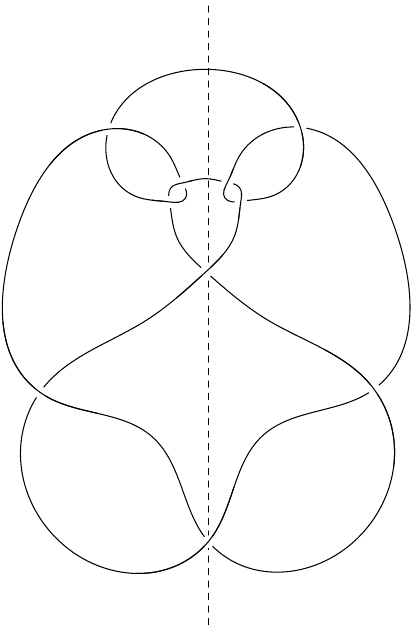}
  \caption{The strongly invertible knot $10_{141}$.}\label{fig:10141}
\end{figure}

\subsection{Comparison with other works} For the convenience of the reader, we close by explaining the relation of the present paper to other works involving involutions on Khovanov homology (in particular, \cite{lobb-watson} and \cite{Sano}). The invariant of \cite{lobb-watson} is obtained by setting $Q = 1$ in the Borel construction and then using the additional filtration described in Section~\ref{sec:lobb-watson-filtration} to define the triply-graded refinement $\mathbb{H}^{i,j,k}(L)$. (See also the discussion in \cite[Section 6]{chen-yang}.) The proof of invariance of $\smash{\Kcm_Q(L)}$ thus bears certain similarities to the arguments in \cite{lobb-watson}. However, as we will see, the Borel construction -- even without the additional filtration -- provides a more powerful formalism to define numerical invariants and obtain our results on the equivariant genus. Combining the Borel construction with the Lobb-Watson filtration (via our mixed complex formalism) likewise represents an attempt to extend \cite{lobb-watson} to applications involving equivariant cobordisms. For experts, we note that the additional complexity of the mixed complex formalism necessitates significant modifications to the proof of invariance from \cite{lobb-watson}; in particular, we present an entirely new proof of invariance under the M3 equivariant Reidemeister move.

The comparison with \cite{Sano} is more straightforward. As discussed previously, the mapping cone construction of \cite{Sano} may be viewed as a truncation of the Borel complex which forgets much of the homotopy commutation data. (We review this in Section~\ref{sec:involutions-on-bar-natan}.) From this point of view, the proof of invariance of $\Kcm_Q(L)$ requires a significantly finer analysis of the maps induced by equivariant Reidemeister moves. Remembering this information allows us to distinguish between the isotopy-equivariant genus and equivariant genus, as described above.



\subsection{Acknowledgments} The authors would like to thank Kristen Hendricks, Robert Lipshitz, Mark Powell, and Liam Watson for helpful conversations. We also thank Jacek Olesiak for assisting the development of the program which computes the action of symmetry on the Khovanov homology. Part of this work was completed when the authors were at the IMPAN Knots, Homology, and Physics Simons Semester and joint with Warsaw University IDUB programme. We also thank the Simons Center for Geometry and Physics for hosting some of the authors during the conference Gauge Theory and Floer Homology in Low Dimensional Topology. ID was supported by NSF grant DMS--2303823. MB was supported by the NCN grant OPUS 2024/53/B/ST1/03470. MS was partially supported by NSF grant DMS-2203828.

\section{Background and Preliminaries}\label{sec:topological-preliminaries}

In this section we review some definitions regarding involutive links and their diagrams. Our discussion is mostly taken from \cite[Section 2]{BI}, \cite[Section 2]{lobb-watson} and \cite[Section 2]{DMS}.

\subsection{Involutive links}
We begin with the definition of an involutive link.

\begin{defn}\label{def:involutive-link}
An \textit{involutive link} is a pair $(L, \tau)$, where $\tau$ is an orientation-preserving involution on $S^3$ with one-dimensional fixed-point set and $L$ is a link which is setwise fixed by $\tau$. In the case that $L$ is a knot, we refer to $(L, \tau)$ as an \textit{equivariant knot}. If $L$ is disjoint from the fixed-point set of $\tau$, then we say that $(L, \tau)$ is \textit{(2-)periodic}. If each component of $L$ intersects the fixed-point set of $\tau$ in two points, then we say that $(L, \tau)$ is \textit{strongly invertible}. 
\end{defn}

\begin{defn}\label{def:Sakuma-equivalent}
Two involutive links $(L, \tau)$ and $(L', \tau')$ are said to be \textit{equivariantly diffeomorphic} (or \textit{Sakuma equivalent}) if there exists a diffeomorphism $f \colon S^3 \rightarrow S^3$ which sends $L$ to $L'$ and intertwines $\tau$ and $\tau'$; that is, $f \circ \tau = \tau' \circ f$.
\end{defn}


 

The (resolution of the) Smith conjecture \cite{Waldhausen, MorganBass} states that the fixed-point set of any $\tau$ as in Definition~\ref{def:involutive-link} is an unknot. We fix a standard model for such an involution as follows:

\begin{defn}\label{def:std-involution}
Identify $S^3$ with the one-point compactification of $\mathbb{R}^3$. The \textit{standard involution $\tau$ on $S^3$} is defined by extending the involution on $\mathbb{R}^3$ given by $\pi$-rotation around the $z$-axis. 
\end{defn}

Up to Sakuma equivalence, any involutive link in $S^3$ may be regarded as an involutive link in this standard model. We thus often suppress writing $\tau$ when discussing $L$.

\begin{defn}\label{def:equivariant-isotopy}
Let $L$ and $L'$ be two involutive links in the same standard model for $S^3$. We say that $L$ and $L'$ are \textit{equivariantly isotopic in $S^3$} if they are isotopic through a family of involutive links. 
\end{defn}




We will also be concerned with cobordisms of involutive links. To set this up, note that since we have fixed our model of $\tau$ on $S^3$, there is a standard extension $\tau_{S^3 \times I}$ of $\tau$ over $S^3 \times I$, obtained by taking the product of $\tau$ with the identity map. Likewise, there is an obvious extension $\tau_{B^4}$ of $\tau$ over $B^4$.

\begin{defn}\label{def:stdeqcob}
Let $L_1$ and $L_2$ be two equivariant knots. We say that a cobordism $\Sigma \subseteq S^3 \times I$ from $L_1$ to $L_2$ is \textit{equivariant} if $\tau_{S^3 \times I}(\Sigma) = \Sigma$, setwise.
\end{defn}

In the special case of an equivariant knot, we can of course consider an equivariant slice surface:

\begin{defn}\label{def:stdeqgenus}
Let $K$ be an equivariant knot. A slice surface $\Sigma \subseteq B^4$ for $K$ is \textit{equivariant} if $\tau_{B^4}(\Sigma) = \Sigma$, setwise. The \textit{equivariant slice genus} of $K$ is defined to be the minimum genus over all equivariant slice surfaces for $K$ in $B^4$. We denote this quantity by $\eg(K)$, suppressing the dependence on $\tau$.
\end{defn}

\begin{rmk}\label{rem:standard}
In the literature, a surface $\Sigma$ is often said to be equivariant if it is fixed under \textit{any} extension of $\tau$ over $B^4$ as an involution, and  likewise for an equivariant concordance or cobordism. This is \textit{not} the notion which we will consider in this paper. Indeed, due to the diagrammatic nature of Khovanov homology, it will be convenient for us to assume that the extension of $\tau$ over $B^4$ or $S^3 \times I$ is the standard one. Thus the quantity $\eg(K)$ of Definition~\ref{def:stdeqgenus} is not \emph{a priori} the same as the quantity $\eg(K)$ defined in \cite[Definition 3.2]{BI} or \cite[Definition 2.1]{DMS}. However, since we will always deal with the standard extension of $\tau$, we hope this will cause no confusion. The authors do not know of any example showing that these notions differ.
\end{rmk}

In the setting of strongly invertible knots, it is furthermore possible to define an equivariant connected sum operation and form an equivariant concordance group. This requires some additional formalism due to the fact that the connected sum operation depends \textit{a priori} on choosing an equivariant basepoint on each summand. We refer the reader to \cite{Sakuma} for further discussion.

Finally, we discuss the isotopy-equivariant slice genus:

\begin{defn}\label{def:stdisotopyeqgenus}
We say that a knot cobordism $\Sigma$ is \textit{isotopy-equivariant} if $\tau_{S^3 \times I}(\Sigma)$ is isotopic to $\Sigma$ rel boundary. Likewise, we say that a slice surface $\Sigma$ for $K$ is an \textit{isotopy-equivariant slice surface} if $\tau_{B^4}(\Sigma)$ is isotopic to $\Sigma$ rel $K$. Define the \textit{isotopy-equivariant slice genus} of $K$ by:
\[
\ieg(K) = \min_{\text{all isotopy-equivariant slice surfaces $\Sigma$}} \{g(\Sigma)\}.
\]
Here $\ieg(K)$ depends on $\tau$, but we suppress this from the notation.
\end{defn}

Note that calculating $\ieg(K)$ allows us to easily construct pairs of slice surfaces which are not isotopic rel boundary. For example, if $K$ is an equivariant slice knot with $\ieg(K) > 0$, then we may take \textit{any} slice disk $\Sigma$ for $K$ and conclude that $\Sigma$ and $\tau(\Sigma)$ are not isotopic. This was used in \cite{DMS} to show that knot Floer homology detects exotic pairs of disks. Although useful in this context, it is an interesting question to ask whether the isotopy-equivariant genus differs from the usual equivariant genus. Most invariants which bound the equivariant genus of $K$ (for example, \cite{DMS, Sano}) actually bound the isotopy-equivariant genus; prior to the current work, these notions were not known to differ. Here we show that the equivariant genus can be arbitrarily larger than the isotopy-equivariant genus. While most of our work is in the smooth category, we also show that $\eg$ and $\ieg$ differ in the topological category. For this, we give a proof of the invariance of Sakuma's $\eta$-polynomial under topological concordance; see Appendix~\ref{sec:appendixa}.

\subsection{Projections and link diagrams}

We now discuss some conventions involving projections and link diagrams, based on \cite[Section 2]{lobb-watson}. To the best of our knowledge, many genericity results regarding diagrams of strongly invertible links have been stated, but rigorous proofs have not appeared in the literature. A detailed, Morse-theoretical approach to diagrams of strongly invertible links will be given in our forthcoming paper \cite{BDMS2}. 

Recall that we have fixed a model for $S^3$ as the one-point compactification of $\mathbb{R}^3$.
\begin{defn}\label{std-projection}
  Define the \textit{standard projection} to be projection from $\mathbb{R}^3$ onto the $xz$-plane. We say that a link $L$ is \textit{generic} with respect to the standard projection if it misses the point at $\infty$ in $S^3$ and the standard projection subsequently gives a link diagram for $L$. (That is, the image of $L$ has no cusps, no tangency points, and no triple points.)
\end{defn}

If $L$ is any link in $S^3$, then we may make $L$ generic with respect to the standard projection simply by applying a small isotopy. Moreover, if $L$ is an involutive link in our standard model for $S^3$, then we may isotope $L$ through involutive links so that it is generic. Note that the resulting diagram will then be invariant under rotation about the $z$-axis. An equivariant knot diagram with this property (where $\Fix(\tau)$ is visible in the plane of projection) is called a \textit{transvergent diagram}. Henceforth, we will usually assume that our involutive links are generic with respect to the standard projection and are represented by transvergent diagrams. See Figure~\ref{fig:transvergent-diagram}. 

\begin{figure}[h!]
\includegraphics[scale = 0.8]{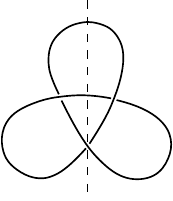}
	\caption{A transvergent diagram for the trefoil.}
	\label{fig:transvergent-diagram}
\end{figure}

Now let $L$ and $L'$ be a pair of involutive links with transvergent diagrams $D$ and $D'$, respectively. There are two slightly different notions of equivariant isotopy. The first is Definition~\ref{def:equivariant-isotopy}, where $L$ and $L'$ are isotopic through involutive links in $S^3$. The second is to consider equivariant isotopy in $\mathbb{R}^3$; that is, isotopies through involutive links which miss the point at $\infty$. In our fixed model of $S^3$, these two notions are not the same: Figure~\ref{fig:imove} may be interpreted as an equivariant isotopy which goes over the point at $\infty$. In contrast to the non-equivariant case, it is not possible to perturb this isotopy in an equivariant manner to lie in $\mathbb{R}^3$. 

As with ordinary link diagrams, there is a list of local moves such that two involutive links $L$ and $L'$ (with transvergent diagrams $D$ and $D'$) are equivalent if and only if $D$ and $D'$ are connected by a sequence of local moves and equivariant planar isotopies. However, this list of moves differs slightly depending on whether we consider our notion of equivalence to be equivariant isotopy in $\mathbb{R}^3$, equivariant isotopy in $S^3$, or Sakuma equivalence. We record this in Theorem \ref{thm:equivalences-of-diagram} below; see also \cite[Theorem 2.3]{lobb-watson}:




\begin{thm}\label{thm:equivalences-of-diagram}
	Let $L$ and $L'$ be two involutive links in the same standard model for $S^3$. Let $D$ and $D'$ be transvergent diagrams for $L$ and $L'$.  Then:
	\begin{enumerate}
		\item $L$ and $L'$ are isotopic as involutive links in $\mathbb{R}^3$ if and only if $D$ and $D'$ are related by the involutive Reidemeister moves (IR-1) through (M-3) in Figure~\ref{fig:all_equi_moves_lobb_watson}.
		\item $L$ and $L'$ are isotopic as involutive links in $S^3$ if and only if $D$ and $D'$ are related by the involutive Reidemeister moves, together with the I-move of passing a strand through the point at infinity, as in Figure~\ref{fig:imove}. 
		\item $L$ and $L'$ are Sakuma equivalent if and only if $D$ and $D'$ are related by the involutive Reidemeister moves, the I-move, and the R-move of rotating the transvergent diagram $\pi$ around the origin, as in Figure~\ref{fig:rmove}.
	\end{enumerate}
\end{thm}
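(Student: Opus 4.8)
The plan is to prove each of (1), (2), (3) by showing that the indicated local moves generate the stated equivalence relation; the ``if'' directions are routine and the content lies in the ``only if'' directions, which together form an equivariant refinement of Reidemeister's theorem. For the ``if'' directions, each move (IR-1)--(M-3) of Figure~\ref{fig:all_equi_moves_lobb_watson} is by construction supported either in a $\tau$-conjugate pair of disjoint balls missing $\Fix(\tau)$ or in a single ball centered on $\Fix(\tau)$, and in either case is realized there by an isotopy through involutive links lying in $\R^3$; this proves ``if'' in (1), hence \emph{a fortiori} in (2) and (3). The I-move of Figure~\ref{fig:imove} is by definition an equivariant isotopy of $S^3$ that drags a strand over the point at infinity, supplying the extra generator needed for (2). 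The R-move of Figure~\ref{fig:rmove} is realized by precomposition with the diffeomorphism $\rho$ of $S^3$ which in the standard model is rotation by $\pi$ about the $y$-axis: $\rho$ is orientation-preserving, commutes with $\tau$, and carries transvergent diagrams to transvergent diagrams, so $(\rho(L),\tau)$ is Sakuma equivalent to $(L,\tau)$ by Definition~\ref{def:Sakuma-equivalent}; rotation by $\pi$ about the $x$-axis differs from $\rho$ by $\tau$, which fixes $L$ setwise, so no choice is involved.

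\emph{``Only if'' in (1).} Given an isotopy $\{\Phi_t\}_{t\in[0,1]}$ through involutive links in $\R^3$ from $L$ to $L'$, I would first perturb it, $\tau$-equivariantly and rel endpoints, so that the associated family of planar diagrams of $\Phi_t(L)$ under the standard projection is generic among $\tau$-equivariant such families. Then $\Phi_t$ carries a transvergent diagram of locally constant combinatorial type for all but finitely many $t$, and at each exceptional time a single $\tau$-orbit of ``catastrophe'' occurs. If this catastrophe is disjoint from the projection of $\Fix(\tau)$, genericity forces it to consist of an ordinary Reidemeister catastrophe together with its $\tau$-mirror in the disjoint conjugate region, and the combined modification of the transvergent diagram is one of the moves in Figure~\ref{fig:all_equi_moves_lobb_watson} obtained by symmetrizing an ordinary Reidemeister move. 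If the catastrophe meets $\Fix(\tau)$, a local equivariant normal-form analysis identifies it with one of the remaining, axial moves in that list. The crucial point is completeness of the list: enumerating the generic codimension-one degenerations of a $\tau$-equivariant family of diagrams produces exactly the moves (IR-1)--(M-3), so $D$ and $D'$ are connected by these moves and equivariant planar isotopies. (Compare the argument of \cite[Theorem~2.3]{lobb-watson}.)

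\emph{Reductions for (2) and (3).} For (2), given an isotopy through involutive links in $S^3$, I would arrange after an equivariant perturbation that $\Phi_t(L)$ meets $\infty\in\Fix(\tau)$ for only finitely many $t$; near each such time the offending strand performs the I-move, and on each complementary interval $\Phi$ avoids $\infty$ and is therefore an isotopy in $\R^3$, handled by (1). For (3), given a diffeomorphism $f$ of $S^3$ with $f\tau=\tau f$ and $f(L)=L'$, note that $f$ preserves the unknot $\Fix(\tau)$ setwise, and that the group of path-components of orientation-preserving diffeomorphisms of $S^3$ commuting with the standard $\tau$ is $\Z/2$, generated by $\rho$: this follows from the classification of diffeomorphisms of $S^3$ preserving an unknot together with the equivariance constraint (equivalently, from equivariant Cerf theory after conjugating $\tau$ to be linear, where the centralizer reduces to $O(2)\times O(2)$), with $[\rho]\neq 0$ detected by the fact that $\rho$ reverses the orientation of $\Fix(\tau)$. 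Writing $f=g\circ\rho^{\epsilon}$ with $g$ equivariantly isotopic to $\id$ and $\epsilon\in\{0,1\}$, we deduce that $L'$ is equivariantly isotopic in $S^3$ either to $L$ or to $\rho(L)$, so (3) follows from (2) and one R-move.

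\emph{Main obstacle.} The heart of the matter is the equivariant transversality package behind ``only if'' in (1): that a $\tau$-equivariant isotopy can be made generic with respect to the standard projection; that its generic catastrophes are either $\tau$-conjugate pairs of ordinary Reidemeister catastrophes away from $\Fix(\tau)$ or isolated model catastrophes meeting it; and that the latter models are precisely the axial moves of Figure~\ref{fig:all_equi_moves_lobb_watson}. Making this rigorous is a Morse-theoretic analysis of one-parameter families of equivariant immersed curves near the fixed axis, and we develop the necessary framework in \cite{BDMS2}. The remaining ingredient, the computation of the equivariant mapping class group of $(S^3,\tau)$ used in the Sakuma case, is standard.
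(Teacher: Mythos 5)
Your outline is sound, but you should know that the paper's own proof of this theorem is essentially a citation: part (1) and the R-move portion of (3) are quoted directly from Lobb--Watson \cite[Theorem 2.3]{lobb-watson}, and the only genuinely new ingredient, the I-move, is deferred to the forthcoming paper \cite{BDMS2}. You instead sketch a direct argument: an equivariant general-position/catastrophe analysis of a one-parameter family of symmetric diagrams for (1), a finite-passes-through-$\infty$ reduction of (2) to (1) (which is correct, and it is worth observing that only the two axis points of $L$ can ever reach $\infty$, which is why the I-move is the only new generic event), and, for (3), the computation that $\pi_0$ of the group of orientation-preserving diffeomorphisms of $S^3$ commuting with the standard $\tau$ is $\Z/2$, generated by the $\pi$-rotation realizing the R-move. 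Your route is more self-contained in structure, and the mapping-class-group reduction of (3) to (2) is cleaner than rederiving the R-move diagrammatically; but it hides two nontrivial inputs. First, the completeness of the list of equivariant catastrophes is precisely the technical content that both you and the paper defer to \cite{BDMS2} -- the paper explicitly warns that rigorous proofs of such genericity statements have not previously appeared, so this step is not merely routine transversality. Second, the claim that the equivariant diffeomorphism group of $(S^3,\tau)$ has $\pi_0\cong\Z/2$ rests on conjugating $\tau$ to the linear model (the Smith conjecture, as cited in the paper) \emph{and} on an equivariant Smale-type theorem identifying the equivariant diffeomorphism group with its linear centralizer $S(O(2)\times O(2))$ up to homotopy; this is a substantial theorem rather than ``standard'' Cerf theory, and deserves a citation. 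Finally, your argument for (3) tacitly assumes the Sakuma diffeomorphism is orientation-preserving; this matches the intended convention (and is needed for the statement to be true), though Definition~\ref{def:Sakuma-equivalent} does not say so explicitly.
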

\begin{proof}
Except for the I-move, the claim is shown in \cite[Theorem 2.3]{lobb-watson}. The I-move in the non-equivariant setting is discussed e.g. in \cite{MWW}, where it leads to the sweep-around move. To the best of our knowledge, the I-move has not been studied in the equivariant setting. We refer to \cite{BDMS2} for a detailed description.
\end{proof}

\begin{figure}[h!]
\includegraphics[width=6cm]{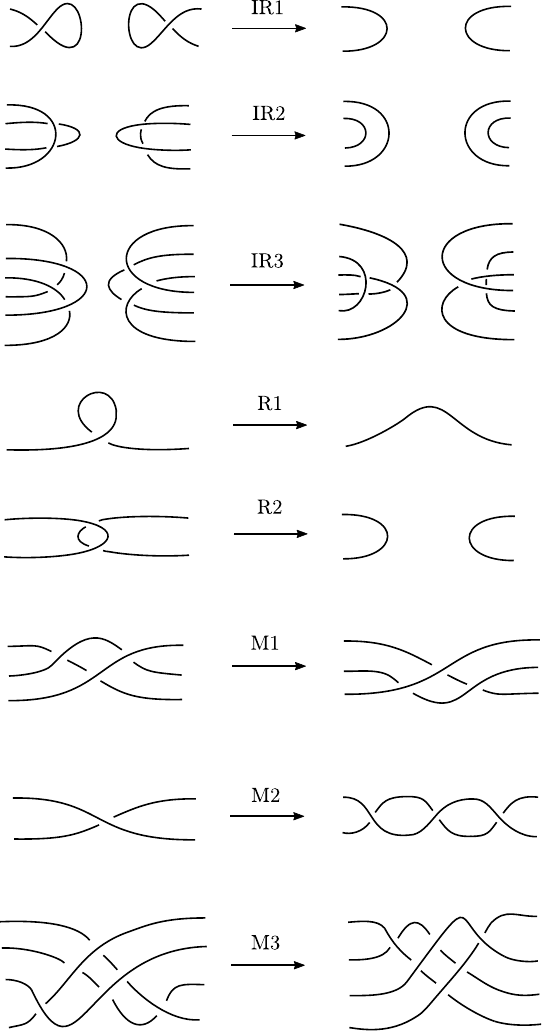}
	\caption{Set of equivariant Reidemeister moves.}
	\label{fig:all_equi_moves_lobb_watson}
\end{figure}

\begin{figure}[h!]
\includegraphics[scale = 0.75]{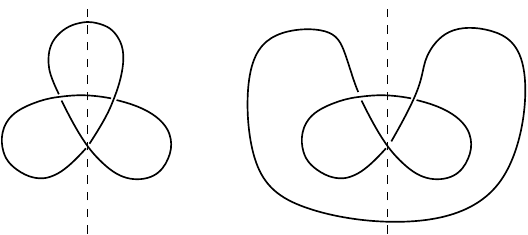}
	\caption{Illustration of the I-move. The strand at the top of the trefoil passes through the point at $\infty$.}
	\label{fig:imove}
\end{figure}

\begin{figure}[h!]
\includegraphics[scale = 0.75]{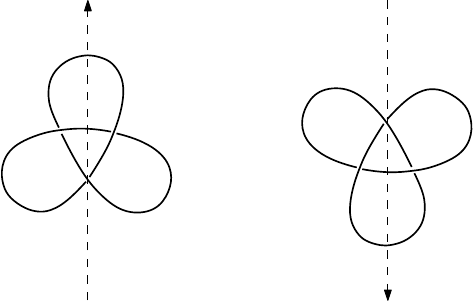}
	\caption{Illustration of the R-move. This consists of $\pi$ rotation about the origin. Note that the orientation on $\Fix(\tau)$ is reversed.}
	\label{fig:rmove}
\end{figure}

We will show that our Khovanov-type invariant for involutive links is an invariant up to Sakuma equivalence, which is the most flexible notion of the three listed above. We thus establish invariance of diagram under the set of involutive Reidemeister moves, the I-move, and the R-move.

\subsection{Cobordisms}

We now briefly discuss the diagrammatics of cobordisms and equivariant cobordisms. 

\begin{defn}\label{def:std-projection-cobordism}
  Define the \textit{standard projection} on $\mathbb{R}^3 \times I$ to be the standard projection in each $\mathbb{R}^3 \times \{t\}$. A cobordism $\Sigma$ in $S^3 \times I$ is \textit{generic} with respect to the standard projection if it misses the arc $\{\infty\} \times I$ and the standard projection subsequently gives a cobordism movie.
\end{defn}

Here, by a cobordism movie we mean a diagrammatic depiction of the cobordism as a sequence of Reidemeister moves, planar istopies, and \textit{elementary cobordisms}, which consist of births, deaths, and saddles. If $\Sigma$ is a cobordism between two links which are generic with respect to the standard projection, then it is a standard fact that we may perturb $\Sigma$ (rel boundary) to itself be generic; compare \cite{BDMS2}. In the equivariant setting, however, we need a slightly different notion:

\begin{defn}\label{def:std-projection-equivariant-cobordism}
An equivariant cobordism $\Sigma$ in $S^3 \times I$ is \textit{equivariantly generic} with respect to the standard projection if it misses the arc $\{\infty\} \times I$ and the standard projection subsequently gives an equivariant cobordism movie.
\end{defn}

Here, by an equivariant cobordism movie, we mean a diagrammatic depiction of the cobordism as a sequence of equivariant Reidemeister moves, equivariant planar isotopies, and \textit{elementary equivariant cobordisms}. Equivariant Reidemeister moves have already been discussed. The elementary equivariant cobordisms are depicted in Figure~\ref{fig:all_equi_moves_lobb_watson_cob}; these consist of on-axis births, deaths, and saddles, together with pairs of off-axis births, deaths, and saddles. See \cite{BDMS2}. Note that an equivariantly generic cobordism is not usually generic in the sense of Definition~\ref{def:std-projection-cobordism}.

\begin{figure}[h!]
\includegraphics[scale = 0.5]{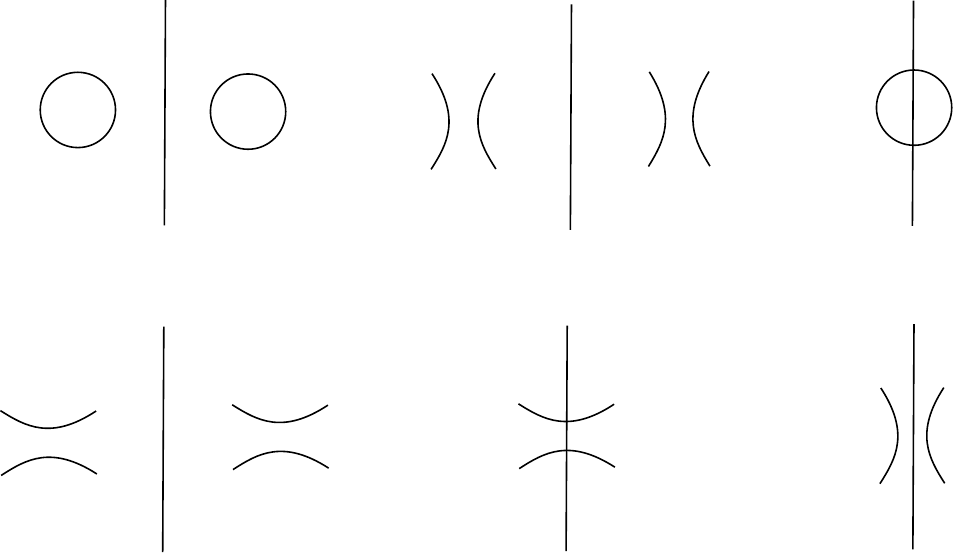}
	\caption{Set of elementary equivariant cobordisms. Note that there are death moves corresponding to each of the symmetric birth moves.}
	\label{fig:all_equi_moves_lobb_watson_cob}
\end{figure}

Once again, up to equivariant isotopy, any equivariant cobordism can be assumed to be equivariantly generic. However, there is a slight subtlety: in contrast to the non-equivariant case, this isotopy cannot in general be assumed to fix the boundary of $\Sigma$. For example, the I-move of Theorem~\ref{thm:equivalences-of-diagram} can be viewed as the trace of an equivariant isotopy $\Sigma$ which passes over $\{\infty\} \times I$. This cannot be equivariantly isotoped \textit{rel boundary} to miss the arc $\{\infty\} \times I$. However, if the outgoing end of the cobordism is allowed to move, then it is clear that $\Sigma$ is equivariantly isotopic in $S^3 \times I$ to the product cobordism. In general, to see that $\Sigma$ can be equivariantly isotoped to miss $\{\infty\} \times I$, choose a path $\gamma$ in $S^3 \times I$ such that $\gamma$ is disjoint from $\Sigma$ and intersects each $S^3 \times \{t\}$ at a single (non-constant) point on $\Fix(\tau)$. There is then an equivariant isotopy of $S^3 \times I$ which moves $\gamma$ into $\{\infty\} \times I$. 

In light of the above, we will usually assume that our equivariant cobordisms have been equivariantly isotoped (possibly moving the boundary) to lie in $\mathbb{R}^3 \times I$, and are equivariantly generic. This relegates all discussion of the I-move to the setting of invariance of diagram. While this is not essential, we would otherwise have to consider the I-move when constructing cobordism maps, which is not usually how cobordism maps are discussed in the Khovanov literature. We thus assume that all of our non-equivariant and equivariant cobordisms lie in $\mathbb{R}^3 \times I$.

\subsection{Algebraic preliminaries}

In no particular order, we collect here some algebraic results which will be useful throughout the paper.

As indicated in the introduction, we will often be dealing with chain complexes over different polynomial rings. In order to pass between these theories, it will be helpful for us to have the following lemma:

\begin{lem}\label{lem:quasi-iso-move} Let $R$ be a commutative ring and $C_1$ and $C_2$ be free, finitely-generated complexes over an $\F$-algebra $R[h]$. A chain map $f\colon C_1 \rightarrow C_2$ is a quasi-isomorphism if and only if the induced map $f/h \colon C_1/h \rightarrow C_2/h$ is a quasi-isomorphism. 
\end{lem}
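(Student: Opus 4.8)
The plan is to prove the "if" direction, since the "only if" direction is immediate (a chain map of complexes of free $R[h]$-modules that is a quasi-isomorphism clearly descends to a quasi-isomorphism mod $h$, e.g.\ by the five lemma applied to the long exact sequences associated to multiplication by $h$, or just because $-\otimes_{R[h]} R[h]/h$ is right exact and one can chase). So assume $f/h$ is a quasi-isomorphism and we must show $f$ is.

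First I would reduce to showing the mapping cone $C = \Cone(f)$ is acyclic. Since $f$ is a chain map between free, finitely-generated $R[h]$-complexes, $C$ is again a free, finitely-generated complex over $R[h]$, and $\Cone(f)/h \cong \Cone(f/h)$; by hypothesis the latter is acyclic. So it suffices to prove: \emph{if $C$ is a free, finitely-generated complex over $R[h]$ with $H_*(C/h) = 0$, then $H_*(C) = 0$.} The key input is that multiplication by $h$ on $C$ is injective in each homological degree (freeness), giving for each $n$ a short exact sequence of complexes $0 \to C \xrightarrow{h} C \to C/h \to 0$ and hence a long exact sequence
\[
\cdots \to H_n(C) \xrightarrow{h} H_n(C) \to H_n(C/h) \to H_{n-1}(C) \xrightarrow{h} \cdots
\]
Since $H_*(C/h) = 0$, this shows multiplication by $h$ is an isomorphism on $H_*(C)$, i.e.\ $H_*(C)$ is an $h$-divisible (indeed $h$-local) $R[h]$-module. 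The finite generation then forces $H_*(C) = 0$: each $H_n(C)$ is a subquotient of a finitely-generated $R[h]$-module, hence finitely generated over $R[h]$; a finitely-generated module $M$ over $R[h]$ with $hM = M$ satisfies $M = 0$ by (the $R[h]$-version of) Nakayama applied to the ideal $(h)$, or more elementarily: writing generators $m_1,\dots,m_k$ and expressing each $m_i \in hM$ gives a matrix equation $(\Id - hA)\mathbf{m} = 0$ with $\det(\Id - hA) \in 1 + h R[h]$ a non-zero-divisor, so $\mathbf{m} = 0$ after inverting it, and since $1 + hR[h]$ acts injectively on the free modules $C_n$ it acts injectively on subquotients of them — wait, one must be slightly careful here, so instead I would simply invoke Nakayama: $(h)$ is contained in the Jacobson radical of the $(h)$-adic completion, or, cleanest, observe directly that $H_n(C)$ is a graded module (using $\gr(h) = (0,-2)$, so all the complexes and their homology are internally graded and bounded below in, say, $\gr_q$ up to finitely many shifts) on which $h$ raising-or-lowering the grading and acting bijectively forces vanishing degree by degree.

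To make the grading argument airtight I would note that in all our applications $C_1, C_2$ carry the internal $\gr_q$-grading with $\gr_q(h) = -2$, each graded piece is finite-dimensional over $\F$, and the $\gr_q$-grading is bounded above (or below) in each homological degree; then on $H_n(C)$, bijectivity of the degree-$(-2)$ map $h$ contradicts boundedness unless $H_n(C) = 0$. If one prefers to state the lemma purely ring-theoretically without grading, the Nakayama route above works verbatim: $H_n(C)$ finitely generated over $R[h]$ with $h \cdot H_n(C) = H_n(C)$ implies $H_n(C) = 0$ by Nakayama's lemma for the ideal $(h) \subseteq R[h]$ (here we may assume $R[h]$ is Noetherian, or simply argue with the explicit determinant $1 - h\cdot(\text{something})$ which is a unit in the $(h)$-adic completion, under which homology does not change since everything is finitely generated and $h$-adically separated).

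The main obstacle is the step concluding $H_*(C) = 0$ from "$h$ acts bijectively and everything is finitely generated": one needs finite generation of the homology (which follows from finite generation of the complex together with, if necessary, Noetherianity of $R$, or from the grading hypotheses that are present in practice) plus a Nakayama-type argument, and this is exactly where the finitely-generated hypothesis in the statement is used. Everything else — the cone reduction, the long exact sequence from multiplication by $h$, the easy direction — is routine.
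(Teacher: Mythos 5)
Your easy direction and your reduction of the hard direction to acyclicity of the cone are fine, and the first half of that argument (the long exact sequence of $0 \to C \xrightarrow{h} C \to C/h \to 0$, showing $h$ acts bijectively on $H_*(C)$) is a legitimate, more elementary alternative to the paper's route; for comparison, the paper handles the forward implication with the universal-coefficient short exact sequences coming from the length-one resolution of $R[h]/h$ plus the five lemma (same content as your argument), and the converse with the spectral sequence of the filtration by powers of $h$, whose $E^1$-page is $H_*(C_i/h)\otimes\f[h]$. The genuine gap is in your concluding step. Nakayama's lemma does not apply to the ideal $(h)\subseteq R[h]$: $(h)$ is not contained in the Jacobson radical (already for $\F[h]$ the radical is zero), and the determinant trick only yields $(1-hb)M=0$ for some $b$, which does not force $M=0$. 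Concretely, $M=R[h]/(h-1)$ is finitely generated, satisfies $hM=M$, and is nonzero; it is also not $h$-adically separated (since $h^nM=M$ for all $n$), which is exactly why your completion variant fails too --- separatedness of the free modules $C_n$ does not pass to the subquotient $H_n(C)$, and vanishing of the completion does not give vanishing of $M$. In fact the purely ungraded statement you are trying to prove is false: take $C_1=0$ and $C_2$ the two-term complex $R[h]\xrightarrow{1-h}R[h]$; then $C_2/h$ is acyclic, so $f=0$ is a quasi-isomorphism mod $h$, but $H_*(C_2)\cong R[h]/(h-1)\cong R\neq 0$. So no ring-theoretic patch (Noetherianity, completion, etc.) can close this step.

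The correct fix is the grading argument you relegate to an aside, and it should be the actual proof: in all the intended applications $h$ is one of $u$, $Q$, $W$, each of nonzero degree, the differentials and $f$ are homogeneous, and finite generation makes each $C_n$ bounded (on one side) in the degree that $h$ shifts; bijectivity of the degree-shifting map $h$ on $H_n(C)$ then forces $H_n(C)=0$ degree by degree. This boundedness is where the hypotheses are genuinely used, and it is the same implicit input the paper needs: convergence of its $h$-power filtration spectral sequence ``by finite generation'' likewise relies on the filtration being bounded in each bidegree, which holds precisely because $\deg(h)\neq 0$ and the complexes are finitely generated and graded.
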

\begin{proof}
  The $R[h]$-module $R_h = R[h]/h$ admits a length-one free resolution $0\to R[h]\xrightarrow{\cdot h} R[h]\to R_h\to 0$. That is, the projective dimension of $R_h$ is $1$ (see \cite[Chapter 4]{Weibel_book}), so all the $\Tor$ groups involving $R_h$ vanish except the zeroth and the first.
  Consequently, for any 
chain complex $C$ of free $R[h]$-modules, the universal coefficient spectral sequence computing the homology of $C\otimes R_h$
  degenerates, yielding a natural family of short exact sequences:
  \[
  0\to H_i(C)\otimes_{R[h]} R_h\to H_i(C\otimes_{R[h]} R_h)\to\Tor_1(H_{i+1}(C),R_h)\to 0.\footnote{We write homology rather than cohomology here since the Khovanov homology convention is that the differential increases grading by one.}
  \]
  Suppose $f$ is a chain map from $C_1$ to $C_2$. We have the commutative diagram
\[
  \begin{tikzcd}
    0 \ar[r] & H_i(C_1)\otimes_{R[h]}R_h\ar[r]\ar[d,"f_{1i}"] & H_i(C_1\otimes_{R[h]} R_h) \ar[r]\ar[d,"f_{2i}"] & \Tor_1(H_{i+1}(C_1),R_h) \ar[r]\ar[d,"f_{3i}"] & 0\\
    0 \ar[r] & H_i(C_2)\otimes_{R[h]}R_h\ar[r] & H_i(C_2\otimes_{R[h]} R_h) \ar[r] & \Tor_1(H_{i+1}(C_2),R_h) \ar[r]& 0
  \end{tikzcd}
\]
Suppose $f$ is a quasi-isomorphism. Then, the maps $f_{1i}$ and $f_{3i}$ are isomorphisms for all $i$.
The
statement that $f/h$ is a quasi-isomorphism is equivalent to saying that $f_{2i}$ is an isomorphism for all $i$. This follows
clearly from the five lemma.

To prove the converse, consider the filtration on $C_i$ given by powers of $h$, so that $C_i^\ell = h^\ell C_i$. The associated spectral sequence has $E^1$-page given by $H_*(C_i/h)\otimes \f[h]$; by finite generation, this converges to $H_*(C_i)$.  
As the complexes $C_1$ and $C_2$ are free, the morphism $f$ induces an isomorphism of $E^1$-pages,
by the hypothesis of the lemma, and so also induces an isomorphism $f_*\colon H_*(C_1)\to H_*(C_2)$. 
\end{proof}

As a special case, note the following:

\begin{lem} \label{lem:iso-after-set-zero}
Let $R$ be a commutative ring and $C_1$ and $C_2$ be free, finitely-generated complexes over an $\F$-algebra $R[h]$. A chain map $f\colon C_1 \rightarrow C_2$ is an isomorphism of chain complexes if and only if the induced map $f/h \colon C_1/h \rightarrow C_2/h$ is an isomorphism.
\end{lem}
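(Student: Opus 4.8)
The plan is to deduce this as a direct corollary of Lemma~\ref{lem:quasi-iso-move}, upgrading the conclusion from "quasi-isomorphism" to "isomorphism of chain complexes." The forward direction is trivial: if $f$ is an isomorphism of complexes, then applying the functor $-\otimes_{R[h]} R_h$ (equivalently, reducing mod $h$) gives a chain map $f/h$ with a two-sided inverse $(f^{-1})/h$, so $f/h$ is an isomorphism. The content is entirely in the converse.

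For the converse, suppose $f/h \colon C_1/h \to C_2/h$ is an isomorphism of chain complexes. First I would observe that an isomorphism of chain complexes is in particular a quasi-isomorphism, so Lemma~\ref{lem:quasi-iso-move} applies and tells us $f$ itself is a quasi-isomorphism. It remains to promote this to an actual chain isomorphism, using freeness and finite generation. The key step is to argue degreewise: in each homological degree $i$, the map $f_i \colon (C_1)_i \to (C_2)_i$ is a map of free finitely-generated $R[h]$-modules whose reduction $f_i/h$ is an isomorphism of $R$-modules (forgetting differentials, this is just linear algebra). I would then invoke a Nakayama-type / determinant argument: pick bases, so $f_i$ is represented by a square matrix $M$ over $R[h]$ (square because $f_i/h$ being an isomorphism forces equal ranks), and $f_i/h$ being an isomorphism means $M$ reduces mod $h$ to an invertible matrix over $R$, i.e.\ $\det(M) \in R[h]$ reduces to a unit in $R$. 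Hence $\det(M) = a_0 + h(\cdots)$ with $a_0 \in R^\times$; one checks this element is a unit in $R[h]$ (its inverse can be constructed as a power series in $h$, which terminates or is handled using finite generation of the modules involved — alternatively, localize: $\det(M)$ is not in the maximal-ideal-type locus, but cleanest is simply that $a_0 + h r$ with $a_0$ a unit is invertible in $R[h]$ precisely when... actually this requires care, see below). Therefore $M$ is invertible over $R[h]$, so each $f_i$ is an isomorphism of $R[h]$-modules. Since the $f_i$ are compatible with the differentials (as $f$ is a chain map) and each is bijective, $f$ is an isomorphism of chain complexes, with inverse the degreewise inverse.

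The main obstacle — and the point requiring the most care — is the claim that a matrix $M$ over $R[h]$ with $\det(M)$ reducing to a unit in $R$ is invertible over $R[h]$. This is false for a general commutative ring $R[h]$ (e.g.\ $1 + h$ is not a unit in $R[h]$), so one cannot argue purely at the level of determinants. The correct approach is instead homological: combine the fact that $f$ is a quasi-isomorphism between bounded complexes of free (hence projective) modules with the fact that $f/h$ is a \emph{chain isomorphism}. Concretely, the cone of $f$ is an acyclic bounded complex of free $R[h]$-modules, hence contractible, so $f$ is a chain homotopy equivalence; writing down a homotopy inverse $g$ and homotopy $\partial K + K \partial = \id - gf$, one reduces mod $h$ and compares with the genuine inverse of $f/h$ to conclude that $gf$ is already a chain isomorphism in degree-by-degree fashion — but cleanest is to recall the standard fact (e.g.\ from the theory of complexes over local-type rings, or an induction on the number of nonzero terms using that $h$ lies in the Jacobson radical of $R[h]$ only after localization) that a self-map of a bounded complex of finitely-generated free modules which is a quasi-isomorphism and reduces mod $h$ to an isomorphism must itself be an isomorphism. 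I would organize the writeup around the cone/contractibility argument, as it avoids the false determinant shortcut and parallels the structure of the proof of Lemma~\ref{lem:quasi-iso-move} already given.
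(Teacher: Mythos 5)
Your route is genuinely different from the paper's, and the step where it differs is exactly where it has a gap. The paper's proof is a one-line reduction: apply Lemma~\ref{lem:quasi-iso-move} to $C_1$ and $C_2$ regarded as complexes with \emph{zero} differential. For zero-differential complexes a quasi-isomorphism is precisely a degreewise module isomorphism, so that lemma immediately gives that $f$ is bijective in each degree, and a bijective chain map (with respect to the true differentials) is a chain isomorphism; no further upgrade is needed. You instead apply Lemma~\ref{lem:quasi-iso-move} to the actual complexes, which only yields that $f$ is a quasi-isomorphism, and you must then promote ``quasi-isomorphism whose mod-$h$ reduction is a chain isomorphism'' to ``chain isomorphism''. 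You rightly discard the determinant/Nakayama shortcut (since $1+h$ is not a unit of $R[h]$), but the replacement is never actually supplied: the cone/contractibility discussion ends by ``recalling a standard fact'' which is essentially the statement to be proved, supported only by a vague remark about $h$ lying in the Jacobson radical ``after localization''.

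Moreover, that standard fact is false at the level of generality in which you invoke it. Take $C_1=C_2$ to be the contractible two-term complex $R[h]\xrightarrow{\ \id\ }R[h]$ and let $f$ be multiplication by $1+h$ in both degrees. Then $f$ is a chain map and a quasi-isomorphism (both complexes are acyclic, so the cone of $f$ is contractible), $f/h$ is the identity, yet $f$ is not an isomorphism. So ``quasi-isomorphism plus mod-$h$ chain isomorphism'' genuinely does not imply ``isomorphism'' for bounded complexes of finitely generated free $R[h]$-modules, and the homotopy-equivalence/cone argument you sketch cannot close the gap on its own. (The same example shows the lemma is being read within the paper's implicit graded conventions, where $h$ is $u$ or $Q$ with nonzero degree and all maps are homogeneous, so a component such as $1+h$ cannot occur.) To make your approach work you would have to replace the final appeal to a standard fact by an honest degreewise graded-Nakayama argument using those conventions --- or simply use the paper's zero-differential trick, which sidesteps the upgrade step entirely.
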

\begin{proof}
This follows immediately from Lemma~\ref{lem:quasi-iso-move} by considering $C_1$ and $C_2$ as complexes with zero differential.
\end{proof}

\begin{rmk}
Lemmas~\ref{lem:quasi-iso-move} and \ref{lem:iso-after-set-zero} do not hold if $C_1$ and $C_2$ are not free. For example, if $C_1=R[h]/h^3$
  and $C_2=R[h]/h^2$ (with trivial differential), then there is a map from $C_1$ to $C_2$ inducing an isomorphism on $C_1/h\cong R$
  and $C_2/h\cong R$, but the complexes are not quasi-isomorphic.
\end{rmk}

Finally, it will be useful to translate between quasi-isomorphism and homotopy equivalence. The following is well-known:

\begin{lem} \label{lem:qitohe}
  Let $C_1$ and $C_2$ be free, bounded chain complexes. Then any quasi-isomorphism $f \colon C_1 \rightarrow C_2$ admits a homotopy inverse. That is, if $f \colon C_1 \rightarrow C_2$ is a quasi-isomorphism, then there exists a chain map $g \colon C_2 \rightarrow C_1$ such that $g \circ f \simeq \id$ and $f \circ g \simeq \id$. In particular, $C_1$ and $C_2$ are homotopy equivalent.
\end{lem}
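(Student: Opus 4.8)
The plan is to argue via the mapping cone. Given a quasi-isomorphism $f\colon C_1\to C_2$, form $C=\Cone(f)$, whose underlying module in degree $n$ is $(C_1)_{n-1}\oplus(C_2)_n$ with differential assembled from $\partial_1$, $\partial_2$ and $f$. The short exact sequence $0\to C_2\to C\to C_1[1]\to 0$ induces a long exact sequence in homology in which the connecting homomorphism is (up to sign) $f_*$; since $f$ is a quasi-isomorphism, $f_*$ is an isomorphism, which forces $H_*(C)=0$. Thus $C$ is acyclic.

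The essential input is then the standard fact that a bounded acyclic complex of projective (in particular, free) modules is contractible. I would prove this by the usual inductive splitting argument: starting from the lowest nonzero term, acyclicity makes each differential surject onto the cycles in the next degree, and projectivity lets that surjection split, so inductively $C_n\cong Z_n\oplus Z_{n-1}$ with differential the evident projection-inclusion; such a complex carries an obvious contracting homotopy. Boundedness is exactly what makes the induction terminate. (Alternatively, one may simply cite \cite{Weibel_book}.) The upshot is a degree-$(+1)$ map $s\colon C\to C$ with $\partial s+s\partial=\id_C$.

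Finally I would unwind $s$ into block components relative to the decomposition $C_n=(C_1)_{n-1}\oplus(C_2)_n$. Writing $s(x,y)=(ax+gy,\ cx+ey)$, the identity $\partial s+s\partial=\id$ separates into a handful of relations; the relevant ones are $\partial_1 g=g\partial_2$ (so $g\colon C_2\to C_1$ is a chain map), $gf=\id_{C_1}+\partial_1 a+a\partial_1$, and $fg=\id_{C_2}+\partial_2 e+e\partial_2$. These say precisely that $g$ is a homotopy inverse of $f$, with homotopies $a$ and $e$, and the last sentence of the lemma is then immediate.

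I do not expect a serious obstacle here: the only point requiring genuine care is the contractibility of bounded acyclic complexes of projectives, which is where the freeness and boundedness hypotheses are actually used — the Remark following Lemma~\ref{lem:iso-after-set-zero} already shows these hypotheses cannot simply be dropped. Over the $\F_2$-algebras relevant to this paper all sign issues in the cone disappear, so the bookkeeping in the final step is entirely routine.
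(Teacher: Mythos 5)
Your argument is correct: the cone of $f$ is an acyclic bounded complex of free modules, hence contractible, and unwinding the contracting homotopy in block form does yield the chain map $g$ together with the homotopies $a$ and $e$ witnessing $gf\simeq\id$ and $fg\simeq\id$ (over $\F_2$ the sign bookkeeping indeed evaporates). This is exactly the standard argument behind the paper's own proof, which consists solely of a citation to \cite[Chapter 10.4]{Weibel_book}, so you have simply made explicit what the paper outsources to the reference.
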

\begin{proof}
This follows from \cite[Chapter 10.4]{Weibel_book}.
\end{proof}


\section{Bar-Natan Homology}\label{sec:khovanov-homology-preliminaries}

In this section, we discuss Bar-Natan homology \cite{bar-natan-tangle} and establish notation.  

\subsection{Bar-Natan homology}

Let $L$ be a knot or link. The Bar-Natan complex $\Kcm(L)$ of $L$ is a bigraded chain complex over the one-variable polynomial ring $\F[u]$, where $\F = \Z/2\Z$. We denote the two components of the bigrading by $(\gr_h, \gr_q)$; these are referred to as the \textit{homological grading} and the \textit{quantum grading}, respectively. Our conventions are that
\[
\gr(\partial) = (1, 0) \quad \text{and} \quad \gr(u) = (0, -2).
\]
In other literature (for example, \cite{bar-natan-tangle,dunfield-lipshitz-schuetz}), the variable $u$ is replaced by $h$. Here, we use $u$ in analogy with the variable $U$ from Heegaard Floer homology. Note that with respect to multiplication by $u$, it is the quantum grading which plays the role of the homological grading from Heegaard Floer theory. We review the pieces needed for the construction of $\Kcm(L)$ below.

\subsubsection*{The cube of resolutions} For $n \in \mathbb{N}$, we refer to $\{0,1\}^n$ as the \textit{$n$-dimensional cube}. We denote a typical vertex of $\{0,1\}^n$ by $v = (v_1, \ldots, v_n)$. If $v$ and $w$ are two vertices in $\{0,1\}^n$ that differ in exactly one coordinate $i$ for which $v_i = 0$ and $w_i = 1$, then we say that $w$ is an \emph{immediate successor} of $v$. In this situation, we draw an (ordered) edge from $v$ to $w$, which we denote by $\phi_{v, w}$. 

Now let $D$ be a diagram for $L$ with $n$ crossings, which we assume have been enumerated from $1$ to $n$. For each vertex in the $n$-dimensional cube, let $D_v$ be the complete resolution of $D$ formed by taking the $0$-resolution at the $i$th crossing if $v_i=0$, and the $1$-resolution if $v_i = 1$.  The diagram $D_v$ is a planar diagram of embedded circles; we refer to this set of circles by $Z(D_v)$. For each edge $\phi_{v, w}$, we construct an embedded $1$-handle cobordism in $\R^2 \times [0, 1]$ from $D_v$ to $D_w$. This either \textit{merges} two circles in $D_v$ or it \textit{splits} a single circle from $D_v$ into two. The set of all such resolutions and $1$-handle cobordisms between them is referred to as the \textit{cube of resolutions}. 

\begin{figure}[h!]
\includegraphics[scale = 0.9]{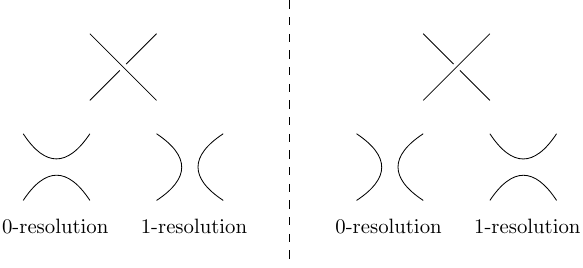}
	\caption{The $0$- and $1$-resolutions of a crossing.}
	\label{fig:resolutions}
\end{figure}

\subsubsection*{The Frobenius algebra} Let $\mathcal{A} = \spa_{\F[u]}\{1, x\}$ be the two-dimensional Frobenius algebra over $\F[u]$ with basis $\{1, x\}$ and multiplication $m\colon\cA\otimes\cA\to\cA$ given by
\[
m(1 \otimes 1)=1,\qquad m(1 \otimes x)=x,\qquad m(x \otimes 1)=x,\qquad m(x \otimes x)=ux.
\]
We define a comultiplication $\Delta\colon\cA\to\cA\otimes\cA$ by
\[
\Delta(1)=1\otimes x + x\otimes 1 + u(1\otimes 1),\qquad \Delta(x)=x\otimes x.
\]
Here, all instances of $\otimes$ are understood over $\F[u]$. The unit and counit are given by 
\[
i(1)=1,\qquad \epsilon(1)=0,\qquad \epsilon(x)=1.
\]
Bar-Natan homology comes from applying the (1+1)-dimensional TQFT corresponding to $\mathcal{A}$ to the cube of resolutions. We recall the meaning of this below.

\subsubsection*{Bar-Natan homology} 

For each $v \in \{0,1\}^n$, define an $\F[u]$-module $\AbFunc(v)$ by taking the tensor product of one copy of $\mathcal{A}$ for each circle $c \in Z(D_v)$:
\[
\AbFunc(v) = \bigotimes_{c \in Z(D_v)} \mathcal{A}.
\]
This tensor product is taken over $\F[u]$. A basis element of $\AbFunc(v)$ is thus obtained by labeling each $c \in Z(D_v)$ by either $1$ or $x$; $\AbFunc(v)$ is the $\F[u]$-span of these elements. For each edge $\phi_{v, w}$, define a $\F[u]$-equivariant map
\[
\AbFunc(\phi_{v,w}) \colon \AbFunc(v) \rightarrow \AbFunc(w).
\]
on basis elements as follows:
\begin{enumerate}
\item Suppose $\phi_{v, w}$ corresponds to merging $c_\alpha$ and $c_\beta$ to $c_\gamma$. Define $\AbFunc(\phi_{v,w})$ by carrying forward all labels on $Z(D_v) \setminus \{c_\alpha, c_\beta\}$ and labeling $c_\gamma$ by $m(a_\alpha \otimes a_\beta)$, where $a_\alpha$ and $a_\beta$ are the labels on $c_\alpha$ and $c_\beta$, respectively. In the case $a_\alpha = a_\beta = x$, note that the image has a coefficient of $u$.
\item Suppose $\phi_{v, w}$ corresponds to splitting $c_\alpha$ into $c_\beta$ and $c_\gamma$. Define $\AbFunc(\phi_{v,w})$ by carrying forward all labels on $Z(D_v) \setminus \{c_\alpha\}$ and labeling $c_\beta$ and $c_\gamma$ by the two factors of $\Delta(a_\alpha)$, where $a_\alpha$ is the label on $c_\alpha$. This is done in an $\f[u]$-linear manner: for example, if $a_\alpha = 1$, the image has three terms.
\end{enumerate}
Extend this map $\F[u]$-linearly. Define the \textit{Bar-Natan complex} associated to $D$ by setting
\[
\Kc^-(D) = \bigoplus_{v \in \{0,1\}^n} \AbFunc(v) \qquad \text{and} \qquad \partial = \sum_{\text{edges } \phi_{v,w}} \AbFunc(\phi_{v,w}).
\]
As we will see, the homotopy type of $\Kc^-(D)$ is an invariant of $L$; we correspondingly refer to this homotopy type by $\Kc^-(L)$. The notation $\Kc^-$ is meant to suggest Heegaard Floer homology $HF^-$. As in Heegaard Floer theory, we may form
\[
\Kchat(D) = \Kcm(D)/(u = 0) \quad \text{and} \quad \Kcinf(D) = \Kcm(D)\otimes_{\f[u]}\f[u,u^{-1}].
\]
Note that setting $u = 0$ gives the Frobenius algebra used in the usual definition of Khovanov homology \cite{khovanov}. The $u = 0$ theory is thus the ordinary Khovanov complex $\CKh(D)$, whose homology is the ordinary Khovanov homology $\Kh(L)$.

\subsubsection*{Gradings}\label{sec:gradings}

Suppose now that each component of $L$ is oriented and let $n_+$ and $n_-$ be the number of positive and negative crossings, respectively. Let $v \in \{0,1\}^n$ and $y \in \AbFunc(v)$ be a basis element given by a labeling of the circles in $D_v$. Let $|v|$ be the number of coordinates of $v$ equal to $1$ and $\ell=\ell(y)$ be the number of circles in $D_v$ which are labeled $x$. Define
\[
\gr_h(y) \; = \; |v|-n_- \quad \text{and} \quad \gr_q(y) \; = \; |Z(D_v)|-2\ell+|v|+n_+-2n_-.
\]
We extend this to the whole complex by setting $\gr(u) = (0, -2)$. It is easily checked that with respect to this bigrading, $\gr(\partial) = (1, 0)$. Note that in order to define the absolute grading, it is necessary to orient each component of $L$ to obtain $n_+$ and $n_-$. However, in the unoriented setting, it is still possible to disregard these and define a relative $\Z \oplus \Z$-grading.

\subsection{Reduced Bar-Natan homology}\label{sec:reduced-bar-natan}
We now review the construction of the \textit{reduced} Bar-Natan complex. We first define the reduced complex using a choice of basepoint on $L$, as in \cite{wigderson, khovanov}. This will be necessary to apply various results in the literature regarding the reduced Bar-Natan homology. For several other applications, however, it will be more convenient to have a definition of the reduced complex which does not require a choice of basepoint; this can be found in \cite{ORS}. We thus also review the unpointed construction and explain why the pointed and unpointed constructions are isomorphic. 


\subsubsection*{Reduced complex, pointed construction} Let $L$ be equipped with a basepoint $p$, so that $D$ is a pointed link diagram. Every resolution $D_v$ of $D$ then comes equipped with a single distinguished circle $c_p$ containing the point $p$. The standard definition of the reduced complex is as follows:

\begin{defn}\label{def:pointed-reduced}
The \textit{(pointed) reduced Bar-Natan complex} is given by
\[
\Kcrm_p(D) = \spa_{\F[u]} \{\text{generators labeling the circle containing } p \text{ with }x\}.
\]
It is straightforward to check that this is a subcomplex of $\Kc^-(D)$. We use the subscript $p$ to emphasize the dependence on the choice of basepoint, although (as we will see) different choices of basepoint result in isomorphic reduced complexes. 
\end{defn}




\subsubsection*{Reduced complex, unpointed construction} In order to give an unpointed version of the reduced complex, it will be useful to have a slight algebraic reformulation of the overall Bar-Natan complex. Let $D_v$ be a resolution of $D$ and suppose the circles of $D_v$ have been enumerated from $1$ to $m$. Define an $\F[u]$-module isomorphism
\begin{equation}\label{eqn:reducediden}
\AbFunc(v) \cong \F[x_1, \ldots, x_m]/(x_i^2 = ux_i \text{ for } i = 1, \ldots, m)
\end{equation}
as follows. Consider a basis element of $\AbFunc(v)$ in which the circles $c_{i_1}, \ldots, c_{i_k}$ are labeled $x$ and all other circles have been labeled $1$. We map this basis element to the monomial $x_{i_1} \cdots x_{i_k}$ and extend $\F[u]$-linearly. It is clear that this is an isomorphism of $\F[u]$-modules. However, the right-hand side additionally has an algebra structure which will be convenient presently.

We now re-write the edge maps $\AbFunc(\phi_{v, w})$ under the identification (\ref{eqn:reducediden}). There are two possibilities:
\begin{enumerate}
\item Suppose $\phi_{v, w}$ corresponds to merging $c_\alpha$ and $c_\beta$ to $c_\gamma$. Let $\pi$ be the corresponding map from the index set of $D_v$ to the index set of $D_w$; this is a bijection except that $\pi(\alpha) = \pi(\beta) = \gamma$. It is straightforward to check that under the identification (\ref{eqn:reducediden}), $\AbFunc(\phi_{v,w})$ is defined on monomials by
\begin{equation}\label{eq:rewrite1}
\AbFunc(\phi_{v,w})(x_{i_1} \cdots x_{i_k}) = x_{\pi(i_1)} \cdots x_{\pi(i_k)},
\end{equation}
extending $\F[u]$-linearly. (In the case that $c_\alpha$ and $c_\beta$ and labeled $x$, use the fact that $x_\gamma^2 = u x_\gamma$.) Note that $\AbFunc(\phi_{v,w})$ thus corresponds to the algebra map induced by the (non-injective) relabeling $\pi$.
\item Suppose $\phi_{v, w}$ corresponds to splitting $c_\alpha$ into $c_\beta$ and $c_\gamma$. Choose either one of the two resulting circles, say $c_\beta$. Let $\iota$ be the injection from the index set of $D_v$ to the index set of $D_w$ which sends $\alpha$ to $\beta$ and otherwise sends circles in $D_v$ bijectively to their images in $D_w$. We claim that under the identification (\ref{eqn:reducediden}), $\AbFunc(\phi_{v,w})$ is defined on monomials by
\begin{equation}\label{eq:rewrite2}
\AbFunc(\phi_{v,w})(x_{i_1} \cdots x_{i_k}) = (x_\beta + x_\gamma + u \cdot 1)(x_{\iota(i_1)} \cdots x_{\iota(i_k)}).
\end{equation}
If $x_\alpha$ does not appear in the monomial $x_{i_1} \cdots x_{i_k}$, then this follows immediately from the comultiplication $\Delta(1) =  1 \otimes x + x \otimes 1 + u(1 \otimes 1)$. If $x_\alpha$ does appear in the monomial $x_{i_1} \cdots x_{i_k}$ (say as $x_{i_1}$), then the comultiplication $\Delta(x) = x \otimes x$ indicates that under the identification (\ref{eqn:reducediden}), we have
\[
\AbFunc(\phi_{v,w})(x_{i_1} \cdots x_{i_k}) = x_\beta x_\gamma x_{\iota(i_2)} \cdots x_{\iota(i_k)}.
\]
To verify that this coincides with (\ref{eq:rewrite2}), we compute
\begin{align*}
(x_\beta + x_\gamma + u \cdot 1)(x_{\iota(\alpha)} x_{\iota(i_2)} \cdots x_{\iota(i_k)}) &= (x_\beta + x_\gamma + u \cdot 1)(x_{\beta} x_{\iota(i_2)} \cdots x_{\iota(i_k)}) \\
&= x_\beta^2 x_{\iota(i_2)} \cdots x_{\iota(i_k)} + x_\gamma x_\beta x_{\iota(i_2)} \cdots x_{\iota(i_k)} + u x_\beta x_{\iota(i_2)} \cdots x_{\iota(i_k)} \\
&= x_\gamma x_\beta x_{\iota(i_2)} \cdots x_{\iota(i_k)},
\end{align*}
where in the last line we have used the fact that $x_\beta^2 = u x_\beta$ and canceled the first and third terms. Note that $\AbFunc(\phi_{v,w})$ thus corresponds to first applying the algebra map induced by the (non-surjective) relabeling $\iota$, and then multiplying by $(x_\beta + x_\gamma + u \cdot 1)$.
\end{enumerate}
\noindent
We refer to the above version of the Bar-Natan complex -- equipped with its additional algebraic structure at each resolution -- as $\Kc'(D)$. Note that $\Kc'(D)$ has an algebra for each vertex $v$, which we denote by $\Kc'(D_v)$, together with $\f[u]$-module maps from $\Kc'(D_v)$ to $\Kc'(D_w)$ for each edge $\phi_{v, w}$ in $\{0, 1\}^n$. These module maps are not algebra maps: in the case of (\ref{eq:rewrite2}), the map is an algebra map followed by multiplication by $x_\beta + x_\gamma + u \cdot 1$. The isomorphism (\ref{eqn:reducediden}) provides the explicit isomorphism between $\Kcm(D)$ and $\Kc'(D)$.

This re-interpretation of $\Kcm(D)$ allows us to define an unpointed version of the reduced complex.

\begin{defn}
For each $v \in \{0, 1\}^n$, let $\Kcrm_{un}(D_v)$ be the subalgebra of $\Kc'(D_v)$ generated (as a $\f[u]$-subalgebra) by the elements $1$ and $x_i + x_j$, running over all $i$ and $j$. Define the \textit{unpointed reduced complex} $\Kcrm_{un}(D)$ to be the $\f[u]$-submodule
\[
\Kcrm_{un}(D)=\bigoplus_{v\in  \{0, 1\}^n} \Kcrm_{un}(D_v)\subset \Kc'(D).
\]
In fact, $\Kcrm_{un}(D)$ is a subcomplex of $\Kc'(D)$; see \cite{ORS} for the $u=0$ case. To see this, we simply check that $\Kcrm_{un}(D)$ is sent to itself by the edge maps. For merges this is clear, since in this case $\AbFunc(\phi_{v, w})$ is an algebra map which sends $x_i + x_j$ to $x_{\pi(i)} + x_{\pi(j)}$. For splits, first observe that the algebra map induced by the relabeling $\iota$ likewise maps $\Kcrm_{un}(D_v)$ into $\Kcrm_{un}(D_w)$. We obtain $\AbFunc(\phi_{v, w})$ by multiplying through by $(x_\beta + x_\gamma + u \cdot 1)$, which is an element of $\Kcrm_{un}(D_w)$.
\end{defn}

As usual, the complexes $\widehat{\Kcr}_{p}(D)$ and $\widehat{\Kcr}_{un}(D)$ are obtained from the corresponding $\Kcrm$ complexes by setting $u = 0$, and likewise for $\Kcr_p^{\infty}$ and $\Kcr_{un}^{\infty}$. In the $u = 0$ case, it is well-known that the pointed and unpointed reduced complexes are isomorphic; see e.g.\ \cite{ORS}. The proof extends to give the following result for Bar-Natan homology:

\begin{lem}\label{lem:reduced-theories-equivalent}
For any basepoint $p\in D$, there is an isomorphism $\Kcrm_{un}(D) \cong \Kcrm_{p}(D)$ as bigraded chain complexes over $\F[u]$. This isomorphism is canonical in the sense that it depends only on $D$ and $p$.
\end{lem}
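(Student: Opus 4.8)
The plan is to construct an explicit $\F[u]$-module isomorphism $\Phi_p \colon \Kcrm_{un}(D) \to \Kcrm_p(D)$ resolution by resolution and then check that it commutes with the edge maps. Fix a resolution $D_v$ with circles enumerated $1, \ldots, m$, and say $c_p$ is the circle indexed by $p(v) \in \{1, \ldots, m\}$. On the algebra $\Kc'(D_v) \cong \F[x_1, \ldots, x_m]/(x_i^2 = ux_i)$, I would consider the change of variables $y_i = x_i + x_{p(v)}$ for $i \neq p(v)$ and $y_{p(v)} = x_{p(v)}$; this is visibly an $\F[u]$-algebra automorphism of $\Kc'(D_v)$. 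The unpointed reduced subalgebra $\Kcrm_{un}(D_v)$ is the $\F[u]$-subalgebra generated by $1$ and all $x_i + x_j = y_i + y_j$ (for $i, j \neq p(v)$, together with $x_i + x_{p(v)} = y_i$), i.e. it is exactly the subalgebra generated by $1$ and the $y_i$ with $i \neq p(v)$ — in other words, the image under the above automorphism of the subalgebra not involving $x_{p(v)}$. Multiplying this subalgebra by $x_{p(v)}$ then lands it isomorphically onto the span of monomials that do contain $x_{p(v)}$, which is precisely the pointed reduced subcomplex $\Kcrm_p(D_v)$ (the generators labeling $c_p$ by $x$). So I would \emph{define} $\Phi_p$ on $\Kc'(D_v)$ to be multiplication by $x_{p(v)}$ precomposed with the automorphism $x_i \mapsto x_i + x_{p(v)}$, and check it restricts to an $\F[u]$-module isomorphism $\Kcrm_{un}(D_v) \xrightarrow{\ \cong\ } \Kcrm_p(D_v)$ — injectivity because multiplication by $x_{p(v)}$ is injective on the subalgebra in question (since no monomial there contains $x_{p(v)}$, using $x_{p(v)}^2 = u x_{p(v)}$ one sees the product determines the factor), and surjectivity by a dimension count over $\F[u]$, both sides being free of rank $2^{m-1}$.

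Next I would verify $\Phi_p = \bigoplus_v \Phi_p|_{D_v}$ is a chain map, i.e. that it commutes with $\AbFunc(\phi_{v,w})$ for each edge. This is the one genuinely computational step, and I expect it to be the main obstacle — not conceptually, but in bookkeeping the basepoint circle through merges and splits. For a merge of $c_\alpha, c_\beta$ into $c_\gamma$: if neither is the basepoint circle, the edge map is the algebra relabeling $\pi$, which commutes with both the automorphism $x_i \mapsto x_i + x_{p}$ and with multiplication by $x_p$ on the nose; if say $c_\alpha = c_p$ so $c_\gamma$ becomes the new basepoint circle, one checks directly using $x_\gamma^2 = u x_\gamma$ that relabeling-then-multiply-by-$x_\gamma$ agrees with multiply-by-$x_\alpha$-then-relabel after the change of variables. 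For a split of $c_\alpha$ into $c_\beta, c_\gamma$, recall from \eqref{eq:rewrite2} that $\AbFunc(\phi_{v,w})$ is the relabeling $\iota$ (sending $\alpha \mapsto \beta$) followed by multiplication by $(x_\beta + x_\gamma + u)$; here I would split into cases according to whether $c_\alpha$ is the basepoint circle (in which case exactly one of $c_\beta, c_\gamma$ inherits the basepoint — choose that one as the distinguished circle $c_\beta$ in \eqref{eq:rewrite2}) or not, and in each case expand both composites and cancel using $x_i^2 = u x_i$, exactly mirroring the $u = 0$ computation of \cite{ORS}.

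Finally I would address the grading and canonicity claims. The automorphism $x_i \mapsto x_i + x_{p(v)}$ preserves the quantum grading since $x_i$ and $x_{p(v)}$ both have the same quantum degree (each $x$-label drops $\gr_q$ by $2$), and multiplication by $x_{p(v)}$ shifts $\gr_q$ by $-2$ on every resolution uniformly, so after the standard normalization of the reduced complex the map is bigraded; it obviously preserves $\gr_h$ since it is defined within each resolution. Canonicity is immediate from the construction: $\Phi_p$ was built purely from the combinatorial data of $D$ together with the location of $p$ — no auxiliary choices (such as orderings of circles, or the choice of which of $c_\beta, c_\gamma$ is distinguished) actually enter, since the change of variables $y_i = x_i + x_{p(v)}$ and the subsequent multiplication by $x_{p(v)}$ are canonical once $p$ is fixed, and one checks the split-map computation is independent of the auxiliary choice of $c_\beta$ in \eqref{eq:rewrite2}. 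This gives the asserted isomorphism $\Kcrm_{un}(D) \cong \Kcrm_p(D)$ of bigraded complexes over $\F[u]$ depending only on $(D, p)$. (As a sanity check one recovers the well-known $u = 0$ statement of \cite{ORS} by setting $u = 0$ throughout.)
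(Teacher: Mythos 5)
Your proposed map is not a chain map, and this is a genuine gap rather than a bookkeeping issue hiding in the merge/split case analysis. Concretely, take an edge $\phi_{v,w}$ that merges the basepoint circle $c_p$ of $D_v$ with another circle $c_\alpha$ into $c_\gamma$ (so $c_\gamma$ carries the basepoint in $D_w$), and apply your composite to the generator $x_\alpha + x_p \in \Kcrm_{un}(D_v)$. The change of variables sends $x_\alpha + x_p$ to $x_\alpha$, multiplication by $x_p$ gives $x_\alpha x_p$, and the merge map then produces $x_\gamma^2 = u\,x_\gamma$. Going around the square the other way, the merge map sends $x_\alpha + x_p$ to $x_\gamma + x_\gamma = 0$, so the composite is $0$. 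The discrepancy $u\,x_\gamma$ is nonzero because the complexes are free over $\F[u]$, so $\Phi_p$ does not commute with this edge map (and the same failure occurs whichever order you compose the automorphism with multiplication by $x_p$). The root cause is that the identity $x_p x_{i_1}\cdots x_{i_k} = x_p(x_p + x_{i_1})\cdots(x_p + x_{i_k})$, which is what makes the change of variables look innocuous, holds only modulo $u$; over $\F[u]$ your map differs from plain multiplication by $x_p$ by exactly the $u$-correction terms that merges involving $c_p$ detect.

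By contrast, the paper's proof uses the map $\mu_p$ given by plain multiplication by $x_p$ in each resolution, with no change of variables: this does commute with all edge maps (in the case above it sends $x_\alpha + x_p$ to $x_\alpha x_p + u\,x_p$, which the merge map also annihilates), and its image is exactly $\Kcrm_p(D)$. The paper then sidesteps any direct injectivity/surjectivity verification over $\F[u]$: after setting $u=0$ the restriction $\mu_p|_{\Kcrm_{un}(D)}$ is visibly surjective (this is precisely where the identity above is legitimately used), both sides have rank $2^{m-1}$ in each resolution, and Lemma~\ref{lem:iso-after-set-zero} promotes an isomorphism mod $u$ between free, finitely generated $\F[u]$-complexes to an isomorphism. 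If you want to keep the shape of your argument, drop the change of variables and work with $\mu_p$ itself; but then the clean resolution-by-resolution bijection ``monomials without $x_p$ $\to$ monomials with $x_p$'' is no longer literally your map over $\F[u]$, and the mod-$u$ reduction (or an equivalent filtration argument) is the efficient way to conclude. Canonicity then comes for free, since $\mu_p$ is built from $(D,p)$ alone.
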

\begin{proof}
Let $p$ be a basepoint on $D$. Every resolution $D_v$ of $D$ comes equipped with a single distinguished circle $c_p$ containing the point $p$. Define a map
\[
\mu_p \colon \Kc'(D) \rightarrow \Kc'(D)
\]
by multiplying by $x_p$ in each resolution. It is straightforward to check that $\mu_p$ commutes with the edge maps and is hence a chain map. Note that under the isomorphism of (\ref{eqn:reducediden}), the pointed reduced complex $\Kcrm_p(D)$ is identified with the subcomplex $\mathrm{im}(\mu_p) \subseteq \Kc'(D)$.

Restricting the domain of $\mu_p$ gives a chain map
\[
\mu_p |_{\Kcrm_{un}(D)} \colon \Kcrm_{un}(D) \rightarrow \mathrm{im}(\mu_p) \cong \Kcrm_{p}(D).
\]
We claim that this restriction is an isomorphism. To see this, set $u = 0$. We obtain a chain map
\[
\widehat{\Kcr}_{un}(D) \rightarrow \widehat{\Kcr}_{p}(D).
\]
This is easily seen to be surjective. Indeed, consider a generator of $\widehat{\Kcr}_{p}(D)$ which labels the circles $c_p$, $c_{i_1}, \ldots, c_{i_k}$ with $x$. This corresponds to the element $x_p x_{i_1} \cdots x_{i_k}$ in $\Kc'(D)$. If $u = 0$, then $x_p x_{i_1} \cdots x_{i_k} = x_p(x_p + x_{i_1}) \cdots (x_p + x_{i_k})$, and hence is in the image of $\mu_p |_{\Kcrm_{un}(D)}$.

On the other hand, it is straightforward to see that both complexes have half the dimension of $\Kchat(D)$. For the pointed reduced complex this is obvious from the definition; while after setting $u = 0$, $\Kcrm_{un}(D_v)$ is easily seen to be a half-dimensional subspace of the exterior algebra on $Z(D_v)$. (The exterior algebra on $Z(D_v)$ has dimension $2^m$, where $m$ is the number of circles in $Z(D_v)$. The subspace in question is the exterior algebra on $x_1 + x_i$ for $2 \leq i \leq m$ and thus has dimension $2^{m-1}$.) By Lemma~\ref{lem:iso-after-set-zero}, a chain map between free, finitely-generated $\F[u]$-complexes which is an isomorphism after setting $u = 0$ is itself an isomorphism.
\end{proof}

In particular, it follows that the chain isomorphism type of the reduced complex does not depend on the choice of basepoint $p$. We thus write $\Kcrm(D)$ to refer to this isomorphism type, without fixing a basepoint or even specifying whether we are using the pointed or unpointed construction.

\subsubsection*{Splitting of the unreduced complex} It is well-known that the unreduced theory splits into two copies of the (pointed) reduced theory. We record this below. Let $\Kcrm_1(D) = \Kcm(D)/\Kcrm_p(D)$. As an $\f[u]$-module, this is spanned by the generators that label $c_p$ by $1$. We write 
\[
0 \rightarrow \Kcrm_p(D) \xrightarrow{i_p} \Kcm(D) \xrightarrow{j_p} \Kcrm_1(D) \rightarrow 0.
\]
It follows from the work of Wigderson \cite{wigderson} that we can find a section $\sigma_p$ of the quotient map $j_p$. (See the discussion after \cite[Proposition 2.23]{Sano}.) Moreover, it is shown in \cite{wigderson} that $\Kcrm_1(D) \cong \Kcrm_p(D)$. Hence we obtain:


\begin{thm}[{\cite{wigderson, schutz-integral}}]\label{thm:wigderson}
    \label{thm:bar-natan-theory-splits-nonequivariantly}
    We have
    \[
    \Kcm(D) = \Kcrm_p(D) \oplus \sigma_p(\Kcrm_1(D)) \cong \Kcrm_p(D) \oplus \Kcrm_p(D).
    \]
\end{thm}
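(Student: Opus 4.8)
The plan is to reduce everything to the already-established non-equivariant splitting of Khovanov homology over $\f[u]$, which is the content of the theorem statement, and assemble the three ingredients one needs: a basepoint-pointed short exact sequence, a section, and the identification $\Kcrm_1(D) \cong \Kcrm_p(D)$.

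First I would fix a basepoint $p$ on $D$ and recall the pointed reduced subcomplex $\Kcrm_p(D) \subseteq \Kcm(D)$ from Definition~\ref{def:pointed-reduced}, together with the quotient complex $\Kcrm_1(D) = \Kcm(D)/\Kcrm_p(D)$. Since at each resolution $D_v$ the module $\AbFunc(v) = \bigotimes_{c \in Z(D_v)} \cA$ splits as an $\f[u]$-module into the span of generators labeling $c_p$ by $x$ and the span of those labeling $c_p$ by $1$, one checks directly that $\Kcrm_p(D)$ is a subcomplex and hence $\Kcrm_1(D)$ is a quotient complex, giving the short exact sequence
\[
0 \rightarrow \Kcrm_p(D) \xrightarrow{i_p} \Kcm(D) \xrightarrow{j_p} \Kcrm_1(D) \rightarrow 0.
\]
All of these are free, finitely-generated $\f[u]$-complexes.

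Next I would invoke Wigderson's result (as cited after \cite[Proposition 2.23]{Sano}) to produce an explicit $\f[u]$-linear chain map $\sigma_p \colon \Kcrm_1(D) \to \Kcm(D)$ splitting $j_p$, i.e.\ $j_p \circ \sigma_p = \id$. Given such a section, the short exact sequence splits and one obtains $\Kcm(D) = \Kcrm_p(D) \oplus \sigma_p(\Kcrm_1(D))$ as chain complexes over $\f[u]$. Finally, one quotes the further fact from \cite{wigderson} (and its integral refinement \cite{schutz-integral}) that $\Kcrm_1(D) \cong \Kcrm_p(D)$, which combined with the splitting yields $\Kcm(D) \cong \Kcrm_p(D) \oplus \Kcrm_p(D)$.

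The main obstacle is purely one of citation bookkeeping rather than new argument: one must verify that Wigderson's construction of the section and of the isomorphism $\Kcrm_1(D) \cong \Kcrm_p(D)$, originally phrased for Khovanov homology (the $u = 0$ theory), carries over verbatim to the Bar-Natan deformation over $\f[u]$. This is immediate because Wigderson's maps are defined by the same local formulas at the level of the cube of resolutions and do not use the specific value of $u$; alternatively, one can produce a candidate chain isomorphism over $\f[u]$ and check it is an isomorphism after setting $u = 0$, then appeal to Lemma~\ref{lem:iso-after-set-zero}. Since the statement is explicitly attributed to \cite{wigderson, schutz-integral}, no further work is required beyond recording this observation.
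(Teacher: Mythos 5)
Your proposal matches the paper's own treatment: the paper records exactly the same short exact sequence $0 \to \Kcrm_p(D) \to \Kcm(D) \to \Kcrm_1(D) \to 0$, cites Wigderson (via the discussion after \cite[Proposition 2.23]{Sano}) for the section $\sigma_p$ and for the isomorphism $\Kcrm_1(D) \cong \Kcrm_p(D)$, and assembles the splitting from these ingredients. Your closing remark about transferring the argument from $u=0$ to the Bar-Natan deformation (or checking an isomorphism mod $u$ and appealing to Lemma~\ref{lem:iso-after-set-zero}) is a harmless extra precaution, since the cited results are already stated at the Bar-Natan level.
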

\noindent
We let
\[
i_p \colon \Kcrm_p(D)\to \Kcm(D) \quad \text{and} \quad \pi_p \colon \Kcm(D) \rightarrow \Kcrm_p(D)
\]
be inclusion and projection onto the first factor with respect to this splitting, so that $\pi_p \circ i_p = \id$. These will be useful for defining cobordism maps on the reduced complex.

\subsection{Behavior under localization}
It is well-known that $\Kc^\infty(L)$ is standard. We review the precise formulation of this here; our discussion here is taken from \cite[Section 3]{Sano}.

\begin{defn}
Let $D$ be a link diagram of $n$ components. For each orientation $o$ on $D$, define the \textit{canonical generator} $\frs_o$ as follows. Let $D_v$ be the oriented resolution of $D$ corresponding to $o$.
Color the planar regions of $\mathbb{R}^2 - D_v$ in the checkerboard manner, with the unbounded region colored white. For each circle $c \in Z(D_v)$, label $c$ with $x$ if there is a black region to the left of $c$ (with respect to its induced orientation), and with $x + u$ otherwise. It can be checked that $\frs_o$ is a cycle; see \cite[Section 3]{Sano}.
\end{defn}

\begin{prop}[{\cite[Theorem 2.1]{lipshitz-sarkar-mixed}}]\label{prop:unreduced-infty-splitting}
	Let $D$ be a link diagram for an $n$-component link $L$. Denote by $o(L)$ the set of orientations on $L$. Then the $2^{n}$ classes $[\frs_o]$ form a basis for $H_*(\Kc^{\infty}(D))$:
	\[
	H_*(\Kc^{\infty}(D)) = \spa_{\f[u, u^{-1}]}\{ [\frs_o] \colon o \in o(L) \} \cong \f[u, u^{-1}]^{2^{n}}.
	\]
\end{prop}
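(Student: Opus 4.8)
The plan is to localize and diagonalize the Frobenius algebra. Over $R = \f[u, u^{-1}]$ we have $\mathcal{A} \otimes_{\f[u]} R \cong R[x]/(x(x+u))$, and since $u$ is now invertible this algebra splits via the complementary idempotents $e_+ = x/u$ and $e_- = 1 + x/u$ (so that $e_\pm^2 = e_\pm$, $e_+ e_- = 0$, $e_+ + e_- = 1$). Thus $\mathcal{A} \otimes R = R e_+ \oplus R e_-$, and a short computation rewrites the TQFT structure in this basis: the merge map satisfies $m(e_\pm \otimes e_\pm) = e_\pm$ and $m(e_+\otimes e_-) = m(e_-\otimes e_+) = 0$, while the split map satisfies $\Delta(e_\pm) = u\,(e_\pm \otimes e_\pm)$. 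Since $x = u\,e_+$ and $x + u = u\,e_-$, the canonical generator $\frs_o$ is, up to the unit $u^{|Z(D_{v(o)})|}$, exactly the oriented resolution $D_{v(o)}$ with each circle labelled by the idempotent prescribed by the checkerboard rule.

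Next I would pass to the combinatorial complex obtained by expanding every tensor factor $\mathcal{A} \otimes R = R e_+ \oplus R e_-$ (equivalently, by Bar--Natan delooping in the idempotent basis). This presents $\Kc^{\infty}(D)$, up to chain homotopy equivalence, as a complex $\widetilde{C}$ whose $R$-basis consists of pairs $(v, \sigma)$ with $v$ a vertex of the cube of resolutions and $\sigma\colon Z(D_v) \to \{+, -\}$ a colouring of the circles. By the formulas above the edge maps are transparent and never mix colours: a merge sends $(v, \sigma)$ to $(w, \sigma')$ with coefficient $1$ when the two merged circles share a colour (which then colours the merged circle), and to $0$ otherwise; a split sends $(v, \sigma)$ to the colouring of $D_w$ in which the two new circles both inherit the colour of the split circle, with coefficient $u$.

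The heart of the matter --- and the step I expect to be the main obstacle --- is to decompose $\widetilde{C}$ as a direct sum of subcomplexes, one for each orientation $o$ of $L$ and the rest acyclic. Following Lee's analysis of the analogous deformation, one argues that each colouring occurring in $\widetilde C$ is ``generated'' by a unique orientation of $L$ via the checkerboard rule applied along resolutions, that the subcomplex attached to $o$ has homology a free rank-one $R$-module generated by the class of the Seifert-coloured generator of $D_{v(o)}$, and that the remaining colourings assemble into acyclic subcomplexes --- essentially tensor products of two-term complexes of the form $R \xrightarrow{1} R$ and $R \xrightarrow{u} R$, each acyclic over $R$. Granting this, $H_*(\widetilde{C}) \cong R^{2^n}$ with basis the classes of the Seifert-coloured generators; undoing the delooping identification carries these to $[\frs_o]$ (the unit $u^{|Z(D_{v(o)})|}$ being invisible in homology over $R$), which is the claim. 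An alternative to the explicit decomposition is induction on the number of crossings of $D$: resolving one crossing exhibits $\Kc^{\infty}(D)$ as the mapping cone of the saddle map between the localized complexes of the two smaller diagrams, which are understood by induction, and one tracks which orientations of $L$ restrict to orientations of each resolution; the bookkeeping of which classes survive in the mapping cone is the analogue of the obstacle above.
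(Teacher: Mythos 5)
First, note that the paper does not actually prove this proposition: it is quoted verbatim from Lipshitz--Sarkar, and ultimately rests on Lee's theorem and its characteristic-two Bar-Natan analogue (Turner, Bar-Natan--Morrison). So there is no in-paper argument to compare against; the relevant comparison is with that literature, and your route is exactly the standard one. The parts you compute are correct: over $R=\f[u,u^{-1}]$ the idempotents $e_+=x/u$ and $e_-=1+x/u$ are orthogonal and complementary, $m(e_\pm\otimes e_\pm)=e_\pm$, $m(e_+\otimes e_-)=0$, $\Delta(e_\pm)=u\,e_\pm\otimes e_\pm$, and $\frs_o$ is a unit multiple of the checkerboard-coloured oriented resolution, so identifying the surviving delooped generators with the classes $[\frs_o]$ is legitimate.

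The genuine incompleteness is exactly where you flag it: the paragraph beginning ``the heart of the matter'' asserts the splitting rather than proving it, and the statement you propose to grant (``each colouring is generated by a unique orientation of $L$'') is not the statement one actually establishes. The decomposition that works is indexed by colourings of the arcs of the underlying $4$-valent graph of $D$: in the delooped complex a nonzero edge map never changes the colour of any arc away from the crossing being resolved (same-colour merges and splits preserve all arc colours, different-colour merges vanish), so $\widetilde C$ splits as a direct sum over arc-colourings. In a fixed summand, crossings whose four incident ends share one colour contribute edge maps with coefficient $1$ or $u$ -- both units over $R$ -- so that summand is an acyclic cube (an iterated cone of quasi-isomorphisms, not literally a tensor product, since merge versus split varies over the cube); summands incompatible with every smoothing at some crossing are zero; and the surviving summands, each a single generator with zero differential, are the arc-colourings in which the two strands at every crossing carry different colours. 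Identifying these with orientations of $L$ is Lee's lemma and is not free: one direction needs that in an oriented resolution the two arcs at each crossing lie on \emph{distinct} circles receiving \emph{opposite} checkerboard colours (a Jordan-curve argument), and the converse requires reconstructing an orientation from such a colouring (Lee's nesting-number argument). Your alternative -- induction on crossings via the mapping cone of the saddle map -- also works, but the bookkeeping of which classes survive in the cone is of comparable length. In short: right approach, correct algebra where given, but the central combinatorial lemma is invoked in spirit rather than proved.
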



Moreover, exactly half of the generators of Theorem \ref{prop:unreduced-infty-splitting} lie in the reduced subcomplex. Indeed, the canonical generators $\frs_o$ and $\frs_{\overline{o}}$ are related by changing each label $x$ to $x + u$, and vice versa. The following result states that the reduced complex is spanned by the $2^{n-1}$ canonical generators which label the appropriate circle $c_p$ with $x$. Moreover, although the other generators do not lie in the reduced complex, they project isomorphically onto the reduced complex via $\pi_p$: 

\begin{prop}[{\cite[Proposition 3.12]{Sano}}]\label{prop:reduced-bar-natan-calculation}
Let $(D,p)$ be a pointed link diagram for an $n$-component link $L$ and let $o'(L) = \{o \in o(L) \colon \frs_o \text{ labels } c_p \text{ with } x\}$. Then
\[
H_*(\Kcr^{\infty}_{p}(D)) = \spa_{\f[u, u^{-1}]}\{ [\frs_o] \colon o \in o'(L) \} \cong \f[u, u^{-1}]^{2^{n-1}}.
\]
Moreover, $o \in o'(L, p)$ if and only if $\overline{o} \notin o'(L, p)$. However, $\pi_p([\frs_o]) = \pi_p([\frs_{\overline{o}}])$ for all $o \in o(L)$.
\end{prop}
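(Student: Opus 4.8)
The plan is to deduce the reduced statement from the unreduced computation of Proposition~\ref{prop:unreduced-infty-splitting} together with the splitting of Theorem~\ref{thm:wigderson}, using the basepoint action on $\Kcinf(D)$ to locate $H_*(\Kcr^{\infty}_p(D))$ inside $H_*(\Kcinf(D))$.

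First I would introduce the basepoint operator $X_p\colon\Kcinf(D)\to\Kcinf(D)$, multiplication by $x$ on the circle $c_p$ through $p$. A routine check on merges and splits (using the Frobenius identity $\Delta(xa)=(x\otimes 1)\Delta(a)$) shows $X_p$ is an $\f[u,u^{-1}]$-linear chain map. On $\mathcal{A}\otimes\f[u,u^{-1}]$, multiplication by $x$ has matrix $\left(\begin{smallmatrix}0&0\\1&u\end{smallmatrix}\right)$ in the basis $\{1,x\}$, so it is diagonalizable with eigenvalues $u$ and $0$ and eigenvectors $x$ and $x+u$. Tensoring over the remaining circles and using that $\partial$ commutes with $X_p$, one gets a splitting $\Kcinf(D)=\Kcr^{\infty}_p(D)\oplus N$ into subcomplexes, where $\Kcr^{\infty}_p(D)$ is the span of generators labeling $c_p$ with $x$ (the $u$-eigenspace of $X_p$) and $N$ is the $0$-eigenspace. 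On $\Kcr^{\infty}_p(D)$ the map $X_p$ is literally multiplication by the unit $u$, and on $N$ it is literally $0$; hence on homology $X_p$ acts as $u$ on $H_*(\Kcr^{\infty}_p(D))$ and as $0$ on $H_*(N)$, so $H_*(\Kcr^{\infty}_p(D))$ is exactly $\ker(X_p-u)$ acting on $H_*(\Kcinf(D))$.

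Next I would evaluate $X_p$ on the basis $\{[\frs_o]\}$ of $H_*(\Kcinf(D))$ from Proposition~\ref{prop:unreduced-infty-splitting}. Directly from the checkerboard definition of $\frs_o$: if $c_p$ is labeled $x$ in $\frs_o$, i.e.\ $o\in o'(L)$, then $X_p\frs_o=u\,\frs_o$; if $c_p$ is labeled $x+u$, i.e.\ $o\notin o'(L)$, then $x(x+u)=ux+ux=0$, so $X_p\frs_o=0$. Thus the $\frs_o$ are eigenvectors of $X_p$ and $\ker(X_p-u)=\spa_{\f[u,u^{-1}]}\{[\frs_o]:o\in o'(L)\}$, whence $H_*(\Kcr^{\infty}_p(D))=\spa_{\f[u,u^{-1}]}\{[\frs_o]:o\in o'(L)\}$ by the previous paragraph. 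For the rank, I would note that $D_{\overline{o}}$ equals $D_o$ as a collection of circles but with every induced orientation reversed, so the checkerboard rule interchanges the labels $x$ and $x+u$ on every circle; in particular the label on $c_p$ switches, so exactly one of $o,\overline{o}$ lies in $o'(L)$. Since $o\mapsto\overline{o}$ is a free involution on the $2^n$ orientations, $|o'(L)|=2^{n-1}$, and since the $[\frs_o]$ with $o\in o'(L)$ form part of the basis of Proposition~\ref{prop:unreduced-infty-splitting} they are linearly independent, giving $H_*(\Kcr^{\infty}_p(D))\cong\f[u,u^{-1}]^{2^{n-1}}$ and the ``moreover'' clause. (Note that this much uses only the eigenspace splitting and Proposition~\ref{prop:unreduced-infty-splitting}, not Theorem~\ref{thm:wigderson}.)

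Finally I would prove $\pi_p([\frs_o])=\pi_p([\frs_{\overline{o}}])$, and I expect this to be the main obstacle. When $o\in o'(L)$ the cycle $\frs_o$ lies in $\Kcrm_p(D)$, so $\pi_p([\frs_o])=[\frs_o]$; since the assertion is symmetric in $o\leftrightarrow\overline{o}$ we may assume $o\in o'(L)$, and the content becomes $\pi_p([\frs_{\overline{o}}])=[\frs_o]$, equivalently $[\frs_o]+[\frs_{\overline{o}}]\in\ker\pi_p$. This is genuinely finer than the previous steps: it is invisible to the basepoint action, since $X_p$ kills $[\frs_{\overline{o}}]$ while $\pi_p$ does not, and $\ker\pi_p$ --- which is the homology of the localization of $\sigma_p(\Kcrm_1(D))$ --- is \emph{not} the $0$-eigenspace $H_*(N)$. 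My plan is to unwind Wigderson's section: writing $\pi_p=\id-\sigma_p\circ j_p$ and $j_p(\frs_{\overline{o}})=u\cdot(1_{c_p}\otimes r)$, where $r$ is the labeling of $\frs_{\overline{o}}$ on the circles other than $c_p$, the claim reduces to showing $\frs_{\overline{o}}+u\,\sigma_p(1_{c_p}\otimes r)$ is homologous to $\frs_o$ in $\Kcr^{\infty}_p(D)$; this is where one must invoke the explicit description of $\sigma_p$ from \cite{wigderson} (cf.\ the discussion after \cite[Proposition 2.23]{Sano}). As an alternative I would also try exploiting that fixing $1$ and sending $x\mapsto x+u$ is a Frobenius-algebra automorphism of $\mathcal{A}$, hence induces a chain automorphism $\Phi$ of $\Kcm(D)$ with $\Phi([\frs_o])=[\frs_{\overline{o}}]$, and then aim to show $\pi_p\circ\Phi=\pi_p$ on homology --- which, since $\Phi$ does not preserve $\Kcrm_p(D)$, once more reduces to comparing $\Phi$ with the splitting of Theorem~\ref{thm:wigderson}.
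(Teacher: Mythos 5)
First, a point of reference: the paper does not prove this proposition at all --- it is quoted from \cite[Proposition 3.12]{Sano}, with the accompanying remark attributing the description of $\Kcr^\infty$ to Lee--Rasmussen and singling out the final clause about $\pi_p([\frs_o])=\pi_p([\frs_{\overline{o}}])$ as the part that is new in \cite{Sano} and essential for this paper. So there is no in-paper argument to compare against, and your proposal has to stand on its own. Your first two paragraphs do: the basepoint operator $X_p$ with $X_p^2=uX_p$, the eigenspace splitting $\Kcinf(D)=\ker(X_p-u)\oplus\ker(X_p)$ over $\f[u,u^{-1}]$ (the projections $X_p/u$ and $\id+X_p/u$ are chain maps), the observation that $X_p\frs_o=u\frs_o$ or $0$ according to the label on $c_p$, and the label-flipping argument for $o\leftrightarrow\overline{o}$ together give a correct and complete proof of the identification $H_*(\Kcr^{\infty}_p(D))=\spa_{\f[u,u^{-1}]}\{[\frs_o]:o\in o'(L)\}\cong\f[u,u^{-1}]^{2^{n-1}}$ and of the ``moreover'' clause.

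The genuine gap is the third clause, $\pi_p([\frs_o])=\pi_p([\frs_{\overline{o}}])$, which you explicitly leave as a plan rather than a proof. You correctly diagnose why it does not follow from anything you have set up: $\pi_p$ is projection along $\sigma_p(\Kcrm_1(D))$ in the Wigderson splitting, not along the $0$-eigenspace of $X_p$, and indeed the eigenspace projection sends $[\frs_{\overline{o}}]$ to $0$ while the proposition asserts $\pi_p$ sends it to $[\frs_o]\neq 0$. But your two proposed routes --- unwinding the explicit section $\sigma_p$ from \cite{wigderson} to show $\frs_{\overline{o}}+u\,\sigma_p(1_{c_p}\otimes r)$ is homologous to $\frs_o$, or comparing $\pi_p$ with $\pi_p\circ\Phi$ for the Frobenius automorphism $x\mapsto x+u$ --- are both left unexecuted, and each requires exactly the structural input (the concrete form of $\sigma_p$, equivalently of the splitting in Theorem~\ref{thm:wigderson}) that you never bring in. Since this clause is precisely what the paper uses later (it is the input to the locality argument in Theorem~\ref{thm:knotlike-locality}), the proposal as written establishes only the Lee--Rasmussen-type part of the statement and not the part the proposition is quoted for; to complete it you would need to either reproduce Sano's computation with the explicit Wigderson section or supply an independent argument that $[\frs_o]+[\frs_{\overline{o}}]$ lies in $\ker\pi_p$ after localization.
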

\begin{rmk}
  The description of $\Kcr^\infty$ in Theorem~\ref{prop:reduced-bar-natan-calculation} is due to \cite{lee,Rasmussen}.
  However, for us the last part of Theorem~\ref{prop:reduced-bar-natan-calculation} regarding the behavior of $[\frs_o]$ and $[\frs_{\overline{o}}]$ will be essential. To the best of the authors' knowledge, this first appears explicitly in \cite{Sano}.
\end{rmk}

\begin{example}
Let $D$ be the trivial unknot diagram equipped with a basepoint $p$. In this case, it can be checked that the Wigderson splitting is given by the obvious splitting of $\Kcm(D)$:
\[
\Kcm(D) = (\f[u] \cdot x) \oplus (\f[u] \cdot 1). 
\]
The two canonical generators of $\Kc^{\infty}(D)$ are given by $[x]$ and $[x + u]$, which both project to $[x]$.
\end{example}

\subsection{Invariance and cobordism maps for unreduced theory}\label{sub:cobordism_maps}
We now briefly review the proof of invariance and the existence of cobordism maps for Bar-Natan homology. Although this material is standard, setting up the appropriate framework will be important when we discuss these notions in the presence of an involution. 

\subsubsection*{Invariance for the unreduced theory} 
Let $L$ be a link in $S^3$. Perturbing $L$ if necessary, we obtain a link diagram $D$ for $L$. We claim that the homotopy type of $\Kcm(D)$ is a well-defined invariant of $L$. Indeed, if $D$ and $D'$ are link diagrams obtained by two perturbations of $L$, then there exists a sequence of Reidemeister moves going from $D$ to $D'$. A Reidemeister move from $D$ to $D'$ induces a homotopy equivalence $\Kcm(D) \simeq \Kcm(D')$, with homotopy inverse given by the reverse Reidemeister move. (Strictly speaking, in addition to Reidemeister moves, there are also planar isotopies, which induce obvious isomorphisms.) Composing these if necessary, we see that the homotopy type of $\Kcm(D)$ is a well-defined invariant associated to $L$, which we denote by $\Kcm(L)$. More generally, this shows that $\Kcm(L)$ is an invariant of the isotopy class of $L$. 

\subsubsection*{Cobordism maps for the unreduced theory}
Let $\Sigma$ be a cobordism from $L_1$ to $L_2$ in $S^3 \times I$. Assume $\Sigma$ is generic with respect to the standard projection, so that $\Sigma$ is diagrammatically depicted by a sequence $\{S_i\}_{i = 1}^k$ of Reidemeister moves and elementary cobordism moves. To define the cobordism map $\Kcm(\Sigma)$, we define elementary cobordism maps associated to each Reidemeister move and elementary cobordism move. The maps $\Kcm(S_i)$ for the Reidemeister moves are just the homotopy equivalences discussed above. Elementary cobordism moves come in the following forms: births, deaths, and saddles. For each of these, there is a well-known elementary cobordism map $\Kcm(S_i)$; see for example \cite{blanchet}. We then define:
\begin{defn}\label{def:unreduced-cobordism}
Let $\Sigma$ be a cobordism in $S^3 \times I$ from $L_1$ to $L_2$. Assume $\Sigma$ is generic, so that $\Sigma$ gives rise to sequence $\{S_i\}_{i = 1}^k$ of Reidemeister moves and elementary cobordism moves from a diagram $D_1$ of $L_1$ to a diagram $D_2$ of $L_2$. Define the associated cobordism map to be the composition
\[
\Kcm(\Sigma) = \Kcm(S_k) \circ \cdots \circ \Kcm(S_1) \colon \Kcm(D_1) \rightarrow \Kcm(D_2)
\]
where the $\Kcm(S_i)$ are the maps associated to each elementary move. If $\Sigma$ is an oriented cobordism between oriented links, it can be checked that
\[
\gr(\Kcm(\Sigma)) = (0, -2g(\Sigma)).
\]
See for example \cite{blanchet}.
\end{defn}

Note that even if $L_1$ and $L_2$ are generic with respect to the standard projection, $\Sigma$ may not itself be generic. In this case, we apply a small perturbation of $\Sigma$ rel boundary in order to define $\Kcm(\Sigma)$. We may then ask whether $\Kcm(\Sigma)$ is independent of the choice of perturbation; or, more generally, whether the chain homotopy type of $\Kcm(\Sigma)$ is independent of isotopy rel boundary. The following theorem is due to Morrison-Walker-Wedrich \cite{MWW} building on work of Jacobsson \cite{jacobsson}; see \cite[Proposition 3.7]{lipshitz-sarkar-mixed} for the claim in the Bar-Natan setting:

\begin{thm}\label{thm:cobordism-naturality}
Let $\Sigma$ and $\Sigma'$ be two cobordisms from $L_1$ to $L_2$ in $S^3 \times I$. Assume that $\Sigma$ and $\Sigma'$ are both generic with respect to the standard projection. If $\Sigma$ and $\Sigma'$ are isotopic rel boundary in $S^3 \times I$, then $\Kcm(\Sigma)$ and $\Kcm(\Sigma')$ are chain homotopic.
\end{thm}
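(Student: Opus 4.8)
The plan is to reduce the statement to a finite, local verification via the theory of movie moves. Since $\Sigma$ and $\Sigma'$ are generic, each is presented by a movie $\{S_i\}$, and by the movie-move theorem (in the form used by Morrison--Walker--Wedrich \cite{MWW}, building on Jacobsson \cite{jacobsson}), any two generic cobordisms from $L_1$ to $L_2$ that are isotopic rel boundary in $S^3 \times I$ are related by a finite sequence of \emph{movie moves}: replacements of a finite sub-movie by another, drawn from a fixed finite list. This list comprises the Carter--Saito movie moves, the ``distant-commutation'' moves that reorder elementary pieces supported in disjoint regions of the diagram, and -- because we work in $S^3 \times I$ rather than $\mathbb{R}^3 \times I$ -- the sweep-around move of passing a strand over the point at infinity (the $I$-move for cobordisms). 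Because $\Kcm(\Sigma)$ is a composition of the elementary maps $\Kcm(S_i)$, and because chain homotopy is preserved under composition, it suffices to check that for each movie move the two composite maps built from the left- and right-hand sub-movies are chain homotopic.

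First I would dispose of the easy moves. For the distant-commutation moves the two composites agree on the nose, since each is built from $0$-, $1$-, $2$-handle or Reidemeister cobordisms supported in disjoint balls, and the corresponding maps commute strictly. For the movie moves involving only Morse critical points (births, deaths, saddles), the required relations are exactly the axioms of the $(1{+}1)$-dimensional TQFT attached to the Frobenius algebra $\cA = \spa_{\F[u]}\{1,x\}$ of Section~\ref{sec:khovanov-homology-preliminaries}: (co)associativity, the Frobenius relation, the (co)unit axioms, and the neck-cutting and sphere-evaluation relations. One checks directly that $\cA$, with the multiplication $m(x\otimes x)=ux$ and comultiplication $\Delta(1)=1\otimes x + x\otimes 1 + u(1\otimes 1)$, satisfies all of these; this is Bar-Natan's original local-relations computation \cite{bar-natan-tangle}, and it is precisely what makes $\Kcm(-)$ a functor out of the (dotted) cobordism category. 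For the mixed moves -- a Reidemeister move interacting with a saddle, birth, or death -- one combines these local relations with the explicit formulas for the Reidemeister homotopy equivalences; the structural point is that the map assigned to a Reidemeister move and to its reverse compose to the identity up to an explicitly writable homotopy (via delooping and Gaussian elimination), so that a cancelling pair of Reidemeister moves contributes trivially.

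The genuine obstacle is the family of moves in which a Reidemeister~3 move participates (the moves MM8, MM10 in the Carter--Saito numbering) together with the sweep-around move. The cleanest way to handle these is a ``no room'' argument: after delooping and cancelling, both composites are endomorphisms of a very small complex (often a single resolution, or a two-term complex homotopy equivalent to $\Kcm$ of an elementary tangle), and the space of chain homotopy classes of endomorphisms of the identity functor on $\Kom$ of the cobordism category in bidegree $(0,0)$ is just $\F[u]$ acting by scalars; hence it is enough to know that the two composites induce the same map after localizing, which is immediate from Proposition~\ref{prop:unreduced-infty-splitting} since isotopic cobordisms induce the same map on $\Kcinf$ (they act identically on the canonical generators $[\frs_o]$). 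Over $\Z$ this is exactly the point at which sign anomalies arise and where the arguments of \cite{jacobsson, MWW} are needed; over $\F = \F_2$ the signs vanish and the verification simplifies considerably. Since the statement and all of its ingredients are already recorded in the literature -- \cite{MWW}, building on \cite{jacobsson}, with the Bar-Natan case stated as \cite[Proposition 3.7]{lipshitz-sarkar-mixed} -- in the paper we simply invoke these results rather than reproduce the full movie-move check.
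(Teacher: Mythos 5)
The paper offers no proof of this theorem at all: it is imported wholesale from the literature, being attributed to Morrison--Walker--Wedrich \cite{MWW} building on Jacobsson \cite{jacobsson}, with the Bar-Natan version cited as \cite[Proposition 3.7]{lipshitz-sarkar-mixed} --- which is precisely what your final paragraph does. Your preceding movie-move sketch is a reasonable outline of how those references argue (and over $\F_2$ the sign issues they wrestle with indeed disappear), so your proposal is correct and takes essentially the same approach as the paper.
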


Thus, if we fix the ends of $\Sigma$, then $\Kcm(\Sigma)$ is independent of the choice of perturbation. More generally, the chain homotopy type of $\Kcm(\Sigma)$ is unchanged by isotopy rel boundary.

\begin{rmk}
Note that we may view the homotopy equivalence associated to a sequence of Reidemeister moves as a special case of a cobordism map. Indeed, any sequence of Reidemeister moves corresponds to an isotopy in $\mathbb{R}^3$; the cobordism in question is just the trace of this isotopy.
\end{rmk}


Finally, we discuss the behavior of cobordism maps with respect to the canonical generators. We say that a cobordism $\Sigma$ from $L_1$ to $L_2$ is \emph{weakly connected} if every component of $\Sigma$ has a boundary component in $L_1$. Note that if $L_1$ is given an orientation $o_1$ and $\Sigma$ weakly connected, then there is a unique orientation on $\Sigma$ which extends $o_1$; this induces a corresponding orientation $o_2$ on $L_2$. The following fact is well-known; see for example \cite[Theorem 4.1]{Rasmussen}, \cite[Theorem 2.4]{Lipshitz-Sarkar-refinement}, or \cite[Proposition 4.29]{Sano}:

\begin{prop}\label{thm:unreduced-local}
\label{prop:cob}
Let $L_1$ and $L_2$ be equipped with orientations $o_1$ and $o_2$ and let $\Sigma$ be a weakly connected, oriented cobordism from $(L_1, o_1)$ to $(L_2, o_2)$. Then $\Kc^\infty(\Sigma)$ maps $[\mathfrak{s}_{o_1}]$ to a nonzero $u$-multiple of $[\mathfrak{s}_{o_2}]$:
\[
\Kc^\infty(\Sigma)([\mathfrak{s}_{o_1}]) = u^{n} [\mathfrak{s}_{o_2}]
\]
for some $n \geq 0$.
\end{prop}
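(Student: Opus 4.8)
The plan is to decompose $\Sigma$ into a sequence of elementary moves (Reidemeister moves, births, deaths, and saddles) and to track the canonical generator through each one, following the strategy of \cite{lee, Rasmussen} (see also \cite[Proposition 4.29]{Sano}). Before doing so I would fix the bookkeeping. Since $\Sigma$ is weakly connected, the orientation $o_1$ on $L_1$ extends uniquely to an orientation of $\Sigma$: every component of $\Sigma$ meets $L_1$, and the orientations of the circles of $L_1$ lying on a given component extend coherently over it; the induced boundary orientation on $L_2$ is then $o_2$. Hence at every frame of the cobordism movie there is a well-defined oriented resolution, and it is here that weak connectedness is really used. Weak connectedness also forbids closed components of $\Sigma$, which allows me to rearrange the handle decomposition of $\Sigma$ rel $L_1 \times I$ so that it uses no $0$-handles — equivalently, to choose a movie with no birth moves. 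Finally, since $\Sigma$ is oriented I would arrange that every saddle in the movie is an oriented saddle, joining two anti-parallel arcs. It then suffices to show that each remaining elementary move carries the canonical generator of its incoming diagram to a non-negative power of $u$ times the canonical generator of its outgoing diagram.

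Next I would verify the elementary moves. For a Reidemeister move the associated homotopy equivalence carries $\frs_o$ to $\frs_{o'}$ on the nose; this is standard. For a death one caps off an innermost Seifert circle of the oriented resolution, which is labeled $x$ or $x+u$, and since $\epsilon(x) = \epsilon(x+u) = 1$ the death map again carries $\frs_o$ to $\frs_{o'}$ exactly. The substantive case is an oriented saddle $D \to D'$. This map acts on each resolution $\AbFunc(v)$ by the corresponding merge or split, and since no crossing is altered the oriented resolutions of $D$ and $D'$ are indexed by the same cube vertex, so only the local effect at the saddle site matters. If the saddle merges two Seifert circles carrying labels $\ell_\alpha, \ell_\beta \in \{x, x+u\}$, then over $\F$ one has $m(x\otimes x) = ux$, $m(x\otimes(x+u)) = 0$, and $m((x+u)\otimes(x+u)) = u(x+u)$, so the output vanishes unless $\ell_\alpha = \ell_\beta$, in which case it is $u$ times the merged circle labeled $\ell_\alpha$. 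A checkerboard-coloring computation rules out the vanishing case: because the two arcs at the saddle are anti-parallel, the two regions lying on their outer sides carry the same color, which forces $\ell_\alpha = \ell_\beta$ and simultaneously identifies $\ell_\alpha$ as the correct label of the merged circle. Hence a merging saddle sends $\frs_o$ to $u\,\frs_{o'}$. Dually, if the saddle splits a Seifert circle labeled $\ell$, then $\Delta(\ell) = \ell\otimes\ell$ labels both new circles by $\ell$, and the analogous coloring check shows this is $\frs_{o'}$, so a splitting saddle sends $\frs_o$ to $\frs_{o'}$.

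Composing these, $\Kcinf(\Sigma)$ carries $[\frs_{o_1}]$ to $u^n[\frs_{o_2}]$ where $n$ is the number of merging oriented saddles in the movie, so in particular $n \ge 0$; the value of $n$ is also pinned down by the grading identity $\gr_q(\frs_{o_1}) - 2g(\Sigma) = \gr_q(\frs_{o_2}) - 2n$ coming from $\gr(\Kcinf(\Sigma)) = (0, -2g(\Sigma))$. The step I expect to be the main obstacle is the removal of birth moves: a raw birth sends $\frs_o$ to $u^{-1}$ times the sum of the two canonical generators of $D \sqcup O$, rather than to a $u$-multiple of a single one, so one must either justify the handle rearrangement that eliminates $0$-handles (invoking $L_1 \ne \emptyset$ and the absence of closed components) or, as an alternative, pair each birth with the first saddle merging the newborn circle into the rest of $\Sigma$ and check by direct computation that the composite of these two moves carries $\frs_o$ to $\frs_{o'}$ exactly. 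A secondary, more routine point is getting the checkerboard-coloring bookkeeping for oriented saddles precisely right, but this is carried out in the references cited.
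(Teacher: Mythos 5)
The paper does not actually prove this proposition: it is quoted as well known, with references to \cite{Rasmussen}, \cite{Lipshitz-Sarkar-refinement}, and \cite[Proposition 4.29]{Sano}. Your move-by-move tracking of the canonical generators is exactly the argument those references use, so in spirit you are reconstructing the cited proof rather than finding a new one, and the skeleton (oriented saddles act by $m(\ell\otimes\ell)=u\ell$ and $\Delta(\ell)=\ell\otimes\ell$ for $\ell\in\{x,x+u\}$, deaths act by $\epsilon$, weak connectedness controls the orientation bookkeeping) is sound. Your handling of births is also acceptable: eliminating $0$-handles is legitimate once you note that changing the movie is an isotopy rel boundary and invoke Theorem~\ref{thm:cobordism-naturality}; your alternative fix (pair a birth with its first merging saddle) needs the same rearrangement input to make the two moves adjacent, and the references instead simply keep births, write $1=u^{-1}\bigl(x+(x+u)\bigr)$, and observe that the unwanted extension is eventually annihilated by a merge since $m\bigl(x\otimes(x+u)\bigr)=0$.

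The one claim you should not wave through is that ``a Reidemeister move carries $\frs_o$ to $\frs_{o'}$ on the nose.'' In the Bar--Natan theory over $\f[u]$ this fails: the quantum grading of $\frs_o$ is $w(D)-s(D)$ (writhe minus number of Seifert circles), which drops by $2$ under a negative R1 move, while the R1 map is grading-preserving. Concretely, for the negative kink the R1 map is the birth-type map $y\mapsto y\otimes 1$, and one computes $u[x\otimes 1]=[x\otimes(x+u)]$ in $\Kc^\infty$, so the induced map sends $[\frs_o]$ to $u^{-1}[\frs_{o'}]$, not to $[\frs_{o'}]$. So individual Reidemeister moves contribute powers $u^{k}$ with $k$ possibly negative, your formula ``$n=\#\{\text{merging saddles}\}$'' and the closing grading identity are not correct as stated, and the positivity of the total exponent does not follow from your accounting. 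What the elementary-move argument genuinely delivers is that each move sends the canonical class to a (nonzero) power of $u$ times the canonical class of the induced orientation, hence $\Kc^\infty(\Sigma)[\frs_{o_1}]$ is a nonzero $u$-power multiple of $[\frs_{o_2}]$ --- which is the only part of the proposition the paper ever uses (locality in Theorem~\ref{thm:knotlike-locality}). If you want the exponent pinned down or bounded below, you need a separate grading argument rather than the claim that R-moves act trivially on canonical generators.
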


\subsection{Invariance and cobordism maps for reduced theory}\label{sub:cobordism_maps_reduced}
We now discuss what modifications need to be made when discussing the reduced theory.

\subsubsection*{Invariance for the reduced theory}
Let $L$ be a link in $S^3$. Perturbing $L$ if necessary, we obtain a link diagram $D$ for $L$. This allows us to form the isomorphism class $\Kcrm(D)$, either by choosing any basepoint $p$ on $D$ or by invoking the unpointed construction. To establish invariance, first suppose that $D$ and $D'$ are two diagrams for $L$ which differ by a planar isotopy. Fix a basepoint $p$ on $D$ and let $p'$ be the image of $p$ under the planar isotopy from $D$ to $D'$. Then it is clear that the isomorphism $\Kcm(D) \cong \Kcm(D')$ restricts to an isomorphism of subcomplexes $\smash{\Kcrm_p(D) \cong \Kcrm_{p'}(D')}$. Hence (keeping Lemma~\ref{lem:reduced-theories-equivalent} in mind) the isomorphism type of $\Kcrm(D)$ is preserved. 

Now suppose $D$ and $D'$ differ by a single Reidemeister move. Choose a basepoint $p$ which is unaffected by this Reidemeister move, in the sense that $p$ lies outside of the ball in which the Reidemeister move occurs. An examination of the maps associated to Reidemeister moves \cite{vogel} shows that the homotopy equivalences $f$ and $g$ between $\Kcm(D)$ and $\Kcm(D')$ preserve the reduced subcomplexes corresponding to $p$. This implies that we have a homotopy equivalence
\[
\Kcrm_p(D) \simeq \Kcrm_{p}(D').
\]
Indeed, suppose $gf$ is a map from $\Kcm(D)$ itself which is homotopic to $\id$ via a homotopy $H$. If $gf$ preserves $\smash{\Kcrm_p(D)}$, then $gf$ -- as a map from $\smash{\Kcrm_p(D)}$ to itself -- is homotopic to $\id$ via the homotopy $\pi_p H$, where $\pi_p$ is the Wigderson projection. The homotopy type of $\Kcrm(D)$ (which is itself an isomorphism class) is thus an invariant of $L$, which we denote by $\Kcrm(L)$. The same argument proves $\Kcrm(L)$ is an invariant of the isotopy class of $L$.

\begin{rmk}\label{rem:basepointjumping1}
Note that if we are given a sequence of Reidemeister moves from $D$ to $D'$, we do \textit{not} assume that the same basepoint $p$ is chosen throughout. Indeed, we might have to abruptly change our choice of basepoint between one Reidemeister move and the next. Thus we are not claiming to provide a natural homotopy equivalence between $\Kcrm_p(D)$ and $\Kcrm_{p'}(D')$; simply that (in light of Lemma~\ref{lem:reduced-theories-equivalent}) these homotopy types must abstractly be the same.
\end{rmk}

\subsubsection*{Cobordism maps for the reduced theory}
We now discuss how to define cobordism maps on the reduced complex. To truly make a functorial theory, we would have to equip each cobordism with a horizontal arc running between basepoints. This is done for example in \cite{baldwin-levine-sarkar}. However, requiring such an arc turns out to be inconvenient when we move to the equivariant setting. Instead of honestly tacking this problem, we introduce a less-natural version of the cobordism map defined directly in terms the projection and inclusion maps $i_p$ and $\pi_p$ of the Wigderson splitting. 

\begin{defn}\label{def:reduced-cobordism-map}
Let $\Sigma$ be a cobordism in $S^3 \times I$ from $L_1$ to $L_2$ which is generic with respect to the standard projection. For any pair of basepoints $p_1$ on $D_1$ and $p_2$ on $D_2$, define
\[
\Kcrm(\Sigma) \colon \Kcrm_{p_1}(D_1) \rightarrow \Kcrm_{p_2}(D_2)
\]
to be the composition
\[
\Kcrm_{p_1}(D_1) \xrightarrow{i_{p_1}} \Kcm(D_1) \xrightarrow{\Kcm(\Sigma)} \Kcm(D_2) \xrightarrow{\pi_{p_2}} \Kcrm_{p_2}(D_2).
\]
Here, the middle term $\Kcm(\Sigma)$ is the cobordism map for the \textit{unreduced} complex. We stress that the above cobordism map does not even require the \textit{existence} of a path from $p_1$ to $p_2$: the basepoints $p_1$ and $p_2$ are used only to define the Wigderson splittings of the ends. 
\end{defn}

\begin{rmk}
We stress that $\Kcrm(\Sigma)$ depends on a choice of basepoint and perturbation data. Indeed, without a choice of basepoints, the domain and codomain of $\Kcrm(\Sigma)$ are defined only up to isomorphism. Definition~\ref{def:reduced-cobordism-map} is thus assuredly unnatural: $\Kcrm(\Sigma)$ will depend drastically on the choice of basepoints $p_1$ and $p_2$, as these change the Wigderson splitting. If, for example, a different basepoint $p_2$ is chosen on the outgoing end, then $\Kcm(\Sigma)$ will be projected onto a completely different subcomplex of $\Kcm(D_2)$. Even though this new subcomplex is isomorphic to the original, there is no reason to think that -- even under this identification -- the chain homotopy type of $\Kcrm(\Sigma)$ will be preserved. Thus, at minimum, we should really denote $\Kcrm(\Sigma)$ by $\Kcrm(\Sigma, p_1, p_2)$. However, all claims that we make regarding the reduced cobordism maps will hold independent of the choice of basepoints, so we suppress these from the notation and write
\[
\Kcrm(\Sigma) \colon \Kcrm(L_1) \rightarrow \Kcrm(L_2).
\]
Note, however, that if we fix the ends of $\Sigma$ and the choice of basepoints, then $\Kcrm(\Sigma)$ will tautologically satisfy the analogue of Theorem~\ref{thm:cobordism-naturality}. Indeed, if $\Kcm(\Sigma)$ and $\Kcm(\Sigma')$ are chain homotopic, then we immediately have that $\Kcrm(\Sigma)$ and $\Kcrm(\Sigma')$ are chain homotopic, as the latter are obtained from the former by precomposition and postcomposition.
\end{rmk}

\subsection{Knotlike complexes}
We now specialize to the case of knots. If $K$ is a knot, then $\Kcrm(K)$ has especially simple behavior under inverting $u$. We record this in the following:

\begin{defn}\label{def:knotlike}
A \textit{knotlike complex} $C$ is a free, finitely-generated chain complex over $\F[u]$ with a $\Z \oplus \Z$-grading for which $\deg(\partial) = (1, 0)$ and $\deg(u) = (0, -2)$. As usual, we write this as $(\gr_h, \gr_q)$ and refer to the components as the \textit{homological grading} and \textit{quantum grading}, respectively. We require that $C$ satisfy the \textit{localization condition}
\[
u^{-1}H_*(C) \cong \F[u, u^{-1}],
\]
where $\F[u, u^{-1}]$ is supported in homological grading zero and even quantum grading. We say that a relatively graded chain map $f$ between two knotlike complexes $C_1$ and $C_2$ is \textit{local} if the induced map
\[
f \colon u^{-1}H_*(C_1) \cong \F[u, u^{-1}] \rightarrow u^{-1}H_*(C_2) \cong \F[u, u^{-1}]
\]
is an isomorphism. Note that $f$ is not required to be $(\gr_h, \gr_q)$-preserving, but instead preserves $\gr_h$ and shifts $\gr_q$ by an even constant. We refer to $\gr_q(f)$ simply as the associated grading shift.
\end{defn}

The following observation is crucial. We write $C[(a, b)]$ to indicate a grading shift by $(a, b)$, so that an element $x \in C$ of bigrading $\gr(x)$ now has bigrading $\gr(x) - (a, b)$.

\begin{thm}\label{thm:knotlike-locality}
If $K$ is a knot, then $\smash{\Kcrm_p(K)[(0, -1)]}$ is a knotlike complex. Moreover, if $\Sigma$ is a connected cobordism from $K_1$ to $K_2$, then $\Kcrm(\Sigma)$ is a local map of grading shift $- 2g(\Sigma)$.
\end{thm}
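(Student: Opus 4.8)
The plan is to deduce both halves of Theorem~\ref{thm:knotlike-locality} from the results already collected for the unreduced theory, using the Wigderson splitting as the bridge. First I would verify the localization condition for $\smash{\Kcrm_p(K)[(0,-1)]}$. By Proposition~\ref{prop:reduced-bar-natan-calculation} applied to a knot (so $n=1$ and $o'(K)$ consists of a single orientation $o$), we get $H_*(\Kcr^\infty_p(K)) = \spa_{\f[u,u^{-1}]}\{[\frs_o]\} \cong \f[u,u^{-1}]$. Since $\Kcrm_p(K)$ is a free, finitely-generated $\f[u]$-complex, $u^{-1}H_*(\Kcrm_p(K)) = H_*(\Kcr^\infty_p(K))$, so the localization condition holds up to the grading shift; the purpose of the $[(0,-1)]$ shift is precisely to arrange that the generator $[\frs_o]$ lands in homological grading zero and even quantum grading, which is a direct (if slightly tedious) check using the grading formulas of Section~\ref{sec:gradings} evaluated on the canonical generator. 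The free and finitely-generated hypotheses, and the $\deg(\partial) = (1,0)$, $\deg(u) = (0,-2)$ conditions, are immediate from the construction of $\Kcrm_p(D)$ as a subcomplex of $\Kcm(D)$.

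Next I would handle the cobordism map. Recall $\Kcrm(\Sigma)$ is defined (Definition~\ref{def:reduced-cobordism-map}) as $\pi_{p_2} \circ \Kcm(\Sigma) \circ i_{p_1}$. The grading shift claim follows since $\Kcm(\Sigma)$ has grading shift $(0, -2g(\Sigma))$ by Definition~\ref{def:unreduced-cobordism}, while $i_{p_1}$ and $\pi_{p_2}$ are grading-preserving (being inclusion and projection with respect to the Wigderson splitting); the shift of the domain and codomain by $(0,-1)$ affects both sides equally and so leaves the shift unchanged. For locality, I would pass to the $u$-localized theories and trace the canonical generator $[\frs_{o_1}]$ through the composition. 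Orient $K_1$, and since $\Sigma$ is connected (hence weakly connected), there is an induced orientation $o_2$ on $K_2$. By Proposition~\ref{thm:unreduced-local}, $\Kc^\infty(\Sigma)([\frs_{o_1}]) = u^N[\frs_{o_2}]$ for some $N \geq 0$. Now $i_{p_1}$ sends the reduced localized generator to $[\frs_{o_1}]$ (possibly up to reconciling which of $o_1, \overline{o_1}$ labels $c_{p_1}$ by $x$; but by the last part of Proposition~\ref{prop:reduced-bar-natan-calculation}, $\pi_{p}$ identifies $[\frs_o]$ and $[\frs_{\overline{o}}]$, so this ambiguity is harmless). Then $\pi_{p_2}([\frs_{o_2}]) = \pi_{p_2}([\frs_{\overline{o_2}}])$ is a generator of $u^{-1}H_*(\Kcrm_{p_2}(K_2)) \cong \f[u,u^{-1}]$. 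Hence the composite sends a generator of $\f[u,u^{-1}]$ to $u^N$ times a generator, which is an isomorphism after inverting $u$; this is exactly locality.

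The one point requiring a little care — and the likeliest place for a subtlety to hide — is the interaction between the Wigderson splitting maps $i_p, \pi_p$ and the localized canonical generators: one must know that $\pi_p$ restricted to $u^{-1}H_*$ carries $[\frs_o]$ to a \emph{generator} (not a $u$-multiple) of the reduced localized homology, and that $i_p$ on localized homology hits $[\frs_o]$ up to a unit. Both follow from Proposition~\ref{prop:reduced-bar-natan-calculation}: the reduced localized homology is freely spanned by the $[\frs_o]$ with $o \in o'(K)$, and $\pi_p$ is the identity on this span while identifying each omitted $[\frs_{\overline o}]$ with its partner. So no genuine obstacle arises; the proof is essentially an assembly of Propositions~\ref{prop:reduced-bar-natan-calculation} and~\ref{thm:unreduced-local} together with a grading bookkeeping check.
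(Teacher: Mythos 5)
Your proposal is correct and follows essentially the same route as the paper: the knotlike condition is read off from Proposition~\ref{prop:reduced-bar-natan-calculation}, and locality is obtained by tracing the canonical generator through $\pi_{p_2}\circ\Kc^\infty(\Sigma)\circ i_{p_1}$ using Proposition~\ref{thm:unreduced-local} together with the identity $\pi_{p_2}([\frs_{o_2}])=\pi_{p_2}([\frs_{\overline{o}_2}])$. The only cosmetic difference is that the paper resolves your parenthetical worry about $i_{p_1}$ by simply choosing $o_1$ at the outset to be the orientation whose canonical generator already lies in (and generates) $\Kcr^\infty_{p_1}(K_1)$.
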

\begin{proof}
The claim that $\smash{\Kcrm_p(K)[(0, -1)]}$ is a knotlike complex is clear from Theorem~\ref{prop:reduced-bar-natan-calculation}; we shift grading by one so that $\smash{H_*(\Kcr_p^{\infty}(K))}$ is supported in even gradings. The fact that $\Kcrm(\Sigma)$ is local follows from Theorem~\ref{thm:unreduced-local} combined with the behavior of $\pi_p$ described in Theorem~\ref{prop:reduced-bar-natan-calculation}. Indeed, choose any pair of basepoints $p_1$ on $K_1$ and $p_2$ on $K_2$. Let $o_1 \in o(K_1)$ be the orientation on $K_1$ for which $[\frs_{o_1}]$ generates $H_*(\Kcr_{p_1}^{\infty}(K_1))$. Consider the orientation on $\Sigma$ that extends $o_1$ and let the induced orientation on $K_2$ be $o_2$. By Theorem~\ref{thm:unreduced-local}, $\Kc^\infty(\Sigma)$ takes $[\frs_{o_1}]$ to a nonzero $u$-multiple of $[\frs_{o_2}]$. Hence we have
\[
\Kcr^\infty(\Sigma)([\frs_{o_1}]) = (\pi_{p_2} \circ \Kc^\infty(\Sigma) \circ i_{p_1})([\frs_{o_1}]) = \pi_{p_2}(u^n[\frs_{o_2}]).
\]
Now, by Theorem~\ref{prop:reduced-bar-natan-calculation}, we have that $\pi_{p_2}([\frs_{o_2}]) = \pi_{p_2}([\frs_{\overline{o}_2}])$. However, (exactly) one of $[\frs_{o_2}]$ and $[\frs_{\overline{o}_2}]$ already lies in the reduced subcomplex $\Kcr_{p_2}^{\infty}(K_2)$. It follows that $\pi_{p_2}([\frs_{o_2}]) = \pi_{p_2}([\frs_{\overline{o}_2}])$ indeed generates $\Kcr_{p_2}^{\infty}(K_2)$, which is one-dimensional over $\f[u, u^{-1}]$. This establishes locality. Note that our conclusion holds independent of the choice of $p_1$ and $p_2$.
\end{proof}

\begin{rmk}
  We suppress the grading shift and write $\smash{\Kcrm(K)}$ in place of $\smash{\Kcrm(K)[(0, -1)]}$. The grading shift is chosen so that $\smash{\Kcrm_p(U)}$ consists of a single $\f[u]$-tower starting in bigrading $(0, 0)$.  
\end{rmk}

\begin{example}
Consider the cobordism $\Sigma$ from the unknot $U$ to itself displayed in Figure~\ref{fig:examplecobordism}. This consists of the trace of an isotopy from $U$ to $-U$ which effects a $\pi$-rotation about the vertical axis. (In Figure~\ref{fig:examplecobordism}, we decompose this into two successive Reidemeister R1-moves, applied to the top half and bottom half of the knot, respectively.) One can check, either using Theorem~\ref{thm:unreduced-local} or by direct calculation using the Reidemeister move maps, that $\Kc^\infty(\Sigma)$ sends $x$ in the domain to $x + u$ in the codomain. 

Regardless of the choice of basepoints, the Wigderson splitting is given by $(\f[u] \cdot x) \oplus (\f[u] \cdot 1)$ in both the domain and the codomain. Hence $\Kc^\infty(\Sigma)$ does \textit{not} map the reduced complex into the reduced complex. However, that the composition $\smash{\pi_{p_2} \circ \Kc^\infty(\Sigma) \circ i_{p_1}}$ is still an isomorphism.

\begin{figure}[h!]
\includegraphics[scale = 0.8]{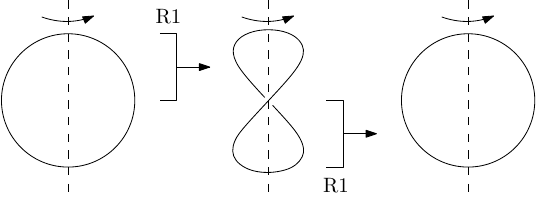}
	\caption{An example of a reduced cobordism map.}
	\label{fig:examplecobordism}
\end{figure}
\end{example}


\section{Involutions on Bar-Natan Homology}\label{sec:involutions-on-bar-natan}
Following \cite{Sano}, we now discuss the simplest way to study the action of $\tau$, which will set the stage for the introduction of the Borel complex in the sequel. We thus construct the action of $\tau$ on $\Kcm(L)$ and store this information via the homotopy type of the pair $(\Kcm(L), \tau)$. If an equivariant basepoint $p$ on $L$ is chosen, we moreover show that the reduced complex $\smash{\Kcrm_p(K)}$ is preserved by $\tau$. We prove that the equivariant isomorphism class of the reduced complex is independent of the choice of (equivariant) basepoint, so that we may unambguously refer to $(\Kcrm(K), \tau)$. We formalize our construction in the following:

\begin{thm}\label{thm:taucomplex}
The homotopy type of the pair $(\Kcm(L), \tau)$ is an invariant of $(L, \tau)$ up to Sakuma equivalence. Moreover, if $\Sigma$ is an isotopy-equivariant cobordism from $(L_1, \tau_1)$ to $(L_2, \tau_2)$, then the cobordism map $\Kcm(\Sigma)$ homotopy commutes with $\tau$:
\[
\Kcm(\Sigma) \circ \tau_1 \simeq \tau_2 \circ \Kcm(\Sigma).
\]
The homotopy type of the pair $(\Kcrm(L), \tau)$ is likewise an invariant of $(L, \tau)$, and we similarly have
\[
\Kcrm(\Sigma) \circ \tau_1 \simeq \tau_2 \circ \Kcrm(\Sigma).
\]
In both cases, the cobordism map has grading shift $(0, -2g(\Sigma))$. Moreover, if $L_1$ and $L_2$ are strongly invertible knots and $\Sigma$ is connected, then $\Kcrm(\Sigma)$ is local.\footnote{The claims regarding $\Kcrm(\Sigma)$ should be interpreted as holding for any equivariant basepoint data $p_1$ and $p_2$. We again stress that we do not require that $\Sigma$ be decorated with a path from $p_1$ to $p_2$, either to construct $\Kcrm(\Sigma)$ or to prove that it homotopy commutes with $\tau$.}
\end{thm}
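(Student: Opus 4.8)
The plan is to follow the strategy of \cite{Sano}: rather than analyzing equivariant Reidemeister moves directly, we extract every claim from the naturality and functoriality results of Sections~\ref{sec:khovanov-homology-preliminaries} and \ref{sub:cobordism_maps}. The first step is to observe that for a transvergent diagram $D$ of $(L,\tau)$ the symmetry $\tau$ is realized by $\pi$-rotation of the plane of projection about $\Fix(\tau)$; this carries $D$ to itself, permuting the crossings and reversing the local orientations there. The induced permutation of the cube $\{0,1\}^n$ and of the circle sets $Z(D_v)$ then yields a bigrading-preserving $\f[u]$-chain map $\tau\colon\Kcm(D)\to\Kcm(D)$ with $\tau^2=\id$ \emph{on the nose}. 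If $p$ is an equivariant basepoint on $L$ --- for a strongly invertible knot, one of the two points of $K\cap\Fix(\tau)$ --- then $\tau(p)=p$, so in every resolution the distinguished circle $c_p$ is $\tau$-invariant, and hence $\tau$ restricts to an involution of $\Kcrm_p(D)$. Since the multiplication map $\mu_p$ and the subalgebras $\Kcrm_{un}(D_v)$ of Section~\ref{sec:reduced-bar-natan} involve only $x_p$ and symmetric functions of the $x_i$, they are manifestly $\tau$-equivariant, so the identifications of Lemma~\ref{lem:reduced-theories-equivalent} are $\tau$-equivariant; consequently the equivariant isomorphism type of $(\Kcrm(D),\tau)$ does not depend on the choice of equivariant basepoint.

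Next I would prove the homotopy-commutation statement for isotopy-equivariant cobordisms, deducing invariance of the pair as a special case. Let $\Sigma$ be an isotopy-equivariant cobordism from $(L_1,\tau_1)$ to $(L_2,\tau_2)$, presented by a movie from a transvergent diagram $D_1$ to a transvergent diagram $D_2$, so that $\Kcm(\Sigma)\colon\Kcm(D_1)\to\Kcm(D_2)$ has grading shift $(0,-2g(\Sigma))$ by Definition~\ref{def:unreduced-cobordism}. Applying the rotation $\tau$ frame by frame produces a movie for $\tau_{S^3\times I}(\Sigma)$, and from the diagrammatic definition of the TQFT one has, tautologically,
\[
\Kcm(\tau(\Sigma)) \;=\; \tau_{D_2}\circ\Kcm(\Sigma)\circ\tau_{D_1}.
\]
Because $\Sigma$ is isotopy-equivariant, $\tau(\Sigma)$ is isotopic rel boundary to $\Sigma$, so Theorem~\ref{thm:cobordism-naturality} gives $\Kcm(\tau(\Sigma))\simeq\Kcm(\Sigma)$; combined with $\tau_{D_i}^2=\id$ this yields $\tau_{D_2}\circ\Kcm(\Sigma)\simeq\Kcm(\Sigma)\circ\tau_{D_1}$. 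For invariance of the homotopy type of $(\Kcm(L),\tau)$: if $D$ and $D'$ are transvergent diagrams of $(L,\tau)$ then, by Theorem~\ref{thm:equivalences-of-diagram}, they are connected by a sequence of equivariant Reidemeister moves, the I-move, and the R-move; the trace of this equivariant isotopy is a \emph{genuinely} equivariant cobordism $\Sigma$, so $\tau(\Sigma)=\Sigma$ on the nose and the argument above shows the homotopy equivalence $\Kcm(\Sigma)$ intertwines $\tau_D$ and $\tau_{D'}$ up to homotopy. (The I-move passes through the point at infinity and so needs the adjustment discussed in Section~\ref{sec:topological-preliminaries}, namely moving an end of the cobordism so that the relevant surface lies in $\R^3\times I$; the R-move reverses the orientation of $\Fix(\tau)$ but is realized by an equivariant rotation and is harmless at the chain level.)

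For the reduced statements I would combine the above with the Wigderson splitting. Locality of $\Kcrm(\Sigma)$ for a connected $\Sigma$ between strongly invertible knots is immediate from Theorem~\ref{thm:knotlike-locality} and uses no equivariance; likewise both cobordism maps inherit the shift $(0,-2g(\Sigma))$ from the unreduced case, as $i_p$ and $\pi_p$ are bigrading-preserving. For the commutation $\tau_2\circ\Kcrm(\Sigma)\simeq\Kcrm(\Sigma)\circ\tau_1$, recall $\Kcrm(\Sigma)=\pi_{p_2}\circ\Kcm(\Sigma)\circ i_{p_1}$ for equivariant basepoints $p_1,p_2$; the inclusion $i_{p_1}$ of the $\tau$-invariant subcomplex commutes with $\tau$ strictly, and the unreduced commutation was just established, so the only remaining point is that the Wigderson projection $\pi_{p_2}$ homotopy-commutes with $\tau_{D_2}$. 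To see this I would use the (Shumakovitch-type) $\tau$-equivariant homotopy equivalence $\Kcm(K)\simeq\Kcrm(K)\otimes_{\f[u]}\mathcal{A}$, under which the action of $\tau$ on $\Kcm(K)$ corresponds to its action on $\Kcrm(K)$ tensored with the identity on $\mathcal{A}$ (the basepoint circle being $\tau$-fixed); this exhibits $\tau_{D_2}$ as preserving the Wigderson splitting up to homotopy. Invariance of the homotopy type of $(\Kcrm(K),\tau)$ then proceeds exactly as in the non-equivariant discussion of Section~\ref{sub:cobordism_maps_reduced}: for a single equivariant Reidemeister move one picks an equivariant basepoint outside the region of the move, checks (as in \cite{vogel}) that the associated homotopy equivalences preserve the corresponding reduced subcomplexes, restricts the homotopy-commuting data there (projecting the homotopy via $\pi_p$), and absorbs basepoint changes using the equivariant form of Lemma~\ref{lem:reduced-theories-equivalent} from the first paragraph.

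The main obstacle is the bookkeeping in the reduced setting: the reduced cobordism maps of Definition~\ref{def:reduced-cobordism-map} are unnatural and depend on auxiliary basepoint data, so one must verify that the Wigderson inclusion and projection interact correctly with $\tau$ for \emph{any} equivariant basepoint and that basepoint jumps along a sequence of equivariant moves do not disturb the equivariant homotopy type. By contrast, the unreduced statements are formal consequences of Theorem~\ref{thm:cobordism-naturality} together with the single observation that a genuinely equivariant cobordism is fixed by $\tau_{S^3\times I}$, while the grading and locality assertions are immediate from Definition~\ref{def:unreduced-cobordism} and Theorem~\ref{thm:knotlike-locality}.
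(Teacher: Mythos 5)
Your proposal is correct and follows essentially the same route as the paper: the tautological identity $\Kcm(\tau(\Sigma))\circ\tau_1=\tau_2\circ\Kcm(\Sigma)$ combined with isotopy-rel-boundary invariance (Theorem~\ref{thm:cobordism-naturality}) gives homotopy commutation for isotopy-equivariant cobordisms; invariance of the pair follows by viewing equivariant Reidemeister moves as equivariant traces, with explicit $\tau$-equivariant isomorphisms for the I- and R-moves; and the reduced statements come from the Wigderson splitting with equivariant basepoints together with Theorem~\ref{thm:knotlike-locality}. Two small remarks. First, the trace of an equivariant isotopy is equivariantly generic but not generic, so before invoking your second-paragraph argument one must perturb it (non-equivariantly, rel boundary) to define $\Kcm(\Sigma)$ at all; the paper does this explicitly, and your argument goes through verbatim once the perturbation is inserted, since the perturbed surface and its $\tau$-image are isotopic rel boundary. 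Second, for the compatibility of the Wigderson projection $\pi_{p}$ with $\tau$ you appeal to a ``Shumakovitch-type'' $\tau$-equivariant equivalence $\Kcm(K)\simeq\Kcrm(K)\otimes_{\f[u]}\mathcal{A}$ under which $\tau$ acts as $\tau\otimes\id$; this is precisely the point that requires verification rather than something to quote, and the paper settles it more strongly by checking that the explicit section $\sigma_p$ (hence the whole splitting, and so $i_p$ and $\pi_p$) is strictly $\tau$-equivariant when $p$ lies on the axis, following \cite[Proposition 2.23]{Sano}. Your weaker homotopy-equivariance claim does suffice for the present theorem, but the verification is the same direct examination of the splitting, and the strict statement is what gets reused later for the Borel construction, so there is little to gain from the weaker formulation.
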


Using locality, we define an (isotopy-)equivariant concordance invariant $\wt{s}(K, \tau) = \wt{s}(K)$. This bounds the isotopy-equivariant genus:

\begin{thm}\label{thm:s-bound}
Let $\Sigma$ be an isotopy-equivariant knot cobordism from $(K_1, \tau_1)$ to $(K_2, \tau_2)$. Then
\[
\wt{s}(K_1) - 2g(\Sigma) \leq \wt{s}(K_2).
\]
In particular, if $(K, \tau)$ is a strongly invertible knot, then $\ieg(K) \geq |\wt{s}(K)| / 2$.
\end{thm}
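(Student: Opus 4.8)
The plan is to derive the inequality directly from the functoriality and locality asserted in Theorem~\ref{thm:taucomplex}, in close analogy with the standard proof that the Rasmussen $s$-invariant obeys a cobordism bound. Throughout we take $K_1$ and $K_2$ to be strongly invertible, so that the locality clause of Theorem~\ref{thm:taucomplex} is available. Recall that $\wt s(K)$ is extracted from the $u$-local structure of the pair $(\Kcrm(K),\tau)$ --- concretely, from the auxiliary complex built out of $\Kcrm(K)$ together with the endomorphism $1+\tau$ (the $\f[Q]/Q^2$-truncation of the differential $\partial_Q = \partial + Q(1+\tau)$; see Section~\ref{sec:tau-complex}) --- as the associated quantum-grading invariant of that complex. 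The formal property I would isolate is monotonicity: if $f\colon \Kcrm(K_1)\to \Kcrm(K_2)$ is a local chain map that homotopy-commutes with $\tau$ and has quantum grading shift $\delta$, then $\wt s(K_1) + \delta \le \wt s(K_2)$. This should follow from the construction of $\wt s$ just as in the non-equivariant case: a homotopy $H$ witnessing $f\circ\tau_1 \simeq \tau_2\circ f$ promotes $f$ to a chain map of the auxiliary complexes that respects their local structure and has the same quantum grading shift $\delta$, after which one tracks $\delta$ through the grading bookkeeping that defines $\wt s$.

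Granting this, the first inequality is immediate. By Theorem~\ref{thm:taucomplex}, an isotopy-equivariant knot cobordism $\Sigma$ from $(K_1,\tau_1)$ to $(K_2,\tau_2)$ --- which, after discarding closed components, we may assume connected --- induces a cobordism map $\Kcrm(\Sigma)$ that is local, homotopy-commutes with $\tau$, and has quantum grading shift $-2g(\Sigma)$. Applying monotonicity with $f = \Kcrm(\Sigma)$ gives $\wt s(K_1) - 2g(\Sigma)\le \wt s(K_2)$.

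For the ``in particular'' clause, let $(K,\tau)$ be strongly invertible and let $\Sigma\subset B^4$ be an isotopy-equivariant slice surface with $g(\Sigma) = \ieg(K)$; discarding closed components, we take $\Sigma$ connected. Removing a small equivariant ball centered at a point of $\Fix(\tau_{B^4})$ turns $B^4$ into $S^3\times I$ and exhibits $\Sigma$ as a connected isotopy-equivariant cobordism from $(K,\tau)$ to the standard strongly invertible unknot $(U,\tau_0)$ of the same genus; isotopy-equivariance survives because the isotopy between $\tau_{B^4}(\Sigma)$ and $\Sigma$ can be taken to miss the puncture. Since $\Kcrm(U)\simeq \f[u]$ with $\tau_0 = \id$, one has $\wt s(U) = s(U) = 0$. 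Applying the first inequality to $\Sigma$ gives $\wt s(K) - 2\,\ieg(K)\le 0$, and applying it to the reversed cobordism $\overline{\Sigma}$ from $(U,\tau_0)$ to $(K,\tau)$ --- the same underlying surface, hence the same genus --- gives $-2\,\ieg(K)\le \wt s(K)$. Combining, $|\wt s(K)|\le 2\,\ieg(K)$, that is, $\ieg(K)\ge |\wt s(K)|/2$.

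The geometric content of the argument is already contained in Theorem~\ref{thm:taucomplex}: the essential point is that an \emph{isotopy}-equivariant cobordism --- not merely an honest equivariant one --- produces a cobordism map that homotopy-commutes with $\tau$. The remaining ingredient is the monotonicity statement for $\wt s$, which I expect to be the main (if essentially routine) obstacle; it amounts to a diagram chase on the auxiliary complex parallel to the non-equivariant proof of the $s$-invariant bound, together with the standard observation that a slice surface can be equivariantly punctured along the fixed axis to become a cobordism to the unknot.
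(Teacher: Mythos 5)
Your proposal is correct and takes essentially the same route as the paper: invariance plus locality of $\Kcrm(\Sigma)$ from Theorem~\ref{thm:taucomplex}, the monotonicity of $\wt{s}$ under local maps that homotopy-commute with $\tau$, and for the second inequality the punctured slice surface read as an isotopy-equivariant cobordism in both directions together with $\wt{s}(U)=0$. The only cosmetic difference is that you phrase the monotonicity step through the $\f[Q]/Q^2$ mapping-cone model, whereas Definition~\ref{def:tilde-s} is stated directly in terms of $\tau$-invariant $u$-nontorsion classes, so that step is a one-line check (locality preserves $u$-nontorsion, homotopy-commutation preserves $\tau$-invariance on homology) rather than the ``main obstacle'' you anticipate.
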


Again, Theorems~\ref{thm:taucomplex} and \ref{thm:s-bound} are a re-phrasing of the work of Sano \cite{Sano}. However, we give an alternative discussion of them here to motivate our results on Borel complexes in the next section. Theorems~\ref{thm:taucomplex} and \ref{thm:s-bound} should be viewed as introductory precursors to Theorems~\ref{thm:full_borel_invariant} and \ref{thm:equivariant_s_borel_intro} from the Borel setting.

\subsection{The action of $\tau$} 
Let $D$ be a transvergent diagram for $L$. Clearly, $\tau$ induces an involution on the set of resolutions of $D$, where a resolution $D_v$ is taken to its image $\tau(D_v)$. This gives a bijection from $Z(D_v)$ to $Z(\tau(D_v))$, which we again denote by $\tau$. If $D_v = \tau(D_v)$, this is an automorphism of $Z(D_v)$. See Figure~\ref{fig:tau-example}.

\begin{figure}[h!]
\includegraphics[scale = 0.65]{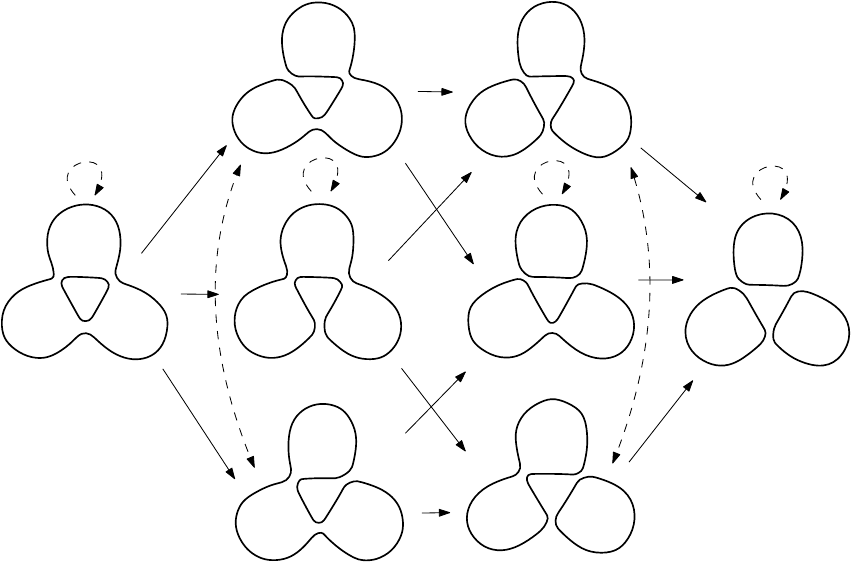}
	\caption{The cube of resolutions for the transvergent diagram of the trefoil from Figure~\ref{fig:transvergent-diagram}. The differentials going between different resolutions are indicated by solid arrows. The dashed arrows indicate the action of $\tau$ on the set of resolutions.}
	\label{fig:tau-example}
\end{figure}

\begin{defn}\label{def:diagrammatic_tau}
Let $D$ be a transvergent diagram for $L$ and assume that $D$ is oriented. Construct a map
\[
\tau \colon \Kcm(D) \rightarrow \Kcm(D)
\]
as follows. Let $y$ be a basis element from $D_v$ consisting of a label $a_c \in \{1, x\}$ on each circle $c \in Z(D_v)$. The image $\tau(y)$ is the basis element from $\tau(D_v)$ constructed by labeling each circle $\tau(c) \in Z(\tau(D_v))$ by $a_c$. We extend $\tau$ such that it is $\F[u]$-linear; clearly $\tau$ is a chain map and that $\tau^2 = \id$. We may likewise define a map
\[
\tau \colon \Kc'(D) \rightarrow \Kc'(D)
\]
for the version of the Bar-Natan complex used to construct the unpointed reduced complex. This sends $\Kc'(D_v)$ to $\Kc'(\tau(D_{v}))$ via the algebra map induced by the index bijection $\tau \colon Z(D_v) \rightarrow Z(\tau(D_v))$. It is clear that the isomorphism $\Kcm(D) \cong \Kc'(D)$ of (\ref{eqn:reducediden}) is $\tau$-equivariant.
\end{defn}



We stress that in Definition~\ref{def:diagrammatic_tau}, both the domain and codomain of $\tau$ are given the same orientation. The action of $\tau$ in Definition~\ref{def:diagrammatic_tau} is then grading-preserving. To see this, observe that $D_v$ and $\tau(D_v)$ have the same number of 1-resolutions and that the quantity $\ell$ of Section~\ref{sec:gradings} is preserved under $\tau$. We also remind the reader that we are working over $\F$, so that there is no sign assignment needed for the differential on $\Kcm(D)$; if we were attempting to work over $\Z$, we would need to use equivariant sign assignments as explained in \cite{bpyozgyur}. (This applies both for the definition of $\tau$, as well as the definition of the cobordism maps.)

The following is what we mean when we speak of the homotopy type of a pair:

\begin{defn}\label{def:homotopy-equivalence-pair}
Let $C_1$ and $C_2$ be two chain complexes equipped with homotopy involutions $\tau_1$ and $\tau_2$. We say that $(C_1, \tau_1)$ and $(C_2, \tau_2)$ are homotopy equivalent if there exist homotopy inverses $f \colon C_1 \rightarrow C_2$ and $g \colon C_2 \rightarrow C_1$ which homotopy commute with $\tau$; that is,
\[
  g \circ f =\id + [\partial, F] \quad \text{and} \quad f \circ g = \id + [\partial, G]
\]
and
\[
  f \circ \tau_1 = \tau_2 \circ f + [\partial, f_1] \quad \text{and} \quad g \circ \tau_2 =\tau_1 \circ g + [\partial, g_1],
\]
where $F\colon C_1\to C_1$, $G\colon C_2\to C_2$, $f_1\colon C_1\to C_2$ and $g_1\colon C_2\to C_1$ are sometimes referred to as \emph{witnessing}
homotopies
\end{defn}

\begin{rmk}
There are several items to emphasize regarding Definition~\ref{def:homotopy-equivalence-pair}:
\begin{enumerate}
\item Even though we constructed $\tau$ to be an actual involution in Definition~\ref{def:diagrammatic_tau}, in the context of Definition~\ref{def:homotopy-equivalence-pair}, we only require $\tau$ to be a homotopy involution. Indeed, relaxing the condition on $\tau$ allows us to find, in many cases, a smaller and simpler model of a Khovanov chain complex, which is more convenient for computations.

\item We do not require that $f$ and $g$ commute with $\tau$ on the nose, but only up to homotopy. We stress moreover that \emph{a priori} the homotopy $f_1,g_1$ need
  not be $\tau$-equivariant. Later, we will observe that (in the cases at hand) $f_1,g_1$ satisfy additional conditions with respect to $\tau$. This will lead to the invariance of the Borel construction.
\item Likewise, we require that $f$ and $g$ be homotopy inverses, but we do not place any restriction on the homotopies $F,G$ which witness the fact that $g \circ f \simeq \id$ and $f \circ g \simeq \id$. (For example, that they interact in a particular way with $\tau$.)
\end{enumerate}
\end{rmk}

\subsubsection*{Invariance and cobordism maps for the unreduced theory.} 
We now discuss invariance and cobordism maps in the context of an involution. Let $L$ be an involutive link in the standard model for $S^3$ and $D$ be a transvergent diagram for $L$. We must show that the homotopy type of the pair $(\Kcm(D), \tau)$ is invariant under the following:

\begin{enumerate}
\item Equivariant planar isotopies and equivariant Reidemeister moves; 
\item The $I$-move of passing a strand over the point at $\infty$ in $\mathbb{R}^2$; and,
\item The $R$-move obtained by rotating the diagram by $\pi$ about the origin.
\end{enumerate}
The proof of the first of these will utilize the following more general claim:

\begin{lem}\label{lem:tau-complex-isotopy-equivariant}
Let $L_1$ and $L_2$ be a pair of involutive links with transvergent diagrams $D_1$ and $D_2$. Let $\Sigma$ be a cobordism in $S^3 \times I$ from $L_1$ to $L_2$ and assume that $\Sigma$ is (non-equivariantly) generic, so that we have
\[
\Kcm(\Sigma) \colon \Kcm(D_1) \rightarrow \Kcm(D_2).
\]
If $\Sigma$ is isotopy-equivariant, then 
\[
\Kcm(\Sigma) \circ \tau_1 \simeq \tau_2 \circ \Kcm(\Sigma).
\]
\end{lem}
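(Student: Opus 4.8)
The plan is to reduce the statement to the key fact that $\tau_2 \circ \Kcm(\Sigma) \circ \tau_1$ is itself the cobordism map associated to a cobordism $\Sigma'$ which is isotopic to $\Sigma$ rel boundary, and then invoke Theorem~\ref{thm:cobordism-naturality}. First I would observe that $\tau$ acts diagrammatically: applying $\tau_1$ to $D_1$, running the generic cobordism movie $\{S_i\}$ for $\Sigma$, and then applying $\tau_2$ to $D_2$ is the same as running the "rotated" movie $\{\tau(S_i)\}$, since each elementary move (Reidemeister move, birth, death, saddle) is carried by $\tau$ to another elementary move of the same type. Concretely, by the $\f[u]$-linearity and naturality of $\tau$ on the cube of resolutions (Definition~\ref{def:diagrammatic_tau}), one checks that for each elementary move $S_i$ one has $\tau \circ \Kcm(S_i) = \Kcm(\tau(S_i)) \circ \tau$ — this is the routine computation I would relegate to a line or two, noting that $\tau$ intertwines the TQFT maps because the Frobenius algebra $\mathcal{A}$ sits on circles and $\tau$ just permutes circles. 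Composing over $i$ then gives
\[
\tau_2 \circ \Kcm(\Sigma) \circ \tau_1 = \Kcm(S_k') \circ \cdots \circ \Kcm(S_1') = \Kcm(\Sigma'),
\]
where $\Sigma'$ is the cobordism traced out by the movie $\{\tau(S_i)\}$; geometrically $\Sigma' = \tau_{S^3\times I}(\Sigma)$, suitably perturbed to be generic.

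Next I would use the isotopy-equivariance hypothesis: $\tau_{S^3\times I}(\Sigma)$ is isotopic to $\Sigma$ rel boundary in $S^3 \times I$. Since both $\Sigma$ and $\Sigma'$ are generic with respect to the standard projection, Theorem~\ref{thm:cobordism-naturality} applies and yields $\Kcm(\Sigma') \simeq \Kcm(\Sigma)$. Combining, $\tau_2 \circ \Kcm(\Sigma) \circ \tau_1 \simeq \Kcm(\Sigma)$, and composing on the right with $\tau_1$ (using $\tau_1^2 = \id$) gives $\tau_2 \circ \Kcm(\Sigma) \simeq \Kcm(\Sigma) \circ \tau_1$, as desired.

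The main obstacle is a bookkeeping subtlety rather than a conceptual one: the rotated movie $\{\tau(S_i)\}$ goes between the diagrams $\tau(D_1)$ and $\tau(D_2)$, which are generally \emph{different} diagrams from $D_1$ and $D_2$ (they are the mirror-rotated pictures). One must be careful that $\tau_1\colon \Kcm(D_1)\to\Kcm(D_1)$ in the statement secretly factors as $\Kcm(D_1)\to\Kcm(\tau(D_1))$ followed by the identification of $\tau(D_1)$ with $D_1$ as abstract planar diagrams — i.e. $\tau(D_1)$ and $D_1$ are literally the same unlabeled diagram since $D_1$ is transvergent and hence rotation-invariant. With that identification made explicit, the diagrams at the ends of $\{\tau(S_i)\}$ really are $D_1$ and $D_2$ again, and $\Sigma' $ has the same boundary as $\Sigma$, so "rel boundary" in Theorem~\ref{thm:cobordism-naturality} makes sense. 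I would also note that genericity of $\Sigma$ is not automatically preserved by $\tau$, but since $\tau$ is a diffeomorphism of $S^3\times I$ preserving the standard projection structure (it is rotation about the $z$-axis, which commutes with projection to the $xz$-plane), $\tau(\Sigma)$ is again generic, so no extra perturbation is needed — or if one prefers, a small perturbation of $\tau(\Sigma)$ rel boundary is harmless by Theorem~\ref{thm:cobordism-naturality}.
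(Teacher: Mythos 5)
Your proposal is correct and follows essentially the same route as the paper: establish the tautological identity $\tau_2 \circ \Kcm(\Sigma) \circ \tau_1 = \Kcm(\tau(\Sigma))$ by rotating the cobordism movie move-by-move, then use the isotopy-equivariance hypothesis together with Theorem~\ref{thm:cobordism-naturality} to conclude $\Kcm(\tau(\Sigma)) \simeq \Kcm(\Sigma)$. The bookkeeping points you raise (that the ends $D_1$, $D_2$ are transvergent and hence fixed by the rotation, and that $\tau(\Sigma)$ remains generic) are exactly the ones the paper implicitly relies on, so there is no gap.
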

\begin{proof}
We first claim that we have the tautological commutation
\[
\Kcm(\tau(\Sigma)) \circ \tau_1 = \tau_2 \circ \Kcm(\Sigma).
\]
Here, to interpret the left-hand side, note that the rotated cobordism $\tau(\Sigma)$ will also be generic. (For conciseness, we write $\tau$ in place of $\tau_{S^3 \times I}$.) Hence $\tau(\Sigma)$ also yields a set of cobordism moves, which are the cobordism moves for $\Sigma$ but rotated by $\tau$. Composing these as usual gives the cobordism map $\Kcm(\tau(\Sigma))$.

To establish the claim, let $S_i$ be a cobordism move from $\Sigma$ that goes between two (possibly) non-equivariant diagrams $D$ and $D'$. The rotated move $\tau(S_i)$ then goes from $\tau(D)$ to $\tau(D')$. In this situation, $\tau$ no longer induces an automorphism of $\Kcm$ but instead yields tautological isomorphisms $\tau \colon \Kcm(D) \rightarrow \Kcm(\tau(D))$ and $\tau \colon \Kcm(D') \rightarrow \Kcm(\tau(D'))$. It is obvious that 
\[
\Kcm(\tau(S_i)) \circ \tau = \tau \circ \Kcm(S_i).
\]
Since $\Kcm(\Sigma) = \Kcm(S_k) \circ \cdots \circ \Kcm(S_1)$ and $\Kcm(\tau(\Sigma)) = \Kcm(\tau(S_k)) \circ \cdots \circ \Kcm(\tau(S_1))$, the tautological commutation follows. We now simply apply Theorem~\ref{thm:cobordism-naturality}: by isotopy invariance rel boundary, we have that the maps $\Kcm(\tau(\Sigma))$ and $\Kcm(\Sigma)$ are chain homotopic. Hence
\[
\Kcm(\Sigma) \circ \tau_1 \simeq \Kcm(\tau(\Sigma)) \circ \tau_1 = \tau_2 \circ \Kcm(\Sigma).
\]
This completes the proof.
\end{proof}

We now prove the desired claim:

\begin{lem}\label{lem:tau-complex-invariant}
Let $L$ be an involutive link with transvergent diagram $D$. The homotopy type of $(\Kcm(D), \tau)$ is a well-defined invariant of $L$ up to Sakuma equivalence.
\end{lem}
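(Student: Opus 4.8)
The plan is to reduce everything to Theorem~\ref{thm:equivalences-of-diagram}(3): two transvergent diagrams $D$ and $D'$ represent Sakuma-equivalent involutive links if and only if they are joined by a finite sequence of equivariant planar isotopies, equivariant Reidemeister moves, $I$-moves, and $R$-moves. It therefore suffices to show that each such move preserves the homotopy type of the pair $(\Kcm(\cdot),\tau)$ in the sense of Definition~\ref{def:homotopy-equivalence-pair}, and then to compose the resulting homotopy equivalences of pairs over a sequence of moves. I would treat the four families of moves separately.

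For an equivariant planar isotopy or an equivariant Reidemeister move relating $D$ to $D'$, the key observation is that the move is the trace of an isotopy of $L$ through involutive links in $\mathbb{R}^3$, hence the trace of an equivariant — in particular isotopy-equivariant — cobordism $\Sigma$ from $L$ to itself. After perturbing $\Sigma$ to be generic, Lemma~\ref{lem:tau-complex-isotopy-equivariant} yields that $f := \Kcm(\Sigma)$ (the usual Reidemeister move map, or a tautological isomorphism in the case of a planar isotopy) is a homotopy equivalence satisfying $f \circ \tau \simeq \tau \circ f$. Applying the same reasoning to the reverse move produces a homotopy inverse $g$ with $g \circ \tau \simeq \tau \circ g$, and $g \circ f \simeq \id$, $f \circ g \simeq \id$ because concatenating a move with its reverse is isotopic rel boundary to a product cobordism (Theorem~\ref{thm:cobordism-naturality}). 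This is exactly the data required by Definition~\ref{def:homotopy-equivalence-pair}, so $(\Kcm(D),\tau) \simeq (\Kcm(D'),\tau)$.

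For the $R$-move, I would note that $\pi$-rotation of the plane of projection about the origin is the restriction of the diffeomorphism $R$ of $S^3$ given by $\pi$-rotation about the axis orthogonal to that plane; since $R$ and $\tau$ are both $\pi$-rotations of $\mathbb{R}^3$ about coordinate axes, one checks directly that they commute, so $R \circ \tau = \tau \circ R$. Thus $R$ is a Sakuma equivalence from $(L,\tau)$ to $(L',\tau)$, and because $\Kcm$ and the action of $\tau$ are defined purely diagrammatically (Definition~\ref{def:diagrammatic_tau}), $R$ induces a strict isomorphism of pairs $(\Kcm(D),\tau) \cong (\Kcm(D'),\tau)$. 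Note that $R$ reverses the orientation of $\Fix(\tau)$ — which is precisely why the $R$-move is not, in general, realized by an equivariant isotopy — but this is invisible to the complex.

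Finally, for the $I$-move, the point is that passing a strand through the point at infinity alters only the planar embedding of the diagram, leaving the cube of resolutions, the circles, the merge/split maps, the grading data, and the action of $\tau$ combinatorially unchanged; hence the $I$-move induces a canonical isomorphism of pairs $(\Kcm(D),\tau) \cong (\Kcm(D'),\tau)$. I expect this to be the one step demanding genuine care, although the content is bookkeeping rather than difficulty: one must verify that all of the diagrammatic data and the $\tau$-action transport unchanged across $\infty$. (The familiar subtleties of the sweep-around move of \cite{MWW} concern the behavior of \emph{cobordism} maps, not the homotopy type of the complex itself, and therefore do not intervene here; a detailed treatment of the $I$-move in the transvergent setting is given in \cite{BDMS2}.) Composing the homotopy equivalences and isomorphisms of pairs obtained in the four cases then completes the proof.
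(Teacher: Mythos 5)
Your argument is correct and follows essentially the same route as the paper: reduce to the move set of Theorem~\ref{thm:equivalences-of-diagram}(3), handle equivariant planar isotopies and equivariant Reidemeister moves by perturbing the induced equivariant cobordism to a generic one and invoking Lemma~\ref{lem:tau-complex-isotopy-equivariant} to get homotopy commutation with $\tau$, and handle the I-move and R-move via the evident $\tau$-equivariant isomorphisms of complexes. The extra details you supply (the reverse-move homotopy inverse via Theorem~\ref{thm:cobordism-naturality}, and the check that the rotation realizing the R-move commutes with $\tau$) are consistent with, and only slightly more explicit than, the paper's proof.
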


\begin{proof}
First suppose that $D$ and $D'$ differ by a sequence of equivariant planar isotopies and equivariant Reidemeister moves. This gives an equivariant cobordism $\Sigma$ in $\mathbb{R}^3 \times I$. In general, $\Sigma$ will be equivariantly generic but not generic in the sense of Definition~\ref{def:std-projection-cobordism}. However, we may perturb $\Sigma$ to a non-equivariant cobordism $\Sigma'$ which is generic with respect to the standard projection. Then $\Kcm(\Sigma')$ constitutes a homotopy equivalence $f$ from $\Kcm(D)$ to $\Kcm(D')$. On the other hand, $\tau(\Sigma')$ is also a generic perturbation of $\Sigma$, and clearly $\Sigma'$ and $\tau(\Sigma')$ are isotopic rel boundary. By Lemma~\ref{lem:tau-complex-isotopy-equivariant} we thus have that $f \circ \tau \simeq \tau \circ f$. This shows that the homotopy type of $(\Kcm(D), \tau)$ is invariant under equivariant isotopy in $\mathbb{R}^3$. 

We now deal with the I-move and the R-move. This is actually quite straightforward: if $D$ and $D'$ differ by the I-move, then there is an obvious isomorphism from $\Kcm(D)$ to $\Kcm(D')$, as displayed in Figure~\ref{fig:ircorrespondence}. This isomorphism is clearly $\tau$-equivariant. Likewise, if $D$ and $D'$ differ by the R-move, then we obtain a $\tau$-equivariant isomorphism from $\Kcm(D)$ to $\Kcm(D')$. This completes the proof.
\end{proof}

\begin{figure}[h!]
\includegraphics[scale = 0.7]{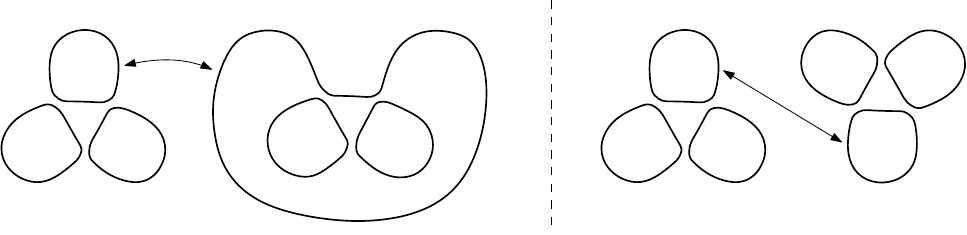}
	\caption{Left: if $D$ and $D'$ are two diagrams that differ by an I-move, then they have the same set of crossings. For each $v \in \{0, 1\}^n$, there is an obvious correspondence between $Z(D_v)$ and $Z(D'_v)$, where the circle containing the moving strand before the I-move is sent to the circle containing the moving strand after the I-move. Right: a similar claim holds for the R-move.}
	\label{fig:ircorrespondence}
\end{figure}

\subsubsection*{Invariance and cobordism maps for the reduced theory.} 
We now deal with the reduced case. Let $D$ be a transvergent diagram for $L$ and $p$ be any fixed point of $\tau$ on $D$. The circle $c_p$ containing $p$ in each resolution is then fixed by $\tau$. It follows that the action of $\tau$ preserves the subcomplex $\Kcrm_p(D)$. Likewise, it is clear that $\smash{\Kcrm_{un}(D) \subseteq \Kc'(D)}$ is preserved by $\tau$.

\begin{lem}\label{lem:reducedtauindependent}
Let $D$ be a transvergent diagram for $L$. For any choice of equivariant basepoint, $\smash{(\Kcrm_p(D), \tau)}$ is equivariantly isomorphic to $\smash{(\Kcrm_{un}(D), \tau)}$.
\end{lem}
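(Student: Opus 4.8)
The plan is simply to observe that the isomorphism furnished by Lemma~\ref{lem:reduced-theories-equivalent} is automatically $\tau$-equivariant once the basepoint $p$ is chosen to be a fixed point of $\tau$. Recall that that isomorphism was realized as the restriction of the chain map $\mu_p \colon \Kc'(D) \to \Kc'(D)$ given by multiplication by $x_p$ in each resolution, together with the identification $\smash{\Kcrm_p(D) \cong \mathrm{im}(\mu_p)}$ coming from~(\ref{eqn:reducediden}).

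First I would check that $\mu_p$ commutes with $\tau$ on the nose. Since $\tau(p) = p$, the distinguished circle $c_p$ in any resolution $D_v$ is carried by the index bijection $\tau \colon Z(D_v) \to Z(\tau(D_v))$ to the distinguished circle of $\tau(D_v)$. As $\tau$ acts on $\Kc'(D)$ by the algebra maps induced by these index bijections (Definition~\ref{def:diagrammatic_tau}), it follows that $\tau(x_p \cdot y) = x_p \cdot \tau(y)$ for every element $y$, i.e.\ $\mu_p \circ \tau = \tau \circ \mu_p$. In particular $\mathrm{im}(\mu_p)$ is $\tau$-invariant, and because~(\ref{eqn:reducediden}) is $\tau$-equivariant, the identification $\smash{\Kcrm_p(D) \cong \mathrm{im}(\mu_p)}$ is $\tau$-equivariant as well.

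Next, recall that $\smash{\Kcrm_{un}(D)}$ is $\tau$-invariant, as noted just before the statement of the lemma. By Lemma~\ref{lem:reduced-theories-equivalent}, the restriction
\[
\mu_p\big|_{\Kcrm_{un}(D)} \colon \Kcrm_{un}(D) \longrightarrow \mathrm{im}(\mu_p) \cong \Kcrm_p(D)
\]
is an isomorphism of bigraded chain complexes over $\F[u]$. Since $\mu_p$ commutes with $\tau$ and both its source and its target are $\tau$-invariant, this isomorphism intertwines the two copies of $\tau$, which is exactly the asserted equivariant isomorphism. There is no genuine obstacle here: the entire content of the lemma is the single observation that requiring $p \in \Fix(\tau)$ forces $\mu_p$ to be $\tau$-equivariant, after which the result is immediate from Lemma~\ref{lem:reduced-theories-equivalent}.
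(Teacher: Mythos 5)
Your proposal is correct and is essentially the paper's own argument: the paper likewise observes that for an equivariant basepoint $\tau(x_p)=x_p$, so the multiplication map $\mu_p$ from Lemma~\ref{lem:reduced-theories-equivalent} is $\tau$-equivariant, and hence the isomorphism $\Kcrm_{un}(D)\cong\Kcrm_p(D)$ intertwines the involutions. Your extra remarks about the index bijections and the $\tau$-equivariance of~(\ref{eqn:reducediden}) only spell out details the paper leaves implicit.
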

\begin{proof}
If the basepoint $p$ is on the axis of symmetry, then the map $\mu_p$ in the proof of Lemma~\ref{lem:reduced-theories-equivalent} is $\tau$-equivariant since $\tau(x_p) = x_p$. It follows that the isomorphism of $\Kcrm_p(D) \cong \Kcrm_{un}(D)$ is $\tau$-equivariant.
\end{proof}

We write $(\Kcrm(D), \tau)$ for the isomorphism class of the reduced complex with its action of $\tau$. We now have the analogue of Lemma~\ref{lem:tau-complex-isotopy-equivariant} for the reduced complex. Recall that the reduced cobordism map is defined to be the composition
\[
\Kcrm(\Sigma) = \Kcrm_{p_1}(D_1) \xrightarrow{i_{p_1}} \Kcm(D_1) \xrightarrow{\Kcm(\Sigma)} \Kcm(D_2) \xrightarrow{\pi_{p_2}} \Kcrm_{p_2}(D_2).
\]
The crucial observation here is that the Wigderson splitting is $\tau$-equivariant, which follows from a direct examination of the section map $\sigma_p$; see \cite[Proposition 2.23]{Sano}. This implies that the inclusion and projection maps $i_p$ and $\pi_p$ are $\tau$-equivariant. Hence we obtain:

\begin{lem}\label{lem:reduced-isotopy-equivariant}
Let $L_1$ and $L_2$ be a pair of involutive links with transvergent diagrams $D_1$ and $D_2$. Fix equivariant basepoints $p_1$ and $p_2$ on $D_1$ and $D_2$. Let $\Sigma$ be a cobordism in $S^3 \times I$ from $L_1$ to $L_2$ and assume that $\Sigma$ is (non-equivariantly) generic, so that we have
\[
\Kcrm(\Sigma) \colon \Kcrm_{p_1}(D_1) \rightarrow \Kcrm_{p_2}(D_2).
\]
If $\Sigma$ is isotopy-equivariant, then 
\[
\Kcrm(\Sigma) \circ \tau_1 \simeq \tau_2 \circ \Kcrm(\Sigma).
\]
\end{lem}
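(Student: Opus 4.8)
The plan is to deduce this from the unreduced statement of Lemma~\ref{lem:tau-complex-isotopy-equivariant}, using the fact that both maps in the Wigderson splitting are \emph{strictly} $\tau$-equivariant once the basepoints are chosen on the axis of symmetry. Recall from Definition~\ref{def:reduced-cobordism-map} that
\[
\Kcrm(\Sigma) = \pi_{p_2} \circ \Kcm(\Sigma) \circ i_{p_1}.
\]

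First I would record that, since $p_1$ and $p_2$ are equivariant basepoints, the circle containing $p_i$ is fixed by $\tau$ in every resolution, so $\tau$ preserves each subcomplex $\Kcrm_{p_i}(D_i)$; moreover the Wigderson section $\sigma_{p_i}$ may be taken $\tau$-equivariant by direct inspection of its construction (see the discussion following \cite[Proposition 2.23]{Sano} referenced above). Consequently $i_{p_1}$ and $\pi_{p_2}$ are chain maps intertwining the relevant $\tau$-actions on the nose, i.e.\ $i_{p_1} \circ \tau_1 = \tau_1 \circ i_{p_1}$ and $\pi_{p_2} \circ \tau_2 = \tau_2 \circ \pi_{p_2}$, where on the left $\tau_i$ is the restricted action on $\Kcrm_{p_i}(D_i)$ and on the right the action on $\Kcm(D_i)$; these agree by the remarks preceding Lemma~\ref{lem:reducedtauindependent}.

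Next I would apply Lemma~\ref{lem:tau-complex-isotopy-equivariant}: since $\Sigma$ is isotopy-equivariant, there is a map $H \colon \Kcm(D_1) \to \Kcm(D_2)$ with
\[
\Kcm(\Sigma) \circ \tau_1 = \tau_2 \circ \Kcm(\Sigma) + [\partial, H].
\]
Pre-composing with $i_{p_1}$, post-composing with $\pi_{p_2}$, and pushing the two strict commutations past $\Kcm(\Sigma)$ yields
\[
\Kcrm(\Sigma) \circ \tau_1 = \tau_2 \circ \Kcrm(\Sigma) + \pi_{p_2} \circ [\partial, H] \circ i_{p_1} = \tau_2 \circ \Kcrm(\Sigma) + [\partial,\, \pi_{p_2} \circ H \circ i_{p_1}],
\]
where the last equality holds because $\pi_{p_2}$ and $i_{p_1}$ are chain maps, so conjugating a $\partial$-commutator by them again produces a $\partial$-commutator. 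This is exactly the claimed homotopy commutation, with witnessing homotopy $\pi_{p_2} \circ H \circ i_{p_1}$.

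The only step that is not formal bookkeeping is the strict $\tau$-equivariance of the Wigderson splitting for an on-axis basepoint; this is where the hypothesis that $p_1,p_2$ be equivariant basepoints is essential, and it is the sole place where genuine care is needed. I do not anticipate a serious obstacle there, since it follows by unwinding the explicit formula for $\sigma_p$ (and was already noted in the discussion preceding this lemma); the remaining manipulations are purely formal, working over $\F$ so that no signs intervene.
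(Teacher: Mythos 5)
Your argument is correct and is exactly the paper's proof: the paper deduces the lemma in one line from Lemma~\ref{lem:tau-complex-isotopy-equivariant} together with the $\tau$-equivariance of the Wigderson maps $i_{p_1}$ and $\pi_{p_2}$ (which the paper justifies by the same appeal to the explicit section $\sigma_p$ from \cite[Proposition 2.23]{Sano}). You have merely written out the formal step that conjugating the witnessing commutator $[\partial,H]$ by the chain maps $\pi_{p_2}$ and $i_{p_1}$ yields the homotopy $\pi_{p_2}\circ H\circ i_{p_1}$, which the paper leaves implicit.
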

\begin{proof}
Since $i_{p_1}$ and $\pi_{p_2}$ are $\tau$-equivariant, this follows immediately from Lemma~\ref{lem:tau-complex-isotopy-equivariant}.
\end{proof}

As before, we show that the homotopy type of $(\Kcrm(D), \tau)$ an invariant of $L$ up to Sakuma equivalence.

\begin{lem}\label{lem:reduced-invariant}
Let $L$ be an involutive link with transvergent diagram $D$. The homotopy type of $(\Kcrm(D), \tau)$ is a well-defined invariant of $L$ up to Sakuma equivalence.
\end{lem}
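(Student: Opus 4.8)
The plan is to run the same argument as in Lemma~\ref{lem:tau-complex-invariant}, replacing each ingredient by its reduced counterpart. By Theorem~\ref{thm:equivalences-of-diagram}, two transvergent diagrams for Sakuma-equivalent involutive links are connected by a finite sequence of equivariant planar isotopies, equivariant Reidemeister moves, I-moves, and R-moves, so it suffices to treat each type of move. Throughout, Lemmas~\ref{lem:reduced-theories-equivalent} and \ref{lem:reducedtauindependent} allow us to compute $(\Kcrm(D), \tau)$ using any equivariant basepoint $p$ on $\Fix(\tau) \cap D$, and to change that basepoint freely between consecutive moves via a $\tau$-equivariant isomorphism; this is the reduced, equivariant analogue of the basepoint-jumping discussed in Remark~\ref{rem:basepointjumping1}.

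First I would treat a single equivariant Reidemeister move (or equivariant planar isotopy, which induces an obvious $\tau$-equivariant isomorphism) carrying $D$ to $D'$. Choose an equivariant basepoint $p$ lying outside the ball(s) in which the move is supported. As recalled in Section~\ref{sub:cobordism_maps_reduced}, the Reidemeister homotopy equivalence $\Kcm(S) \colon \Kcm(D) \to \Kcm(D')$ and its homotopy inverse then preserve the reduced subcomplexes $\Kcrm_p$, so they restrict to mutually homotopy-inverse equivalences $\Kcrm_p(D) \simeq \Kcrm_p(D')$; this restriction equals the reduced cobordism map $\pi_p \circ \Kcm(S) \circ i_p$ of the trace of the move. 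Perturbing this trace to a non-equivariant generic cobordism $\Sigma'$ exactly as in the proof of Lemma~\ref{lem:tau-complex-invariant}, Lemma~\ref{lem:reduced-isotopy-equivariant} --- which rests on the $\tau$-equivariance of the Wigderson splitting --- gives $\Kcrm(\Sigma') \circ \tau \simeq \tau \circ \Kcrm(\Sigma')$, and likewise for the reverse move. Thus $(\Kcrm(D), \tau)$ and $(\Kcrm(D'), \tau)$ are homotopy equivalent in the sense of Definition~\ref{def:homotopy-equivalence-pair}, the witnessing homotopies coming from Lemma~\ref{lem:reduced-isotopy-equivariant} together with the (unreduced) fact that $\Kcm(S)$ and its inverse are homotopy inverse. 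Composing over the whole sequence, changing the equivariant basepoint as necessary, establishes invariance under equivariant isotopy in $\mathbb{R}^3$.

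For the I-move and the R-move I would argue precisely as in the unreduced case: each induces a tautological $\tau$-equivariant isomorphism $\Kcm(D) \cong \Kcm(D')$ coming from the evident identification of resolutions in Figure~\ref{fig:ircorrespondence}, and for an equivariant basepoint chosen compatibly with this identification it restricts to a $\tau$-equivariant isomorphism of reduced subcomplexes. Combining this with the previous paragraph and invoking Lemma~\ref{lem:reducedtauindependent} to pass to the basepoint-independent isomorphism type shows that the homotopy type of $(\Kcrm(D), \tau)$ depends only on the Sakuma-equivalence class of $(L, \tau)$.

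I expect the only genuine subtlety to be the same one already present in the non-equivariant reduced theory: one must be able to choose an \emph{equivariant} basepoint outside the supporting ball of each move. For the off-axis (IR) moves this is automatic, since $\Fix(\tau)$ is disjoint from the two symmetric balls; for the on-axis (M) moves one must know that $\Fix(\tau) \cap D$ contains a point outside the ball, which can always be arranged by a preliminary small equivariant isotopy. Granting this, the resulting homotopy equivalences splice together correctly despite the basepoint jumping precisely because of Lemmas~\ref{lem:reduced-theories-equivalent} and \ref{lem:reducedtauindependent}.
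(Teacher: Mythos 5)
Your proposal is correct and follows essentially the same route as the paper's proof: invariance under equivariant Reidemeister moves is obtained by choosing an equivariant basepoint outside the supporting ball, perturbing the equivariant trace cobordism to a generic one so that the (unreduced) Reidemeister equivalences restrict to the reduced subcomplexes and homotopy commute with $\tau$ (Lemmas~\ref{lem:tau-complex-isotopy-equivariant}/\ref{lem:reduced-isotopy-equivariant}), while the I-move and R-move are handled by the tautological $\tau$-equivariant isomorphisms restricted to $\Kcrm_p$, with basepoint changes absorbed by Lemmas~\ref{lem:reduced-theories-equivalent} and \ref{lem:reducedtauindependent}. Your remark about arranging an equivariant basepoint outside the ball of an on-axis move is a reasonable extra precaution that the paper handles implicitly.
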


\begin{proof}
We combine the proof of Lemma~\ref{lem:tau-complex-invariant} with our discussion in Section~\ref{sub:cobordism_maps_reduced}. First suppose $D$ and $D'$ differ by an equivariant planar isotopy that sends an equivariant basepoint $p$ on $D$ to an equivariant basepoint $p'$ on $D'$. This clearly induces an equivariant isomorphism $\Kcrm_{p}(D) \cong \Kcrm_{p'}(D')$. Next, suppose that $D$ and $D'$ differ by a single equivariant Reidemeister move and let $p$ be an equivariant basepoint outside of the ball in which this move takes place. The Reidemeister move in question corresponds to an equivariant cobordism $\Sigma$ in $\mathbb{R}^3 \times I$. Perturb this cobordism rel boundary to a non-equivariant cobordism $\Sigma'$ which is generic with respect to the standard projection. This can be done away from the fixed basepoint $p$. As we showed in Section~\ref{sub:cobordism_maps_reduced}, $\Kcm(\Sigma')$ is a homotopy equivalence from $\Kcm(D)$ to $\Kcm(D')$ which restricts to a homotopy equivalence from $\smash{\Kcrm_p(D)}$ to $\smash{\Kcrm_{p}(D')}$. On the other hand, $\Kcm(\Sigma')$ also homotopy commutes with $\tau$ by Lemma~\ref{lem:tau-complex-isotopy-equivariant}. Thus the homotopy type of $(\Kcrm(D), \tau)$ is invariant under equivariant isotopy in $\mathbb{R}^3$. 

The I-move and R-move are similarly dealt with as before. For the I-move, we may (for example) choose our equivariant basepoint $p$ to be a basepoint which is not on the arc that moves over $\infty$; then the I-move isomorphism clearly sends $\smash{\Kcrm_p(D)}$ to $\smash{\Kcrm_{p}(D')}$. For the R-move, we let $p$ be any equivariant basepoint on $D$; rotation by $\pi$ about the origin sends this to an equivariant basepoint $p'$ of $D'$. Clearly, the R-move isomorphism likewise sends $\smash{\Kcrm_p(D)}$ to $\smash{\Kcrm_{p'}(D')}$.
\end{proof}

\begin{rmk}\label{rem:basepointjumping2}
We stress again that we do not attempt to prove any sort of naturality for the maps between pairs $(\Kcrm_p(D), \tau)$ for different choices of $p$ and $D$, just that these homotopy types are abstractly the same. As in Remark~\ref{rem:basepointjumping1}, if $D$ and $D'$ are related by a sequence of equivariant Reidemeister moves, then we may have to abruptly change our choice of basepoint in between Reidemeister moves so as to be outside the region where each Reidemeister move is occurring. Note that by Lemma~\ref{lem:reducedtauindependent}, the isomorphism type of $(\Kcrm_p(D), \tau)$ is independent of $p$.
\end{rmk}

\subsection{$\tau$-complexes and the $\wt{s}$-invariant}\label{sec:tau-complex}

We now specialize to the case of a strongly invertible knot. In this setting the algebraic formalism is exactly the same as that of an $\iota$-complex or $\tau$-complex from \cite{HM, HMZ, DHM}.

\begin{defn}\label{def:tau-complex}
A \textit{$\tau$-complex} $(C, \tau)$ is a knotlike complex $C$ equipped with a grading-preserving, $\f[u]$-equivariant homotopy involution $\tau \colon C \rightarrow C$. A \textit{homotopy equivalence} between $(C_1, \tau)$ and $(C_2, \tau_2)$ is defined as in Definition~\ref{def:homotopy-equivalence-pair}. A \textit{local map} from $(C_1, \tau)$ to $(C_2, \tau_2)$ is a local map $f \colon C_1 \rightarrow C_2$ in the sense of Definition~\ref{def:knotlike} which additionally homotopy commutes with $\tau$. Two $\tau$-complexes are \textit{locally equivalent} if there exist local maps between them of grading shift zero.
\end{defn}

It is immediate from Theorem~\ref{thm:knotlike-locality} that if $K$ is a strongly invertible knot, then $(\Kcrm(K), \tau)$ is a $\tau$-complex. Moreover, if $\Sigma$ is an isotopy-equivariant cobordism from $K_1$ to $K_2$, then $\Kcrm(\Sigma)$ is local by Theorem~\ref{thm:knotlike-locality} and Lemma~\ref{lem:reduced-isotopy-equivariant}. We thus finally obtain:

\begin{proof}[Proof of Theorem~\ref{thm:taucomplex}]
The unreduced claims follow from Lemmas~\ref{lem:tau-complex-invariant} and \ref{lem:tau-complex-isotopy-equivariant}. The reduced claims follow from \ref{lem:reduced-invariant} and \ref{lem:reduced-isotopy-equivariant}, together with Theorem~\ref{thm:knotlike-locality}.
\end{proof}

We emphasize here that Theorem~\ref{thm:taucomplex} is a consequence of the formal naturality and functoriality properties of Bar-Natan homology discussed in the previous section. This is analogous to the setup used in \cite{HM, HMZ, DHM, DMS}. As in \cite{HM}, we may likewise form auxiliary constructions such as the mapping cone of the map $1 + \tau$. This is done in \cite{Sano}, where Sano defines
\[
\mathit{CKhI}(L)  = \mathrm{Cone}(\Kcm(L) \xrightarrow{Q \cdot (1 + \tau)} Q \cdot \Kcm(L)).
\]
As in \cite{HM}, a chain map $f \colon \Kcm(L_1) \rightarrow \Kcm(L_2)$ which homotopy commutes with $\tau$ induces a chain map between mapping cones. Likewise, a quasi-isomorphism $f \colon \Kcm(L_1) \rightarrow \Kcm(L_2)$ which homotopy commutes with $\tau$ induces a quasi-isomorphism between mapping cones. Indeed, let $f\tau + \tau f = \partial H + H \partial$; then $f + QH$ is a chain map between mapping cones. This is a quasi-isomorphism after setting $Q = 0$, so it is a quasi-isomorphism by Lemma~\ref{lem:quasi-iso-move}. In particular, Theorem~\ref{thm:taucomplex} shows that the quasi-isomorphism class of $\mathit{CKhI}(L)$ is an invariant of $(L, \tau)$. 

\begin{rmk}\label{rmk:sanoresults}
By Lemma~\ref{lem:qitohe}, this proves that the homotopy equivalence class of $\mathit{CKhI}(L)$ is an invariant of $(L, \tau)$. 
In fact, in \cite{Sano} these homotopy equivalences are constructed explicitly: given an equivariant Reidemeister move, Sano constructs homotopy equivalences $f \colon \Kcm(D) \rightarrow \Kcm(D')$ and $g \colon \Kcm(D') \rightarrow \Kcm(D)$ such that the homotopies witnessing $g \circ f \simeq \id$ and $f \circ g \simeq \id$ satisfy a certain additional condition with respect to $\tau$. The maps $f$ and $g$, together with these homotopies, can then be used to construct the explicit homotopy equivalences between $\mathit{CKhI}(D)$ and $\mathit{CKhI}(D')$. 
\end{rmk}

We now come to the most basic version of the equivariant $s$-invariant. Let $(C, \tau)$ be a $\tau$-complex. Define
\[
\iota_* \colon H_*(C) \xrightarrow{\cong} \F[u,u^{-1}]
\]
to be the map induced by including $H_*(C)$ into its localization. (Note that $C$ is a knotlike complex, so $u^{-1}H_*(C) \cong \f[u, u^{-1}]$.) This is well-defined since the only (grading-preserving) endomorphism of $\F[u,u^{-1}]$ as an $\F[u,u^{-1}]$-module is the identity.

\begin{defn}\label{def:tilde-s}
Let $(C, \tau)$ be a $\tau$-complex. Define the even integer
\begin{align*}
\wt{s}(C, \tau) = \max_i \{&\text{there exists } [x] \in H_*(C) \mid \iota_*[x]=u^{-i/2} \in u^{-1}H_*(C) \text{ and } \tau_*[x] = [x] \}.
\end{align*}
Equivalently, $\wt{s}(C, \tau)$ is the maximum $i$ such that $H_*(C)$ has a $\tau$-invariant, $u$-nontorsion class in bigrading $(0, i)$. For a strongly invertible knot $(K, \tau)$, we define
\[
\wt{s}(K, \tau) = \wt{s}(\Kcrm(K), \tau).
\]
We often suppress writing $\tau$ and refer to this as $\wt{s}(K)$. Note that we always have $\wt{s}(K) \leq s(K)$, where $s(K)$ is the usual $s$-invariant. 
\end{defn}

\begin{rmk}\label{rmk:involutive-version}
One can also define $\wt{s}$ using the mapping cone, as done in \cite{Sano}. From this point of view there are two natural invariants, which play the role of $\underline{d}$ and $\overline{d}$ from involutive Heegaard Floer homology. However, these are related to each other by dualizing: $\wt{s}(C)$ performs the role of $\underline{d}$, while $\wt{s}(C^\vee)$ performs the role of $-\overline{d}$. For simplicity, we thus confine ourselves to a single numerical invariant. In general, $\wt{s}(K)$ and $\wt{s}(-K)$ cannot be determined from each other and contain different information.
\end{rmk}

If $f$ is a local map from $(C_1, \tau_1)$ to $(C_2, \tau_2)$ of grading shift $\gr_q(f)$, then it is clear that
\[
\wt{s}(C_1, \tau_1) + \gr_q(f) \leq \wt{s}(C_2, \tau_2).
\]
Indeed, if $[x]$ is a $u$-nontorsion class in $H_*(C_1)$ with $\tau([x]) = [x]$, then the locality condition combined with the fact that $f$ homotopy commutes with $\tau$ shows that $f([x])$ is such a class in $H_*(C_2)$. We thus obtain:

\begin{proof}[Proof of Theorem~\ref{thm:s-bound}]
If $\Sigma$ is an isotopy-equivariant cobordism from $(K_1,\tau_1)$ to $(K_2,\tau_2)$, then $\Kcrm(\Sigma)$ is a local map from $(\Kcrm_{p_1}(K_1), \tau_1)$ to $(\Kcrm_{p_2}(K_2), \tau_2)$ by Theorem~\ref{thm:taucomplex}. This has grading shift $-2 g(\Sigma)$. The first part is thus immediate. The second part follows from the fact that $\wt{s}(U)=0$ for $U$ the unknot, as then we have both $-2g(\Sigma) \leq \wt{s}(K)$ and $\wt{s}(K) - 2g(\Sigma) \leq 0$.
\end{proof}

\subsection{Examples}\label{subsec:6-examples}

We now present some sample calculations. A program aimed at computing the action of $\tau$ may be found at \cite{Khocomp}. This calculates the Khovanov homology $\Kh(K)$ (over $\f$) of a strongly invertible knot, together with the action of $\tau$ on $\Kh(K)$. The program additionally computes the second page of the Bar-Natan spectral sequence on $\Kh(K)$. As is well-known, this spectral sequence contains the same information as the Bar-Natan homology (over $\f[u]$) and in fact allows us to write down a homotopy equivalent model for $\Kcm(K)$. While the action of $\tau$ on this model is not entirely determined, it is determined modulo $u$ by the action of $\tau$ on $\Kh(K)$. Some simple examples of this procedure are given below:

\begin{example}\label{ex:9-46}
  Let $K$ be the strongly invertible slice knot $9_{46}$; see Figure~\ref{fig:9-46}. The spectral sequence for $\Khr(K)$ is shown on the left in Figure~\ref{fig:tau-946}, giving us the model for $\Kcrm(K)$ shown on the right. The action of $\tau$ on $\Kcrm(K)/u$ is determined by the action of $\tau$ on $\Khr(K)$; in this case, the fact that $\tau$ preserves homological and quantum grading shows that no further action of $\tau$ is possible. Note that the pair $(\Kcrm(K), \tau)$ is locally equivalent to the three-generator subcomplex contained inside the dashed box.

\begin{figure}
  \includegraphics[width=4cm]{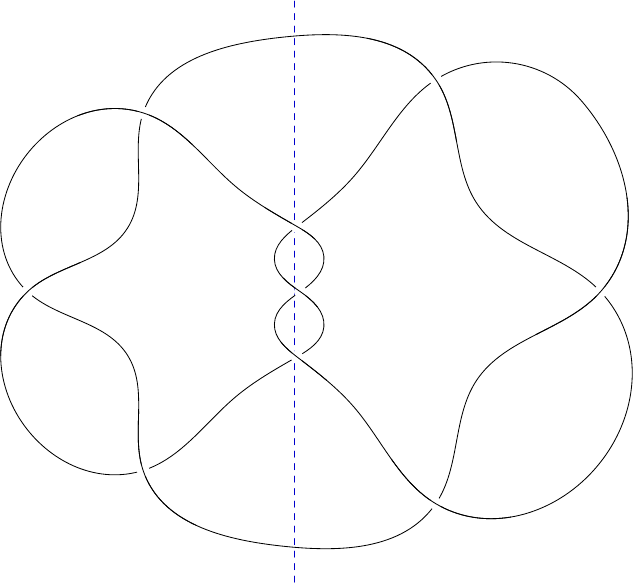}
  \caption{A symmetric diagram of the $9_{46}$ knot.}\label{fig:9-46}
\end{figure}

A quick examination shows $\wt{s}(K) = -2$. We correspondingly have $\ieg(K) \geq 1$, and it is easily seen that $K$ indeed has an equivariant slice surface (in fact, Seifert surface) of genus one. This calculation appeared previously in work of Sundberg-Swann \cite{swann-sundberg}; see also \cite{DMS} for the analogous computation in knot Floer homology.

\begin{figure}[h!]
\includegraphics[scale = 0.7]{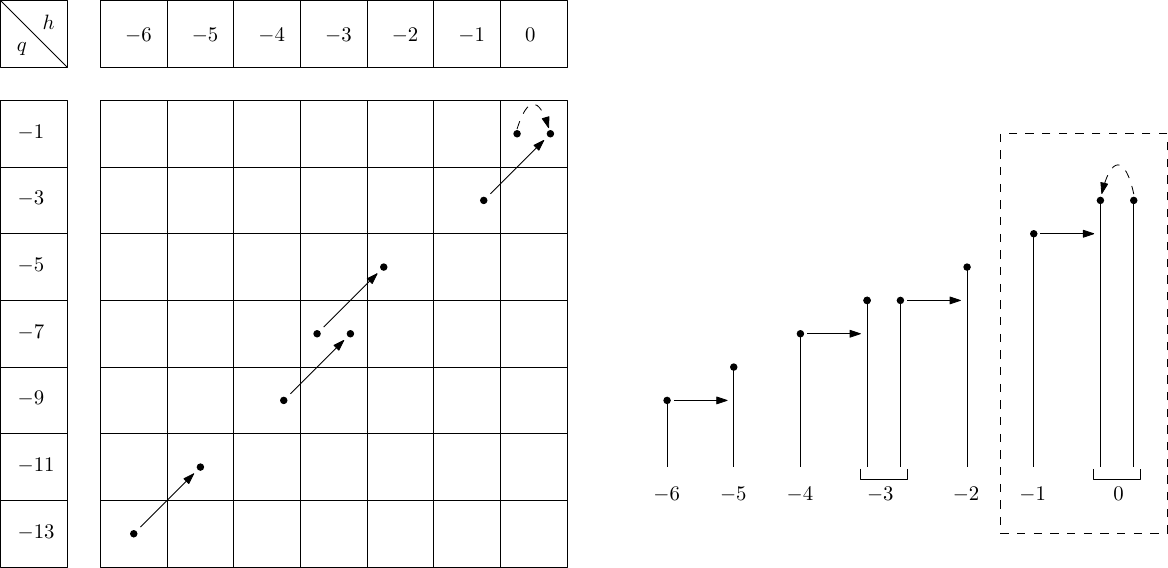}
	\caption{Left: the $E^1$-page of the Bar-Natan spectral sequence for $9_{46}$. Solid arrows are the differential; dashed arrows are the action of $1 + \tau$. Right: homotopy equivalent model for the Bar-Natan complex $\Kcrm(K)$. Solid arrows are the differential; dashed arrows are the action of $1 + \tau$ after setting $u = 0$. (In this case, no further action of $\tau$ is possible.) Each vertical line represents a $\f[u]$-tower in $\Kcrm(K)$. Quantum grading is represented by height, while homological grading is indicated along the horizontal axis.} \label{fig:tau-946}
\end{figure}
\end{example}


\begin{example}\label{ex:kyle_knot}
Let $J = 17nh_{74}$ be the strongly invertible knot from Theorem~\ref{thm:kyle_knot_intro} and $\bar{J}$ be its mirror. (We consider the mirror for computational convenience.) The spectral sequence for $\Khr(\bar{J})$ in homological gradings $-1$ through $3$ is shown on the left in Figure~\ref{fig:tau-J}, giving us the partial model for $\Kcrm(\bar{J})$ shown in the upper right. The action of $\tau$ on $\Kcrm(\bar{J})/u$ is determined by the action of $\tau$ on $\Khr(\bar{J})$. Unlike in Example~\ref{ex:9-46}, however, this does not fully determine the action of $\tau$ on $\Kcrm(\bar{J})$. The simplest possibility is that $\tau$ does not have any components landing in the image of $u$, so that the upper right complex in Figure~\ref{fig:tau-J} indeed displays the full action of $\tau$. However, several other extensions are possible. It is a straightforward but tedious exercise to enumerate all possible extensions of $\tau$ to $\Kcrm(\bar{J})$ which commute with $\partial$, preserve homological and quantum grading, and satisfy $(1 + \tau)^2 \simeq 0$. Once this is done, the ambitious reader may check that there are only two possible local equivalence classes (in the sense of Definition~\ref{def:tau-complex}); these are displayed in the lower right of Figure~\ref{fig:tau-J}. 


A quick examination shows $\smash{\wt{s}(\bar{J}) = -2}$. We correspondingly have $\smash{\ieg(J)} = \smash{\ieg(\bar{J}) \geq 1}$, and it is easily seen that $J$ indeed has an equivariant slice surface (in fact, Seifert surface) of genus one. Example~\ref{ex:kyle_knot} is connected to the study of exotic slice disks, as first shown by Hayden \cite{Hayden} and Hayden-Sundberg \cite{hayden-sundberg}. Compressing along the cores of the obvious $1$-handles in Figure~\ref{fig:kyle} gives a symmetric pair of $\Z$-disks for $J$. In \cite{hayden-sundberg}, it is (essentially) shown that the Khovanov cobordism maps distinguish these disks. Hence $\Sigma$ and $\tau_{B^4}(\Sigma)$ are not smoothly isotopic rel boundary, while work of Conway and Powell~\cite{ConwayPowell} shows that they are \textit{topologically} isotopic rel boundary. Theorem \ref{thm:s-bound} in fact proves that \textit{any} pair of symmetric disks for $J$ cannot be smoothly isotopic rel boundary; this was first shown in \cite{DMS} using knot Floer homology. 

\begin{figure}[h!]
\includegraphics[scale = 0.8]{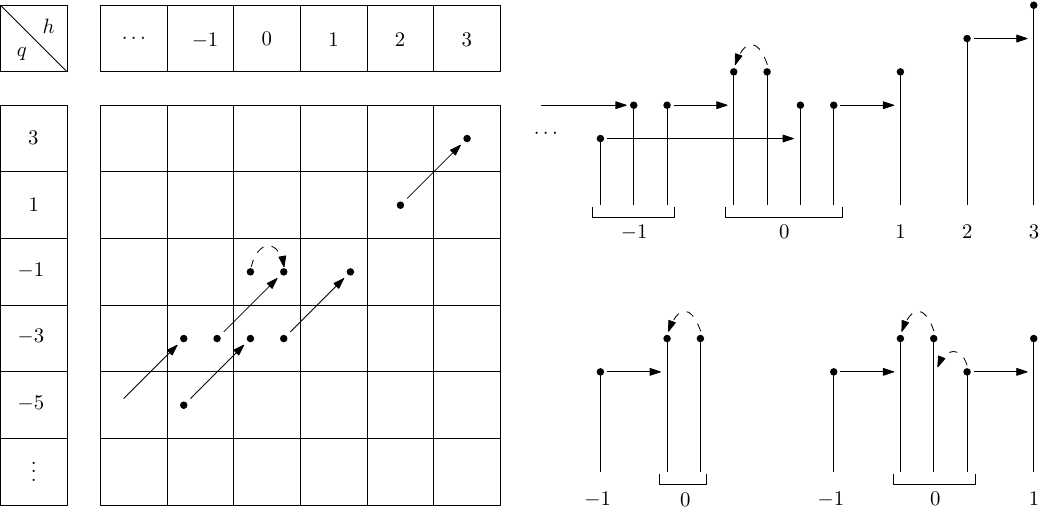}
	\caption{Left: the $E^1$-page of the Bar-Natan spectral sequence for $\bar{J}$. Solid arrows are the differential; dashed arrows are the action of $1 + \tau$. Upper right: homotopy equivalent model for the Bar-Natan complex $\Kcrm(\bar{J})$. Solid arrows are the differential; dashed arrows are the action of $1 + \tau$ after setting $u = 0$. Each vertical line represents a $\f[u]$-tower in $\Kcrm(\bar{J})$. Quantum grading is represented by height, while homological grading is indicated along the horizontal axis. Lower right: possible local equivalence classes for $(\Kcrm(\bar{J}), \tau)$.} 
	\label{fig:tau-J}
\end{figure}
\end{example}

The authors also conducted a general computer-based search for strongly invertible knots with $\wt{s}\neq s$ using \cite{Khocomp}. We identified diagrammatic symmetries using the following:

\begin{defn}
Let $D$ be a diagram for a knot whose PD code enumerates the edges from $1$ to $n$. We say that $D$ exhibits a \textit{PD code symmetry} if there exists some $k$ such that replacing each label $i$ with $((k - i) \ \mathrm{ mod } \ n) + 1$ leaves the PD code invariant. If such a symmetry occurs, then the knot is strongly invertible and $D$ is a transvergent diagram up to isotopy.
\end{defn}

For all knots up to thirteen crossings, we checked the associated PD code stored in SnapPy for a PD code symmetry.\footnote{The absence of a PD code symmetry does not, of course, indicate that the knot is not strongly invertible -- just that the diagram stored in SnapPy via the PD code does not exhibit the strong inversion.} After finding all listed PD codes with such a symmetry, we ran the computer program \cite{Khocomp} to determine the symmetry action on Khovanov homology. In many cases, we found that $\Khr(K)$ does not admit any $\tau$-invariant generator that survives to the $E^\infty$-page of the Bar-Natan spectral sequence. This implies $\wt{s}(K) < s(K)$. In fact, in all such examples, we additionally found that $\Khr(K)$ degenerates by the $E^2$-page, which means that $H_*(\Kcrm(K))$ has $u$-torsion order one. Since $s(K) - \wt{s}(K)$ is clearly bounded above by (twice) the $u$-torsion order of $H_*(\Kcrm(K))$, we have that $\wt{s}(K) = s(K) - 2$ for these knots. Our calculations are listed in Appendix~\ref{sec:different}. 

\subsection{Further applications}
We now give some further applications of studying the action of $\tau$.

\subsubsection*{Distinguishing mutant pairs}
In \cite{lobb-watson} Lobb-Watson present an example of a pair of mutant knots $K_1$ and $K_2$ that have isomorphic Khovanov and knot Floer homologies, but can nevertheless be distinguished by studying their triply-graded refinement. The idea of their proof is the following: both $K_1$ and $K_2$ are hyperbolic, and it is a result of Sakuma \cite[Proposition 3.1]{Sakuma} that a hyperbolic knot has at most two strong inversions. Fix any strong inversion $\tau_2$ of $K_2$. If $K_1$ and $K_2$ were isotopic, then the pair $(K_2, \tau_2)$ would be Sakuma equivalent to $(K_1, \tau_1)$ for some strong inversion $\tau_1$ of $K_1$. It thus suffices to find a strong inversion $\tau_2$ of $K_2$ such that the invariants for $(K_2, \tau_2)$ are not equal to the invariants for $(K_1, \tau_1)$ for either of the two strong inversions $\tau_1$ of $K_1$. In \cite[Section 6.1]{lobb-watson}, Lobb-Watson carry this out using their triply-graded refinement of Khovanov homology. Here, we show that we may instead simply calculate the action of $\tau$ on $\Kh(K)$ using \cite{Khocomp}.


\begin{proof}[Proof of Theorem~\ref{thm:lw_knots}]
The knot $K_1$ admits two transvergent diagrams corresponding to two strong inversions, which we denote $\tau_1$ and $\tau_1'$. These are displayed in Figure~\ref{fig:LWa1} and Figure~\ref{fig:LWa2}, respectively. The knot $K_2$ also admits two strong inversions, but we use only one of these, which we denote by $\tau_2$ and display on the right in Figure~\ref{fig:lobbwatson}. In each case, we computed the action of $\tau$ on the Khovanov homology $\Kh(K_i)$ using \cite{Khocomp} and recorded the dimension of $\ker(1 + \tau)$ in each bigrading. For the two involutions on $K_1$, these ranks were in fact the same, but the involution $\tau_2$ on $K_2$ yielded a different set of ranks. Our results are recorded in Table~\ref{fig:LWa1_hom}. It thus only remains to ensure that $\tau_1$ and $\tau_1'$ are actually distinct; this can be done by computing their $\eta$-polynomials according to the algorithm of \cite{Sakuma}. See Lemma~\ref{lem:distinguished} below.
\end{proof}



\begin{figure}
  \begin{tikzpicture}
    \node at (-5,0) {\includegraphics[width=3cm]{lwa1-eps-converted-to.pdf}};
    \node at (-0,0) {\includegraphics[width=2cm]{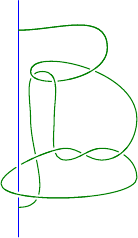}};
    \node at (5,0) {\includegraphics[width=3cm]{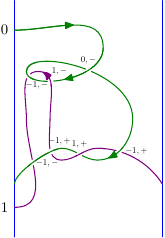}};
  \end{tikzpicture}
  \caption{Left: the diagram of $K_1$ with the strong inversion $\tau_{11}$. Middle and right: subsequent steps for computing
  the Sakuma $\eta$ polynomial for $(K_1,\tau_{11})$. }\label{fig:LWa1}
\end{figure}
\begin{figure}
  \begin{tikzpicture}
    \node at (-5,0) {\includegraphics[width=3cm]{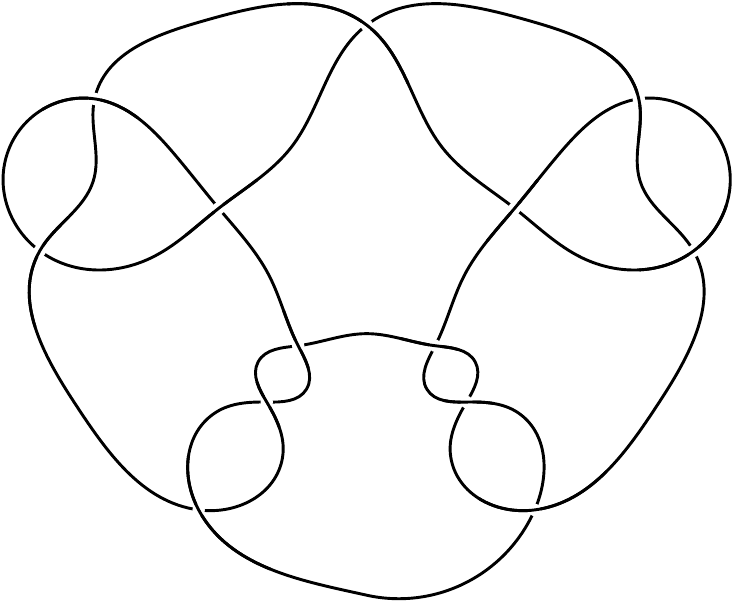}};
    \node at (0,0) {\includegraphics[width=2cm]{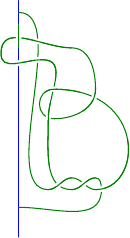}};
    \node at (5,0) {\includegraphics[width=3cm]{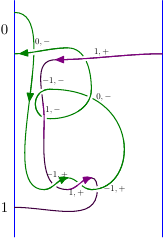}};
  \end{tikzpicture}
  \caption{Left: the diagram of $K_1$ with the strong inversion $\tau_{12}$. Middle and right: subsequent steps for computing
  the Sakuma $\eta$ polynomial for $(K_1,\tau_{12})$. }\label{fig:LWa2}
\end{figure}

\begin{table}
  \tiny
  \begin{tabular}{|c|c|c|c|c|c|c|c|c|c|c|c|c|c|c|}
\hline
-9 &-4 &-3 &-2 &-1 &0 &1 &2 &3 &4 &5 &6 &7 &8 &9\\
\hline
21 & &  &  &  &  &  &  &  &  &  &  &  &  & 1,1 \\
\hline
19 & &  &  &  &  &  &  &  &  &  &  &  & 3,2 & 1,1 \\
\hline
17 & &  &  &  &  &  &  &  &  &  &  & 6,4 & 3,2 &  \\
\hline
15 & &  &  &  &  &  &  &  &  &  & 14,8 & 6,4 &  &  \\
\hline
13 & &  &  &  &  &  &  &  &  & 22,12 & 14,8 &  &  &  \\
\hline
11 & &  &  &  &  &  &  &  & 29,16 & 22,12 &  &  &  &  \\
\hline
9 & &  &  &  &  &  &  & 37,20 & 29,16 &  &  &  &  &  \\
\hline
7 & &  &  &  &  &  & 37,21 & 37,20 &  &  &  &  &  &  \\
\hline
5 & &  &  &  &  & 33,17 & 37,21 &  &  &  &  &  &  &  \\
\hline
3 & &  &  &  & 28,16 & 33,17 &  &  &  &  &  &  &  &  \\
\hline
1 & &  &  & 18,9 & 28,16 &  &  &  &  &  &  &  &  &  \\
\hline
-1 & &  & 10,6 & 18,9 &  &  &  &  &  &  &  &  &  &  \\
\hline
-3 & & 4,2 & 10,6 &  &  &  &  &  &  &  &  &  &  &  \\
\hline
-5 &1,1 & 4,2 &  &  &  &  &  &  &  &  &  &  &  &  \\
\hline
-7 &1,1 &  &  &  &  &  &  &  &  &  &  &  &  &  \\
\hline
\end{tabular}

  \normalsize
  \vspace{0.5cm}
  \caption{The Khovanov homology of the $K_1$ knot. The first number denotes the rank. The second number is the dimension
  of $\ker(\id+\tau_1)$, which turns out to be equal to the dimension of $\ker(\id+\tau_1')$.}\label{fig:LWa1_hom}
\end{table}

\begin{table}
  \tiny
  \begin{tabular}{|c|c|c|c|c|c|c|c|c|c|c|c|c|c|c|}
\hline
-9 &-4 &-3 &-2 &-1 &0 &1 &2 &3 &4 &5 &6 &7 &8 &9\\
\hline
21 & &  &  &  &  &  &  &  &  &  &  &  &  & 1,1 \\
\hline
19 & &  &  &  &  &  &  &  &  &  &  &  & 3,3 & 1,1 \\
\hline
17 & &  &  &  &  &  &  &  &  &  &  & 6,5 & 3,3 &  \\
\hline
15 & &  &  &  &  &  &  &  &  &  & 14,10 & 6,5 &  &  \\
\hline
13 & &  &  &  &  &  &  &  &  & 22,15 & 14,10 &  &  &  \\
\hline
11 & &  &  &  &  &  &  &  & 29,19 & 22,15 &  &  &  &  \\
\hline
9 & &  &  &  &  &  &  & 37,23 & 29,19 &  &  &  &  &  \\
\hline
7 & &  &  &  &  &  & 37,24 & 37,23 &  &  &  &  &  &  \\
\hline
5 & &  &  &  &  & 33,21 & 37,24 &  &  &  &  &  &  &  \\
\hline
3 & &  &  &  & 28,18 & 33,21 &  &  &  &  &  &  &  &  \\
\hline
1 & &  &  & 18,12 & 28,18 &  &  &  &  &  &  &  &  &  \\
\hline
-1 & &  & 10,7 & 18,12 &  &  &  &  &  &  &  &  &  &  \\
\hline
-3 & & 4,3 & 10,7 &  &  &  &  &  &  &  &  &  &  &  \\
\hline
-5 &1,1 & 4,3 &  &  &  &  &  &  &  &  &  &  &  &  \\
\hline
-7 &1,1 &  &  &  &  &  &  &  &  &  &  &  &  &  \\
\hline
\end{tabular}

  \normalsize
  \vspace{0.5cm}
  \caption{The Khovanov homology of the $K_2$ knot. The first number denotes the rank. The second number is the dimension
  of $\ker(\id+\tau_2)$.}\label{fig:LWb1_hom}
\end{table}

\begin{lem}\label{lem:distinguished}
  The involutions $\tau_1$ and $\tau_1'$ are distinguished by their Sakuma polynomials. 
\end{lem}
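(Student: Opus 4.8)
The plan is to compute Sakuma's $\eta$-invariant for each of the two pairs $(K_1,\tau_1)$ and $(K_1,\tau_1')$ directly from their transvergent diagrams and to observe that the two resulting polynomials differ. Recall Sakuma's construction \cite{Sakuma}: for a strongly invertible knot $(K,\tau)$ with axis $A=\Fix(\tau)$, the two points of $K\cap A$ cut $K$ into a pair of arcs interchanged by $\tau$ and cut $A$ into two subarcs $A_1,A_2$; in the quotient orbifold $S^3/\tau\cong S^3$ the image of $K$ becomes a single arc $\bar K$ with endpoints on the image $\bar A$ of the axis, and one forms the auxiliary knots obtained by closing $\bar K$ up with each of the two subarcs of $\bar A$. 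The polynomial $\eta_{(K,\tau)}(t)$ is assembled from the Alexander polynomials of these auxiliary knots and is an invariant of the Sakuma-equivalence class of $(K,\tau)$, once one quotients by the ambiguities $t\mapsto t^{-1}$ and multiplication by $\pm t^k$. Diagrammatically this is a cut-and-reglue operation on the transvergent diagram, which is precisely the sequence of moves depicted in the middle and right-hand panels of Figures~\ref{fig:LWa1} and \ref{fig:LWa2}.

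The first step is to run this algorithm on the diagram of $(K_1,\tau_1)$ in Figure~\ref{fig:LWa1}: following the two reduction steps recorded there, one reads off the auxiliary knot diagram(s) in the rightmost panel and computes the associated Alexander polynomial, for instance by extracting a Seifert matrix or by passing the resulting PD code to SnapPy; this gives $\eta_{(K_1,\tau_1)}(t)$. The second step is to repeat the procedure verbatim for $(K_1,\tau_1')$ using Figure~\ref{fig:LWa2}, obtaining $\eta_{(K_1,\tau_1')}(t)$. The third step is to compare the two: checking that they remain distinct after accounting for the $t\mapsto t^{-1}$ and $\pm t^k$ ambiguities shows that $(K_1,\tau_1)$ and $(K_1,\tau_1')$ are not Sakuma equivalent, so $\tau_1\ne\tau_1'$. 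Combined with \cite[Proposition 3.1]{Sakuma}, which bounds the number of strong inversions of a hyperbolic knot by two, it follows that $\tau_1$ and $\tau_1'$ are exactly the two strong inversions of the hyperbolic knot $K_1$, as is used in the proof of Theorem~\ref{thm:lw_knots}.

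The only genuine difficulty is bookkeeping: one must track carefully how the arcs of $K_1$ and of the axis pass through the crossings during Sakuma's cut-and-reglue, so that the auxiliary knots in the right-hand panels of Figures~\ref{fig:LWa1} and \ref{fig:LWa2} are correctly identified, after which the two Alexander polynomial computations are routine and easily double-checked by computer. The content of the lemma is simply that the two values so obtained are unequal.
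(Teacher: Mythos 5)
Your overall strategy -- compute Sakuma's invariant for $(K_1,\tau_1)$ and $(K_1,\tau_1')$ from the transvergent diagrams and observe the values differ -- is the same as the paper's, but the computational recipe you give computes the wrong invariant. Sakuma's $\eta$-polynomial is \emph{not} assembled from Alexander polynomials of the two auxiliary knots obtained by closing the quotient arc $\bar K$ with the subarcs of the quotient axis. As recalled in Appendix~\ref{sec:appendixa}, $\eta(K,\tau)$ is the Kojima--Yamasaki $\eta$-function of the pair $(\lambda_\tau, O_\tau)$ in the quotient, where $\lambda_\tau$ is the image of a preferred longitude of $K$ and $O_\tau$ the image of the axis; equivalently it is an equivariant self-linking invariant of a two-component link in the quotient, not an Alexander polynomial of constituent knots of the quotient $\theta$-curve. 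Consequently, extracting a Seifert matrix or feeding a PD code of the closed-up quotient knots to SnapPy does not produce $\eta$, and even if the two resulting Alexander polynomials happened to differ, that would not establish the statement of the lemma, which asserts that the \emph{Sakuma polynomials} differ (and is what the argument for Theorem~\ref{thm:lw_knots} relies on, via Sakuma equivalence).

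The paper's proof instead runs Sakuma's explicit diagrammatic algorithm: from the fundamental region (the middle and right panels of Figures~\ref{fig:LWa1} and~\ref{fig:LWa2}) one reads off a signed sum $\wt{\eta}=\sum_p \epsilon_p x_{d_p}$ over crossings, substitutes $x_i \mapsto t^{i-1}-2t^i+t^{i+1}$, and converts to the symmetrized form, obtaining $\eta_1 = 0$ for $\tau_1$ and $\eta_2 = t^2-2+t^{-2}$ for $\tau_1'$. Note in particular that one of the two answers is identically zero, a phenomenon your proposed Alexander-polynomial computation would not see or certify. To repair your argument you would need either to carry out Sakuma's actual algorithm (as the paper does) or to supply and justify a formula expressing $\eta$ in terms of whatever quantities you do compute; as written, the central step of the lemma -- the computation of the two Sakuma polynomials -- is both deferred and aimed at a different invariant.
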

\begin{proof}
We use the algorithm described in \cite{Sakuma}. Figures~\ref{fig:LWa1} and \ref{fig:LWa2} 
show passing from the knot to the collection of arcs in the ``pseudo-fundamental cycle"; compare \cite[Algorithm on pages 181--182]{Sakuma}. To simplify the notation, we will refer to the $\eta$ functions as $\eta_1$ for $\tau_1$ and $\eta_2$ for $\tau_1'$. 
We begin with computation of $\wt{\eta}_1$ and $\wt{\eta}_2$ of Step~5 of Sakuma's algorithm. From 
Figures~\ref{fig:LWa1} and \ref{fig:LWa2}, we read off: 
\begin{align*}
  \wt{\eta}_1&=-x_0\\
  \wt{\eta}_2&=x_1-2x_0+x_{-1}.
\end{align*}
Step~6 of Sakuma's algorithm replaces each $x_i$ by $t^{i-1}-2t^i+t^{i+1}$, so that
\begin{align*}
  \eta'_1&=-t+2-t^{-1}=[2,-1\\
  \eta'_2&=t^2-4t+6-4t^{-1}+t^{-2}=[6,-4,1
\end{align*}
where following Sakuma, we write $[a_0,a_1,\dots$ for the polynomial $a_0+a_1(t+t^{-1})+a_2(t^2+t^{-2})+\cdots$. From the primed polynomials $\eta$, we recover the $\eta$ polynomials via Step~6 of Sakuma's algorithm, namely
\begin{align*}
  \eta_1&=[0,0,0=0\\
  \eta_2&=[-2,0,1=t^2-2+t^{-2}.
\end{align*}
\end{proof}

\subsubsection*{Equivariant squeezedness} We now give an application to equivariant squeezedness. We first recall the definition of a \textit{squeezed knot} from \cite{feller-lewark-lobb}:

\begin{defn}\label{def:squeeze}
A knot $K \subset S^3$ is \emph{squeezed} if it appears as a slice of a genus-minimizing, connected cobordism from a positive torus knot $T^+$ to a negative torus knot $T^-$.
\end{defn}

Although at first Definition \ref{def:squeeze} may seem rather unnatural, \cite{feller-lewark-lobb} show that many classes of familiar knots are squeezed, among them quasipositive knots and alternating knots.  From our present point of view however, squeezed knots are those knots for which there is a direct geometric reason for vanishing of quantum-type invariants (an interpretation also pointed out by \cite{feller-lewark-lobb}) - for example, $4_1$ is squeezed but not slice, though its quantum invariants are compatible with it being slice (for this point of view, it is important to note that torus knots have very constrained local type in quantum-type invariants).  

Note that the minimal genus of a cobordism from $T^+$ to $T^-$ is $g_4(T^+) + g_4(T^-)$. Such a cobordism is realized (for example) by the standard minimal-genus cobordism from $T^+$ to the unknot, followed by the cobordism from the unknot to $T^-$. Minimality can established by using the usual $s$-invariant, since
\[
s(T^+) = 2g_4(T^+) \quad \text{and} \quad s(T^-) = - 2g_4(T^-).
\]
The notion of a squeezed knot admits a natural generalization to the equivariant setting: 

\begin{defn}\label{def:equivar-squeezed}
We say that an equivariant knot $K\subset S^3$ is \emph{equivariantly squeezed} if it appears as a slice of an \textit{equivariant} genus-minimizing, connected cobordism from a positive torus knot $T^+$ to a negative torus knot $T^-$.  
\end{defn}

Note that every torus knot has a unique strong inversion (see for example \cite[Theorem 3.1]{Sakuma}). To ensure that Definition~\ref{def:equivar-squeezed} is not vacuous, we claim:

\begin{lem} 
For each pair $(T^+, T^-)$, there exists at least one equivariant connected cobordism from $T^+$ to $T^-$ that is genus-minimizing.\footnote{We stress, however, that we do not claim \textit{every} genus-minimizing cobordism from $T^+$ to $T^-$ is isotopic rel boundary to an equivariant cobordism.}
\end{lem}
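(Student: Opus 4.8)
The plan is to reduce the statement to the construction of an equivariant cobordism from $T^+$ to an unknot, and then to build that cobordism from the (equivariant) Milnor fiber of the torus knot. The key geometric observation is that the unique strong inversion on $T^+ = T(p,q)$ is realized by complex conjugation: writing $T(p,q) = \{(z_1,z_2) \in S^3 : z_1^p + z_2^q = 0\}$, the map $(z_1,z_2) \mapsto (\bar z_1, \bar z_2)$ is an orientation-preserving involution of $S^3$ whose fixed-point set (the real points) is an unknot meeting $T(p,q)$ in exactly two points, and whose obvious extension over $B^4 \subset \mathbb{C}^2$ is a $\pi$-rotation about a $2$-plane, i.e.\ the standard extension $\tau_{B^4}$. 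For a \emph{real} parameter $\delta$ and a monotone cutoff $\phi$ interpolating between $1$ near the origin and $0$ on $S^3$, the smoothing $\{z_1^p + z_2^q = \delta\cdot\phi(|z|)\} \cap B^4$ of the singular fiber is carried to itself by $\tau_{B^4}$; it is an equivariant slice surface for $T^+$ of genus $(p-1)(q-1)/2 = g_4(T^+)$ (the Milnor number being $(p-1)(q-1)$, and $g_4(T^+) = s(T^+)/2 = (p-1)(q-1)/2$). In particular $\eg(T^+) = g_4(T^+)$, and likewise for $T^-$ by taking mirrors.

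Next I would convert this equivariant slice surface $\hat F$ into an equivariant cobordism. Equip $\hat F$ with a $\tau_{B^4}$-invariant Morse function (a small equivariant perturbation of $|z|^2$) having no interior maxima and a unique global minimum lying on $\hat F \cap \Fix(\tau_{B^4})$, which is nonempty since $\partial\hat F$ meets the fixed axis in two points. Ordering the critical points compatibly with the $\tau$-action, the resulting movie of $\hat F$ (read from the $S^3$-boundary inward) becomes a sequence of on-axis saddles and deaths together with $\tau$-symmetric pairs of off-axis saddles and deaths --- precisely the elementary equivariant cobordisms of Figure~\ref{fig:all_equi_moves_lobb_watson_cob} --- terminating in the small $\tau$-invariant circle $U$ killed by the final (on-axis) death. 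Truncating just before that last death produces an equivariant, connected cobordism $W_+ \colon T^+ \to U$ of genus $g_4(T^+)$, with $U$ an on-axis unknot. Since complex conjugation commutes with reflection in the plane of projection, mirroring $W_+$ and reversing the $I$-direction yields an equivariant connected cobordism $W_- \colon U' \to T^-$ of genus $g_4(T^-)$ for an on-axis unknot $U'$; stacking $W_+$, a product equivariant cobordism from $U$ to $U'$, and $W_-$ gives an equivariant connected cobordism $\Sigma \colon T^+ \to T^-$ of genus $g_4(T^+) + g_4(T^-)$. This is genus-minimizing by the $s$-invariant computation recalled above, since any cobordism from $T^+$ to $T^-$ has genus at least $(s(T^+) - s(T^-))/2 = g_4(T^+) + g_4(T^-)$.

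I expect the main work to be the equivariant bookkeeping in the second step: one must verify that the $\tau$-invariant Morse function can be chosen so that every handle attachment is either supported on the fixed axis or occurs as a genuinely $\tau$-symmetric pair (so that the movie really is assembled from the elementary equivariant cobordism moves of Section~\ref{sec:topological-preliminaries}), and that the truncation can be taken at an on-axis unknot so that the three pieces glue equivariantly. Everything else is formal, and in particular no naturality of $\Sigma$ is claimed. Alternatively, the whole argument can be run diagrammatically, starting from a transvergent diagram of $T^+$ and applying an equivariant version of Seifert's algorithm in place of the Milnor fiber; but the complex-conjugation model has the advantage of making the $\tau$-invariance of a minimal-genus Seifert surface transparent.
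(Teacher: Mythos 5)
Your proposal is correct and takes essentially the same route as the paper: both hinge on the observation that complex conjugation $(z,w)\mapsto(\bar z,\bar w)$ realizes the strong inversion on the link of $z^p+w^q$ and visibly preserves a genus-minimizing fiber surface, after which minimality follows from the $s$-invariant computation. The paper's version is shorter, using the Seifert surface $(f/|f|)^{-1}(1)$ of the Milnor fibration pushed into $B^4$ and leaving the conversion into a cobordism from $T^+$ to $T^-$ implicit (an equivariant puncture at an interior fixed point suffices), whereas your equivariant Morse-function/movie bookkeeping is more than the lemma requires.
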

\begin{proof}
In light of the fact that the minimal genus of a cobordism from $T^+$ to $T^-$ is $g_4(T^+) + g_4(T^-)$, it suffices to show that there is a genus-minimizing slice surface for $T_{p, q}$ which is equivariant. Indeed, this holds even in the setting of Seifert surfaces: let $T_{p, q}$ be the link of the singularity $f(z, w) = z^p + w^q$ and consider the involution sending $(z, w) \mapsto (\bar{z}, \bar{w})$. It is clear that this is a strong inversion on $T_{p, q}$ since the intersection of $z^p + w^q = 0$ and $|z|^2 + |w|^2 = \epsilon$ in $\mathbb{R}^2$ consists of two points. Clearly, the Seifert surface $(f/|f|)^{-1}(1)$ is equivariant. Pushing this into the 4-ball and using the fact that the slice genus and Seifert genus of $T_{p, q}$ coincide, we see that the slice genus of $T_{p, q}$ is realized by an equivariant slice surface. 
\end{proof}

Torus knots are especially convenient since their $\tau$-complexes are locally trivial up to an overall shift. Let $\f[u]$ denote the trivial complex with the identity action of $\tau$, and let $\f[u]_{i}$ denote the complex $\f[u]$, shifted in grading so that $\gr(1) = i$. Then: 

\begin{lem}\label{lem:torus-calculation}
We have local equivalences of $\tau$-complexes
\[
(\Kcrm(T^+), \tau) \sim \f[u]_{2g_4(T^+)} \quad \text{and} \quad (\Kcrm(T^-), \tau) \sim \f[u]_{-2g_4(T^-)}.
\]
\end{lem}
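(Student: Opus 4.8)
The plan is to prove the claim for $T^{+}=T_{p,q}$ (a positive torus knot, $p,q>0$) and then deduce the statement for $T^{-}=\overline{T^{+}}$ by mirroring. Write $g=g_{4}(T^{+})=g_{4}(T^{-})$. Since every torus knot admits a unique strong inversion \cite{Sakuma}, the $\tau$-complexes $(\Kcrm(T^{\pm}),\tau)$ are unambiguous, and by Definition~\ref{def:tau-complex} it suffices to produce, in each case, local maps of grading shift zero in both directions between $(\Kcrm(T^{\pm}),\tau)$ and $\f[u]_{\pm 2g}$.

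The structural input I would use for $T^{+}$ is that it is a positive braid closure, so that $\Khr(T^{+})$ is supported in non-negative homological gradings, with $\Khr^{0}(T^{+})\cong\f$ concentrated in quantum grading $s(T^{+})=2g$; this class survives the Bar-Natan spectral sequence and generates the free part of $H_{*}(\Kcrm(T^{+}))$. (I would cite the computation of Khovanov homology of positive braid closures together with the positivity statement $s(T^{+})=2g$, rather than reprove either.) It follows that $H_{*}(\Kcrm(T^{+}))$ has a \emph{unique} nonzero class $g$ in bigrading $(0,2g)$, so $\tau$ being grading-preserving forces $\tau_{*}g=g$, i.e.\ $\wt{s}(T^{+})=2g$. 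Lifting $g$ to an actual $\tau$-invariant cycle gives a local, grading-shift-zero map $\f[u]_{2g}\to\Kcrm(T^{+})$. For the reverse direction I would use a cobordism map: by the preceding lemma $T^{+}$ bounds an equivariant genus-$g$ Seifert surface coming from the Milnor fibration; deleting an equivariant disk about an interior fixed point of $\tau$ and reading this surface downward yields an equivariant, connected, genus-$g$ cobordism $\Sigma\colon T^{+}\to U$. (One must check the small resulting unknot inherits its standard strong inversion; this holds because $\Fix(\tau)$ lies in the Milnor-fiber surface near such a point.) By Theorem~\ref{thm:taucomplex}, $\Kcrm(\Sigma)\colon\Kcrm(T^{+})\to\Kcrm(U)=\f[u]_{0}$ is local, homotopy-commutes with $\tau$, and has grading shift $-2g$; post-composing with the tautological regrading isomorphism $\f[u]_{0}\cong\f[u]_{2g}$ (of grading shift $+2g$) gives a local, $\tau$-equivariant-up-to-homotopy map of grading shift zero. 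Together these give $(\Kcrm(T^{+}),\tau)\sim\f[u]_{2g}$.

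For $T^{-}$ the cleanest route is duality: $T^{-}$ is the mirror of $T^{+}$, so $\Kcrm(T^{-})\cong\Kcrm(T^{+})^{\vee}$ as bigraded complexes, compatibly with the involution; since dualizing carries $\tau$-complexes to $\tau$-complexes and local equivalences to local equivalences (cf.\ Remark~\ref{rmk:involutive-version}), applying $(-)^{\vee}$ to $(\Kcrm(T^{+}),\tau)\sim\f[u]_{2g}$ yields $(\Kcrm(T^{-}),\tau)\sim(\f[u]_{2g})^{\vee}=\f[u]_{-2g}$. Alternatively I would rerun the previous paragraph with ``positive'' replaced by ``negative'': $\Khr^{0}(T^{-})\cong\f$ in grading $s(T^{-})=-2g$ gives the map $\f[u]_{-2g}\to\Kcrm(T^{-})$ (equivalently, the equivariant genus-$g$ cobordism $U\to T^{-}$ already hits this top class), and the map $\Kcrm(T^{-})\to\f[u]_{-2g}$ is obtained by dualizing the $T^{+}$ construction.

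I expect the main obstacle to be exactly the structural input of the second paragraph: one needs that the canonical (oriented-resolution) generator of $\Kcrm(T^{+})$ sits in bigrading $(0,s(T^{+}))$ and is the only class there, so that the involution is forced to fix it, and one should double-check that the Bar-Natan deformation does not shift this class down. I would want a clean reference for the Khovanov homology of positive braid closures (vanishing below homological degree zero, rank one in degree zero). The cobordism-map half of the argument is routine given Theorem~\ref{thm:taucomplex} and the equivariant Seifert surface; the one delicate point there is confirming that the unknot at the end of $\Sigma$ is strongly invertible in the standard sense, so that the locality clause of Theorem~\ref{thm:taucomplex} genuinely applies.
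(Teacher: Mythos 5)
Your proposal is correct and follows essentially the same route as the paper: a local map $\f[u]_{2g_4(T^+)}\to\Kcrm(T^+)$ given by the $u$-nontorsion class in bigrading $(0,2g_4(T^+))$, the reverse map coming from the minimal-genus equivariant cobordism from $T^+$ to the unknot (with the codomain regraded), and mirroring to handle $T^-$. The only real difference is that the paper checks $\tau$-invariance at the chain level using the canonical generator $\frs_o$ in a positive transvergent diagram (no chains in homological degree $-1$), whereas you import the known structure of $\Khr$ of positive braid closures and force $\tau_*$-invariance by uniqueness of the class in its bigrading --- which suffices, since Definition~\ref{def:tau-complex} only requires maps to homotopy-commute with $\tau$, so your phrase about lifting to an ``actual $\tau$-invariant cycle'' is unnecessary (and not always possible), but harmless.
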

\begin{proof}
First consider $T^+$. It is easily checked that we have an equivariant diagram $D$ for $T^+$, all of whose crossings are positive. In the resulting cube of resolutions, there is a unique resolution $D_v$ of homological grading zero, which is the minimal homological grading. Let $\frs_o$ be the canonical generator lying in the reduced complex. Clearly, $\frs_o$ lies in $D_v$ and is not homologous to any other cycle, since there are no chains of homological grading $-1$. Thus, $\frs_o$ necessarily generates the $\f[u]$-tower in the homology of $\Kcrm(T^+)$. In particular, $\frs_o$ lies in quantum grading $s(T^+) = 2g_4(T^+)$. It is easy to see that $\frs_o$ must be $\tau$-invariant, since $D_v$ is sent to itself by $\tau$ (for grading reasons) and symmetric pairs of circles are clearly assigned the same label in $\frs_o$. Sending $1$ to $\frs_o$ thus gives a local map
\[
\f[u]_{2g_4(T^+)} \rightarrow \Kcrm(T^+).
\]
The local map in the other direction is given by the minimal-genus equivariant cobordism from $T^+$ to the unknot, where we shift the codomain so that the map has grading shift zero. The claim for $T^-$ follows from the evident behavior of $\tau$ under mirroring. 
\end{proof}

We thus obtain:

\begin{proof}[Proof of Theorem~\ref{thm:equivariantly-squeezed}]
Suppose we have equivariant cobordisms from $T^+$ to $K$ and $K$ to $T^-$ as in Definition~\ref{def:equivar-squeezed}. Let these have genus $g^+$ and $g^-$. Then we have local maps
\[
\Kcrm(T^+) \rightarrow \Kcrm(K) \quad \text{and} \quad \Kcrm(K) \rightarrow \Kcrm(T^-)
\]
of grading shift $-2g^+$ and $-2g^-$, respectively. Using Lemma~\ref{lem:torus-calculation} and shifting the grading in the domain and codomain, we have local maps
\[
\f[u]_{2g_4(T^+) - 2g^+} \rightarrow \Kcrm(K) \quad \text{and} \quad \Kcrm(K) \rightarrow \f[u]_{-2g_4(T^-) + 2g^-}.
\]
On the other hand, by minimality of the genus, we have $g_4(T^+) + g_4(T^-) = g^+ + g^-$, so the left- and right-hand complexes are the same. Hence $(\Kcrm(K), \tau)$ is locally equivalent to a trivial complex, which necessarily starts in grading $s(K)$. 
\end{proof}

In particular:

\begin{proof}[Proof of Corollary~\ref{cor:10_141}]
  For $K = 10_{141}$ (equipped with the strong inversion of Figure~\ref{fig:10141}), we have $s(K) \neq \wt{s}(K)$, as displayed in Appendix~\ref{sec:different}.
\end{proof}

\section{The Borel Construction} \label{sec:borel} 

In this section, we introduce the Borel construction and prove Theorem~\ref{thm:full_borel_invariant}. 

\subsection{Algebraic setup} \label{sec:borel-setup}

We begin with the algebraic formalism, which will already be familiar to readers acquainted with Borel equivariant homology.

\begin{defn}\label{def:borel}
A \textit{Borel complex} $C_Q$ is a free, finitely-generated chain complex over the ring $\f[u, Q]$ with a $\Z \oplus \Z$-grading for which
\[
\gr(\partial) = (1, 0),  \quad \gr(u) = (0, -2), \quad \text{and} \quad \gr(Q) = (1, 0).
\]
Two Borel complexes are \textit{homotopy equivalent} if they are homotopy equivalent in the usual sense; that is, as complexes over $\f[u, Q]$.
\end{defn}

The principal way to generate a Borel complex is the following. Suppose that $C$ is a bigraded chain complex over $\f[u]$ equipped with an involution $\tau$. Then we may form a Borel complex by setting 
\[
C_Q = C \otimes_{\f} \f[Q] \quad \text{and} \quad \partial_Q = \partial + Q \cdot (1 + \tau).
\]
All of the Borel complexes in this paper will be formed in such a manner. When we denote a Borel complex by $C_Q$, we thus often mean that $C_Q$ is constructed from a complex $C$ as above.

\begin{rmk}\label{rmk:higher-homotopy1}
Strictly speaking, we do not need $\tau$ to be an actual involution in order to obtain a Borel complex from the data of $C$ and $\tau$. For instance, suppose $\tau$ is only a homotopy involution, but we can find a choice of homotopy $\tau^2 + \id = \partial H + H \partial$ such that $H$ is $\tau$-equivariant. Then it is straightforward to check that the map $\partial_Q = \partial + Q(1 + \tau) + Q^2 H$ is a differential on $C_Q$. Likewise, if $H$ is $\tau$-equivariant only up to homotopy, but we can find a specified choice of homotopy $\tau H + H \tau = \partial H' + H' \partial$ such that $H'$ is $\tau$-equivariant, then the map $\partial_Q = \partial + Q(1 + \tau) + Q^2 H + Q^3 H'$ defines a differential on $C_Q$, and so on. 
\end{rmk}

Now suppose that $C_1$ and $C_2$ are two chain complexes over $\f[u]$ equipped with involutions $\tau_1$ and $\tau_2$, respectively. If $f \colon C_1 \rightarrow C_2$ is a $\tau$-equivariant chain map, then $f_Q = f \otimes \id$ commutes with $\partial_Q$. More generally, if $f$ is only $\tau$-equivariant up to homotopy, but we can choose a witnessing homotopy $H$ (i.e. $f\circ\tau_1=\tau_2\circ f+[d,H]$) which itself is $\tau$-equivariant, then the map
\[
f_Q = f + Q H
\]
is easily seen to be a chain map from $(C_1)_Q$ to $(C_2)_Q$.

\begin{rmk}\label{rmk:higher-homotopy2}
Once again, we do not need the witnessing homotopy to be strictly $\tau$-equivariant. If $H$ is only $\tau$-homotopy-equivariant, but we can find a choice of homotopy $H'$ such that $H \tau_1 + \tau_2 H = \partial H' + H'\partial$ such that $H'$ is $\tau$-equivariant, then $f_Q = f + Q H + Q^2 H'$ defines a map of Borel complexes, and so on. The reader should think of the Borel construction as capturing the higher-order homotopy commutation behavior of $\tau$. 
\end{rmk}

As in Section~\ref{sec:tau-complex}, in the case of a strongly invertible knot we obtain a special type of Borel complex. This is captured by the following notion:

\begin{defn}\label{def:knotlike-borel}
A \textit{knotlike Borel complex} is a Borel complex $C_Q$ that additionally satisfies the localization condition
\[
u^{-1}H_*(C_Q) \cong \f[u, u^{-1}, Q],
\]
where $1 \in \F[u, u^{-1}, Q]$ has bigrading $(0, 0)$. We say that a relatively graded, $\f[u, Q]$-equivariant chain map $f$ between two knotlike Borel complexes $(C_1)_Q$ and $(C_2)_Q$ is \textit{local} if the induced map
\[
f \colon u^{-1}H_*((C_1)_Q) \cong \f[u, u^{-1}, Q] \rightarrow u^{-1}H_*((C_2)_Q) \cong \f[u, u^{-1}, Q]
\]
is an isomorphism. Note that $f$ is not required to be $(\gr_h, \gr_q)$-preserving, but instead necessarily preserves $\gr_h$ and shifts $\gr_q$ by an even constant. We refer to $\gr_q(f)$ as the associated grading shift. Two Borel complexes are \textit{locally equivalent} if there exist local maps between them of grading shift zero. It is not difficult to check that if $C_Q$ is constructed out of a knotlike complex $C$ in the sense of Definition~\ref{def:knotlike}, then $C_Q$ will be a knotlike Borel complex.
\end{defn}

We now give several examples of Borel complexes to illustrate Definition~\ref{def:borel}. Although not strictly necessary, each of our examples is constructed out of a pair $(C, \tau)$ as above. We begin with the two simplest non-trivial examples:

\begin{example}\label{ex:borel-ex1}
Consider the chain complex $C = \mathrm{span}_{\f[u]}\{x, y, z\}$ over $\f[u]$ with differential $\partial y = uz$ and gradings $\gr(x) = (0, 0)$, $\gr(y) = (0,0)$ and $\gr(z) = (1, 2)$. Define $(1 + \tau)y = x$. Then $C$, $C_Q$, and $H_*(C_Q)$ are as in Figure~\ref{fig:borel-ex1}.
\begin{figure}[h!]
\includegraphics[scale = 1]{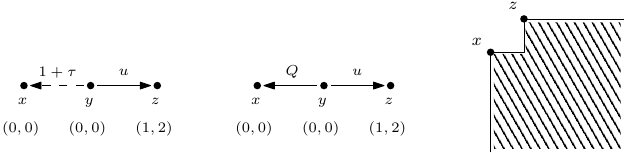}
\caption{Left: $(C, \tau)$. Dashed lines denote the action of $1 + \tau$, while solid lines denote the differential $\partial$. The decoration above the solid line indicates the coefficient of the differential; that is, $\partial y = uz$. Middle: $C_Q$. Solid lines denote the differential $\partial_Q$; here, $\partial_Q y = Qx + uz$. Right: $H_*(C_Q)$. Multiplication by $u$ and $Q$ are schematically represented by vertical and horizontal translation, respectively. In this case, $H_*(C_Q)$ has a presentation with two generators $x$ and $z$ and the single relation $Qx = uz$.}\label{fig:borel-ex1}
\end{figure}
\end{example}

\begin{example}\label{ex:borel-ex2}
Consider the chain complex $C = \mathrm{span}_{\f[u]}\{x, y, z\}$ over $\f[u]$ with differential $\partial z = uy$ and gradings $\gr(x) = (0, 0)$, $\gr(y) = (0,0)$ and $\gr(z) = (-1, -2)$. Define $(1 + \tau)x = y$. Then $C$, $C_Q$, and $H_*(C_Q)$ are as in Figure~\ref{fig:borel-ex2}.
\begin{figure}[h!]
\includegraphics[scale = 1]{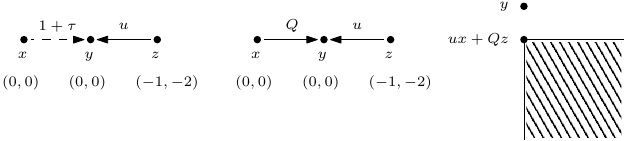}
\caption{Left: $(C, \tau)$. Middle: $C_Q$. Right: $H_*(C_Q) \cong \f[u, Q]_{(0, -2)} \oplus \f[u, Q]/(u, Q)_{(0,0)}$. Here, we write the subscript $(a, b)$ indicates a grading shift so that the element $1$ in the relevant module has grading $(a, b)$.}\label{fig:borel-ex2}
\end{figure}
\end{example}

\begin{rmk}
Borel complexes are quite similar to knot Floer complexes over the ring $\f[U, V]$. The reader who is familiar with knot Floer homology will easily generalize the Borel complexes of Examples~\ref{ex:borel-ex1} and \ref{ex:borel-ex2} to the analogue of staircase complexes in knot Floer theory.
\end{rmk}

In Examples~\ref{fig:borel-ex1} and \ref{fig:borel-ex2}, the Borel construction does not fundamentally contain more information than $(C, \tau)$. Indeed, in these cases the Borel complex can be determined from $H_*(C)$ together with the action of $\tau$ on $H_*(C/u)$. (We make this precise in Lemma~\ref{lem:minimal-model-2} below.) However, in general it is possible for $(C_1, \tau_1)$ and $(C_2, \tau_2)$ to be homotopy equivalent in the sense of Definition~\ref{def:homotopy-equivalence-pair}, but for their Borel complexes be distinct. The simplest example is probably the following:

\begin{example}\label{ex:borel-ex3}
Consider the chain complex $C = \mathrm{span}_{\f[u]}\{x, r, r', y, z \}$ over $\f[u]$ with differential
\[
\partial r' = r \quad \text{and} \quad \partial z = u y
\]
and gradings
\[
\gr(x) = (0, 0), \ \gr(r) = (0, 0), \ \gr(r') = (-1, 0), \ \gr(y) = (-1, 0), \text{ and } \gr(z) = (-2, -2).
\]
Define two different involutions $\tau_1$ and $\tau_2$ on $C$ for which
\[
(1 + \tau_1) r' = y \quad \text{while} \quad (1 + \tau_2) x = r \quad \text{and} \quad (1 + \tau_2) r' = y.
\]
These are depicted in Figure~\ref{fig:borel-ex3-1}. Note that the actions on $\tau_1$ and $\tau_2$ on homology are both trivial. In fact, it is not difficult to check that as $\tau$-complexes, $(C, \tau_1)$ and $(C, \tau_2)$ are homotopy equivalent. To see this, note that the identity map between them is not $\tau$-equivariant, but it is $\tau$-equivariant up to homotopy: if we set $H(x) = r'$, then $\tau_1 + \tau_2 = \partial H + H \partial$. Importantly, $H$ itself is \textit{not} $\tau$-equivariant -- in fact, $H$ is not even $\tau$-equivariant up to homotopy, since $(H\tau_1 + \tau_2H)(x) = y$, which is a homologically nontrivial cycle in $C$.
\begin{figure}[h!]
\includegraphics[scale = 1]{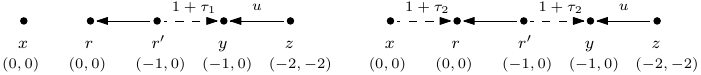}
\caption{Left: $(C, \tau_1)$. Right: $(C, \tau_2)$.}\label{fig:borel-ex3-1}
\end{figure}

It is straightforward to compute the Borel complexes $(C_1)_Q$ and $(C_2)_Q$, where we use $C_1$ and $C_2$ to denote $C$ equipped with the involutions $\tau_1$ and $\tau_2$, respectively. Using the canceling pair $\partial r' = r$, we may observe that $(C_1)_Q$ and $(C_2)_Q$ are both homotopy equivalent to smaller complexes, each spanned by three generators. These are displayed in Figure~\ref{fig:borel-ex3-2}. It is then straightforward to check that $(C_1)_Q$ and $(C_2)_Q$ do not even have the same homology; hence $(C_1)_Q$ and $(C_2)_Q$ are not homotopic over $\f[u, Q]$.
\begin{figure}[h!]
\includegraphics[scale = 1]{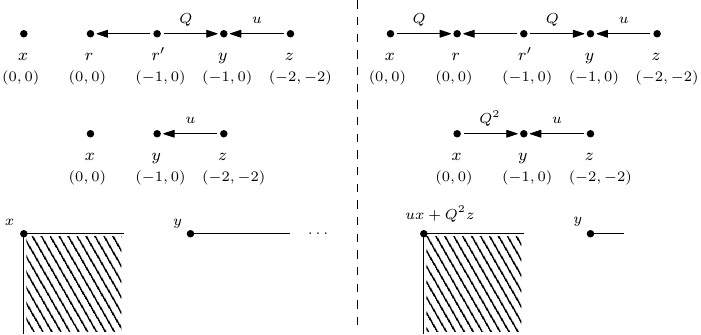}
\caption{Top: $(C_1)_Q$ and $(C_2)_Q$. Middle: simplified models for $(C_1)_Q$ and $(C_2)_Q$; abusing notation, we label these generators also by $x$, $y$, and $z$. Bottom: $H_*((C_1)_Q) \cong \f[u, Q]_{(0,0)} \oplus \f[u, Q]/(u)_{(-1,0)}$ and $H_*((C_2)_Q) \cong \f[u,Q]_{(0, -2)} \oplus \f[u, Q]/(u, Q^2)_{(-1, 0)}$.}\label{fig:borel-ex3-2}
\end{figure}
\end{example}

We give one final example, which although unmotivated at present will turn out to be useful later:

\begin{example}\label{ex:borel-ex4}
Consider the chain complex $C$ and involution $\tau$ displayed in Figure~\ref{fig:borel-ex4}. The Borel complex of this is easily computed; using the canceling generators $\partial r' = r$ and $\partial s' = s$, $C_Q$ is homotopy equivalent to a smaller complex spanned by five generators. This complex and its homology are likewise displayed in Figure~\ref{fig:borel-ex4}. 
\begin{figure}[h!]
\includegraphics[scale = 1]{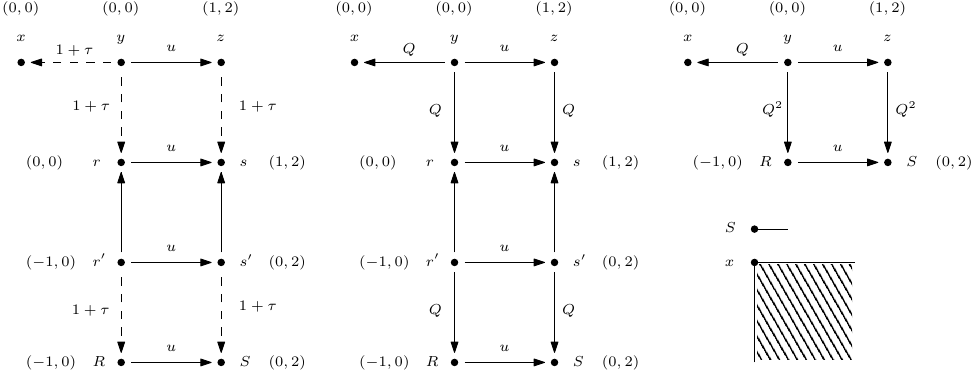}
\caption{Left: $(C, \tau)$. Middle: $C_Q$. Upper right: a simplified model for $C_Q$; by abuse of notation, we label these generators also by $x$, $y$, $z$, $R$, and $S$. Lower right: $H_*(C_Q) \cong \f[u, Q]_{(0,0)} \oplus \f[u, Q]/(u, Q^2)_{(0,2)}$.}\label{fig:borel-ex4}
\end{figure}

It is not difficult to check that $(C, \tau)$ is locally equivalent (in the sense of Definition~\ref{def:tau-complex}) to the pair from Example~\ref{ex:borel-ex1}. In one direction, the local map is given by quotienting out by the subcomplex spanned by $\{r, s, r', s', R, S\}$; in the other direction, the local map is given by the obvious inclusion of $\f[u]$-complexes. As in Example~\ref{ex:borel-ex3}, this latter map is not $\tau$-equivariant, but it is $\tau$-homotopy-equivariant. However, as we will see, $C_Q$ is not locally equivalent (in an appropriate sense for Borel complexes) to the Borel complex from Example~\ref{ex:borel-ex1}.
\end{example}

\subsection{Borel complexes for transvergent diagrams}\label{sec:borel-transvergent}

We now introduce the central construction of the section. 

\begin{defn}\label{def:kcq}
Let $D$ be a transvergent diagram for $L$ and $\tau$ be the involution on $\Kcm(D)$ constructed in Definition~\ref{def:diagrammatic_tau}. Define the \textit{Borel complex of $D$} to be
\[
\Kcm_Q(D) =  \Kcm(D) \otimes \f[Q] \quad \text{with} \quad \partial_Q = \partial + Q \cdot (1 + \tau).
\]
If $L$ is a strongly invertible knot, then $\tau$ preserves the reduced complex $\Kcrm(D)$ (and all versions of the reduced complex are equivariantly isomorphic). In this case, we define the (isomorphism class of the) \textit{reduced Borel complex of $D$} to be
\[
\Kcrm_Q(D) =  \Kcrm(D) \otimes \f[Q] \quad \text{with} \quad \partial_Q = \partial + Q \cdot (1 + \tau).
\]
As usual, we write $\Kcrm(D)$ in place of $\Kcrm(D)[(0, -1)]$. Using the fact that $u^{-1} \Kcrm(D) \simeq \f[u, u^{-1}]$, it is not difficult to check that $\Kcrm_Q(D)$ is a knotlike Borel complex.
\end{defn}

\subsubsection*{Invariance of the Borel complex}
We now claim that the homotopy type of $\smash{\Kcm_Q(D)}$ is an invariant of $L$. This does not immediately follow from the $\tau$-complex formalism of the previous section. Indeed, in Theorem~\ref{thm:taucomplex} we showed that if $D$ and $D'$ are two transvergent diagrams for the same involutive link, then $(\Kcm(D), \tau)$ and $(\Kcm(D'), \tau)$ are homotopy equivalent pairs. However, \textit{a priori} the maps between these complexes only homotopy commute with $\tau$, so this does not suffice to construct maps between $\Kcm_Q(D)$ and $\Kcm_Q(D')$. We thus check that these maps can be made $\tau$-equivariant on the nose, or at least that they homotopy commute with $\tau$ up to a $\tau$-equivariant homotopy. The following lemma gives the cases which are essentially formal; see also \cite[Remark 2.16]{Sano}: 

\begin{lem}\label{lem:invariant-formal-case}
Let $D$ and $D'$ be two transvergent diagrams that differ by one of the following:
\begin{enumerate}
\item An equivariant planar isotopy.
\item An equivariant Reidemeister move IR1, IR2, IR3, R1, or R2.
\item An I-move.
\item An R-move.
\end{enumerate}
Then there exist maps $f \colon \Kcm(D) \rightarrow \Kcm(D')$ and $g \colon \Kcm(D') \rightarrow \Kcm(D)$ such that:
\begin{enumerate}
\item $f$ and $g$ are both $\tau$-equivariant.
\item $f$ and $g$ are homotopy inverse via a $\tau$-equivariant homotopy. That is, there exist $\tau$-equivariant homotopies $F$ and $G$ such that
\[
g \circ f + \id = \partial F + F \partial \quad \text{and} \quad f \circ g + \id = \partial G + G \partial.
\]
\end{enumerate}
Moreover, for an appropriate choice of $p \in D$ and $p' \in D'$, the maps $f$, $g$, $F$, and $G$ preserve $\smash{\Kcrm_{p}(D)}$ and $\smash{\Kcrm_{p'}(D')}$, as appropriate.
\end{lem}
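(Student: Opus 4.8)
The plan is to handle the four types of moves in two groups. For the combinatorially trivial moves -- equivariant planar isotopies, the I-move, and the R-move -- there is a tautological bijection between the resolution cubes of $D$ and $D'$ which is $\tau$-equivariant (see Figure~\ref{fig:ircorrespondence} and the proof of Lemma~\ref{lem:tau-complex-invariant}). This bijection induces an honest isomorphism $f\colon\Kcm(D)\to\Kcm(D')$ of chain complexes, and one takes $g=f^{-1}$, $F=G=0$. Because a planar isotopy, the I-move, and the R-move all carry a fixed point of $\tau$ on $D$ to a fixed point of $\tau$ on $D'$, this isomorphism visibly carries $\Kcrm_p(D)$ to $\Kcrm_{p'}(D')$, where $p'$ is the image of $p$. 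So for these cases both conclusions are immediate with $f$ an isomorphism.

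The substantive cases are the equivariant Reidemeister moves IR1, IR2, IR3, R1, R2. Here the key point is that each of these moves is \emph{supported in a ball disjoint from $\Fix(\tau)$} (for R1, R2) or \emph{supported in a ball meeting $\Fix(\tau)$ in an arc} (for IR1, IR2, IR3), and in either case is built from a single ordinary Reidemeister move together with its $\tau$-mirror (or, for the on-axis moves, a move that is itself symmetric under $\tau$). The strategy is to take the standard Khovanov chain homotopy equivalences $f_0,g_0$ associated to an ordinary Reidemeister move (Bar-Natan's, or as in \cite{vogel,bar-natan-tangle}), localized in the ball where the move occurs, and then to \emph{equivariantize} them. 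Concretely: for an off-axis move, $D$ and $D'$ differ inside two disjoint balls $B$ and $\tau(B)$; on $B$ we use the standard map $f_0$, on $\tau(B)$ we use $\tau f_0\tau$, and away from these balls $f$ is the identity on the (unchanged) tensor factors. Since the local models in $B$ and in $\tau(B)$ do not interact, the resulting $f=f_0\otimes(\tau f_0\tau)\otimes\id$ is a chain map and is $\tau$-equivariant by construction; likewise $g$, $F$, $G$ are obtained by equivariantizing the standard homotopy inverse and the standard witnessing homotopies in the same fashion, and the identities $gf+\id=\partial F+F\partial$, $fg+\id=\partial G+G\partial$ follow from the corresponding non-equivariant identities applied in each ball. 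For an on-axis move, the ball $B$ containing the move is itself $\tau$-invariant, and one checks that the standard local chain homotopy equivalence can be chosen to be $\tau$-equivariant (the local tangle is symmetric, so this amounts to choosing the canonical Bar-Natan map, which is manifestly symmetric under reflection); one then takes $f=f_0\otimes\id$, etc.

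Finally, for the reduced statement: choose the basepoint $p\in D$ to lie on $\Fix(\tau)$ and outside the ball(s) $B$ (resp. $B$ and $\tau(B)$) in which the move takes place -- this is possible since $\Fix(\tau)$ is an unknot and only a compact portion of it is involved in the move. Then in every resolution the distinguished circle $c_p$ is untouched by the local model, so $f$, $g$, $F$, $G$, being identity on the $c_p$ factor, preserve the subcomplex $\Kcrm_p(D)=\{$generators labeling $c_p$ by $x\}$ and similarly $\Kcrm_{p'}(D')$; restricting gives the required $\tau$-equivariant homotopy equivalence of reduced complexes (using Lemma~\ref{lem:reduced-theories-equivalent} to pass between pointed and unpointed models, and Lemma~\ref{lem:reducedtauindependent} for the $\tau$-equivariance).

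\textbf{Main obstacle.} The only real content is the claim that the standard Reidemeister chain homotopy equivalences \emph{and their witnessing homotopies} $F,G$ are sufficiently local that the off-axis equivariantization is literally a tensor product of the $B$-model, the $\tau(B)$-model, and the identity; and, for the on-axis moves, that one genuinely can choose $f_0,g_0,F,G$ $\tau$-equivariant. This requires inspecting the explicit local models (as in \cite{bar-natan-tangle,vogel}) and checking that the homotopies produced there respect the reflection symmetry of the relevant tangle replacement; this is routine but is where all the care goes. Note that the more delicate moves M1, M2, M3 (which move a strand through $\Fix(\tau)$ and hence are genuinely not of product type near the axis) are deliberately \emph{excluded} from this lemma and must be treated separately.
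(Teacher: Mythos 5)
Your overall architecture is the same as the paper's: trivial $\tau$-equivariant isomorphisms with $F=G=0$ for planar isotopies, the I-move and the R-move; for the mirror-pair moves, compose the ordinary Reidemeister map in one disk with its $\tau$-conjugate in the other and use that maps supported in disjoint disks commute (Bar-Natan's tangle locality) to get equivariance of $f$ and $g$; for the on-axis moves, check by inspection that Bar-Natan's explicit maps respect the reflection symmetry; and for the reduced claim, put the basepoint on $\Fix(\tau)$ away from the support of the move. One correction of conventions before the substance: you have the move names reversed. In this paper (Figure~\ref{fig:all_equi_moves_lobb_watson}), IR1, IR2, IR3 are the \emph{off-axis} pairs of mirror-image Reidemeister moves, while R1 and R2 are the \emph{on-axis} moves; your methods cover both kinds, but they are attached to the wrong labels.

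The genuine gap is in your treatment of the witnessing homotopies for the paired (off-axis) moves. Writing $F_0$ for the local homotopy in the disk $B$ with $g_0 f_0 + \id = [\partial, F_0]$ and $F_0' = \tau F_0 \tau$ for its mirror, the naive ``equivariantized'' candidate $F_0 \otimes F_0'$ satisfies $[\partial, F_0\otimes F_0'] = [\partial,F_0]\otimes F_0' + F_0\otimes[\partial,F_0']$, which is \emph{not} $gf+\id = [\partial,F_0]\otimes\id + \id\otimes[\partial,F_0'] + [\partial,F_0]\otimes[\partial,F_0']$; and the standard non-equivariant homotopy for a composite, e.g. $F_0\otimes g_0'f_0' + \id\otimes F_0'$, does satisfy the identity but is not $\tau$-equivariant, since conjugating by $\tau$ (which swaps the two disks) turns it into $g_0f_0\otimes F_0' + F_0\otimes\id$, and over $\f$ you cannot average. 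So ``the identities follow from the non-equivariant identities applied in each ball'' is false as a formal statement: equivariance of $F$ and $G$ genuinely uses the particular structure of the Bar-Natan R1/R2 maps, and this is exactly what the paper checks by hand. For instance, for the R1 move one composite is the identity on the nose ($g_0f_0=\id$), so for the IR1 pair that direction needs no homotopy at all, and the paper exhibits the homotopy in the other direction explicitly (Figure~\ref{fig:IR1}) and observes its equivariance; for R2, $f_0g_0=0$ by the sphere relation, so $G=0$ there. Your ``main obstacle'' paragraph correctly points at the place where the inspection happens, but the proof as written asserts a formal tensor-product shortcut for $F$ and $G$ that does not hold; it must be replaced by the move-by-move verification with the explicit Bar-Natan homotopies, as in the paper.
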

\begin{proof}

The first, third, and fourth cases are obvious: these follow from observing that the corresponding isomorphisms from Lemma~\ref{lem:tau-complex-invariant} are $\tau$-equivariant on the nose. In these cases, the homotopies $F$ and $G$ are zero. We thus consider the second case. Each of IR1, IR2, and IR3 consists of a pair of usual Reidemeister moves occurring on opposite sides of $\Fix(\tau)$. Let $f$ be the usual map for the Reidemeister move to the left of the axis, followed by the map for the move to the right. Let $f'$ likewise be the right-hand map followed by the left-hand map. Clearly, $f \circ \tau = \tau \circ f'$. On the other hand, two Reidemeister maps which occur in disjoint planar disks commute with each other. This follows from Bar-Natan's extension of Khovanov homology to tangles \cite{bar-natan-tangle}; see for example \cite[Section 2.2.2]{Sano}. Thus $f = f'$ and hence $f$ is $\tau$-equivariant. We define $g$ using the reverse pair of Reidemeister moves; we then have homotopies $F$ and $G$ such that
\[
g \circ f + \id = \partial F + F \partial \quad \text{and} \quad f \circ g + \id = \partial G + G \partial.
\]
These are $\tau$-equivariant for the same reason that $f$ and $g$ are $\tau$-equivariant: using Bar-Natan's extension of Khovanov homology to tangles \cite{bar-natan-tangle}, $F$ and $G$ consist of a symmetric pair of homotopies occurring in disjoint planar disks.
For future reference in Section~\ref{sub:mixed_links}, we present an example for the IR1 move in Figure~\ref{fig:IR1}.
\begin{figure}
  \includegraphics[width=12cm]{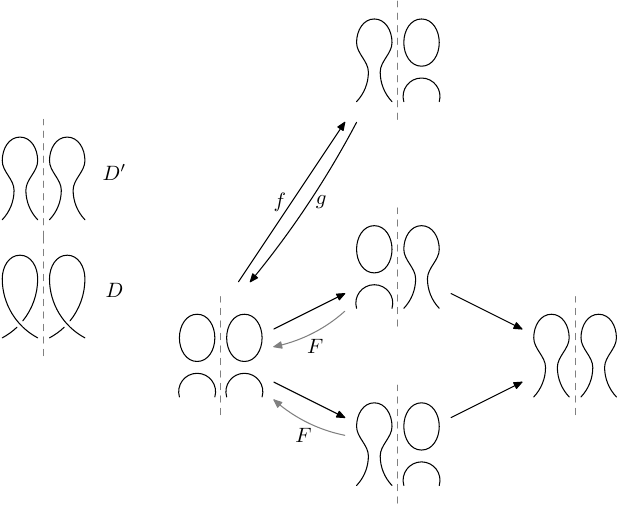}
  \caption{The Bar-Natan maps for the IR1-move as the tensor products of the maps for a classical Reidemeister moves.
    The maps are tensor products of the relevant Bar-Natan maps. The map $F$ is the homotopy on one component and
    the identity on the other. Compare Figures~\ref{fig:new_R1_f} and~\ref{fig:new_R1_gh}.}\label{fig:IR1}
\end{figure}


\begin{figure}
  \includegraphics[width=8cm]{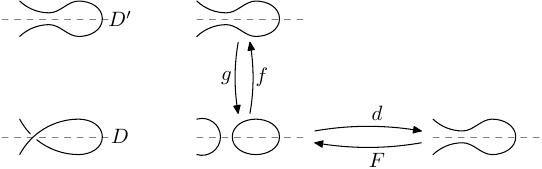}
  \caption{The Bar-Natan maps for the R1 Reidemeister move. The symmetry axis is horizontal.}\label{fig:bn_R1}.
\end{figure}
Now consider an equivariant R1 Reidemeister move. In this case, it turns out the Bar-Natan homotopy equivalences used in the proof of the ordinary R1 Reidemeister move is in fact equivariant, and moreover the respective homotopies used are equivariant. To see this, we recall the maps
in \cite[Section 4.3]{bar-natan-tangle}. These can be seen in Figure~\ref{fig:bn_R1}. The map $d$ is the Bar-Natan differential. The maps $f$,
$g$ and $G$ are drawn in Figures~\ref{fig:new_R1_f} and~\ref{fig:new_R1_gh} with the convention that the source is on the top, and the target
is on the bottom. 
It was proved in \cite{bar-natan-tangle} that $fg + \id =[\partial,G]$ and $gf=\id$. The construction of cobordisms indicates that all the maps $f,g,G$
commute with the $\tau$-action.

\begin{figure}
  \includegraphics[trim = 0 50 0 0,clip,width=6cm]{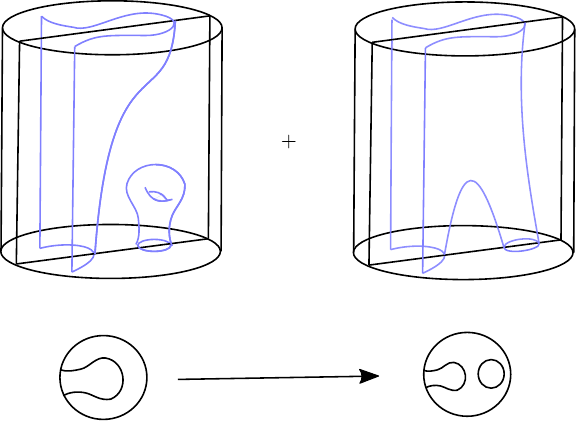}
  \caption{The cobordism map $f$ of the diagram in Figure~\ref{fig:bn_R1}, which is the sum of two cobordisms. The symmetry plane is drawn to indicate that the cobordism map is indeed
  $\tau$-invariant.}\label{fig:new_R1_f}
\end{figure}
\begin{figure}
  \begin{tikzpicture}
    \node at (-1.8,0) {\includegraphics[trim = 0 50 0 0, clip, width =5cm]{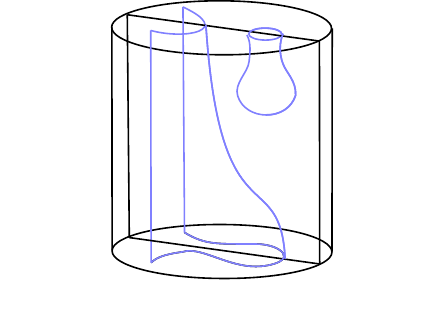}};
    \node at (2,0.2) {\includegraphics[trim = 0 0 0 0, clip, width =2.5cm]{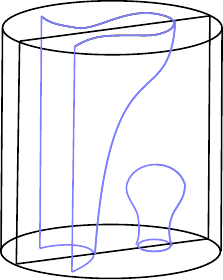}};
  \end{tikzpicture}
  \caption{The maps $g$ (left) and $G$ (right) of the diagram in Figure~\ref{fig:bn_R1}.}\label{fig:new_R1_gh}
\end{figure}






\begin{figure}
  \includegraphics[width=12cm]{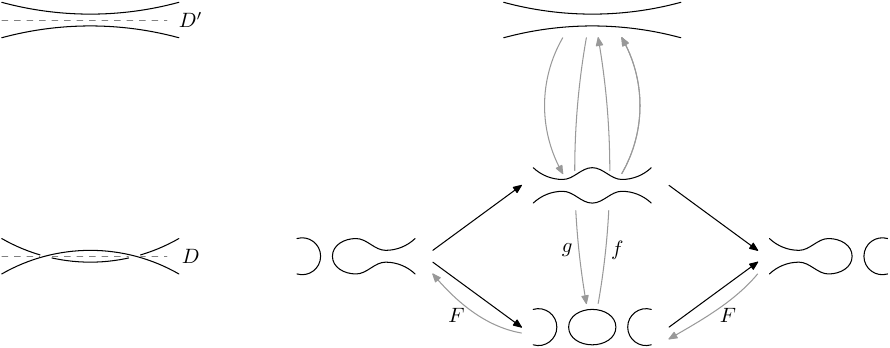}
  \caption{The Bar-Natan maps for the R2 Reidemeister move. The symmetry axis is horizontal, indicated
  on the base diagrams for $D$ and $D'$.}\label{fig:bn_R2}.
\end{figure}
Finally, consider an R2 Reidemeister move. The diagram in Figure~\ref{fig:bn_R2} is taken from \cite[Section 4.3]{bar-natan-tangle}.
Once again, in this case we let $f$ and $g$ be the usual Reidemeister move maps: these are the sums of the identity maps and
the maps drawn in Figure~\ref{fig:R2_1}. It is straightforward to check that $f$ and $g$ are $\tau$-equivariant. 
\begin{figure}[h!]
\includegraphics[trim = 0 50 0 0,clip, scale = 0.59]{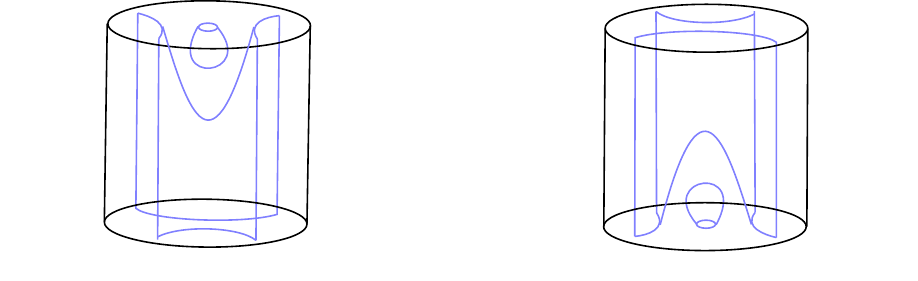}
\caption{Non-trivial part of the homotopy equivalences $g$ (left) and $f$ (right) for R2.}\label{fig:R2_1}
\end{figure}

It follows from the sphere relation that $fg$ is trivial. Hence $G$ is zero. In Figure~\ref{fig:R2_2}, we display the homotopy $F$ between $gf$ and identity. It was shown in \cite{bar-natan-tangle} that $gf + \id =[\partial,F]$ as required. Also, $F$ is immediately seen to be $\tau$-equivariant.
\begin{figure}[h!]
\includegraphics[trim = 0 50 0 0,clip, scale = 0.7]{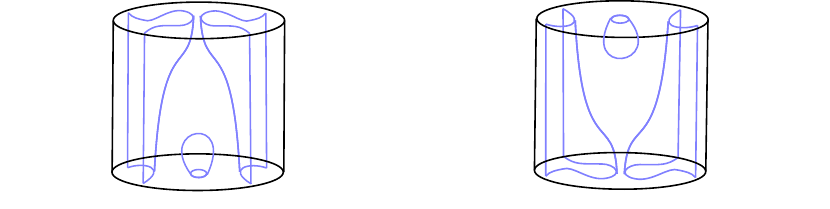}
\caption{Components of the only non-trivial homotopy $F$ for R2.}\label{fig:R2_2}
\end{figure}

The last part of the lemma follows by choosing basepoints as in Lemma~\ref{lem:reduced-invariant}
\end{proof}

We thus immediately obtain the following:

\begin{lem}\label{lem:invariant-formal-case-ii}
Let $D$ and $D'$ be two transvergent diagrams that differ by one of the moves in Lemma~\ref{lem:invariant-formal-case}. Then we have homotopy equivalences $\smash{\Kcm_Q(D)} \simeq \smash{\Kcm_Q(D')}$ and $\smash{\Kcrm_Q(D)} \simeq \smash{\Kcrm_Q(D')}$.
\end{lem}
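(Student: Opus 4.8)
The plan is to derive the lemma directly from Lemma~\ref{lem:invariant-formal-case} together with the algebraic formalism of Section~\ref{sec:borel-setup}. Given transvergent diagrams $D$ and $D'$ related by one of the listed moves, Lemma~\ref{lem:invariant-formal-case} supplies $\tau$-equivariant chain maps $f\colon\Kcm(D)\to\Kcm(D')$ and $g\colon\Kcm(D')\to\Kcm(D)$ together with $\tau$-equivariant homotopies $F$ and $G$ satisfying $g\circ f+\id=[\partial,F]$ and $f\circ g+\id=[\partial,G]$. I would then set $f_Q=f\otimes\id_{\f[Q]}$, $g_Q=g\otimes\id_{\f[Q]}$, $F_Q=F\otimes\id_{\f[Q]}$, and $G_Q=G\otimes\id_{\f[Q]}$, all of which are $\f[u,Q]$-linear of the appropriate degree.

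The first step is to check that $f_Q$ and $g_Q$ are chain maps of Borel complexes. This is exactly the observation recorded in Section~\ref{sec:borel-setup}: because $f$ is $\tau$-equivariant, $f\otimes\id$ commutes with $\partial_Q=\partial+Q(1+\tau)$ (with the ``$QH$'' correction term of that discussion equal to zero), and likewise for $g$. The second step is to verify that $F_Q$ and $G_Q$ witness $g_Q\circ f_Q\simeq\id$ and $f_Q\circ g_Q\simeq\id$ over $\f[u,Q]$. The only computation needed is
\[
[\partial_Q,F_Q]=[\partial,F]\otimes\id+Q\cdot\bigl((1+\tau)F+F(1+\tau)\bigr)\otimes\id,
\]
and over $\f=\f_2$ we have $(1+\tau)F+F(1+\tau)=\tau F+F\tau=0$ precisely because $F$ is $\tau$-equivariant; hence $[\partial_Q,F_Q]=(g\circ f+\id)\otimes\id=g_Q\circ f_Q+\id$, and symmetrically for $G_Q$. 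Thus $\Kcm_Q(D)\simeq\Kcm_Q(D')$.

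For the reduced statement I would use the last clause of Lemma~\ref{lem:invariant-formal-case}: for an appropriate choice of equivariant basepoints $p\in D$ and $p'\in D'$, the maps $f,g,F,G$ all preserve the reduced subcomplexes $\Kcrm_p(D)$ and $\Kcrm_{p'}(D')$. Restricting $f_Q,g_Q,F_Q,G_Q$ accordingly (and tensoring with $\f[Q]$) and repeating the computation above verbatim gives $\Kcrm_Q(D)\simeq\Kcrm_Q(D')$, the choice of equivariant basepoint being immaterial by Lemma~\ref{lem:reducedtauindependent}. I do not expect any real obstacle here: all of the geometric work has already been done in Lemma~\ref{lem:invariant-formal-case}, and the remaining point is the purely formal one that \emph{strict} $\tau$-equivariance of both the homotopy equivalences and the chosen witnessing homotopies is exactly what is required to promote a homotopy equivalence of pairs to a homotopy equivalence of Borel complexes. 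The only thing worth emphasizing is that it is the equivariance of $F$ and $G$ --- and not merely of $f$ and $g$ --- that is being used, which is why Lemma~\ref{lem:invariant-formal-case} was stated with that extra strength.
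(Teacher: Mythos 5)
Your proposal is correct and is essentially the paper's own argument: the paper likewise promotes $f,g,F,G$ from Lemma~\ref{lem:invariant-formal-case} to $f\otimes\id$, $g\otimes\id$, $F\otimes\id$, $G\otimes\id$ and uses the strict $\tau$-equivariance of the witnessing homotopies to conclude they remain homotopy inverses over $\f[u,Q]$, with the reduced case handled by the basepoint clause just as you do. Your explicit verification of $[\partial_Q,F_Q]=g_Qf_Q+\id$ merely spells out what the paper leaves as "easily checked."
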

\begin{proof}
Let $f$ and $g$ be the maps constructed in Lemma~\ref{lem:invariant-formal-case}. Since these commute with $\tau$, we may promote them to chain maps $f_Q = f \otimes \id$ and $g_Q = g \otimes \id$ between $\smash{\Kcm_Q(D)}$ and $\smash{\Kcm_Q(D')}$. Using the fact that $F$ and $G$ also commute with $\tau$, it is easily checked that $f_Q$ and $g_Q$ are homotopy inverse to each other via the homotopies $F_Q = F \otimes \id$ and $G_Q = G \otimes \id$.
\end{proof}

It remains to check invariance under the equivariant Reidemeister moves M1, M2, and M3. These are where the real distinction between the Borel construction and the mapping cone formalism occurs. Roughly speaking, we will follow the construction of Lobb-Watson \cite[Section 5]{lobb-watson} and construct a map $f_0 \colon \Kcm(D) \rightarrow \Kcm(D')$ which is not necessarily $\tau$-equivariant, but satisfies $\tau f_0 + f_0 \tau = \partial f_1 + f_1 \partial$ for a $\tau$-equivariant $f_1$. As discussed in Section~\ref{sec:borel-setup}, this gives a map $\smash{f_Q = f_0 + Q f_1 \colon \Kcm_Q(D) \rightarrow \Kcm_Q(D')}$, which we will show is a homotopy equivalence of Borel complexes. Moreover, $f_0$ will be chain homotopic to a sequence of (non-equivariant) Reidemeister move maps. Since the construction of $f_0$ and $f_1$ is slightly involved, we delay discussion of this until the next subsection.

\subsubsection*{Cobordism maps for the Borel complex}
We now construct cobordism maps for the Borel complex. Let $\Sigma$ be an equivariant cobordism from $L_1$ to $L_2$ and assume that $\Sigma$ is equivariantly generic. Then $\Sigma$ gives a sequence of equivariant Reidemeister moves and equivariant cobordism moves from $D_1$ to $D_2$. We have already argued that each equivariant Reidemeister move induces a homotopy equivalence between successive diagrams $\smash{\Kcm_Q(D)}$ and $\smash{\Kcm_Q(D')}$. We thus consider an equivariant cobordism move. These come in two kinds: equivariant pairs of births, deaths, and saddles, and on-axis births, deaths, and saddles. 

For an equivariant pair, consider the map $f$ from $\Kcm(D)$ to $\Kcm(D')$ given by the composition of the two regular cobordism maps occurring on either side of the axis. As in the proof of Lemma~\ref{lem:invariant-formal-case}, this composition may be taken in either order and is consequently $\tau$-equivariant. Thus $f$ lifts to a map $f \otimes \id$ from $\smash{\Kcm_Q(D)}$ to $\smash{\Kcm_Q(D')}$. For the on-axis cobordism moves, it is evident that the usual cobordism map $f$ is itself $\tau$-equivariant and thus lifts to a map from $\smash{\Kcm_Q(D)}$ to $\smash{\Kcm_Q(D')}$. 


\begin{defn}
Let $\Sigma$ be a cobordism in $S^3 \times I$ from $L_1$ to $L_2$. Assume $\Sigma$ is equivariantly generic, so that $\Sigma$ gives rise to sequence $\{S_i\}_{i = 1}^k$ of equivariant Reidemeister moves and equivariant elementary cobordism moves from $D_1$ to $D_2$. Define the associated cobordism map to be the composition
\[
\Kcm_Q(\Sigma) = \Kcm_Q(S_k) \circ \cdots \circ \Kcm_Q(S_1) \colon \Kcm_Q(D_1) \rightarrow \Kcm_Q(D_2)
\]
where the $\Kcm_Q(S_i)$ are the maps associated to each elementary move. 
\end{defn}

We now define a Borel map for the reduced complex. This is done in exactly the same way as in Definition~\ref{def:reduced-cobordism-map}. First observe that we have a Wigderson splitting on the level of Borel complexes:

\begin{lem}
Let $K$ be a strongly invertible knot with transvergent diagram $D$ and let $p$ be an equivariant basepoint on $D$. The Wigderson maps $i_p$ and $\pi_p$ extend to inclusion and projection maps
\[
i_p \otimes \id \colon \Kcrm_Q(D) \rightarrow \Kcm_Q(D) \quad \text{and} \quad \pi_p \otimes \id \colon \Kcm_Q(D) \rightarrow \Kcrm_Q(D).
\]
\end{lem}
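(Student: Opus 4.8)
The plan is to deduce the lemma formally from the $\tau$-equivariance of the Wigderson splitting, which was already recorded in Section~\ref{sec:involutions-on-bar-natan}, together with the observation that $\partial_Q$ is built only out of $\partial$ and $1+\tau$. First, I would recall from Theorem~\ref{thm:wigderson} the direct sum decomposition
\[
\Kcm(D) = i_p(\Kcrm_p(D)) \oplus \sigma_p(\Kcrm_1(D)),
\]
where $\sigma_p$ is a chain-level section of the quotient map $j_p$. As noted in the discussion preceding Lemma~\ref{lem:reduced-isotopy-equivariant} (see \cite[Proposition 2.23]{Sano}), when $p$ is an equivariant basepoint the section $\sigma_p$ may be chosen $\tau$-equivariant; hence $i_p$ and $\pi_p$ are $\tau$-equivariant, and the displayed decomposition is a decomposition of chain complexes carrying $\tau$-actions.

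Next I would tensor with $\f[Q]$ and check that the differential $\partial_Q = \partial + Q\cdot(1+\tau)$ respects the resulting decomposition
\[
\Kcm_Q(D) = \bigl(i_p(\Kcrm_p(D)) \otimes \f[Q]\bigr) \oplus \bigl(\sigma_p(\Kcrm_1(D)) \otimes \f[Q]\bigr).
\]
This is immediate: $\partial$ preserves the decomposition because it is the differential on a direct sum of complexes, and $1+\tau$ preserves it because $i_p$ and $\pi_p$ are $\tau$-equivariant. Consequently $i_p \otimes \id$ and $\pi_p \otimes \id$ are $\f[u,Q]$-equivariant chain maps with respect to $\partial_Q$, and they are precisely the inclusion of, and projection onto, the first summand; in particular $(\pi_p\otimes\id)\circ(i_p\otimes\id)=\id$. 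The grading statement is trivial since $i_p$ and $\pi_p$ are grading-preserving and $Q$ has bidegree $(1,0)$.

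The only point that requires any care — and it is not really an obstacle — is to confirm that the first summand, equipped with the restriction of $\partial_Q$, literally is the reduced Borel complex $\Kcrm_Q(D)$ of Definition~\ref{def:kcq}, rather than merely an isomorphic copy of it. This holds because $\tau$ preserves the $\tau$-invariant subcomplex $\Kcrm_p(D)$, so the restriction of the diagrammatic involution of Definition~\ref{def:diagrammatic_tau} to $\Kcrm_p(D)$ is exactly the involution used to form $\Kcrm_Q(D)$; hence the restriction of $1+\tau$, and therefore of $\partial_Q$, to $i_p(\Kcrm_p(D))\otimes\f[Q]$ agrees on the nose with the differential defining $\Kcrm_Q(D)$. (If one prefers to work with the unpointed model, the identical argument applies, using that $\Kcrm_{un}(D)\subseteq\Kc'(D)$ is $\tau$-invariant and Lemma~\ref{lem:reducedtauindependent}.) This completes the proof sketch.
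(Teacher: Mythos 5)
Your argument is correct and is essentially the paper's own proof: the whole content is that the Wigderson splitting (hence $i_p$ and $\pi_p$) is $\tau$-equivariant for an equivariant basepoint, so the maps commute with $\partial_Q = \partial + Q\cdot(1+\tau)$ after tensoring with $\f[Q]$. The paper simply states this in one line; your additional check that the restricted differential agrees on the nose with the one defining $\Kcrm_Q(D)$ is a harmless elaboration of the same point.
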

\begin{proof}
This follows immediately from the fact that $i_p$ and $\pi_p$ are $\tau$-equivariant.
\end{proof}

We may thus define:

\begin{defn}
Let $\Sigma$ be an equivariantly generic cobordism in $S^3 \times I$ from $L_1$ to $L_2$. For any pair of basepoints $p_1$ on $D_1$ and $p_2$ on $D_2$, define
\[
\Kcrm_Q(\Sigma) \colon \Kcrm_{p_1}(D_1) \rightarrow \Kcrm_{p_2}(D_2)
\]
to be the composition
\[
\Kcrm_Q(D_1) \xrightarrow{i_{p_1} \otimes \id} \Kcm(D_1) \xrightarrow{\Kcm_Q(\Sigma)} \Kcm(D_2) \xrightarrow{\pi_{p_2} \otimes \id} \Kcrm_Q(D_2)
\]
where the domain and codomain are $\Kcrm_{p_1}(D_1) \otimes \f[Q]$ and $\Kcrm_{p_2}(D_2) \otimes \f[Q]$, respectively.
\end{defn}

Modulo our discussion of the M1, M2, and M3 moves, this completes the proof of the main theorem:

\begin{proof}[Proof of Theorem~\ref{thm:full_borel_invariant}]
Invariance follows from Lemma~\ref{lem:invariant-formal-case-ii}, together with our discussion of the M1, M2, and M3 moves in the next subsection. It remains to check that if $K_1$ and $K_2$ are strongly invertible knots and $\Sigma$ is connected, then $\smash{\Kcrm_Q(\Sigma)}$ is local. To see this, reduce modulo $Q$. We obtain the map 
\[
\smash{\pi_{p_2} \circ (\Kcm_Q(\Sigma) \bmod Q) \circ i_{p_1}} \colon \Kcrm_{p_1}(D_1) \rightarrow \Kcrm_{p_2}(D_2). 
\]
Now, each elementary cobordism map used to define $\smash{\Kcm_Q(\Sigma)}$ has a mod $Q$ reduction which is either just a usual (non-equivariant) Reidemeister or cobordism move map, or is chain homotopic to a composition of such maps. Thus $\Kcm_Q(\Sigma) \bmod Q$ is chain homotopic to $\Kcm(\Sigma)$ as given in Definition~\ref{def:unreduced-cobordism}. Hence $\smash{\Kcrm_Q(\Sigma)} \bmod Q$ is chain homotopic to $\smash{\Kcrm(\Sigma)}$ as given in Definition~\ref{def:reduced-cobordism-map}. By Theorem~\ref{thm:knotlike-locality}, we know that $\smash{\Kcrm(\Sigma)}$ induces an isomorphism on homology after inverting $u$. According to Lemma~\ref{lem:quasi-iso-move}, the same is then true for $\smash{\Kcrm_Q(\Sigma)}$, as desired.
\end{proof}

\subsection{Invariance under M1, M2, and M3 moves}\label{sec:miinvariance}

We now complete the proof of Theorem~\ref{thm:full_borel_invariant} by establishing invariance of $\Kcm_Q(D)$ and $\Kcrm_Q(D)$ under the M1, M2, and M3 moves. The first two of these closely follow the argument given in \cite[Section 5.3]{lobb-watson}.


\subsubsection*{The M1 move} Now let $D$ and $D'$ be two transvergent diagrams which differ by an M1 move, where $D$ is as on the left of the M1 in Figure \ref{fig:all_equi_moves_lobb_watson} and $D'$ is on the right. We begin by constructing maps $f_0$ and $f_1$ from $\Kcm(D)$ to $\Kcm(D')$ such that:
\begin{enumerate}
\item $f_0$ is a chain map; 
\item $\tau f_0 + f_0 \tau = \partial f_1 + f_1 \partial$; and,
\item $f_1$ is $\tau$-equivariant.
\end{enumerate}
Non-equivariantly, the M1 move is actually just a Reidemeister R3 move which moves the overstrand of the diagram to the lower right. We thus let $f_0$ be the associated Reidemeister R3 map on the Bar-Natan complex. This is not $\tau$-equivariant, but by construction it is a chain map (it may be regarded as a variant of \cite[Figure 9]{bar-natan-tangle} for different crossing signs).  Moreover, it homotopy commutes with $\tau$ up to a homotopy $f_1$. The maps $f_0$ and $f_1$ are displayed in Figure~\ref{fig:M1_borel_1}. The analogous maps $g_0$ and $g_1$ in the opposite direction are depicted in Figure~\ref{fig:M1_borel_2}. Here the cobordisms used to define the $g_0$ maps are similar to those used for $f_0$.

\begin{figure}[h!]
\includegraphics[scale = 1.0]{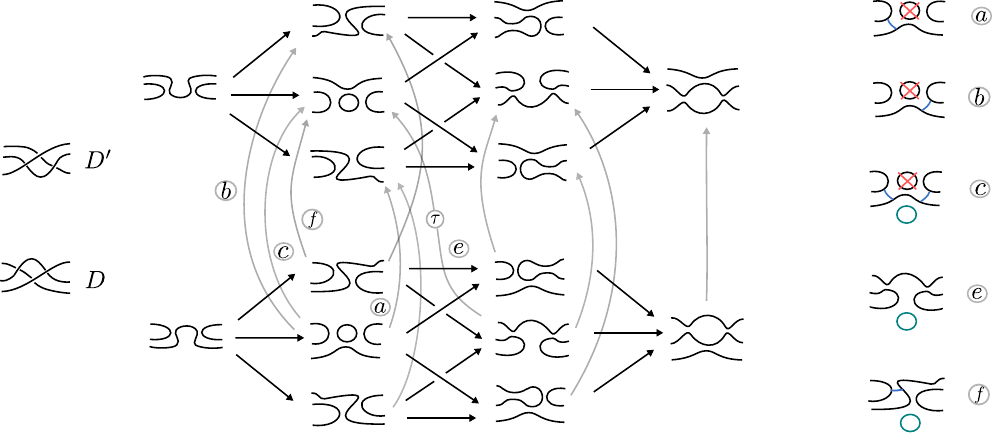}
\caption{Depiction of the maps $f_0$ and $f_1$. In this and subsequent figures, an arrow decorated with $\tau$ means that the corresponding cobordism map is precomposed with $\tau$. The arrow labeled with $\tau$ is the only non-zero component of $f_1$; the rest of the arrows define $f_0$. The alphabetical labels beside the arrows represent the corresponding cobordisms depicted on the right hand side. An arrow without any label is the identity cobordism. The cobordism labeled $(c)$ consist of a first a death, then two saddles, followed by a birth. Note our convention for birth is a green circle, and death is a circle with a red cross.}\label{fig:M1_borel_1}
\end{figure}

\begin{figure}[h!]
\includegraphics[scale = 1.0]{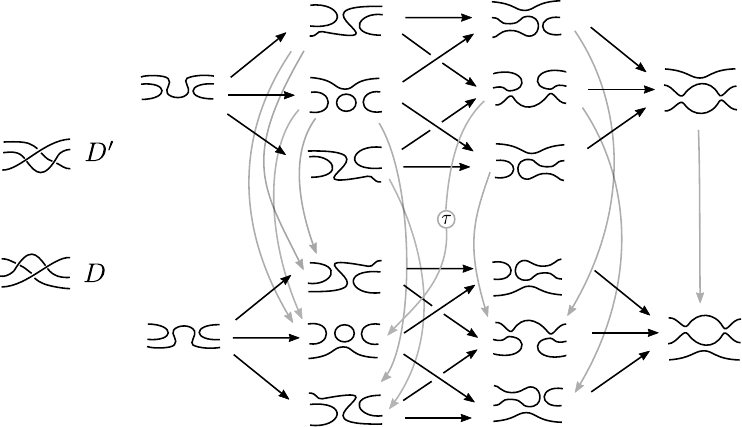}
\caption{The map $g_0$ and $g_1$. As before, the map $g_1$ is defined by only one arrow that is decorated by $\tau$.}\label{fig:M1_borel_2}
\end{figure}

As discussed in Section~\ref{sec:borel-setup}, this gives chain maps $f_Q = f_0 + Qf_1$ and $g_Q = g_0 + Qg_1$ between the Borel complexes $\smash{\Kcm_Q(D)}$ and $\smash{\Kcm_Q(D')}$. We claim that these are homotopy inverse to each other. For this, we produce a map $F_0$ from $\Kcm(D)$ to itself such that: 

\begin{figure}[h!]
\includegraphics[scale = 1.2]{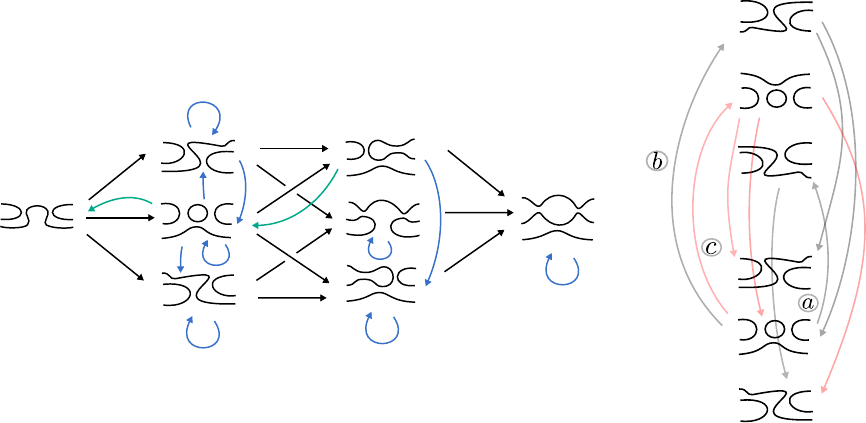}
\caption{On the left, the map $F_0$ in green and $g_0 f_0$ in blue. On the right, a sample computation of $g_0 f_0$, maps in red compose to zero.}\label{fig:M1_borel_3}
\end{figure}
\begin{enumerate}
\item $g_0 f_0 + \id = \partial F_0 + F_0 \partial$;
\item $g_1 f_0 +  g_0 f_1 = \tau F_0 + F_0 \tau $; and,
\item $g_1 f_1 = 0$.
\end{enumerate}
Given this, it is not difficult to verify the identity
\[
(\partial + Q(1 + \tau))F_0 + F_0(\partial + Q(1 + \tau)) = (g_0 + Q g_1)(f_0 + Qf_1) + \id,
\]
demonstrating that $g_Q f_Q$ is homotopic to the identity as a map from $\smash{\Kcm_Q(D)}$ to itself via the homotopy $F_Q = F_0$.

We first begin by defining $F_0$ in Figure~\ref{fig:M1_borel_3}. Now we compute the composition $g_0 f_0$. On the right of Figure~\ref{fig:M1_borel_3}, we show one such sample computation. Indeed the composition of the red arrows is zero since all of those composed cobordisms have a sphere (due to birth followed by a death in the same circle). On the other hand, the rest of the composed cobordisms (coming from the grey arrows) are all non-trivial. The entire map $g_0 f_0$ is shown in Figure~\ref{fig:M1_borel_3}. It immediately follows from Figure~\ref{fig:M1_borel_3} that  $g_0 f_0 + \id = \partial F_0 + F_0 \partial$.

We now note that $g_1 f_1$ is zero, by a direct computation.  The map $g_1 f_0 + g_0 f_1$ is shown in Figure~\ref{fig:M1_borel_4}. Note that again in this computation, due to the sphere relation, $g_0 f_1$ is zero. It follows from Figure~\ref{fig:M1_borel_3} that $g_1 f_0 + g_0 f_1= \tau F_0 + F_0 \tau$, by a further computation. This verifies all the three described conditions mentioned previously, showing $g_Qf_Q$ is homotopic to identity as a map from $\smash{\Kcm_Q(D)}$ to itself via the homotopy 
$F_Q = F_0 $.

Likewise, we claim that there exists a map $G_0$ from $\Kcm(D')$ to itself which satisfy the above conditions with $f$ and $g$ interchanged, showing that $f_Q g_Q$ is homotopic to the identity via $G_Q = G_0 $. 
  To describe $G_Q$ conveniently, we note that $f$ and $g$ are related as follows --- there is a symmetry taking the tangle $D$ to $D'$ by rotating by $\pi$ about the center of the disk containing $D,D'$.  This symmetry induces a bijection $\beta$ between the resolutions pictured in the cubes in Figure \ref{fig:M1_borel_1}; the map $g$ is $\beta\circ f\circ \beta$, where this term requires some interpretation to parse, which we leave to the reader.  The map $G_0$ is similarly given by $\beta \circ F_0\circ \beta$.  

\begin{figure}[h!]
\includegraphics[scale = 1.2]{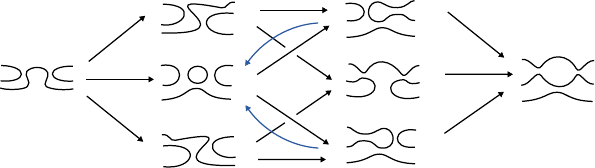}
\caption{The composition $g_1f_0 + g_0f_1$ in blue, which is same as $\tau F_0 + F_0 \tau$.}\label{fig:M1_borel_4}
\end{figure}

If the equivariant basepoint $p$ of the pointed link $(D,p)$ is outside the disk in which the M1 move takes place, then all maps in question preserve $\Kcrm_p$.  Recall all versions of the reduced complex $(\Kcr^-(D),\tau)$ are isomorphic, namely, the pointed complex $\Kcrm_{p}(D)$ via any equivariant basepoint $p$, or defined by the unpointed construction $\Kcrm_{un}(D)$.  Thus, to show $\Kcrm_Q(D)\simeq \Kcrm_Q(D')$, we can use the reduced complexes on both sides that are defined with a basepoint \emph{away} from the M1 region, so that the chain-homotopy type of $\Kcrm_Q$ is indeed unchanged by M1 moves; see for example Remark~\ref{rem:basepointjumping2}. 

\subsubsection*{The M2 move} Now let $D$ and $D'$ be two transvergent diagrams which differ by an M2 move, where $D$ is as on the left of the M2 in Figure \ref{fig:all_equi_moves_lobb_watson} and $D'$ is on the right. We once again construct maps $f_0$ and $f_1$ from $\Kcm(D)$ to $\Kcm(D')$ such that:
\begin{enumerate}
\item $f_0$ is a chain map;
\item $\tau f_0 + f_0 \tau = \partial f_1 + f_1 \partial$; and,
\item $f_1$ is $\tau$-equivariant.
\end{enumerate}
Non-equivariantly, the M2 move is just a Reidemeister R2 move which cancels the left and middle crossings. We thus let $f_0$ be the associated Reidemeister R2 map on the Bar-Natan complex. The maps $f_0$ and $f_1$ are displayed in Figure \ref{fig:m2-f0-f1}. We take the convention that the crossings in $D'$ are ordered from left to right, so, that the $101$-resolution $D'_{101}$ is the diagram with two closed circles. (This is the middle diagram in the third row of Figure~\ref{fig:m2-f0-f1}.) All but one of the cobordisms in the diagram is determined by quantum degree.  The cobordism from $D_1$ to $D'_{101}$ is not determined by quantum degree -  this cobordism is given by birthing the left circle, and splitting the other circle from the \emph{left}-hand side of the tangle (in particular, the right-hand circle in $D'_{101}$ comes from the saddle, not a birth).  

\begin{figure}[h!]
	\includegraphics[scale=1.4]{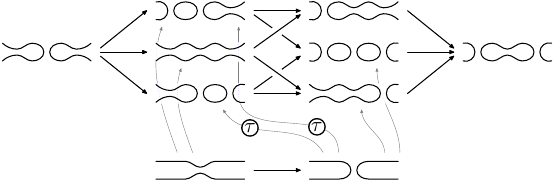}
	\caption{The morphisms from $D$ (bottom) to $D'$ (top).  Here, the part preserving homological degree is $f_0$, while the morphism $f_1$ is the part  decreasing homological grading by $1$.}\label{fig:m2-f0-f1}
\end{figure}

There is a map $g_0$ in the opposite direction, which is the ordinary R2 move map, together with a map $g_1$, both pictured in Figure \ref{fig:m2-g0-g1}.

\begin{figure}[h!]
	\includegraphics[scale=1.6]{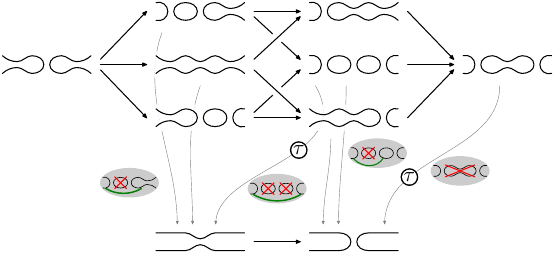}
	\caption{The morphisms from $D'$ (top) to $D$ (bottom).  Here, the part preserving homological degree is $g_0$, while the morphism $g_1$ is the part decreasing homological grading by $1$.  In this figure, we have indicated several of the cobordisms maps, in the shaded gray regions.}\label{fig:m2-g0-g1}
\end{figure}

The maps $g_0,g_1$ satisfy the analogs of the conditions for $f_0,f_1$, namely:
\begin{enumerate}
	\item $g_0$ is a chain map;
	\item $\tau g_0 + g_0 \tau = \partial g_1 + g_1 \partial$; and,
	\item $g_1$ is $\tau$-equivariant.
\end{enumerate}

As in the previous case, to establish that $f_Q = f_0 + Qf_1$ and $g_Q = g_0 + Qg_1$ are homotopy inverse to each other, we need to perform several additional checks. In one direction, this turns out to be trivial. Indeed, in \cite[Section 4.3]{bar-natan-tangle} it is shown that the usual Bar-Natan homotopy for $f_0$ followed by $g_0$ is zero; that is, $g_0f_0 = \id$. Further inspection shows that $f_0g_1+f_1g_0=0$ and $g_1f_1=0$. Hence $g_Qf_Q = \id$. 

For the other direction, we argue as follows. Let $G_0$ be the Bar-Natan homotopy associated to $g_0$ followed by $f_0$, so that $f_0g_0 + \id = [\partial, G_0]$. Explicitly, $G_0$ is given by four components, taking the $D'_{11i}$ resolution to $D'_{10i}$ and $D'_{10i}$ to $D'_{00i}$ for $i=0,1$, as described in Figure \ref{fig:m2-homotopy-1}.  

\begin{figure}
	\centering
	\includegraphics[width=14cm]{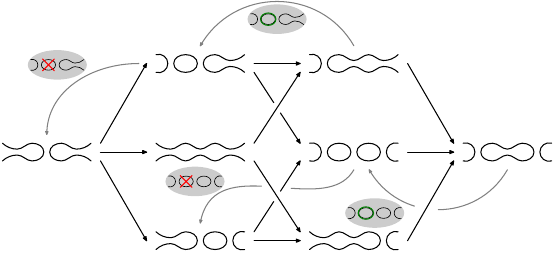}
	\caption{The morphism $G_0$.}\label{fig:m2-homotopy-1}
\end{figure}

Viewing $G_0$ as a chain map on Borel complexes by extending $Q$-linearly, and writing $\partial_Q$ for the Borel differential, we then have $f_Qg_Q+\id = [\partial_Q,G_0] \bmod{Q}$. Hence we may define $E \colon \Kcm(D')\to \Kcm(D')$ to be the map $E$ such that
\[
QE = f_Qg_Q+ \id + [\partial_Q, G_0].
\]
Note $E$ commutes with $\partial_Q$, since the right-hand side of the above expression commutes with $\partial_Q$. Set
\begin{equation}\label{eq:m2-g-homotopy}
G_Q = (1 + QE)^{-1} G_0( f_Qg_Q + \id).
\end{equation}
Here, by $(1 + QE)^{-1}$, we mean the usual infinite series in $QE$. However, since $E$ necessarily decreases homological grading by one and $\Kcm(D')$ is finitely-generated, this expression is well-defined when evaluated on any element of $\Kcm(D')$. We claim that $G_Q$ is the desired homotopy. For this, we compute
\begin{align*}
[\partial_Q, G_Q] &= (1 + QE)^{-1}[\partial_Q, G_0]( f_Qg_Q + \id) \\
&= (1 + QE)^{-1}(f_Q g_Q + \id + QE)(f_Q g_Q + \id) \\
&= (1 + QE)^{-1}((f_Q g_Q + \id)(f_Q g_Q + \id) + QE(f_Q g_Q + \id)) \\
&= (1 + QE)^{-1}((f_Q g_Q f_Q g_Q + \id) + QE(f_Q g_Q + \id)) \\
&= (1 + QE)^{-1}((f_Q g_Q + \id) + QE(f_Q g_Q + \id)) \\
&= f_Qg_Q + \id,
\end{align*}
as desired. Here, in the first line we have used the fact that $E$ commutes with $\partial_Q$, while in the fifth line we have used the fact that $g_Q f_Q = \id$.

Note that if an equivariant basepoint $p$ is chosen outside the disk in which the M2 move takes place, then all maps in question preserve $\Kcrm_p$.

\subsubsection*{The M3 move}
For invariance under M3, we diverge from the treatment in Lobb-Watson and give a more topological argument. (The M3 move in \cite[Section 5.3]{lobb-watson} is significantly more complicated than the previous two cases.) Let $D$ and $D'$ differ by an M3 move. The trick will be to consider the (partial) diagram $D_0$ equipped with the two arcs $\alpha$ and $\beta$ displayed in Figure~\ref{fig:m3cones}. Let $D_1$ and $D_1'$ be the diagrams obtained from $D_0$ by surgery along $\alpha$ and $\beta$, respectively, and let $D_2$ be the diagram obtained by surgery along both. 

\begin{figure}[h!]
\includegraphics[scale = 0.9]{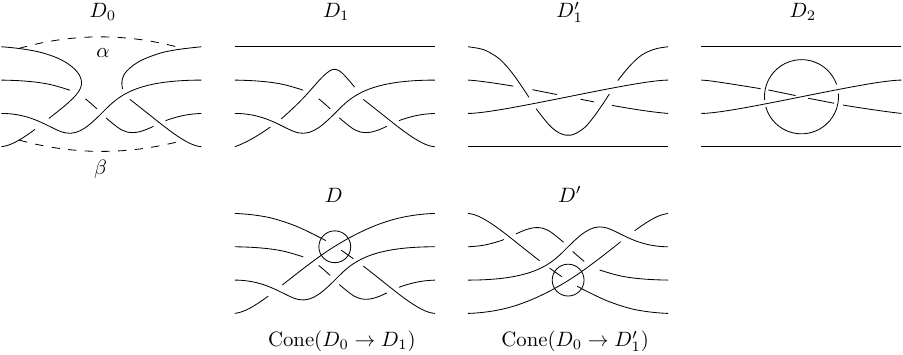}
\caption{Top row: the diagram $D_0$ with two dotted arcs $\alpha$ and $\beta$. Up to planar isotopy, the diagrams $D_1$ and $D_1'$ are obtained from $D_0$ by surgery along $\alpha$ and $\beta$, respectively. Surgery along both $\alpha$ and $\beta$ gives $D_2$. Bottom row: the diagrams $D$ and $D'$ before and after the M3 move. Each diagram has a circled crossing. The 0-resolution in each case is $D_0$, while the 1-resolutions are $D_1$ and $D_1'$, respectively.}\label{fig:m3cones}
\end{figure}

Let $S_\alpha$ and $S_\beta$ be the saddle cobordisms corresponding to surgery along $\alpha$ and $\beta$, and denote their corresponding Bar-Natan cobordism maps by $s_\alpha$ and $s_\beta$. These maps are clearly $\tau$-equivariant and hence may be promoted to maps (which we also denote by $s_\alpha$ and $s_\beta$) of Borel complexes. Note that since $S_\alpha$ and $S_\beta$ occur in disjoint disks, we have the commutative diagram:
\[
\begin{tikzcd}
	\Kcm_Q(D_0) & \Kcm_Q(D_1) \\
	\Kcm_Q(D_1') & \Kcm_Q(D_2)
	\arrow["s_\alpha", from=1-1, to=1-2]
	\arrow["s_\beta"', from=1-1, to=2-1]
	\arrow["s_\beta", from=1-2, to=2-2]
	\arrow["s_\alpha"', from=2-1, to=2-2]
\end{tikzcd}
\]
Moreover, by resolving the crossings indicated in Figure~\ref{fig:m3cones}, we see that we may identify
\[
\Kcm_Q(D) = \mathrm{Cone}(\Kcm_Q(D_0) \xrightarrow{s_\alpha} \Kcm_Q(D_1)) \quad \text{and} \quad \Kcm_Q(D') = \mathrm{Cone}(\Kcm_Q(D_0) \xrightarrow{s_\beta} \Kcm_Q(D_1')).
\]

Now consider the auxiliary sequence of cobordism moves displayed in Figure~\ref{fig:m3aux}. This starts at $D_1$ and consists of the surgery $S_\beta$, followed by an M1 move, an IR2 move, and a death cobordism. For the latter three cobordisms, let the induced maps on the Borel complex be denoted by $F_{\mathrm{M1}}$, $F_{\mathrm{IR2}}$ and $F_{\mathrm{death}}$. Write
\[
F_{\mathrm{aux}} = F_{\mathrm{death}} \circ F_{\mathrm{IR2}} \circ F_{\mathrm{M1}}. 
\]
Observe that the surgery along $\beta$ occurs in a disk which is disjoint from the M1 and IR2 moves. Hence $s_\beta$ commutes with $F_{\mathrm{M1}}$ and $F_{\mathrm{IR2}}$, so that
\[
F_{\mathrm{aux}} \circ s_\beta = F_{\mathrm{death}} \circ s_\beta \circ F_{\mathrm{IR2}} \circ F_{\mathrm{M1}}.
\]
On the right-hand side, $F_{\mathrm{death}} \circ s_\beta$ splits off a circle and then immediately deletes it. This induces the identity map on the Borel complex. Thus $F_{\mathrm{aux}} \circ s_\beta = F_{\mathrm{IR2}} \circ F_{\mathrm{M1}}$; by Section~\ref{sec:borel-transvergent}, this is a homotopy equivalence of Borel complexes.

\begin{figure}[h!]
\includegraphics[scale = 0.85]{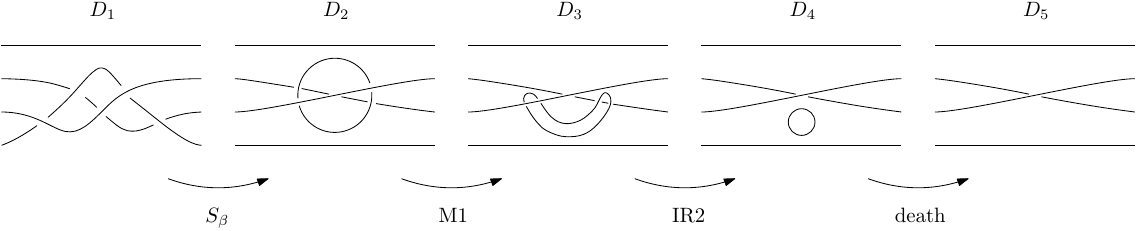}
\caption{An auxiliary sequence of cobordisms moves from $D_1$ to $D_5$.}\label{fig:m3aux}
\end{figure}

Since $F_{\mathrm{aux}} \circ s_\beta$ is a homotopy equivalence, we obtain a homotopy equivalence of mapping cones
\[
\mathrm{Cone}(\Kcm_Q(D_0) \xrightarrow{s_\alpha} \Kcm_Q(D_1)) \simeq \mathrm{Cone}(\Kcm_Q(D_0) \xrightarrow{(F_{\mathrm{aux}} \circ s_\beta) \circ s_\alpha} \Kcm_Q(D_5)),
\]
where $D_5$ is as in Figure~\ref{fig:m3aux}. On the other hand, we may similarly consider the sequence of cobordism moves which starts from $D_1'$ and consists of $S_\alpha$, followed by the same M1 move, IR2 move, and death cobordism. The same argument as before shows that 
\[
\mathrm{Cone}(\Kcm_Q(D_0) \xrightarrow{s_\beta} \Kcm_Q(D_1')) \simeq \mathrm{Cone}(\Kcm_Q(D_0) \xrightarrow{(F_{\mathrm{aux}} \circ s_\alpha) \circ s_\beta} \Kcm_Q(D_5)).
\]
But $s_\beta \circ s_\alpha = s_\alpha \circ s_\beta$, so the two right-hand mapping cones are identical. We conclude $\Kcm_Q(D) \simeq \Kcm_Q(D')$, as desired.

We moreover claim that our homotopy equivalence reduces modulo $Q$ to the ordinary Bar-Natan Reidemeister map from $\Kc^-(D)$ to $\Kc^-(D')$ by an isotopy from $D$ to $D'$ (for instance, this isotopy can be realized as the composite of four Reidemeister 3 moves).

Let $R,R'$ be the tangles for $D,D'$, respectively, in the disk region $\mathcal{R}$ where the M3 takes place, similarly write $R_0,\ldots,R_5$ and $R_i'$ for the active tangles in Figures \ref{fig:m3cones}, \ref{fig:m3aux}.  The tangles $R$ and $R'$ are both \emph{simple} in the sense of \cite{bar-natan-tangle}, as are the tangles $R_1,R_5$ and $R_1'$.  Recall that Bar-Natan \cite{bar-natan-tangle} defines, for a disk $B$ with some finite collection of marked points a category $\Kob(B)$ whose objects are ``complexes of tangles" in $B$ with those boundary points, and for which there is a functor $\Kc^-\colon \Kob(\emptyset)\to \f[u]-\mathrm{gMod}$ so that $\Kc^-$ applied to the cube of resolutions for a link $L$ gives $\Kc^-(L)$.  A tangle is \emph{simple} if it has a unique automorphism in $\Kob$.  Since $R,R'$ (etc.) are simple, there is a unique, up-to-homotopy, homotopy equivalence $\varphi\colon \Kc^-(D)\to \Kc^-(D')$ that is homotopic to a map of the form $\Kc^-(\phi\Id_{D\cap(\mathbb{R}^2-\mathcal{R})})$ where $\phi$ is a homotopy-equivalence in $\Kob$ from $D$ to $D'$.  In what follows, we will show that our morphism $\Kc^-(D)\to \Kc^-(D')$ constructed above, call it $F$, is indeed $\Kc^-$ applied to a morphism in Bar-Natan's tangle category, from $D$ to $D'$ - since there is only one such morphism $\Kc(D)\to \Kc(D')$, we have that the homotopy-equivalence constructed by a sequence of Reidemeister move maps from $D$ to $D'$ agrees with $F$.  

First, we will show that the morphisms $\Kc^-(D_1)\to \Kc^-(D_5)$ and $\Kc^-(D_1')\to \Kc^-(D_5)$ come from morphisms in $\Kob(\mathcal{R})$.  We then claim that the homotopy equivalence that these morphisms induce,
\begin{align}
\begin{split}
\mathrm{Cone}(\Kc^-(D_0)\rightarrow \Kc^-(D_1))&\to \mathrm{Cone}(\Kc^-(D_0)\rightarrow \Kc^-(D_5)) \label{eq:kob-check}\\ \mathrm{Cone}(\Kc^-(D_0)\rightarrow \Kc^-(D_1'))& \to \mathrm{Cone}(\Kc^-(D_0)\rightarrow \Kc^-(D_5))
\end{split}
\end{align}
respectively, are induced by morphisms in $\Kob$.  

Indeed, for the claim that $\Kc^-(D_1)\to \Kc^-(D_5)$ arises from a morphism in $\Kob(\mathcal{R})$ we observe that each of the $S_{\beta}$, M1, IR2, and death maps in Figure \ref{fig:m3aux} are $\Kc^-$ applied to a $\Kob$ morphism.  It is by definition that $S_\beta$ and the death map arise from $\Kob$.  We have already seen that M1 is an R3 map nonequivariantly, and IR2 is nonequivariantly a composite of two R2 maps, so these morphisms also arise from $\Kob(\mathcal{R})$.  Now, the morphisms between mapping cones in (\ref{eq:kob-check}) are $\Kc^-$ applied to the corresponding cones in $\Kob(\mathcal{R})$ (tensored with the tangle $D_0\cap (\mathbb{R}^2-\mathcal{R})$ in $\mathcal{R}^2-\mathcal{R}$).  Finally, the identification of $\Kc^-(D)$ with $\mathrm{Cone}(\Kc^-(D_0)\rightarrow \Kc^-(D_1)$ also is at the level of $\Kob$, by construction.  Thus the sequence of equivalences
\begin{align*}
\Kc^-(D)\to \mathrm{Cone}(\Kc^-(D_0)\rightarrow \Kc^-(D_1))\to \ &\mathrm{Cone}(\Kc^-(D_0)\rightarrow \Kc^-(D_5))\to \\
&\mathrm{Cone}(\Kc^-(D_0)\rightarrow \Kc^-(D_1'))\to \Kc^-(D')
\end{align*}
all come from tensoring equivalences in $\Kob(\mathcal{R})$ with $D\cap (\mathbb{R}^2-\mathcal{R})$, and so $F\colon \Kc^-(D)\to \Kc^-(D')$ that we have constructed arises from a $\Kob(\mathcal{R})$ map.  Thus $F$ agrees with the ordinary nonequivariant isotopy map $\Kc^-(D)\to \Kc^-(D')$, as needed.  
%
%

As before, note that if an equivariant basepoint $p$ is chosen outside the disk in which the M3 move takes place, then all maps in question preserve $\Kcrm_p$.

\begin{rmk}
The discussion of the M3 move completes the proof of Theorem~\ref{thm:full_borel_invariant}. In fact, we have established a slightly stronger claim by explicitly promoting the usual Bar-Natan Reidemeister maps to maps of Borel complexes which are homotopy inverse to each other. Strictly speaking, the latter claim is more than is required by Theorem~\ref{thm:full_borel_invariant}. Indeed, a shorter proof of the homotopy invariance of $\smash{\Kcm_Q(D)}$ can be given as follows: in each case, we construct a homotopy equivalence $f_0$ of Bar-Natan complexes. We then promote this to a map of Borel complexes of the form $f_Q = f_0 \otimes \id$ or $f_Q = f_0 + Q f_1$. Since reducing modulo $Q$ gives a quasi-isomorphism of Bar-Natan complexes, Lemma~\ref{lem:quasi-iso-move} implies that $f_Q$ is also a quasi-isomorphism. Lemma~\ref{lem:qitohe} then guarantees that $\Kcm_Q(D)$ and $\Kcm_Q(D')$ are homotopy equivalent. 

As in Remark~\ref{rmk:sanoresults}, this argument does not require us to construct the homotopies $F_Q$ and $G_Q$ used throughout the last two sections. It should be noted that although Lemma~\ref{lem:qitohe} allows us to conclude that  $f_Q$ and $g_Q$ each admit a homotopy inverse, it does not necessarily follow that $f_Q$ and $g_Q$ are homotopy inverse to each other, even though $f_0$ and $g_0$ are homotopy inverses as maps of Bar-Natan complexes. Here we construct the homotopies in question since they will be useful in our discussion of mixed complexes.
\end{rmk}


\section{Applications and Examples} \label{sec:applications}
We now give the principal applications of the Borel formalism. First, we define the refined equivariant $s$-invariants $\wt{s}_{Q, A, B}$ and prove Theorem~\ref{thm:equivariant_s_borel_intro}. We then discuss techniques for partially computing the Borel complex from homological data. We use this to give several example computations and establish our results on the isotopy-equivariant genus.

\subsection{Knotlike complexes and the $\wt{s}_Q$-invariant}\label{sec:strong-s-examples}
Using the localization behavior of a knotlike Borel complex, we can refine the equivariant $s$-invariant $\wt{s}$ introduced in Definition~\ref{def:tilde-s}. Let 
\[
\iota_* \colon H_*(C_Q) \xrightarrow{\cong} \F[u,u^{-1},Q]
\]
be the map induced by including $H_*(C_Q)$ into its localization. This is well-defined since the only (grading-preserving) endomorphism of $\F[u,u^{-1},Q]$ as an $\F[u,u^{-1},Q]$-module is the identity.

\begin{defn}\label{def:strong-s}
Let $C_Q$ be a knotlike Borel complex. Define the even integer
\begin{align*}
\wt{s}_Q(C_Q) = \max_i \{&\text{there exists } [x] \in H_*(C_Q) \mid \iota_*[x]=u^{-i/2} \in u^{-1}H_*(C_Q)\}.
\end{align*}
Equivalently, $\wt{s}_Q(C_Q)$ is the maximum $i$ such that $H_*(C_Q)$ has a $u$-nontorsion class in bigrading $(0, i)$. We can likewise study $u$-nontorsion classes in other homological gradings. This leads to the following refinement: for each $A \geq 0$, define the even integer
\begin{align*}
\wt{s}_{Q, A}(C_Q) = \max_i \{&\text{there exists } [x] \in H_*(C_Q) \mid \iota_*[x]=u^{-i/2}Q^A \in u^{-1}H_*(C_Q)\}.
\end{align*}
Equivalently, $\wt{s}_{Q, A}(C_Q)$ is the maximum $i$ such that $H_*(C_Q)$ has a $u$-nontorsion class in bigrading $(A, i)$. Clearly, $\wt{s}_{Q, 0} = \wt{s}_Q$. The reader may gain some intuition for $\wt{s}_{Q, A}$ by considering Figure~\ref{fig:sq-example}.
\end{defn}

\begin{figure}[h!]
\includegraphics[scale = 1]{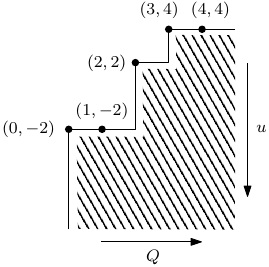}
\caption{A schematic example of the part of $H_*(C_Q)$ that contributes to the calculation of $\wt{s}_Q$. The corresponding $\wt{s}_Q$-invariants are given by: $\wt{s}_Q = \wt{s}_{Q, 0} = \wt{s}_{Q, 1} = -2$, $\wt{s}_{Q, 2} = 2$, and $\wt{s}_{Q, 3} = \wt{s}_{Q, 4} = 4$.}\label{fig:sq-example}
\end{figure}

It will also be useful to consider the truncation $C_Q/Q^B$ for $B > 0$, since will allow us to simplify certain calculations by disregarding high powers of $Q$. It is not difficult to check that if $C_Q$ is a knotlike Borel complex, then $u^{-1}H_*(C_Q/Q^B) \cong \f[u, u^{-1}, Q]/Q^B$. We thus let
\[
\iota_* \colon H_*(C_Q/Q^B) \xrightarrow{\cong} \F[u,u^{-1},Q]/Q^B
\]
be the map induced by including $H_*(C_Q/Q^B)$ into its localization. This gives the truncated version of the $\wt{s}_Q$-invariant:

\begin{defn}\label{def:strong-s-general}
Let $C_Q$ be a knotlike Borel complex and $0 \leq A < B$. Define the even integer
\begin{align*}
\wt{s}_{Q, A, B}(C_Q) = \max_i \{&\text{there exists } [x] \in H_*(C_Q/Q^B) \mid \iota_*[x]=u^{-i/2}Q^A \in u^{-1}H_*(C_Q/Q^B)\}.
\end{align*}
Equivalently, $\wt{s}_{Q, A, B}(C_Q)$ is the maximum $i$ such that $H_*(C_Q/Q^B)$ has a $u$-nontorsion class in bigrading $(A, i)$. Clearly, $\wt{s}_{Q, A, \infty} = \wt{s}_{Q, A}$, while $\wt{s}_{Q, 0, 1}$ is the non-equivariant $s$-invariant of $C_Q/Q$.
\end{defn}

As with the usual $\wt{s}$-invariant, it is clear that if $f_Q \colon (C_1)_Q \rightarrow (C_2)_Q$ is local, then for any $0 \leq A < B$ (including the cases of $0$ and $\infty$), we have 
\[
\wt{s}_{Q, A, B}((C_1)_Q) + \gr_q(f_Q) \leq \wt{s}_{Q, A, B}((C_2)_Q).
\]
This of course immediately gives Theorem~\ref{thm:equivariant_s_borel_intro}:

\begin{proof}[Proof of Theorem~\ref{thm:equivariant_s_borel_intro}]
If $\Sigma$ is an equivariant cobordism from $K_1$ to $K_2$, then by Theorem~\ref{thm:full_borel_invariant} there exists a cobordism map fro $\Kcrm_Q(K_1)$ to $\Kcrm_Q(K_2)$ which is local and has grading shift $-2 g(\Sigma)$.
\end{proof}

The reader should calculate the various flavors of $\wt{s}_Q$ for the examples discussed in Section~\ref{sec:borel-setup} by using the Borel complexes and homologies displayed in Figures~\ref{fig:borel-ex1}, \ref{fig:borel-ex2}, \ref{fig:borel-ex3-1} and \ref{fig:borel-ex4}. The results are given here:

\begin{example}\label{ex:sq-ex1}
For the complex of Example~\ref{ex:borel-ex1}, we have $\wt{s}(C, \tau) = 0$ and 
\[
\wt{s}_{Q, A}(C_Q) = 
\begin{cases}
0 & \text{for } A = 0 \\
2 & \text{for } A \geq 1.
\end{cases}
\] 
The calculation for $\wt{s}_{Q, A, B}(C_Q)$ is identical, except that if $B = 1$, we destroy all $Q$-information and the only remaining invariant is $\wt{s}_{Q, 0, 1}(C_Q) = 0$.
\end{example}

\begin{example}\label{ex:sq-ex2}
For the complex of Example~\ref{ex:borel-ex2}, we have $\wt{s}(C, \tau) = -2$ and 
\[
\wt{s}_{Q, A}(C_Q) = -2
\] 
for all $A$. The calculation for $\wt{s}_{Q, A, B}(C_Q)$ is identical, except that if $B = 1$, we destroy all $Q$-information and the only remaining invariant is $\wt{s}_{Q, 0, 1}(C_Q) = 0$.
\end{example}

\begin{example}\label{ex:sq-ex3}
All the invariants for the complex $C_1$ of Example~\ref{ex:borel-ex3} are zero. For the complex $C_2$, we have that $\wt{s}(C_2, \tau_2) = 0$ and
\[
\wt{s}_{Q, A}((C_2)_Q) = -2
\] 
for all $A$. The calculation for $\wt{s}_{Q, A, B}((C_2)_Q)$ is identical, except that if $B \leq 2$, we destroy all $Q$-information and the invariant collapses to zero.
\end{example}

\begin{example}\label{ex:sq-ex4}
For the complex $C$ of Example~\ref{ex:borel-ex4}, we have that $\wt{s}(C, \tau) = 0$ and
\[
\wt{s}_{Q, A}(C_Q) = 0
\]
for all $A$. For $B \geq 3$, the calculation for $\wt{s}_{Q, A, B}(C_Q)$ is identical. However, for $B = 2$, the complex $C_Q/Q^2$ is locally equivalent to that of Example~\ref{ex:sq-ex1}; thus $\wt{s}_{Q, 0, 2}(C_Q) = 0$ and $\wt{s}_{Q, 1, 2}(C_Q) = 2$. For $B = 1$, we destroy all $Q$-information and the invariant collapses to zero. 
\end{example}

Note that as shown by Example~\ref{ex:sq-ex4}, it is possible for the truncated $\wt{s}_{Q, A, B}$-invariants to be nontrivial even if the usual invariants $\wt{s}_{Q, A}$ are all trivial. Finally, we observe:

\begin{lem}
For any fixed $B$ (including $B = \infty$), we have
\[
\wt{s}_{Q, 0, B} \leq \wt{s}_{Q, 1, B} \leq \wt{s}_{Q, 2, B} \leq \cdots \leq \wt{s}_{Q, B - 1, B}.
\]
For any fixed $A$, we have
\[
\wt{s}_{Q, A, A + 1} \geq \wt{s}_{Q, A, A + 2} \geq \wt{s}_{Q, A, A + 3} \geq \cdots \geq \wt{s}_{Q, A, \infty} = \wt{s}_{Q, A}.
\]
\end{lem}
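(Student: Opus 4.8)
\emph{Proof plan.} Both chains will follow from directly pushing $u$-nontorsion homology classes around, and the argument is essentially formal once one recalls the reformulation recorded in Definitions~\ref{def:strong-s} and~\ref{def:strong-s-general}: $\wt{s}_{Q,A,B}(C_Q)$ is the largest $i$ for which $H_*(C_Q/Q^B)$ carries a $u$-nontorsion class in bigrading $(A,i)$, and similarly $\wt{s}_{Q,A}(C_Q)$ with $H_*(C_Q)$ in place of $H_*(C_Q/Q^B)$. The first thing I would record is the elementary fact that a class $[x]$ is $u$-nontorsion exactly when $\iota_*[x]\neq 0$, together with the observation that the homogeneous part of $\f[u,u^{-1},Q]/Q^B$ in bigrading $(A,i)$ is spanned by the single monomial $u^{-i/2}Q^A$ (nonzero precisely when $i$ is even and $0\le A<B$). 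These two points let me pass freely between ``$u$-nontorsion class in bigrading $(A,i)$'' and ``$\iota_*[x]=u^{-i/2}Q^A$''.

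For the first chain, I would fix $B$ (allowing $B=\infty$) and a power $A$ with $A+1<B$, set $i=\wt{s}_{Q,A,B}(C_Q)$, and pick a $u$-nontorsion $[x]\in H_*(C_Q/Q^B)$ in bigrading $(A,i)$. Since $\gr(Q)=(1,0)$, the class $Q\cdot[x]$ lies in bigrading $(A+1,i)$, and $\iota_*(Q[x])=u^{-i/2}Q^{A+1}$, which is nonzero in $u^{-1}H_*(C_Q/Q^B)\cong\f[u,u^{-1},Q]/Q^B$ precisely because $A+1<B$. Hence $Q[x]$ is $u$-nontorsion, so $\wt{s}_{Q,A+1,B}(C_Q)\ge i$. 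Iterating over $A=0,1,\dots,B-2$ (and running the same argument verbatim with $C_Q$ in place of $C_Q/Q^B$, over all $A\ge 0$, when $B=\infty$) gives $\wt{s}_{Q,0,B}\le\wt{s}_{Q,1,B}\le\cdots\le\wt{s}_{Q,B-1,B}$.

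For the second chain, I would use the tautological quotient maps $C_Q/Q^{B+1}\to C_Q/Q^B$ and $C_Q\to C_Q/Q^B$, which are grading-preserving and $u$-equivariant and which, after inverting $u$, are intertwined with the reductions $\f[u,u^{-1},Q]/Q^{B+1}\to\f[u,u^{-1},Q]/Q^B$ and $\f[u,u^{-1},Q]\to\f[u,u^{-1},Q]/Q^B$. Fixing $A$ and $B>A$ and taking a $u$-nontorsion $[x]$ in bigrading $(A,i)$ of $H_*(C_Q/Q^{B+1})$ (resp.\ of $H_*(C_Q)$), its $\iota_*$-image $u^{-i/2}Q^A$ survives reduction mod $Q^B$ because $A<B$; therefore the image of $[x]$ in $H_*(C_Q/Q^B)$ has nonzero $\iota_*$-image, is $u$-nontorsion, and still lies in bigrading $(A,i)$. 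This yields $\wt{s}_{Q,A,B}\ge\wt{s}_{Q,A,B+1}$ for every $B>A$, as well as $\wt{s}_{Q,A,B}\ge\wt{s}_{Q,A}$ for every $B>A$; together with the identity $\wt{s}_{Q,A,\infty}=\wt{s}_{Q,A}$ this gives the second chain.

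I do not anticipate any genuine obstacle: nothing beyond the localization isomorphism $u^{-1}H_*(C_Q/Q^B)\cong\f[u,u^{-1},Q]/Q^B$ (already noted before Definition~\ref{def:strong-s-general}) is needed. The only point to watch is the index bookkeeping in the first chain -- one must keep track of the constraint $A+1<B$ so that multiplying the witnessing class by $Q$ does not annihilate it in the truncated complex -- and, correspondingly, that the chain there terminates at $\wt{s}_{Q,B-1,B}$.
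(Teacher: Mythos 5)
Your argument is correct and is essentially the paper's own proof: the first chain via $Q$-equivariance of $\iota_*$ (so $\iota_*(Q[x])=u^{-i/2}Q^{A+1}$), and the second via the commutative square intertwining the quotient maps $H_*(C_Q/Q^{B+1})\to H_*(C_Q/Q^B)$ with the reductions of $\f[u,u^{-1},Q]$-modules. Your extra bookkeeping (the constraints $A+1<B$, $A<B$, and the separate treatment of $B=\infty$) just makes explicit what the paper leaves implicit.
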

\begin{proof}
The first claim follows from the fact that
\[
\iota_* \colon H_*(C_Q/Q^B) \rightarrow u^{-1} H_*(C_Q/Q^B) \cong \f[u, u^{-1}, Q]/Q^B
\]
is $Q$-equivariant. Thus, if $\iota_*([x]) = u^{-i/2}Q^A$, then $\iota_*(Q[x]) = u^{-i/2}Q^{A+1}$. The second claim follows from the observation that the diagram
\[
\begin{tikzcd}
	{H_*(C_Q/Q^{B+1})} & {H_*(C_Q/Q^{B})} \\
	{\mathbb{F}[u,u^{-1},Q]/Q^{B+1}} & {\mathbb{F}[u,u^{-1},Q]/Q^{B}}
	\arrow[from=1-1, to=1-2]
	\arrow["{\iota_*}"', from=1-1, to=2-1]
	\arrow["{\iota_*}", from=1-2, to=2-2]
	\arrow[from=2-1, to=2-2]
\end{tikzcd}
\]
commutes, where the horizontal arrows are induced by the quotient. Thus, if $[x] \in H_*(C_Q/Q^{B+1})$ satisfies $\iota_*([x]) = u^{-i/2}Q^A$, then so does the image of $[x]$ in $H_*(C_Q/Q^B)$.
\end{proof}

\subsection{Constraining the Borel complex from homological data} \label{sec:borel-bootstrapping}

We now discuss some lemmas aimed at computing the Borel complex from partial data. Unfortunately, we will not generally have access to the full computation the action of $\tau$ on the Bar-Natan complex, as the size of the chain complex is very large even for relatively simple knots. Instead, as in Section~\ref{subsec:6-examples}, we will usually know:
\begin{enumerate}
\item[(a)] The Bar-Natan complex, via computing the Khovanov homology together with the Bar-Natan spectral sequence; and,
\item[(b)] The action of $\tau$ on the Khovanov homology $\Kh$. 
\end{enumerate}
While this is very far from determining the homotopy type of the Borel complex, we will attempt to enumerate the possible homotopy types compatible with such data.

We begin with the following general situation. Let $(C, \partial)$ be a complex over $\f[u]$. Suppose that we endow $C \otimes \f[Q]$ with an endomorphism $\partial_Q = \partial + Q\partial_1$ such that
\[
(C_Q = C \otimes \f[Q], \partial_Q = \partial + Q\partial_1)
\]
is a chain complex. Here, by $\partial$ we mean $\partial \otimes \id$, but we allow $\partial_1$ to be any (grading-preserving) endomorphism of $C_Q$. (That is, $\partial_1$ is not required to arise from an endomorphism of $C$, although in the initial case of interest we indeed have $\partial_1 = (1 + \tau) \otimes \id$.) Now suppose $(C', \partial')$ is an $\f[u]$-complex which is homotopy equivalent (over $\f[u]$) to $(C, \partial)$. We will usually think of $C'$ as a smaller model for $C$. We ask whether we can find an endomorphism $\partial_1'$ of $C' \otimes \f[Q]$ that makes
\[
(C_Q' = C' \otimes \f[Q], \partial_Q' = \partial' + Q\partial_1')
\]
into a chain complex which is homotopy equivalent to $(C_Q, \partial_Q)$. Roughly speaking, we wish to use the homotopy equivalence between $C$ and $C'$ to transfer the perturbed differential $\partial_Q$ on $C_Q$ to a perturbed differential on $C_Q'$.

\begin{rmk}\label{rmk:setuzero}
Note that every Borel complex $C_Q$ arises as $(C_Q = C \otimes \f[Q], \partial_Q = \partial + Q\partial_1)$ for some $\f[u]$-complex $(C, \partial)$ and endomorphism $\partial_1$. Indeed, if we simply fix a basis $\{x_1, \ldots, x_n\}$ for $C_Q$ over $\f[u, Q]$, then setting $C = \mathrm{span}_{\f[u]}\{x_1, \ldots, x_n\}$ and $\partial = \partial_Q \bmod Q$ provides the desired $\f[u]$-complex.
\end{rmk}

The desired endomorphism $\partial_1'$ is provided by the homological perturbation lemma:
 
\begin{lem}\label{lem:minimal-model-1}
  Let $C$ and $C'$ be chain complexes over $\F[u]$. Suppose that $f \colon C \rightarrow C'$ and $g \colon C' \rightarrow C$ are homotopy inverses and let $H \colon C \rightarrow C$ be a homotopy between $g \circ f$ and $\id$. Consider the formal expression
\[
\partial_1' = f(\id + Q\partial_1H)^{-1} \partial_1 g = f \left( \sum_{i = 0}^\infty (Q\partial_1 H)^i \right) \partial_1 g.
\]
Then $\partial_Q' = \partial' + Q \partial_1'$ is a differential on $C_Q'$ and $(C_Q, \partial_Q)$ and $(C_Q', \partial_Q')$ are homotopy equivalent.
\end{lem}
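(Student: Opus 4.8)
The statement to prove is the homological perturbation lemma (Lemma~\ref{lem:minimal-model-1}): given chain homotopy equivalences $f\colon C\to C'$ and $g\colon C'\to C$ over $\F[u]$ with $g\circ f\simeq\id$ via $H$, the formal series $\partial_1' = f(\id+Q\partial_1 H)^{-1}\partial_1 g$ produces a differential $\partial_Q' = \partial' + Q\partial_1'$ on $C_Q' = C'\otimes\F[Q]$ such that $(C_Q,\partial_Q)$ and $(C_Q',\partial_Q')$ are homotopy equivalent.

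First I would address well-definedness of the infinite series. Since $\partial_1$ is grading-preserving and $H$ raises... actually, the key point is that each application of $Q\partial_1 H$ multiplies by a positive power of $Q$, and we are working in $C_Q' = C'\otimes\F[Q]$ which is free over $\F[u,Q]$ but whose elements are polynomials in $Q$; evaluated on any fixed element, only finitely many terms of $\sum_i (Q\partial_1 H)^i$ survive modulo any given power of $Q$. More precisely, I would note that $\partial_1$ has homological degree $1$ while $H$ has homological degree $-1$, so $\partial_1 H$ is homological-degree-preserving, but each factor carries an explicit $Q$; since $C'$ (and $C$) are bounded/finitely generated, the analogous argument to the one used for $(1+QE)^{-1}$ in the M2-move discussion applies — the series converges $Q$-adically and is well-defined on each generator. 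I would remark this is exactly the setup of Remark~\ref{rmk:setuzero}.

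Next I would carry out the standard homological perturbation argument. The cleanest route is to write down the transfer maps explicitly: set
\[
f_Q = f(\id + Q\partial_1 H)^{-1} = f\sum_{i\ge 0}(Q\partial_1 H)^i, \qquad g_Q = (\id + Q H\partial_1)^{-1} g = \sum_{i\ge 0}(QH\partial_1)^i\, g,
\]
\[
H_Q = H(\id + Q\partial_1 H)^{-1} = \sum_{i\ge 0}(QH\partial_1)^i H,
\]
all extended $\F[Q]$-linearly (and these are well-defined by the same $Q$-adic convergence). Then I would verify by direct algebraic manipulation — the classical computation behind the perturbation lemma — that: (i) $\partial_Q'$ as defined squares to zero, i.e.\ is a genuine differential on $C_Q'$; (ii) $f_Q\colon (C_Q,\partial_Q)\to(C_Q',\partial_Q')$ and $g_Q\colon(C_Q',\partial_Q')\to(C_Q,\partial_Q)$ are chain maps; (iii) $g_Q f_Q \simeq \id$ via $H_Q$, and $f_Q g_Q\simeq\id$. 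For the last point I would either use a side-condition-free version of the lemma (dropping the requirement that $f g\simeq\id$ is witnessed compatibly) and appeal to Lemma~\ref{lem:qitohe}/Lemma~\ref{lem:quasi-iso-move}: reducing $f_Q$ modulo $Q$ recovers $f$, which is a homotopy equivalence, hence a quasi-isomorphism, hence $f_Q$ is a quasi-isomorphism of free finitely-generated $\F[u,Q]$-complexes by Lemma~\ref{lem:quasi-iso-move}, and then a homotopy equivalence by Lemma~\ref{lem:qitohe}. This shortcut sidesteps the bookkeeping of constructing the second homotopy $f_Q g_Q\simeq\id$ by hand.

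The main obstacle is the purely formal but somewhat tedious verification that $(\partial_Q')^2 = 0$ and that $f_Q, g_Q$ are chain maps — one must expand $\partial_Q' = \partial' + Qf(\id+Q\partial_1 H)^{-1}\partial_1 g$, use $\partial f + f\partial = 0$ type relations, $gf = \id + \partial H + H\partial$, $fg = \id + (\text{homotopy})$, and the geometric-series identity $(\id+Q\partial_1 H)^{-1}(\id+Q\partial_1 H) = \id$, together with the commutation $\partial_1 H\partial_1 = (\id+Q\partial_1 H)(\cdots)$ manipulations, to telescope everything. I would organize this by first establishing the intertwining relation $\partial_1 g (\id + QH\partial_1)^{-1} = $ (something making $g_Q$ a chain map), then deducing the rest formally; concretely I would reduce to the classical statement of the perturbation lemma (as in, e.g., standard references on homological algebra) applied to the perturbation datum consisting of the homotopy equivalence $(f,g,H)$ and the perturbation $Q\partial_1$ of the differential on $C\otimes\F[Q]$, so that the bulk of the calculation can be cited rather than reproduced. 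The only thing genuinely particular to our situation — the convergence of the series — is handled by the $Q$-adic / finite-generation argument noted above, and the final homotopy-equivalence conclusion is then immediate from Lemmas~\ref{lem:quasi-iso-move} and~\ref{lem:qitohe}.
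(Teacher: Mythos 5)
Your overall route is essentially the paper's: the paper also treats this as an instance of the homological perturbation lemma, checking only that the series defining $\partial_1'$ is well defined and then citing the explicit transfer formulas (Crainic) for the remaining algebraic verification. Your proposed shortcut for the homotopy-equivalence conclusion --- verify $(\partial_Q')^2=0$ and that $f_Q$ is a chain map, reduce mod $Q$ to recover $f$, and then invoke Lemmas~\ref{lem:quasi-iso-move} and~\ref{lem:qitohe} --- is a legitimate way to avoid constructing the second homotopy by hand, and is consistent with how the paper uses these lemmas elsewhere.

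The one step you should repair is the well-definedness argument. $Q$-adic convergence alone is not enough: $C_Q'=C'\otimes\F[Q]$ consists of \emph{polynomials} in $Q$, so a series that merely converges $Q$-adically defines an endomorphism of $C'\otimes\F[[Q]]$, not of $C_Q'$. Moreover your degree count is off: with the conventions of Definition~\ref{def:borel} the variable $Q$ carries homological degree $1$, so $\partial_1$ itself \emph{preserves} homological grading (it is a grading-preserving endomorphism in the setup), while the homotopy $H$ decreases homological grading by one; hence $(\partial_1H)^i$ decreases homological grading by $i$. Since $C$ is finitely generated, $C_Q$ is bounded below in homological grading, so all but finitely many terms of $\sum_{i\ge 0}(Q\partial_1H)^i$ vanish on any fixed element; the sum is literally finite and $\partial_1'$ genuinely lands in $C'\otimes\F[Q]$. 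This is the same mechanism as the $(1+QE)^{-1}$ argument in the M2 discussion you point to --- there, too, the finiteness comes from $E$ decreasing homological grading, not from $Q$-adic convergence. If you claimed instead that $\partial_1H$ were degree-preserving, the series would not terminate and the lemma as stated (over $\F[Q]$, not its completion) would not follow from your argument.
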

\begin{proof}
  First note that $\partial_1'$ is in fact a well-defined endomorphism of $\smash{C_Q'}$. This is because $\partial_1$ preserves the homological grading while $H$ decreases the homological grading. Hence $(\partial_1 H)^i$ decreases the homological grading by $i$; since $C_Q$ is bounded below in homological grading, this shows that $\partial_1'$ applied to any fixed element has only a finite number of nonzero terms. The claim is then a matter of algebraic verification. The explicit formulas are given in (for example) the work of Crainic \cite{crainic}.
\end{proof}

We now return to the case at hand. Let $(C, \partial)$ be a free, finitely generated complex over $\f[u]$ equipped with an involution $\tau$. As discussed in Section~\ref{subsec:6-examples}, $(C, \partial)$ is homotopy equivalent to $(C' = H_*(C/u) \otimes \f[u], \partial')$ for some differential $\partial'$ computed from the spectral sequence as in the proof of Lemma~\ref{lem:quasi-iso-move}. Using Lemma~\ref{lem:minimal-model-1}, we transfer $\partial_Q$ from $C_Q$ onto the smaller model $C' \otimes \f[Q] = H_*(C/u) \otimes \f[u, Q]$. Although the resulting perturbation $\partial_1'$ may be somewhat complicated, it turns out that $\partial_1' \bmod (u, Q)$ is quite tractable. Indeed, it is exactly what one would expect: modulo $(u, Q)$, the perturbation $\partial_1'$ is just the action of $1 + \tau_*$ on $H_*(C/u)$.

\begin{lem}\label{lem:minimal-model-2}
Let $C$ be a free $\f[u]$-complex with an action of an involution $\tau$. Then the Borel complex $C_Q$ is homotopy equivalent to a complex
\[
C_Q' = H_*(C/u) \otimes \f[u, Q] \quad \text{with} \quad \partial_Q' = \partial' + Q \partial_1'
\]
for which
\[
\partial_1' = (1 + \tau_*) \bmod (u, Q)
\]
where $\tau_*$ is the action of $\tau$ on $H_*(C/u)$.
\end{lem}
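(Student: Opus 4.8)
The plan is to apply Lemma~\ref{lem:minimal-model-1} with the specific homotopy equivalence coming from the Bar-Natan spectral sequence, and then simply inspect the leading term of the resulting perturbation. First I would fix the homotopy equivalence $f\colon C \to C'$ and $g\colon C' \to C$ between $C$ and its minimal model $C' = H_*(C/u)\otimes \f[u]$, together with the homotopy $H\colon C \to C$ with $g\circ f + \id = [\partial, H]$, exactly as in the proof of Lemma~\ref{lem:quasi-iso-move}. These maps are $\f[u]$-linear, and the key structural observation is that $f$, $g$, and $H$ all reduce modulo $u$ to maps involving $C/u$ and $H_*(C/u)$: modulo $u$, the map $f$ becomes the projection onto homology (with respect to a choice of splitting) and $g$ becomes the inclusion of the chosen representatives, so that $(f \circ g) \bmod u = \id$ on $H_*(C/u)$. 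Here $\partial_1 = (1 + \tau)\otimes \id$, where $\tau$ is the diagrammatic involution on $C$.

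Next I would substitute into the formula of Lemma~\ref{lem:minimal-model-1}:
\[
\partial_1' = f\left(\sum_{i=0}^\infty (Q\,\partial_1 H)^i\right)\partial_1\, g = f\,\partial_1\, g + Q\cdot f\,\partial_1 H\,\partial_1\, g + \cdots.
\]
Reducing modulo $Q$ kills every term with $i \geq 1$, leaving $\partial_1' \equiv f\,\partial_1\, g \pmod Q$. Reducing this further modulo $u$, and using that $f$ and $g$ become the homology projection and inclusion modulo $u$ while $\partial_1 = (1+\tau)\otimes\id$ descends to $(1 + \tau_*)$ on $H_*(C/u)$, gives $\partial_1' \equiv \overline{f}\,(1+\tau_*)\,\overline{g} \pmod{(u,Q)}$, where $\overline{f}, \overline{g}$ are the mod-$u$ reductions. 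Since $\overline{f}\circ\overline{g} = \id$ on $H_*(C/u)$ and $\tau_*$ is induced on homology (hence commutes with the chain-level identifications), this collapses to $\partial_1' \equiv (1 + \tau_*) \pmod{(u,Q)}$, as claimed. The homotopy equivalence $C_Q \simeq C_Q'$ is precisely the conclusion of Lemma~\ref{lem:minimal-model-1}.

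The main technical point — and the step I would be most careful about — is justifying that $\overline{f}$, $\overline{g}$, and the induced map $\tau_*$ interact in the expected way modulo $u$; that is, that the mod-$u$ reduction of the chain-level $\tau$ conjugated through the homotopy equivalence really is the homology-level involution $\tau_*$ on $H_*(C/u)$, with no correction terms. This is essentially naturality of the spectral-sequence comparison, together with the fact that $\tau$ is an honest chain map commuting with $\partial$ (so it does descend to homology) — but since $f$ and $g$ are not assumed $\tau$-equivariant, one must check that the \emph{modulo $u$} reductions $\overline{f}, \overline{g}$ can be taken to be an honest inverse pair of chain maps on the $E_1$-page $C/u$, which is where the identity $\overline{f}\circ\overline{g} = \id$ comes from. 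Once that bookkeeping is in place, the rest is the routine algebraic verification of Lemma~\ref{lem:minimal-model-1}, already cited to Crainic~\cite{crainic}, plus the observation about grading that guarantees the infinite sum terminates on each element (which is identical to the argument in the proof of Lemma~\ref{lem:minimal-model-1}).
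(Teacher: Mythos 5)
Your proposal is correct and follows essentially the same route as the paper: apply Lemma~\ref{lem:minimal-model-1} with the spectral-sequence homotopy equivalence, note that the higher terms in the perturbation formula carry factors of $Q$, and observe that modulo $(u,Q)$ the map $g$ becomes an inclusion of cycle representatives and $f$ the projection/homology-class map, so $f\partial_1 g$ reduces to $1+\tau_*$. The "technical point" you flag is handled exactly as you suggest (mod $u$ the differential on the minimal model vanishes, so $\bar f$ kills boundaries and sends a cycle to its class), and this is the same bookkeeping implicit in the paper's argument.
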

\begin{proof}
Choose homotopy inverses $f \colon C \rightarrow C' = H_*(C/u) \otimes \f[u]$ and $g \colon C' = H_*(C/u) \otimes \f[u] \rightarrow C$. Apply Lemma~\ref{lem:minimal-model-1} and quotient out by $u$. Note that after quotienting out by $u$, the map $g$ is an inclusion of $\f$-vector spaces from $H_*(C/u)$ into a subspace of cycles of $C/u$, and $f$ is some projection onto this subspace followed by taking the homology class. An examination of the formula for $\partial_1'$ in Lemma~\ref{lem:minimal-model-1} thus shows that modulo $(u, Q)$, we have $\partial_1' = f\partial_1g = f(1 + \tau)g = 1 + \tau_*$.
\end{proof}

In practice, there is thus an straightforward algorithm for constraining (a homotopy representative of) the Borel complex $C_Q$, given the data discussed at the beginning of the section. First, we simply write down a basis for the $\f$-vector space $H_*(C/u)$. Tensoring this with $\f[u, Q]$ gives a basis for $C_Q$. We can then fill in the components of the differential that have a coefficient of $u^i$ by using the spectral sequence, as well as the components of the differential that have a coefficient of $Q$ by using the homological action of $\tau$ on $H_*(C/u)$. While we cannot completely determine the other components of the differential in this basis, we can attempt to constrain the set of possibilities using grading reasons and the fact that $\partial_Q^2 = 0$.

To understand this explicitly, let us consider the examples of Section~\ref{sec:borel-setup}. In Examples~\ref{ex:borel-ex1} and \ref{ex:borel-ex2}, the differential on the Borel complex is completely determined by the differential on $C$ and the action of $1 + \tau_*$ on $H_*(C/u)$. This is in contrast to Example~\ref{ex:borel-ex3}, where $H_*(C/u) \otimes \f[u]$ provides a smaller, three-generator model for $C$. In both the cases $C_1$ and $C_2$ of Example~\ref{ex:borel-ex3}, the $Q$-component of the Borel differential on the smaller model \textit{to first order in $Q$} is zero. This is exactly as predicted by Lemma~\ref{lem:minimal-model-2}, since the action of $\tau$ on $H_*(C/u)$ is trivial. However, in the case of $C_2$, we see that there is an additional $Q^2$ term of the Borel differential. A similar phenomenon occurs with Example~\ref{ex:borel-ex4}: when constructing the Borel complex out of the smaller five-generator model $H_*(C/u) \otimes \f[u]$ for $C$, the Borel differential to first order in $Q$ is determined by the homological action of $\tau$ on $H_*(C/u)$. Roughly speaking, however, the Borel differential also remembers the behavior of $\tau$ involving homologically inessential pairs of generators; this information is stored in the higher $Q$-powers of the Borel differential.

\subsection{Examples} 

We now give some calculations and partial calculations following the procedure of Section~\ref{sec:borel-bootstrapping}.

\begin{example}
Let $K = 9_{46}$. By Lemma~\ref{lem:minimal-model-1}, $\Kcrm_Q(K)$ is homotopy equivalent to a complex with underlying module $\Khr(K) \otimes \f[u, Q]$. By Lemma~\ref{lem:minimal-model-2}, the arrows decorated by a power of $u$ are determined by the spectral sequence (reproduced in Figure~\ref{fig:borel-946}), while the arrows decorated with $Q$ are given by the action of $1 + \tau$ on $\Khr(K)$. Note that if an arrow with decoration $u^i Q^j$ points from $x$ to $y$, then 
\[
\gr(y) - \gr(x) = (1 - j, 2i). 
\]
An examination of Figure~\ref{fig:borel-946} then shows that further arrows are ruled out for grading reasons. The Borel complex $\Kcrm(K)$ is thus given as on the right. Note that this is locally equivalent to the complex from Example~\ref{ex:borel-ex2}.

\begin{figure}[h!]
\includegraphics[scale = 0.7]{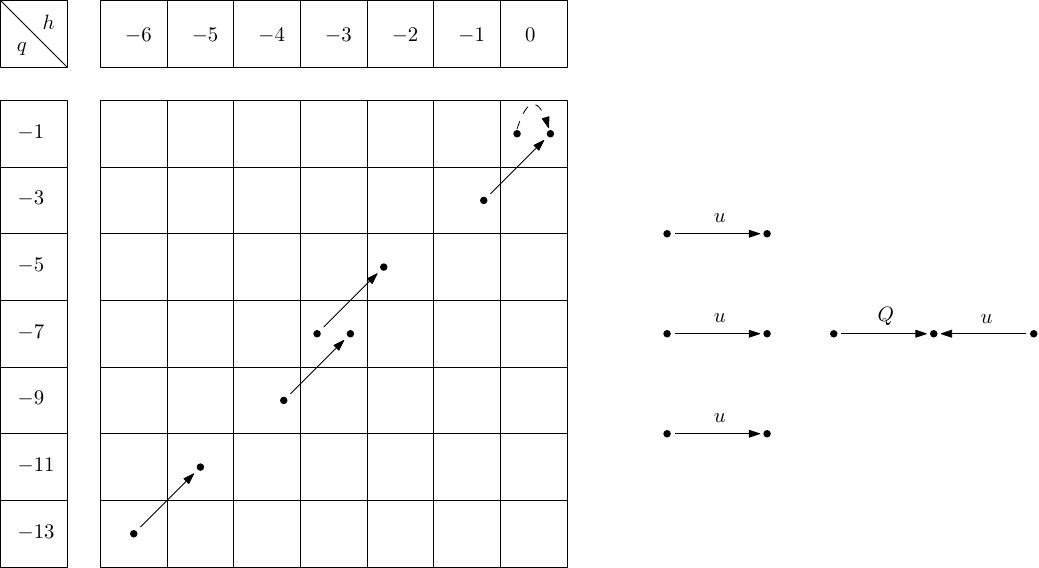}
	\caption{Left: $E^1$-page of the Bar-Natan spectral sequence for $9_{46}$. Right: the Borel complex $\Kcrm_Q(K)$.} \label{fig:borel-946}
\end{figure}
\end{example}

\begin{example}
Let $J = 17nh_{74}$. By Lemma~\ref{lem:minimal-model-1}, $\Kcrm(J)$ is homotopy equivalent to a complex with underlying module $\Khr(J) \otimes \f[u, Q]$. By Lemma~\ref{lem:minimal-model-2}, the arrows decorated by a power of $u$ are determined by the spectral sequence (reproduced in Figure~\ref{fig:borel-kyle}), while the arrows decorated with $Q$ are given by the action of $1 + \tau$ on $\Khr(J)$. Unfortunately, we will not be able to fully determine $\Kcrm(J)$ in this case. Instead, we show that there exists a local map of grading shift zero from the complex $C_Q$ of Example~\ref{ex:borel-ex4} -- displayed again in Figure~\ref{fig:borel-kyle} -- into $\smash{\Kcrm_Q(J)}$. While this is weaker than determining the homotopy type (or even local equivalence class) of $\smash{\Kcrm_Q(J)}$, the reader should think of this as bounding $\smash{\Kcrm_Q(J)}$ from below. Indeed, in particular note that we have $\smash{\wt{s}_{Q, 1, 2}(J) \geq \wt{s}_{Q, 1, 2}(C_Q) \geq 2}$.
\end{example}

\begin{figure}[h!]
\includegraphics[scale = 0.8]{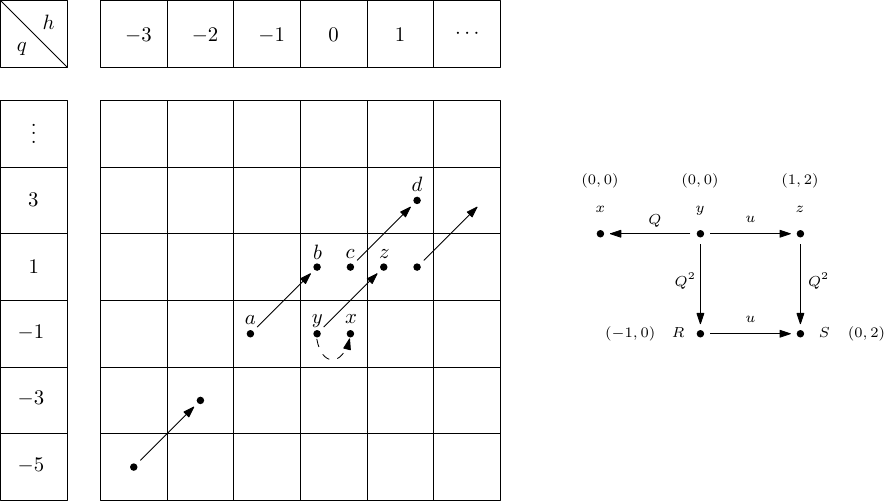}
\caption{Left: $E^1$-page of the Bar-Natan spectral sequence for $J$. Right: the Borel complex $C_Q$ of Example~\ref{ex:borel-ex4}, which bounds $\Kcrm_Q(J)$ from below.}\label{fig:borel-kyle} 
\end{figure}

\begin{lem}\label{lem:partial-computation-J}
There exists a local map of grading shift zero from the Borel complex $C_Q$ of Example~\ref{ex:borel-ex4} into $\Kcrm_Q(J)$.
\end{lem}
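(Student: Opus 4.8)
The plan is to prove this by an explicit construction, after first reducing $\Kcrm_Q(J)$ to a tractable model via the bootstrapping procedure of Section~\ref{sec:borel-bootstrapping}. The Bar-Natan spectral sequence gives a homotopy equivalence between $\Kcrm(J)$ and $\Khr(J)\otimes\f[u]$ with a computable differential $\partial'$; applying Lemmas~\ref{lem:minimal-model-1} and~\ref{lem:minimal-model-2} then shows $\Kcrm_Q(J)$ is homotopy equivalent to $\bigl(\Khr(J)\otimes\f[u,Q],\,\partial'+Q\partial_1'\bigr)$, where the components of $\partial'$ carrying a power of $u$ are those recorded in Figure~\ref{fig:borel-kyle} and $\partial_1'\bmod(u,Q)$ is the action of $1+\tau_*$ on $\Khr(J)$, which is also known. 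The grading rule --- an arrow with coefficient $u^iQ^j$ shifts bigrading by $(1-j,2i)$ --- rules out all further differential components except for a short list of possible $Q^{\ge 2}$-arrows, and imposing $\partial_Q^2=0$ cuts this down to a finite collection of admissible models. Inspection of Figure~\ref{fig:borel-kyle} shows that each admissible model contains the pattern of generators and low-order arrows exhibited by the simplified five-generator form of $C_Q$ from Example~\ref{ex:borel-ex4}.

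I would then write $x,y,z,R,S$ for those generators of $C_Q$ and define $\Phi\colon C_Q\to\Kcrm_Q(J)$ on generators. Since $\Kcrm_Q(J)$ is a knotlike Borel complex and $J$ is slice (Example~\ref{ex:kyle_knot}), its $u$-nontorsion tower is generated by a $\partial_Q$-cycle $\mathbf{x}$ in bigrading $(0,0)$ --- concretely, the canonical generator $\frs_o$, which for grading reasons may be taken $\tau$-invariant as in the proof of Lemma~\ref{lem:torus-calculation}. Set $\Phi(x)=\mathbf{x}$, and choose chains $\mathbf{y},\mathbf{z},\mathbf{R},\mathbf{S}$ in $\Kcrm_Q(J)$ realizing the differentials dictated by $C_Q$ --- these are forced at $Q$-order $\le 1$ by the spectral-sequence arrows and the homological $\tau$-action --- then extend $\f[u,Q]$-linearly. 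By construction $\Phi$ has grading shift $0$, and it is local because $\Phi(x)$ generates $u^{-1}H_*(\Kcrm_Q(J))$; combined with $\wt{s}_{Q,1,2}(C_Q)=2$ from Example~\ref{ex:sq-ex4} and the monotonicity of $\wt{s}_{Q,A,B}$ under local maps, this gives the promised bound $\wt{s}_{Q,1,2}(J)\ge 2$.

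The main obstacle is verifying that $\Phi$ is a chain map for \emph{every} admissible completion of the differential on the model of $\Kcrm_Q(J)$: the undetermined $Q^{\ge 2}$-components of $\partial_Q$ could obstruct matching the low-order structure of $C_Q$ on the nose. I expect to deal with this by correcting $\Phi$ order-by-order in $Q$, adding terms of strictly increasing $Q$-power and solving $\Phi\partial_Q=\partial_Q\Phi$ successively; the process terminates because each correction raises $Q$-order while preserving homological grading and $C_Q$ is finitely generated (compare the convergence argument in Lemma~\ref{lem:minimal-model-1}), while the $\partial_Q^2=0$ relations on both sides force each order-$k$ obstruction to be a cycle, which in the relevant bigradings one checks to be a boundary. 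The remaining work is bookkeeping: explicitly locating $\mathbf{y},\mathbf{z},\mathbf{R},\mathbf{S}$ in each of the finitely many models and checking the vanishing of the obstruction classes. This is routine but is where the content of the lemma lies.
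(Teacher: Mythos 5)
Your overall strategy coincides with the paper's: pass to the minimal model $\Khr(J)\otimes\f[u,Q]$ via Lemmas~\ref{lem:minimal-model-1} and~\ref{lem:minimal-model-2}, constrain the unknown components of $\partial_Q$ by gradings and $\partial_Q^2=0$, and build the map generator by generator. However, the two points you treat as given are exactly where the content of the lemma lies, and the one concrete justification you offer is wrong. You assert that the $u$-nontorsion tower of $\Kcrm_Q(J)$ is generated by a $\partial_Q$-cycle $\mathbf{x}$ in bigrading $(0,0)$, ``concretely $\frs_o$, $\tau$-invariant for grading reasons as in Lemma~\ref{lem:torus-calculation}.'' That grading argument is specific to positive equivariant diagrams of torus knots (a unique resolution in minimal homological degree), and it does not transfer to $J$; moreover, the existence of such a cycle is not a formal consequence of $J$ being slice or of the complex being knotlike. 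Example~\ref{ex:borel-ex2} is exactly a knotlike complex whose $u$-tower mod $Q$ starts in bigrading $(0,0)$ but whose only $u$-nontorsion $\partial_Q$-cycle sits in bigrading $(0,-2)$, so $\wt{s}_Q=-2$ there. For $J$ this has to be proved: in the minimal model one a priori has $\partial_Q x = uQb$, and one needs the relation $\partial_Q a = ub$ together with the change of basis $x\mapsto x+Qa$ to produce the required cycle; nothing about $\frs_o$ gives this for free.

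Second, your claim that ``each admissible model contains the pattern of generators and low-order arrows exhibited by the simplified five-generator form of $C_Q$'' overstates what is true. The exhaustive analysis (writing $\partial_Q x = uQX_1+Q^2X_2$, $\partial_Q y = uz+Qx+uQY_1+Q^2Y_2$, $\partial_Q z = uQZ_1+Q^2Z_2$, extracting $uZ_1=\partial_Q Y_1$ and $uZ_2+uX_1=\partial_Q Y_2$ from $\partial_Q^2=0$, and changing basis by $z\mapsto z+QY_1$ and possibly $x\mapsto x+Qa$) leaves two genuinely different outcomes, and in one of them the relevant subcomplex is the smaller three-generator complex with no $Q^2$-arrows (the left of Figure~\ref{fig:borelex2}). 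In that case there are no chains $\mathbf{R},\mathbf{S}$ ``realizing the differentials dictated by $C_Q$''; the local map exists only because one may send $R$ and $S$ to zero, which is compatible with the chain-map equations precisely because $\partial_Q R = uS$ in $C_Q$. Your order-by-order correction scheme could in principle discover this, but the vanishing of the order-$k$ obstructions in the relevant bigradings is not routine bookkeeping: it is the finite case analysis above, which you defer. As written, the proposal therefore has a genuine gap at both the existence of the $(0,0)$-cycle and the identification of the target structure, even though the intended route is the same as the paper's.
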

\begin{proof}
As discussed above, $\smash{\Kcrm_Q(J)}$ is homotopy equivalent to a complex with underlying module $\Khr(J) \otimes \f[u, Q]$. To prove the lemma we begin by constructing a particular subcomplex of $\smash{\Kcrm_Q(J)}$. Consider the basis elements $x$, $y$, and $z$ labeled in Figure~\ref{fig:borel-kyle}. The $u^i$- and $Q$-arrows going out of $x$, $y$, and $z$ are all determined by Lemma~\ref{lem:minimal-model-2}. We perform an exhaustive analysis of the possibilities for the other arrows. Write
\begin{equation}\label{eq:full-differential}
\partial_Q x = uQ X_1 + Q^2 X_2, \quad \partial_Q y = uz + Qx + uQ Y_1 + Q^2 Y_2, \quad \text{and} \quad \partial_Q z = uQ Z_1 + Q^2 Z_2
\end{equation}
where the $X_i$, $Y_i$, and $Z_i$ are $\f[u, Q]$-linear combinations of basis elements from $\Khr(J) = H_*(\Kcrm(J)/u)$. Examining the gradings of these elements, we see that
\begin{equation}\label{eq:enumerating-possibilities}
X_2, Y_2 \in \spa_{\f}\{a\}, \quad X_1, Y_1, Z_2 \in \spa_{\f}\{b, c\}, \quad \text{and} \quad Z_1 \in \spa_{\f}\{d\}.
\end{equation}
Note that even though the $X_i$, $Y_i$, and $Z_i$ are \textit{a priori} $\f[u, Q]$-linear combinations of basis elements, we see for grading reasons that in fact they must be $\f$-linear combinations of the basis elements listed above.

We now use the condition $\partial_Q^2 = 0$ to further constrain the possibilities for $X_i$, $Y_i$, and $Z_i$. First observe that $\partial_Q a = ub$ and $\partial_Q c = ud$. This can be read off from the spectral sequence and homological action of $\tau$, since there are no further arrows out of $a$, $b$, $c$, or $d$ for grading reasons. Applying the condition $\smash{\partial_Q^2 = 0}$ to $x$ shows that $X_2 = 0$ and $X_1 = 0$ or $b$. Applying the condition $\smash{\partial_Q^2 = 0}$ to $y$ then shows that
\begin{align*}
u(uQZ_1 + Q^2 Z_2) + Q(uQX_1) + uQ \partial_Q Y_1 + Q^2 \partial_Q Y_2 = 0.
\end{align*}
Since $Z_1$, $Z_2$, and $X_1$ are not in the image of $(u, Q)$ and $\partial_Q Y_1$ and $\partial_Q Y_2$ are either zero or a $u$-multiple of a basis element, we may collect coefficients to conclude
\begin{equation}\label{eq:simplified-relations}
uZ_1 = \partial_Q Y_1 \quad \text{and} \quad uZ_2 + uX_1 = \partial_Q Y_2.
\end{equation}
Now let us perform the change-of-basis $z = z + QY_1$. Modifying (\ref{eq:full-differential}) accordingly then gives
\[
\partial_Q x = uQX_1, \quad \partial_Q y = uz + Qx + Q^2 Y_2, \quad \text{and} \quad \partial_Q z = Q^2 Z_2.
\]
We know that $X_1 = 0$ or $b$. Reviewing the possibilities (\ref{eq:enumerating-possibilities}) for $Y_2$ and $Z_2$ and taking into account the second relation of (\ref{eq:simplified-relations}), we thus see that there are four situations:
\begin{enumerate}
\item $X_1 = 0$, $Y_2 = 0$, $Z_2 = 0$: This is the complex on the left in Figure~\ref{fig:borelex2}.
\item $X_1 = 0$, $Y_2 = a$, $Z_2 = b$: This is the complex on the right in Figure~\ref{fig:borelex2}.
\item $X_1 = b$, $Y_2 = 0$, $Z_2 = b$: Performing the change-of-basis $x = x + Qa$ gives the complex on the right in Figure~\ref{fig:borelex2}. 
\item $X_1 = b$, $Y_2 = a$, $Z_2 = 0$: Performing the change-of-basis $x = x + Qa$ gives the complex on the left in Figure~\ref{fig:borelex2}.
\end{enumerate}

\begin{figure}[h!]
\includegraphics[scale = 1]{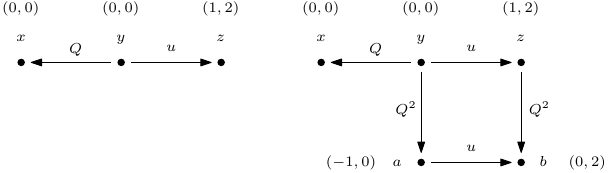}
\caption{Two possible subcomplexes of $\Kcrm_Q(J)$.}\label{fig:borelex2}
\end{figure}

We have thus identified a subcomplex of $\Kcrm_Q(J)$ which is either isomorphic to the complex on the left of Figure~\ref{fig:borelex2} or the complex on the right of Figure~\ref{fig:borelex2}. Moreover, note that in all cases, after reducing mod $Q$ the generator $x$ is sent to the Khovanov homology generator $x$ in $\Khr(J)$. Since this survives the Bar-Natan spectral sequence, the generator $x$ of our subcomplex has $u$-nontorsion homology class. If the subcomplex we have identified is as on the right-hand side of Figure~\ref{fig:borelex2}, then we are obviously done. If the subcomplex is as on the left-hand side, then we simply map in the complex on the right-hand side by sending $R$ and $S$ to zero.
\end{proof}

\subsection{Connected sums} \label{sec:connected-sum1}
We now study the behavior of our invariants under connected sums. Let $(K_1, \tau_1)$ and $(K_2, \tau_2)$ be two strongly invertible knots with transvergent diagrams $D_1$ and $D_2$. 

\begin{thm}\label{thm:connected-sum-tau}
There is an equivariant isomorphism
\[
(\Kcrm(D_1 \# D_2), \tau_1 \# \tau_2) \cong (\Kcrm(D_1), \tau_1) \otimes (\Kcrm(D_2), \tau_2).
\]
Here, the right-hand side is the tensor product complex $\Kcrm(D_1) \otimes_{\f[u]} \Kcrm(D_2)$ equipped with the tensor product involution $\tau_1 \otimes \tau_2$. The claim holds regardless of how the equivariant connected sum on the left is formed.
\end{thm}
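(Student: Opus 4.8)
The plan is to reduce the statement to the well-known non-equivariant connected sum isomorphism for Bar-Natan homology, upgraded to track the action of $\tau$. First I would recall that if $D_1$ and $D_2$ are link diagrams, then $D_1 \# D_2$ is obtained by choosing an arc on each diagram, cutting both open along that arc, and gluing. For a transvergent diagram, the natural choice is to perform the connect sum along the fixed axis; concretely, one removes a small interval of $K_i$ meeting $\Fix(\tau_i)$ and glues equivariantly, so that the connect-sum point sits on the axis. The key diagrammatic observation is that there is a canonical bijection between the resolutions of $D_1 \# D_2$ and pairs of resolutions $(D_{1,v}, D_{2,w})$, under which the distinguished on-axis circle $c_{p_1}$ of $D_{1,v}$ and the distinguished on-axis circle $c_{p_2}$ of $D_{2,w}$ are merged into a single on-axis circle $c_p$ of $(D_1 \# D_2)_{(v,w)}$. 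This is exactly the setup needed for the pointed reduced complex (Definition~\ref{def:pointed-reduced}).

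The second step is the algebraic heart: with the basepoints $p_1 \in D_1$, $p_2 \in D_2$, and $p \in D_1 \# D_2$ all lying on the axis, one shows that merging along $c_{p_i}$ induces a chain isomorphism
\[
\Kcrm_{p}(D_1 \# D_2) \cong \Kcrm_{p_1}(D_1) \otimes_{\f[u]} \Kcrm_{p_2}(D_2).
\]
This is the standard fact that the reduced Khovanov/Bar-Natan complex is multiplicative under connected sum (going back to \cite{khovanov}; the Bar-Natan version follows the same argument, using the Frobenius algebra $\cA$ and the fact that in the reduced complex the distinguished circle always carries the label $x$, with $m(x \otimes x) = ux$ giving the $\f[u]$-module identification $\cA \otimes_{\cA} \cA \cong \cA$ that merges $c_{p_1}$ and $c_{p_2}$). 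On generators this isomorphism literally takes a labeling of $D_{1,v}$ together with a labeling of $D_{2,w}$ (each labeling $c_{p_i}$ by $x$) to the labeling of $(D_1 \# D_2)_{(v,w)}$ which agrees with each on the off-axis circles and labels $c_p$ by $x$. One must check this respects the edge maps: edges of $(D_1 \# D_2)_{(v,w)}$ are edges in exactly one of the two factors (since the crossings of $D_1 \# D_2$ are the disjoint union of those of $D_1$ and $D_2$), and a merge or split in the first factor not involving $c_{p_1}$ commutes with the identification, while the distinguished circle $c_p$ never participates in a split (it is always labeled $x$) and behaves correctly under merges because of $m(x \otimes x) = ux$. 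The $\f[u]$-bilinearity is automatic.

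The final step is to observe that when $p_1, p_2, p$ are on the axis, every map above is $\tau$-equivariant on the nose. Indeed, $\tau_1 \# \tau_2$ acts on the resolutions of $D_1 \# D_2$ compatibly with the bijection to pairs $(D_{1,v}, D_{2,w})$; it fixes $c_p$ setwise (just as $\tau_i$ fixes $c_{p_i}$), and it permutes the off-axis circles by the product permutation. Since the reduced complexes are identified with the unpointed reduced complexes $\Kcrm_{un}$ independently of the on-axis basepoint (Lemma~\ref{lem:reducedtauindependent}), and since the $\tau$-action was defined (Definition~\ref{def:diagrammatic_tau}) purely combinatorially by carrying labels along the permutation of circles, the isomorphism of Step~2 intertwines $\tau_1 \# \tau_2$ with $\tau_1 \otimes \tau_2$. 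To handle the clause ``regardless of how the equivariant connected sum is formed,'' I would invoke that any two equivariant connect-sum operations on strongly invertible knots differ by an equivariant diffeomorphism of $S^3$ (the equivariant connected sum depends a priori only on the choice of equivariant basepoints, cf.\ \cite{Sakuma}), together with the invariance of $(\Kcrm(-), \tau)$ under Sakuma equivalence already established in Lemma~\ref{lem:reduced-invariant}; so the isomorphism type of the left-hand side is well-defined and the displayed identification holds for any choice. The main obstacle I anticipate is purely bookkeeping rather than conceptual: carefully verifying $\tau$-equivariance of the connect-sum isomorphism at the chain level (as opposed to up to homotopy), and making sure the on-axis basepoint is a legitimate equivariant basepoint so that Lemma~\ref{lem:reducedtauindependent} applies; the non-equivariant multiplicativity itself is classical and requires no new ideas.
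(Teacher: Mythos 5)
Your core strategy is sound, and it is a genuinely different execution from the paper's. The paper works entirely in the unpointed model $\Kcrm_{un}$: it composes the inclusion $\Kcrm_{un}(D_1)\otimes\Kcrm_{un}(D_2)\to\Kcrm_{un}(D_1\sqcup D_2)$ with the (manifestly $\tau$-equivariant) merge map of the connect-sum saddle, proves the composite is surjective by exhibiting the algebra generators $x_i+x_j$ in its image, and then upgrades surjectivity to an isomorphism by a rank count modulo $u$ together with Lemma~\ref{lem:iso-after-set-zero}. You instead write down the classical pointed chain-level identification directly, as a bijection of generators through the merged distinguished circle, and check compatibility with the edge maps case by case. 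Your route gives a completely explicit isomorphism and avoids the mod-$u$ bootstrapping; the paper's route gets $\tau$-equivariance for free (no basepoint appears at all) and avoids the edge-map case analysis.

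Three points need repair, all local. First, the basepoint bookkeeping you flagged is genuinely off as written: in the equivariant connected sum of transvergent diagrams the two new band arcs are off-axis and swapped by $\tau$, and $D_1\# D_2$ meets the axis only at the two outer fixed points; hence there is no on-axis basepoint $p$ of $D_1\# D_2$ lying on the merged distinguished circle in every resolution, so you cannot take ``$p_1,p_2,p$ all on the axis'' and quote Lemma~\ref{lem:reducedtauindependent} verbatim. The fix is easy: put $p$ on the band and observe that the circle through $p$ in each resolution is the merge of two $\tau$-invariant circles, hence setwise $\tau$-invariant, so $\tau(x_p)=x_p$ and the $\mu_p$ argument of Lemma~\ref{lem:reducedtauindependent} still identifies $\Kcrm_p$ equivariantly with $\Kcrm_{un}$ --- or simply run your generator bijection in the unpointed model, as the paper does. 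Second, the parenthetical claim that the distinguished circle never participates in a split is false (a crossing of $D_1$ both of whose strands lie on $c_{p_1}$ splits it); this is harmless, since $\Delta(x)=x\otimes x$ keeps such terms in the reduced subcomplex and they match on both sides, but the case must be included in your check. Third, the appeal to ``any two equivariant connect-sum operations differ by an equivariant diffeomorphism'' is unjustified: different choices of equivariant basepoints can yield inequivalent strongly invertible knots, which is precisely why Sakuma works with directed knots and why the theorem is phrased ``regardless of how the connected sum is formed.'' You do not need that claim --- your chain-level argument is uniform in the choice of on-axis points $p_1,p_2$, so it already covers every equivariant connected sum.
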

\begin{proof}

Observe that there is a chain map
\[
\Phi \colon \Kc'(D_1 \sqcup D_2) \rightarrow \Kc'(D_1 \# D_2)
\]
corresponding to the saddle cobordism from $D_1 \sqcup D_2$ to $D_1 \# D_2$. This is the merge map discussed in Section~\ref{sec:reduced-bar-natan}, which restricts to preserve the unpointed reduced complex:
\[
\phi \colon \Kcrm_{un}(D_1 \sqcup D_2) \rightarrow \Kcrm_{un}(D_1 \# D_2).
\]
Moreover, we have an inclusion map
\[
\iota \colon \Kcrm_{un}(D_1) \otimes \Kcrm_{un}(D_2) \rightarrow \Kcrm_{un}(D_1 \sqcup D_2).
\]
It is clear that all of these maps are $\tau$-equivariant, where we put the diagonal action of $\tau$ on the domain of $\iota$.

Let the connected sum be formed along a choice of equivariant basepoints $p_1 \in D_1$ and $p_2 \in D_2$. Consider any pair of resolutions $(D_1)_v$ of $D_1$ and $(D_2)_w$ of $D_2$. Taking the connected sum of these resolutions gives a resolution of $D_1 \# D_2$ which we denote by $(D_1 \# D_2)_{v \# w}$. Note that for each $v$ and $w$, the aforementioned saddle map goes from $(D_1)_v \sqcup (D_2)_w$ to $(D_1 \# D_2)_{v \# w}$. Enumerate
\[
Z((D_1)_v) = \{c_1, \ldots, c_m\} \quad \text{and} \quad Z((D_2)_w) = \{c_{m+1}, \ldots, c_{m+n}\}
\]
so that $c_1$ is the circle containing $p_1$ and $c_{m+1}$ is the circle containing $p_2$. Then we have the enumerations
\[
Z((D_1)_v \sqcup (D_2)_w) = \{c_1, \ldots, c_m\} \cup \{c_{m+1}, \ldots, c_{m+n}\}
\]
and
\[
Z((D_1 \# D_2)_{v \# w}) = \{c_0\} \cup \left(\{c_1, \ldots, c_m\} - \{c_1\}\right) \cup \left(\{c_{m+1}, \ldots, c_{m+n}\} - \{c_{m+1}\}\right)
\]
where $c_0 = c_1 \# c_{m+1}$. As discussed in Section~\ref{sec:reduced-bar-natan}, the merge map is the algebra map induced by the map $\pi$ of index sets which is an identity except for $\pi(1) = \pi(m+1) = 0$.

We now claim that $\phi \circ \iota$ is an isomorphism. To see this, first observe that $\phi \circ \iota$ is surjective. Indeed, $\Kcrm_{un}((D_1 \# D_2)_{v \# w})$ is generated as an algebra by $1$, together with five kinds of generators: 
\begin{enumerate}
\item $x_i + x_j$ with $2 \leq i, j \leq m$,
\item $x_i + x_j$ with $m+2 \leq i, j \leq m + n$, 
\item $x_i + x_0$ with $2 \leq i \leq m$, 
\item $x_0 + x_j$ with $m+2 \leq j \leq m + n$, and
\item $x_i + x_j$ with $2 \leq i \leq m$ and $m + 2 \leq j \leq m + n$.
\end{enumerate}
The first four classes of generators are clearly in the image of $\varphi \circ \iota$. For example, for the third class we have
\[
(\phi \circ \iota)( (x_i + x_1) \otimes 1) = x_i + x_0.
\]
To deal with the fifth class, we observe that
\[
(\phi \circ \iota)( (x_i + x_1) \otimes 1 + 1 \otimes (x_j + x_{m+1}) ) = (x_i + x_0) + (x_j + x_0) = x_i + x_j.
\]
Since $\phi \circ \iota$ is an algebra map, this shows that $\phi \circ \iota$ is surjective.

We now observe that $\phi \circ \iota$ is an isomorphism modulo $u$. Indeed, we showed in the proof of Lemma~\ref{lem:reduced-theories-equivalent} that modulo $u$, $\Kcrm_{un}(D_v)$ has rank equal to $2^{m-1}$, where $m$ is the number of circles in $Z(D_v)$. Thus the rank of $\Kcrm_{un}((D_1)_v) \otimes \Kcrm_{un}((D_2)_w)$ after setting $u = 0$ is given by $2^{m-1} \cdot 2^{n-1} = 2^{m + n - 2}$. On the other hand, this is exactly the same as the rank of $\Kcrm_{un}((D_1 \# D_2)_{v \# w})$ after setting $u = 0$, as the latter has $m + n - 1$ circles. Thus $\phi \circ \iota$ is an isomorphism modulo $u$; by Lemma~\ref{lem:iso-after-set-zero}, we have that $\phi \circ \iota$ is an isomorphism, as desired.
\end{proof}

We caution the reader that the Borel complex associated to the tensor product is \textit{not} the tensor product of Borel complexes in the usual sense. Indeed, the Borel complex associated to the tensor product cannot easily be computed only from the Borel differentials on each of the factors. Instead, we have:

\begin{lem}\label{lem:modified-product}
Let $C_1$ and $C_2$ be complexes equipped with involutions $\tau_1$ and $\tau_2$. Let $C_\otimes = C_1 \otimes C_2$ be equipped with the involution $\tau_1 \otimes \tau_2$. Write $\partial_{Q, 1}$, $\partial_{Q, 2}$, and $\partial_{Q, \otimes}$ for the Borel differentials on the complexes $(C_1)_Q$, $(C_2)_Q$, and $(C_\otimes)_Q$, respectively. Then
\[
\partial_{Q, \otimes} = \partial_{Q, 1} \otimes 1 + 1 \otimes \partial_{Q, 2} + Q(1 + \tau_1) \otimes (1 + \tau_2).
\]
\end{lem}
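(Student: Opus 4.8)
The plan is to unwind the definitions and compare the two sides as $\f[Q]$-linear operators; since everything is over $\f=\f_2$, no Koszul signs appear and the argument is a short bookkeeping computation. First I would fix the module-level identification: because $(C_i)_Q = C_i \otimes_\f \f[Q]$ and $C_\otimes = C_1 \otimes_\f C_2$, there is a canonical isomorphism $(C_\otimes)_Q \cong C_1 \otimes_\f C_2 \otimes_\f \f[Q] \cong (C_1)_Q \otimes_{\f[Q]} (C_2)_Q$, under which each of $\partial_{Q,1}\otimes 1$, $1 \otimes \partial_{Q,2}$, and $Q(1+\tau_1)\otimes(1+\tau_2)$ is a well-defined $\f[Q]$-linear endomorphism. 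Thus the statement is an equality of operators on this single module.

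Next I would substitute $\partial_{Q,i} = \partial_i + Q(1+\tau_i)$ into the right-hand side, using that the underlying differential and involution on $C_\otimes$ are $\partial_1\otimes 1 + 1\otimes\partial_2$ and $\tau_1\otimes\tau_2$, so that by Definition~\ref{def:borel} we have $\partial_{Q,\otimes} = \partial_1\otimes 1 + 1\otimes\partial_2 + Q(1 + \tau_1\otimes\tau_2)$. The $Q^0$-terms match on the nose, and matching the $Q^1$-terms reduces to the purely algebraic identity
\[
(1+\tau_1)\otimes 1 + 1\otimes(1+\tau_2) + (1+\tau_1)\otimes(1+\tau_2) = 1\otimes 1 + \tau_1\otimes\tau_2 ,
\]
which is verified by expanding each summand and cancelling over $\f_2$ (the cross-monomials $\tau_1\otimes 1$ and $1\otimes\tau_2$ each occur an even number of times, $1\otimes 1$ occurs three times, and $\tau_1\otimes\tau_2$ once). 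This is the only computation; the point I would highlight is that the cross term $Q(1+\tau_1)\otimes(1+\tau_2)$ is exactly the correction forced by the fact that $1 + \tau_1\otimes\tau_2$ differs from $(1+\tau_1)\otimes 1 + 1\otimes(1+\tau_2)$ precisely by $(1+\tau_1)\otimes(1+\tau_2)$, and it is here that characteristic two is used.

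There is no genuine obstacle. In particular, no separate verification of $\partial_{Q,\otimes}^2 = 0$ is required, since $\partial_{Q,\otimes}$ is a differential by construction; the content of the lemma is merely the explicit formula, which is what will be used together with Theorem~\ref{thm:connected-sum-tau} when computing the invariants of connected sums.
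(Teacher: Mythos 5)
Your proof is correct and follows essentially the same route as the paper: substitute $\partial_{Q,i} = \partial_i + Q(1+\tau_i)$, note that the Borel differential on $C_\otimes$ is $\partial_1\otimes 1 + 1\otimes\partial_2 + Q(1+\tau_1\otimes\tau_2)$, and compare the $Q$-linear parts over $\f_2$. The only cosmetic difference is that the paper computes the difference of the two sides directly rather than verifying the cancellation identity term by term.
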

\begin{proof}
By definition, $\partial_{Q, \otimes}$ is given by
\[
\partial_{Q, \otimes} = (\partial_1 \otimes 1 + 1 \otimes \partial_2) + Q( 1\otimes 1 + \tau_1 \otimes \tau_2 ).
\]
On the other hand,
\begin{align*}
\partial_{Q, 1} \otimes 1 + 1 \otimes \partial_{Q, 2} &= (\partial_1 + Q(1 + \tau_1)) \otimes 1 + 1 \otimes (\partial_2 + Q(1 + \tau_2)) \\
&= (\partial_1 \otimes 1 + 1 \otimes \partial_2) + Q(\tau_1 \otimes 1 + 1 \otimes \tau_2).
\end{align*}
The difference between these is precisely $Q(1 + \tau_1) \otimes (1 + \tau_2)$.
\end{proof}

\subsection{Koszul duality} \label{sec:koszul-borel}
The previous section indicates that $\Kcrm_Q(K_1 \# K_2)$ is difficult to calculate directly from $\smash{\Kcrm_Q(K_1)}$ and $\smash{\Kcrm_Q(K_2)}$. Instead, Lemma~\ref{lem:modified-product} shows that the necessary data to compute the tensor product consists of the underlying $\tau$-complexes of $K_1$ and $K_2$. It is thus natural to ask whether the latter can be determined from the former. 

\subsubsection{Koszul bimodules} We begin with a brief review of Koszul duality; most results here appear in (or can be deduced from) the seminal paper \cite{BGS}. As we will use Koszul duality in a rather specific setting, we provide some background.

\begin{defn}\label{def:koszul}
Let $R = \f[u]$. Denote by $\bKom_{R[\tau]/(\tau^2 + 1)}$ the category of bigraded complexes over $R[\tau]/(\tau^2 + 1)$ which are free over $R$ and homologically bounded below. Likewise, denote by $\bKom_{R[Q]}$ the category of bigraded complexes over $R[Q]$ which are free over $R$ and homologically bounded below. In each case, we refer to the two gradings as the homological and quantum gradings, with $\deg(\tau) = (0, 0)$ and $\partial$, $u$, and $Q$ having their usual degrees from Definition~\ref{def:borel}.
\end{defn}

If $D$ is a transvergent diagram for $L$, then $\Kcm(D)$ gives rise to an element of $\smash{\bKom_{R[\tau]/(\tau^2 + 1)}}$ simply by promoting the action of $\tau$ to a variable in the polynomial ring. This contains the same information as the complex $(\Kcm(D), \tau)$, but we emphasize the fact that $\tau$ is an actual involution. Note that a morphism between two complexes in $\smash{\bKom_{R[\tau]/(\tau^2 + 1)}}$ is a $R[\tau]/(\tau^2 + 1)$-module map, and hence commutes with $\tau$ on the nose. Both the objects and morphisms of $\smash{\bKom_{R[\tau]/(\tau^2 + 1)}}$ are thus more stringent than in our discussion of $\tau$-complexes and maps between them.

Our treatment of Koszul duality will proceed via tensoring with certain \textit{Koszul bimodules}:

\begin{defn}\label{def:bimodule}
  Define a bimodule-with-differential by setting
\[
{}_{R[\tau]/(\tau^2+1)}\Br_{R[Q]} = R[\tau]/(\tau^2+1)\otimes_R R[Q] \quad \text{and} \quad \partial(a \otimes b) = (1 + \tau) a \otimes Qb.
\]
Similarly, define a bimodule-with-differential by setting
\[
{}_{R[Q]}\Hh_{R[\tau]/(\tau^2+1)} = \Hom_R(R[\tau]/(\tau^2+1),R[Q]) \quad \text{and} \quad (\partial F)(x) = Q F((1 + \tau)x).
\]
These are depicted in Figure~\ref{fig:bimodules}. Note that $\smash{{}_{R[\tau]/(\tau^2+1)}\Br_{R[Q]}}$ and $\smash{{}_{R[Q]}\Hh_{R[\tau]/(\tau^2+1)}}$ are in fact isomorphic, but we distinguish them as they will be used slightly differently. The reader should think of each of these as the minimal complex which has homology $R$ and is free (as a complex) over $R[\tau]/(\tau^2 + 1)$ and free (as a complex) over $R[Q]$.
\end{defn}

\begin{figure}[h!]
\includegraphics[scale = 0.9]{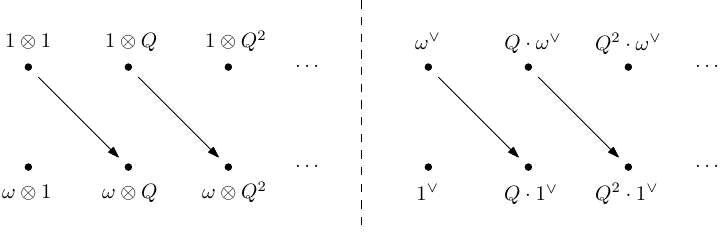}
\caption{The bimodules ${}_{R[\tau]/(\tau^2+1)}\Br_{R[Q]}$ (left) and ${}_{R[Q]}\Hh_{R[\tau]/(\tau^2+1)}$ (right), with arrows depicting the differential. It is convenient to consider the change-of-variables $\omega = 1 + \tau$, so that $R[\tau]/(\tau^2 + 1) = R[\omega]/\omega^2$. Over $R$, the ring $R[\omega]/\omega^2$ has basis $\{1, \omega\}$; we denote the dual basis of $\Hom_R(R[\omega]/\omega^2, R)$ by $\{1^\vee, \omega^\vee\}$. For both bimodules, multiplication by $Q$ is given by horizontal translation to the right, while multiplication by $\omega$ is vertical translation downwards.}\label{fig:bimodules}
\end{figure}

\begin{defn}
Define a functor
\[
\Br \colon \bKom_{R[\tau]/(\tau^2 + 1)} \rightarrow \bKom_{R[Q]} \quad \text{by} \quad X \mapsto \Br(X) = X \otimes {}_{R[\tau]/(\tau^2+1)}\Br_{R[Q]}.
\]
Likewise, define a functor
\[
\Hh \colon \bKom_{R[Q]} \rightarrow \bKom_{R[\tau]/(\tau^2 + 1)} \quad \text{by} \quad Y \mapsto \Hh(Y) = Y \otimes {}_{R[Q]}\Hh_{R[\tau]/(\tau^2+1)}.
\]
We will see that $\Br$ takes a complex over $R[\tau]/(\tau^2 + 1)$ and applies the Borel construction. We think of $\Hh$ as doing the reverse -- it takes a complex over $R[Q]$ and returns a complex equipped with an involution up to chain homotopy equivalence commuting with $\tau$.
\end{defn}


If we choose a basis $\{x_i\}$ for $X$ over $R$, then $\{x_i \otimes (1 \otimes 1)\}$ forms a basis for $X \otimes {}_{R[\tau]/(\tau^2+1)}\Br_{R[Q]}$ over $R[Q]$, since $x_i \otimes (\omega \otimes 1) = (\omega x_i) \otimes (1 \otimes 1)$. Moreover, the differential in this basis is given by
\[
\partial_Q(x_i \otimes (1 \otimes 1)) = (\partial x_i) \otimes (1 \otimes 1) + x_i \otimes (\omega \otimes Q) = (\partial x_i) \otimes (1 \otimes 1) + (\omega x_i) \otimes (1 \otimes Q).
\]
Under the identification that sends $x \otimes (1 \otimes Q^i)$ to $x \otimes Q^i$, this is precisely the Borel differential. Note that if $X$ is finitely generated over $R$, then $\Br(X)$ is finitely generated over $R[Q]$.

Now suppose $Y$ is free over $R[Q]$, as is the case for every Borel complex considered in this paper. Let $\{y_i\}$ be a basis for $Y$ over $R[Q]$. The tensor product $Y \otimes {}_{R[Q]}\Hh_{R[\tau]/(\tau^2+1)}$ has two towers for each $y_i$, given by $\{Q^j (y_i \otimes \omega^\vee)\}$ and $\{Q^j (y_i \otimes 1^\vee)\}$ for $j \geq 0$. Over $R[Q]$, these towers are of course spanned by $y_i \otimes \omega^\vee$ and $y_i \otimes 1^\vee$. However, over $R$ we instead have two infinite sets of linearly independent generators, each indexed by $j \geq 0$. The action of $\omega$ on $Y \otimes {}_{R[Q]}\Hh_{R[\tau]/(\tau^2+1)}$ maps the first of these onto the second with an index shift of one. Thus, even if $Y$ is finitely generated over $R[Q]$, $\Hh(Y)$ will \textit{not} be finitely generated over $R$. 

Given the above, it is clear that the composition of $\Br$ and $\Hh$ is not literally the identity. Nevertheless:

\begin{thm}\label{thm:derived-equivalence}
There exist quasi-isomorphisms of bimodules
\[
{}_{R[\tau]/(\tau^2+1)}\Id_{R[\tau]/(\tau^2+1)} \rightarrow {}_{R[\tau]/(\tau^2+1)}\Br_{R[Q]} \otimes {}_{R[Q]}\Hh_{\f[\tau]/(\tau^2+1)} 
\]
and
\[
\quad {}_{R[Q]}\Hh_{R[\tau]/(\tau^2+1)}\otimes {}_{R[\tau]/(\tau^2+1)}\Br_{R[Q]} \rightarrow {}_{R[Q]}\Id_{R[Q]}.
\]
In particular, $\Br$ and $\Hh$ induce equivalences of derived categories.
\end{thm}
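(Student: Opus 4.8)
The plan is to prove the two quasi-isomorphisms directly by computing the tensor products, and then to deduce the statement about derived categories from the standard formalism of tilting complexes or, more elementarily, from the fact that a bifunctor induced by tensoring with a bimodule that is invertible in the derived sense is a derived equivalence. First I would fix the change of variables $\omega = 1 + \tau$, so that $R[\tau]/(\tau^2+1) = R[\omega]/\omega^2$, and write $A = R[\omega]/\omega^2$ and $B = R[Q]$ for brevity. In this notation, $\Br = {}_A A \otimes_R B{}_B$ with differential $\partial(1\otimes 1) = \omega \otimes Q$, and $\Hh = {}_B \Hom_R(A, B)_A$ with $(\partial F)(x) = Q\,F(\omega x)$. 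Both are free of rank one over $B$ on the right side after forgetting the differential (for $\Br$, the $B$-basis is $\{1\otimes 1\}$; for $\Hh$, the $B$-basis is $\{1^\vee, \omega^\vee\}$, so rank two), and the claim about finite generation and non-finite-generation recorded before the theorem tells us which side is which.

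For the first quasi-isomorphism $A \to \Br \otimes_B \Hh$, I would compute $\Br \otimes_B \Hh = A \otimes_R B \otimes_B \Hom_R(A,B)_A = A \otimes_R \Hom_R(A,B)_A$ as an $(A,A)$-bimodule, equipped with the total differential coming from the two tensor factors (using that the middle $B$ disappears). Over $R$ this has underlying module $A \otimes_R \Hom_R(A,B) = A \otimes_R \Hom_R(A,R) \otimes_R B$, and I would identify the differential explicitly using the bases $\{1,\omega\}$ of $A$ and $\{1^\vee, \omega^\vee\}$ of $\Hom_R(A,R)$ together with the $Q$-tower. The unit map $A \to \Br\otimes_B\Hh$ is the obvious one sending $1 \mapsto 1 \otimes 1^\vee$ (or rather the appropriate element of bidegree $(0,0)$); I would check it is a chain map of $(A,A)$-bimodules and then verify it is a quasi-isomorphism, which amounts to checking that the mapping cone is acyclic. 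Because everything is free over $R$ and bounded appropriately, by Lemma~\ref{lem:quasi-iso-move} it suffices to check this after setting $u = 0$ (and then it is a finite, completely explicit bigraded complex of $\F$-vector spaces with a $Q$-tower, whose acyclicity away from the bottom is a direct computation). The second quasi-isomorphism $\Hh \otimes_A \Br \to B$ is handled symmetrically: $\Hh \otimes_A \Br = \Hom_R(A,B) \otimes_A A \otimes_R B = \Hom_R(A,B) \otimes_R B$ as a $(B,B)$-bimodule with total differential, the counit map is evaluation at $1 \in A$ post-composed with multiplication $B \otimes_R B \to B$, and again one checks it is a chain map and a quasi-isomorphism by reducing mod $u$ and computing a small explicit complex.

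Having established the two quasi-isomorphisms of bimodules, I would deduce the derived equivalence as follows. Since $\Br$ is free over $R$ (hence $R$-flat, and flat over $A$ on the left as well, being $A \otimes_R B$), the functor $X \mapsto X \otimes_A \Br$ is exact on $\mathbf{Kom}_A$ and descends to the derived category; similarly for $\Hh$. The natural isomorphisms in homology, $X \otimes_A (\Br \otimes_B \Hh) \simeq X \otimes_A A = X$ and $Y \otimes_B (\Hh \otimes_A \Br) \simeq Y \otimes_B B = Y$ — obtained by tensoring the bimodule quasi-isomorphisms with $X$ and $Y$ and again invoking Lemma~\ref{lem:quasi-iso-move} to see that tensoring a quasi-isomorphism of $R$-free complexes with an $R$-free complex stays a quasi-isomorphism — show that $\Br$ and $\Hh$ are mutually inverse up to natural isomorphism on the derived categories. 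One subtlety to flag explicitly: $\Hh(Y)$ is not finitely generated over $R$, so the derived categories in question must be taken to include homologically-bounded-below, $R$-free complexes that need not be finitely generated over $R$; this matches exactly the definition of $\bKom$ in Definition~\ref{def:koszul} (which requires $R$-freeness and bounded-below, but not finite generation), so the statement is consistent as phrased, and I would add a sentence noting this.

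The main obstacle I anticipate is purely bookkeeping: writing down the total differential on $A \otimes_R \Hom_R(A,R) \otimes_R B$ and on $\Hom_R(A,R) \otimes_R B \otimes_R B$ correctly, keeping careful track of the sign-free but index-shifting behavior of the $Q$-towers and of the $\omega$-action (which shifts the tower index), and then confirming acyclicity of the two cones. There is no deep conceptual difficulty — this is the classical Koszul duality computation between $\F[\omega]/\omega^2$ and $\F[Q]$ (the latter being the Ext-algebra / dual numbers duality), carried out over the base ring $R = \f[u]$ — but the towers make the complexes infinite in one direction and one must be slightly careful that the mapping cones are honestly acyclic rather than merely acyclic in each fixed bidegree after truncation. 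Reducing mod $u$ via Lemma~\ref{lem:quasi-iso-move} is what keeps this manageable, and I would lean on it heavily rather than argue over $\f[u]$ directly.
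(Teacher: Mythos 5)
Your overall strategy---compute the two bimodule tensor products explicitly, write down unit/counit maps, verify they are quasi-isomorphisms, and then deduce the derived equivalence from flatness/freeness of the bimodules---is the same in spirit as the paper's proof (which cites \cite{BGS} for the classical case and exhibits the maps and their one-sided homotopy inverses explicitly in Figures~\ref{fig:boreltensor} and~\ref{fig:boreltensor2}). However, two of your specific steps do not work as written.

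First, the proposed unit map $1 \mapsto 1 \otimes 1^\vee$ is not a chain map: in $\Br \otimes_{R[Q]} \Hh$ one has $\partial(1 \otimes 1^\vee) = \omega \otimes Q\,1^\vee \neq 0$, since the $\Br$-factor contributes $\omega(\cdot) \otimes Q(\cdot)$ even though $\partial 1^\vee = 0$. The cycle generating the homology in that bidegree is $1 \otimes 1^\vee + \omega \otimes \omega^\vee$ (with $\omega \mapsto \omega \otimes 1^\vee$), which is exactly the map recorded in the caption of Figure~\ref{fig:boreltensor}; your hedge ``the appropriate element of bidegree $(0,0)$'' does not pin this down, because all four generators of the first square sit in the same bidegree. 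Your counit $F \otimes b \mapsto F(1)b$ on $\Hh \otimes_{R[\tau]/(\tau^2+1)} \Br$ is fine and agrees with the paper's map.

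Second, your appeals to Lemma~\ref{lem:quasi-iso-move} are not legitimate in either place you invoke it. That lemma requires free, \emph{finitely-generated} complexes over $R[h]$, and with $h = u$ none of the relevant complexes ($\Br \otimes \Hh$, $\Hh \otimes \Br$, or a general object of $\bKom$) is finitely generated over $\f[u]$ (the $Q$-towers are infinite); the converse direction of that lemma is exactly the one whose spectral-sequence convergence uses finite generation, so you would have to re-argue convergence degreewise---or, more simply, compute the homology of the two cones directly over $\f[u]$, which is no harder than the mod-$u$ computation. More seriously, the lemma is also the wrong tool for the step ``tensoring the bimodule quasi-isomorphism with $X$ stays a quasi-isomorphism'': what matters there is not $R$-freeness but freeness over the ring you are tensoring over. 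The paper's route is to observe that $\Br \otimes \Hh$ is free as a complex of left $R[\tau]/(\tau^2+1)$-modules, so the quasi-isomorphism from the identity bimodule admits an explicit left-equivariant homotopy inverse (Figure~\ref{fig:boreltensor}); tensoring with $X$ over that side then preserves the homotopy equivalence, and the resulting map $X \to (\Hh \circ \Br)(X)$ is still equivariant for the remaining action. (Equivalently, one can argue that a bounded-below complex of modules free over the relevant ring is K-flat, so tensoring kills the acyclic cone.) The same freeness observation, over $R[Q]$ and over $R[\tau]/(\tau^2+1)$ respectively, is also what justifies your claim that $\Br$ and $\Hh$ descend to the derived categories; exactness of the termwise tensor functor alone does not give preservation of quasi-isomorphisms. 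With the corrected unit map and these justifications substituted, your argument goes through and coincides with the paper's.
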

\begin{proof}
  The statement is well-known, \cite{BGS}, although it is often used for $R=\f$. For the reader's convenience we show how to extend the
  result from $R=\f$ to more general rings. The method is the extension of scalars.
   To see that this implies $\Br$ and $\Hh$ induce equivalences of derived categories, we argue as follows. First, we claim that $\Br$ and $\Hh$ indeed constitute functors between derived categories; that is, they take quasi-isomorphisms to quasi-isomorphisms. Suppose $f \colon X_1 \rightarrow X_2$ is a quasi-isomorphism of $R[\tau]/(\tau^2+1)$-complexes. An examination of Figure~\ref{fig:bimodules} shows ${}_{R[\tau]/(\tau^2+1)}\Br_{R[Q]}$ is free as a complex over $R[\tau]/(\tau^2 + 1)$; that is, it decomposes into the direct sum of free modules over $R[\tau]/(\tau^2 + 1)$ with the differential going between successive summands. Since quasi-isomorphism is preserved under tensoring with free complexes, it follows that 
\[
f \otimes \id \colon \Br(X_1) = X_1 \otimes {}_{R[\tau]/(\tau^2+1)}\Br_{R[Q]} \rightarrow \Br(X_2) = X_2 \otimes {}_{R[\tau]/(\tau^2+1)}\Br_{R[Q]}
\]
is a quasi-isomorphism. The claim that $\Hh$ sends quasi-isomorphisms to quasi-isomorphisms is similar, using the fact that ${}_{R[Q]}\Hh_{R[\tau]/(\tau^2+1)}$ is free as a complex over $R[Q]$.

\begin{figure}[h!]
\includegraphics[scale = 1]{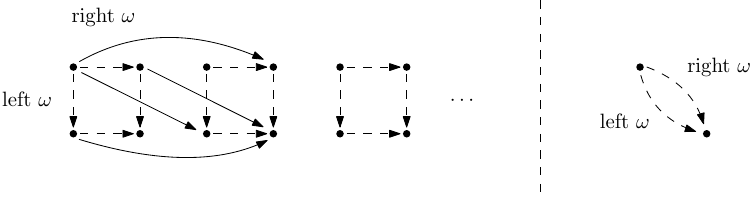}
\caption{Left: $\Br \otimes \Hh$. Over $R$, this consists of an infinite number of copies of a four-generator square spanned by $\{1, 1\cdot \omega, \omega \cdot 1, \omega \cdot 1 \cdot \omega\}$. The left and right $\omega$-actions are depicted by the vertical and horizontal dashed arrows, respectively. The differential takes $1$ in square $i$ to the sum of $\omega \cdot 1$ and $1 \cdot \omega$ in square $i+1$, and takes both $\omega \cdot 1$ and $1 \cdot \omega$ in square $i$ to $\omega \cdot 1 \cdot \omega$ in square $i+1$. This is partially depicted by the solid arrows. Right: the identity bimodule, spanned (over $R$) by two generators $1$ and $\omega$, with the left and right $\omega$-actions being the same. The quasi-isomorphism from right-to-left sends $1$ to the sum of $\omega \cdot 1$ and $1 \cdot \omega$ in the first square, and takes $\omega$ to $\omega \cdot 1 \cdot \omega$ in the first square. This has a homotopy inverse (which is only left $R[\tau]/(\tau^2 + 1)$-equivariant) that sends $1 \cdot \omega$ in the first square to $1$, takes $\omega \cdot 1 \cdot \omega$ in the first square to $\omega$, and sends everything else to zero.}\label{fig:boreltensor}
\end{figure}

\begin{figure}[h!]
\includegraphics[scale = 1]{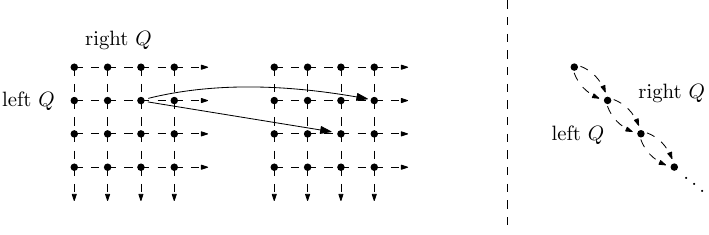}
\caption{Left: $\Hh \otimes \Br$. Over $R$, this consists of two copies of an infinite grid spanned by $\{Q^i \cdot 1 \cdot Q^j\}$ for $i, j \geq 0$. The left and right $Q$-actions are depicted by the vertical and horizontal dashed arrows, respectively. The differential takes $Q^i \cdot 1 \cdot Q^j$ in the first infinite grid to the sum of $Q^{i+1} \cdot 1 \cdot Q^j$ and $Q^i \cdot 1 \cdot Q^{j+1}$ in the second. This is partially depicted by the solid arrows.  Right: the identity bimodule, spanned (over $R$) by an infinite sequence of generators $Q^i$, with the left and right $Q$-actions being the same. The quasi-isomorphism from left-to-right takes $Q^i \cdot 1 \cdot Q^j$ in the second infinite grid to $Q^{i+j}$, and takes every generator in the first grid to zero. This has a homotopy inverse (which is only left $R[Q]$-equivariant) that sends $Q^i$ to $Q^i \cdot 1$ in the second grid.}\label{fig:boreltensor2}
\end{figure}

To see that $\Br$ and $\Hh$ are equivalences of derived categories, let $X \in \bKom_{R[\tau]/(\tau^2 + 1)}$. Consider the quasi-isomorphism
\[
{}_{R[\tau]/(\tau^2+1)}\Id_{R[\tau]/(\tau^2+1)} \rightarrow {}_{R[\tau]/(\tau^2+1)}\Br_{R[Q]} \otimes {}_{R[Q]}\Hh_{\f[\tau]/(\tau^2+1)} 
\]
It is straightforward to check that ${}_{R[\tau]/(\tau^2+1)}\Br_{R[Q]} \otimes {}_{R[Q]}\Hh_{R[\tau]/(\tau^2+1)}$ is free as a left $R[\tau]/(\tau^2+1)$-complex. By Lemma~\ref{lem:qitohe}, the above quasi-isomorphism thus has a homotopy inverse, although this inverse is only equivariant with respect to the left $R[\tau]/(\tau^2+1)$-action. (Again, it is easy to check this explicitly; see Figure~\ref{fig:boreltensor}.) The natural map
\[
X = X \otimes {}_{R[\tau]/(\tau^2+1)}\Id_{R[\tau]/(\tau^2+1)} \rightarrow X \otimes {}_{R[\tau]/(\tau^2+1)}\Br_{R[Q]} \otimes {}_{R[Q]}\Hh_{R[\tau]/(\tau^2+1)}
\]
is thus a homotopy equivalence of $R$-complexes, and in particular is a quasi-isomorphism. Hence we have a quasi-isomorphism from $X$ to $(\Hh \circ \Br)(X)$ which is (by construction) equivariant with respect to the right $R[\tau]/(\tau^2+1)$-action, showing that $\Hh \circ \Br$ is the identity map on the derived category. The claim for $\Br \circ \Hh$ is analogous.
\end{proof}

\subsubsection{Tensor products}
We now define the tensor product of $R[\tau]/(\tau^2 + 1)$-complexes and combine this with Theorem~\ref{thm:derived-equivalence} to define the tensor product of Borel complexes.

\begin{defn}\label{def:tensor-product-tau}
Let $X_1$ and $X_2$ be two complexes in $\bKom_{R[\tau]/(\tau^2+1)}$. Define
\[
X_1 \otimes X_2 = X_1 \otimes_{R} X_2
\]
by taking the tensor product of $X_1$ and $X_2$ over $R$ and giving $X_1 \otimes_{R} X_2$ the diagonal action $\tau \otimes \tau$. In terms of $\omega = 1 + \tau$, note that the action of $\omega$ on the tensor product is given by $1 \otimes \omega + \omega \otimes 1 + \omega \otimes \omega$.
\end{defn}

\begin{defn}\label{def:tensor-product-borel}
  Let $Y_1$ and $Y_2$ be two complexes in $\bKom_{R[Q]}$. Define 
\[
Y_1 \otimes_{\Br} Y_2 = \Br(\Hh(Y_1) \otimes \Hh(Y_2)),
\]
where the tensor product on the right is the product in $\bKom_{R[\tau]/(\tau^2+1)}$ from Definition~\ref{def:tensor-product-tau}.
\end{defn}

It is straightforward to see that Definitions~\ref{def:tensor-product-tau} and \ref{def:tensor-product-borel} are well-defined as operations on the relevant derived categories. Indeed, suppose that $X_1$ and $X_2$ are quasi-isomorphic to $X_1'$ and $X_2'$ via maps $f_1$ and $f_2$, respectively. Since all of our complexes are free over $R$, we have that $X_1 \otimes_R X_2$ and $X_1' \otimes_R X_2'$ are quasi-isomorphic via $f_1 \otimes f_2$. It is evident that $f_1 \otimes f_2$ intertwines the diagonal action of $\tau$ on $X_1 \otimes X_2$ with that on $X_1' \otimes X_2'$ and is thus a map in $\bKom_{R[\tau]/(\tau^2 + 1)}$. This establishes the claim for Definition~\ref{def:tensor-product-tau}. Since $\Hh$ and $\Br$ send quasi-isomorphisms to quasi-isomorphisms, the claim for Definition~\ref{def:tensor-product-borel} follows. 

Re-phrasing Theorem~\ref{thm:connected-sum-tau} in the present language, we obtain:

\begin{thm}\label{thm:borel-tensor-koszul}
Let $D_1$ and $D_2$ be transvergent diagrams for $(L_1, \tau_1)$ and $(L_2, \tau_2)$. Then we have a homotopy equivalence
\[
\Kcrm_{Q}(D_1 \# D_2) \simeq \Kcrm_{Q}(D_1) \otimes_{\Br} \Kcrm_{Q}(D_2).
\]
\end{thm}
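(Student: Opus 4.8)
The plan is to combine the connected-sum isomorphism of Theorem~\ref{thm:connected-sum-tau} with the Koszul duality of Theorem~\ref{thm:derived-equivalence}, using the observation that the Borel complex of a transvergent diagram is literally the image of the functor $\Br$. Concretely, if we regard $\Kcrm(D)$ as an object of $\bKom_{R[\tau]/(\tau^2+1)}$ by promoting $\tau$ to a ring variable (with $R = \f[u]$), then the computation following the definition of $\Br$ shows that, under the identification $x \otimes (1 \otimes Q^i) \leftrightarrow x \otimes Q^i$, we have $\Br(\Kcrm(D)) = \Kcrm_Q(D)$ (cf.\ Definition~\ref{def:kcq}); the same holds for the diagram $D_1 \# D_2$.

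First I would apply Theorem~\ref{thm:connected-sum-tau}, which gives an isomorphism $\Kcrm(D_1 \# D_2) \cong \Kcrm(D_1) \otimes \Kcrm(D_2)$ in $\bKom_{R[\tau]/(\tau^2+1)}$, where the right-hand side carries the diagonal action $\tau_1 \otimes \tau_2$ and hence is the tensor product of Definition~\ref{def:tensor-product-tau}. Applying the functor $\Br$ to both sides and using the previous paragraph yields an isomorphism
\[
\Kcrm_Q(D_1 \# D_2) \;\cong\; \Br\bigl(\Kcrm(D_1) \otimes \Kcrm(D_2)\bigr).
\]
Next I would transport the right-hand side through Koszul duality. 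By Theorem~\ref{thm:derived-equivalence} there are quasi-isomorphisms $\Kcrm(D_i) \xrightarrow{\ \sim\ } \Hh(\Br(\Kcrm(D_i))) = \Hh(\Kcrm_Q(D_i))$ in $\bKom_{R[\tau]/(\tau^2+1)}$ for $i = 1, 2$. Since every complex involved is free over $R$ and bounded below, tensoring over $R$ preserves quasi-isomorphisms and respects the diagonal $\tau$-actions, so these induce a quasi-isomorphism $\Kcrm(D_1) \otimes \Kcrm(D_2) \xrightarrow{\ \sim\ } \Hh(\Kcrm_Q(D_1)) \otimes \Hh(\Kcrm_Q(D_2))$ in $\bKom_{R[\tau]/(\tau^2+1)}$. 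Applying $\Br$ (which sends quasi-isomorphisms to quasi-isomorphisms, by the proof of Theorem~\ref{thm:derived-equivalence}) and using Definition~\ref{def:tensor-product-borel}, this becomes a quasi-isomorphism
\[
\Br\bigl(\Kcrm(D_1) \otimes \Kcrm(D_2)\bigr) \;\xrightarrow{\ \sim\ }\; \Br\bigl(\Hh(\Kcrm_Q(D_1)) \otimes \Hh(\Kcrm_Q(D_2))\bigr) = \Kcrm_Q(D_1) \otimes_{\Br} \Kcrm_Q(D_2).
\]
Composing with the isomorphism above identifies $\Kcrm_Q(D_1 \# D_2)$ with $\Kcrm_Q(D_1) \otimes_{\Br} \Kcrm_Q(D_2)$ up to a single quasi-isomorphism of $R[Q]$-complexes.

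It remains to upgrade this quasi-isomorphism to a homotopy equivalence, which is where I expect the only genuine care to be needed. The issue is that $\Hh$ destroys finite generation: $\Hh(\Kcrm_Q(D_i))$ — and therefore $\Kcrm_Q(D_1) \otimes_{\Br} \Kcrm_Q(D_2)$ — is not finitely generated over $R$ and is unbounded above in homological grading, so the version of Lemma~\ref{lem:qitohe} quoted in the text does not apply verbatim. However, both $\Kcrm_Q(D_1 \# D_2)$ and $\Kcrm_Q(D_1) \otimes_{\Br} \Kcrm_Q(D_2)$ are complexes of free (hence projective) $R[Q]$-modules that are bounded below in homological grading, and a quasi-isomorphism between bounded-below complexes of projectives is automatically a homotopy equivalence (see \cite[Chapter 10]{Weibel_book}). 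This gives the desired equivalence $\Kcrm_Q(D_1 \# D_2) \simeq \Kcrm_Q(D_1) \otimes_{\Br} \Kcrm_Q(D_2)$. Beyond this point the main thing to double-check is bookkeeping: that each auxiliary map genuinely lies in the category $\bKom_{R[\tau]/(\tau^2+1)}$ or $\bKom_{R[Q]}$ it is claimed to — i.e.\ is equivariant for the correct module structure — so that the maps may legitimately be composed and pushed through $\Br$.
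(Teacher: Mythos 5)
Your proposal is correct and follows the paper's own argument essentially step for step: identify $\Kcrm_Q$ of each diagram as $\Br$ of the corresponding $R[\tau]/(\tau^2+1)$-complex, apply Theorem~\ref{thm:connected-sum-tau}, use the derived equivalence of Theorem~\ref{thm:derived-equivalence} to replace each factor by $\Hh(\Kcrm_Q(D_i))$, push the resulting quasi-isomorphism through $\Br$, and finally upgrade to a homotopy equivalence. The only difference is at the last step, where the paper simply invokes Lemma~\ref{lem:qitohe} while you correctly note that its stated hypotheses (bounded, finitely generated) fail for the $\otimes_{\Br}$ side and substitute a projectivity argument; your conclusion is valid (over $R[Q]=\f[u,Q]$, which has finite global dimension, any quasi-isomorphism between complexes of frees is a homotopy equivalence, since the acyclic cone of frees is contractible), though the blanket phrase ``bounded below complexes of projectives'' should be justified by finite global dimension rather than quoted as a general fact, given that here the differential increases the homological grading and the complexes are unbounded above.
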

\begin{proof}
Denote by $X_1$ and $X_2$ the complexes in $\smash{\bKom_{R[\tau]/(\tau^2 + 1)}}$ obtained from $(\Kcm(D_1), \tau_1)$ and $(\Kcm(D_2), \tau_2)$. Theorem~\ref{thm:connected-sum-tau} shows
\[
\Kcrm_{Q, W}(D_1 \# D_2) \cong \Br(X_1 \otimes X_2).
\]
Note that $\smash{\Br(X_i) = \Kcrm_{Q}(D_i)}$. Using the fact that $\Br$ and $\Hh$ induce equivalences of derived categories, it follows that $X_i$ is quasi-isomorphic to $\smash{\Hh(\Kcrm_{Q}(D_i))}$. Hence we have a quasi-isomorphism between
\[
\Kcrm_{Q}(D_1 \# D_2) \quad \text{and} \quad \Kcrm_{Q}(D_1) \otimes_{\Br} \Kcrm_{Q}(D_2).
\]
Since both of these complexes are free over $R$, invoking Lemma~\ref{lem:qitohe} completes the proof.
\end{proof}

Note that in fact the proof of Theorem~\ref{thm:borel-tensor-koszul} shows:

\begin{lem}\label{lem:simplifiedmodel}
Let $Y_1$ and $Y_2$ be complexes in $\bKom_{R[Q]}$. If $X_1$ and $X_2$ are any pair of complexes in $\bKom_{R[\tau]/(\tau^2 + 1)}$ such that $\Br(X_i)$ is quasi-isomorphic to $Y_i$, then $Y_1 \otimes_{\Br} Y_2$ is quasi-isomorphic to $\Br(X_1 \otimes X_2)$. 
\end{lem}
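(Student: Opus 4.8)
The plan is to leverage the fact that $\Br$ and $\Hh$ are inverse equivalences of derived categories, as established in Theorem~\ref{thm:derived-equivalence}, together with the unraveling of Definition~\ref{def:tensor-product-borel}. Recall that $Y_1 \otimes_{\Br} Y_2$ is \emph{defined} as $\Br(\Hh(Y_1) \otimes \Hh(Y_2))$. The goal is to replace $\Hh(Y_i)$ by the more convenient models $X_i$. The key observation is that the hypothesis ``$\Br(X_i)$ is quasi-isomorphic to $Y_i$'' can be promoted, via the derived equivalence, to a quasi-isomorphism between $X_i$ and $\Hh(Y_i)$ in $\bKom_{R[\tau]/(\tau^2+1)}$.

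First I would apply $\Hh$ to the given quasi-isomorphism $\Br(X_i) \simeq Y_i$. Since $\Hh$ sends quasi-isomorphisms to quasi-isomorphisms (shown in the proof of Theorem~\ref{thm:derived-equivalence} using that ${}_{R[Q]}\Hh_{R[\tau]/(\tau^2+1)}$ is free as a complex over $R[Q]$), this yields a quasi-isomorphism $\Hh(\Br(X_i)) \simeq \Hh(Y_i)$. Next, by the first quasi-isomorphism of bimodules in Theorem~\ref{thm:derived-equivalence} — or rather its consequence established in that proof, namely the natural quasi-isomorphism $X_i \to (\Hh \circ \Br)(X_i)$ — we obtain $X_i \simeq \Hh(\Br(X_i))$. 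Composing, we get a quasi-isomorphism $X_i \simeq \Hh(Y_i)$ in $\bKom_{R[\tau]/(\tau^2+1)}$ for $i = 1, 2$.

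Then I would take the tensor product of these two quasi-isomorphisms. Since all complexes in sight are free over $R$, the map $X_1 \otimes_R X_2 \to \Hh(Y_1) \otimes_R \Hh(Y_2)$ is a quasi-isomorphism, and (as noted in the discussion following Definition~\ref{def:tensor-product-borel}) it intertwines the diagonal $\tau$-actions, hence is a morphism in $\bKom_{R[\tau]/(\tau^2+1)}$. So $X_1 \otimes X_2$ is quasi-isomorphic to $\Hh(Y_1) \otimes \Hh(Y_2)$ in the appropriate derived category. Applying the functor $\Br$ — which again preserves quasi-isomorphisms — gives a quasi-isomorphism
\[
\Br(X_1 \otimes X_2) \simeq \Br(\Hh(Y_1) \otimes \Hh(Y_2)) = Y_1 \otimes_{\Br} Y_2,
\]
which is precisely the claim.

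I do not anticipate a serious obstacle here: the statement is essentially a formal consequence of the derived equivalence, and all the needed preservation-of-quasi-isomorphism facts for $\Br$ and $\Hh$ were already assembled in the proof of Theorem~\ref{thm:derived-equivalence}. The one point requiring a small amount of care is the direction of equivariance of the various quasi-isomorphisms (left versus right $R[\tau]/(\tau^2+1)$-module structures, as flagged in the proof of Theorem~\ref{thm:derived-equivalence}); one should check that the composite $X_i \simeq \Hh(Y_i)$ respects the module structure actually used to form the tensor product in Definition~\ref{def:tensor-product-tau}. But since the tensor product there is taken over $R$ — with the $\tau$-action reconstructed diagonally — only a quasi-isomorphism of underlying $R$-complexes together with $\tau$-equivariance is needed, and that is exactly what the argument produces.
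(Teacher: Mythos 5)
Your argument is correct and is essentially the paper's own proof, just written out in more detail: the paper likewise deduces $X_i \simeq \Hh(Y_i)$ from Theorem~\ref{thm:derived-equivalence} and then invokes the fact (established right after Definition~\ref{def:tensor-product-borel}) that the tensor products of Definitions~\ref{def:tensor-product-tau} and \ref{def:tensor-product-borel} respect quasi-isomorphism. Your extra care about $\tau$-equivariance and the preservation of quasi-isomorphisms by $\Br$ and $\Hh$ is exactly the content the paper compresses into that citation.
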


\begin{proof}
By Theorem~\ref{thm:derived-equivalence}, if $\Br(X_i)$ is quasi-isomorphic to $Y_i$, then $X_i$ is quasi-isomorphic to $\Hh(Y_i)$. The claim is then immediate from the fact that Definitions~\ref{def:tensor-product-tau} and \ref{def:tensor-product-borel}  respect quasi-isomorphism.
\end{proof}

\subsubsection{Local equivalence}
We close with a discussion relating $\Br$ and $\Hh$ to local equivalence. As usual, we say that a complex in $\bKom_{R[\tau]/(\tau^2 + 1)}$ is \textit{knotlike} if after inverting $u$, the resulting homology is isomorphic to $\f[u, u^{-1}]$ as an $\f[u]$-module. A map between knotlike $R[\tau]/(\tau^2 + 1)$-complexes is said to be \textit{local} if it induces an isomorphism on homology after inverting $u$. 
We note that in the present context, the notion of a local map differs from Definition~\ref{def:tau-complex}, where we consider maps homotopy commuting with $\tau$. The category $\bKom_{R[\tau]/(\tau^2+1)}$ consists of chain complexes, where both differentials and morphisms
commute with $\tau$ in the strict sense. 

\begin{lem}\label{lem:koszul-local}
The functors $\Br$ and $\Hh$ take local maps to local maps. 
\end{lem}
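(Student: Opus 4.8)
The plan is to reduce the statement to the fact, already used in the proof of Theorem~\ref{thm:derived-equivalence}, that $\Br$ and $\Hh$ preserve quasi-isomorphisms, by passing to the localization $\overline{R}=\f[u,u^{-1}]$. The key point is that $u$ is central, so $u^{-1}(-)=(-)\otimes_R\overline{R}$ commutes with the tensor products defining $\Br$ and $\Hh$; hence $u^{-1}\Br(X)=\overline{\Br}(u^{-1}X)$ and $u^{-1}\Hh(Y)=\overline{\Hh}(u^{-1}Y)$, where $\overline{\Br},\overline{\Hh}$ are the functors built in the same way from the localized bimodules $u^{-1}{}_{R[\tau]/(\tau^2+1)}\Br_{R[Q]}$ and $u^{-1}{}_{R[Q]}\Hh_{R[\tau]/(\tau^2+1)}$, and the same identity holds on morphisms. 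Since localization is exact, $u^{-1}H_*(-)=H_*(u^{-1}(-))$, so by definition a morphism in $\bKom_{R[\tau]/(\tau^2+1)}$ or $\bKom_{R[Q]}$ is local precisely when its $u$-localization is a quasi-isomorphism.

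The first step would be to observe that the localized bimodules are still complexes of free modules over $\overline{R}[\tau]/(\tau^2+1)$ and $\overline{R}[Q]$ respectively — this is visible directly in Figure~\ref{fig:bimodules} and is the localized form of the observation in the proof of Theorem~\ref{thm:derived-equivalence} — so that tensoring with them preserves quasi-isomorphisms. Then, if $f$ is a local map, $u^{-1}f$ is a quasi-isomorphism, hence so is $\overline{\Br}(u^{-1}f)=u^{-1}\Br(f)$; thus $\Br(f)$ induces an isomorphism on $u^{-1}H_*$. The identical argument with $\overline{\Hh}$ handles a local map $g$ in $\bKom_{R[Q]}$, showing $\Hh(g)$ induces an isomorphism on $u^{-1}H_*$. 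I would also note that $\Br(f)=f\otimes\id$ and $\Hh(g)=g\otimes\id$ are module maps over $R[Q]$ and $R[\tau]/(\tau^2+1)$ respectively and inherit the grading shift of $f$, $g$, so they are genuine local maps in the sense of Definition~\ref{def:knotlike-borel} and of the definition preceding the lemma.

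The remaining point — needed so that "local map" even makes sense for the images — is that $\Br$ and $\Hh$ carry knotlike complexes to knotlike complexes. For $\Br$ this is immediate: $\Br(X)$ is exactly the Borel complex of the knotlike complex $(X,\tau)$ (by the discussion following Definition~\ref{def:bimodule}), hence is a knotlike Borel complex by the remark in Definition~\ref{def:knotlike-borel}. For $\Hh$ I would argue: if $Y$ is a knotlike Borel complex then $u^{-1}H_*(Y)\cong\f[u,u^{-1},Q]$ is free of rank one over $\overline{R}[Q]$, so choosing a cycle $z\in u^{-1}Y$ whose class generates it gives an $\overline{R}[Q]$-linear chain map $\overline{R}[Q]\to u^{-1}Y$ which is the identity on homology, hence a quasi-isomorphism; applying $\overline{\Hh}$ and using step one reduces the computation of $u^{-1}H_*(\Hh(Y))$ to computing the homology of $\overline{\Hh}(\overline{R}[Q])={}_{\overline{R}[Q]}\Hh_{\overline{R}[\tau]/(\tau^2+1)}$, which by a direct inspection of Figure~\ref{fig:bimodules} is $\f[u,u^{-1}]$ concentrated in homological degree zero. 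Hence $\Hh(Y)$ is knotlike.

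I do not anticipate a serious obstacle: the bimodules are designed so that Koszul duality interacts well with inverting $u$. The only place requiring slight care is the "knotlike implies knotlike" verification for $\Hh$, since — unlike $\Br(X)$ — the complex $\Hh(Y)$ is not finitely generated over $R$, so one cannot invoke a finiteness argument; the clean route, carried out above, is to first replace $u^{-1}Y$ by $\overline{R}[Q]$ up to quasi-isomorphism (using that its homology is free of rank one over $\overline{R}[Q]$) and only then compute.
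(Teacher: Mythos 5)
Your proposal is correct and follows essentially the same route as the paper: the paper's proof simply notes that all the results of the section (in particular that $\Br$ and $\Hh$ preserve quasi-isomorphisms) hold with $R=\f[u]$ replaced by $R=\f[u,u^{-1}]$, which is exactly your localization argument. Your extra verification that $\Br$ and $\Hh$ carry knotlike complexes to knotlike complexes is a reasonable piece of added care, but it does not change the method.
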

\begin{proof}
The results of this section hold equally well replacing $R = \f[u]$ with $R = \f[u, u^{-1}]$. In particular, the functors $\Br$ and $\Hh$ still take quasi-isomorphisms to quasi-isomorphisms after setting $R = \f[u, u^{-1}]$, which implies the claim.
\end{proof}

\begin{lem}\label{lem:local-tensor-product}
Suppose there exist local maps $f \colon Y_1 \rightarrow Y_1'$ and $g \colon Y_2 \rightarrow Y_2'$ in $\bKom_{R[Q]}$. Then there exists a local map $Y_1 \otimes Y_1' \rightarrow Y_2 \otimes Y_2'$ of grading shift $\gr(f) + \gr(g)$.
\end{lem}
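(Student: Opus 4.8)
The plan is to obtain the map by transporting $f$ and $g$ through the Koszul-duality functors, mirroring the proof of Theorem~\ref{thm:borel-tensor-koszul}. (Here the intended conclusion is the existence of a local map $Y_1 \otimes_{\Br} Y_2 \rightarrow Y_1' \otimes_{\Br} Y_2'$ of grading shift $\gr(f) + \gr(g)$.) First I would apply $\Hh$ to obtain maps $\Hh(f) \colon \Hh(Y_1) \rightarrow \Hh(Y_1')$ and $\Hh(g) \colon \Hh(Y_2) \rightarrow \Hh(Y_2')$ in $\bKom_{R[\tau]/(\tau^2 + 1)}$. By Lemma~\ref{lem:koszul-local} these are local maps, and since $\Hh$ is defined by tensoring with a bimodule concentrated in a single bidegree, the grading shifts of $\Hh(f)$ and $\Hh(g)$ equal those of $f$ and $g$. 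I would also note that $\Hh(Y_i)$ and $\Hh(Y_i')$ are knotlike $R[\tau]/(\tau^2+1)$-complexes: this follows from the $\f[u,u^{-1}]$-coefficient version of Theorem~\ref{thm:derived-equivalence} together with $\Br(\Hh(Y_i)) \simeq Y_i$, or alternatively from a direct Künneth computation with the bimodule of Definition~\ref{def:bimodule}.

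Next I would form the external tensor product $\Hh(f) \otimes \Hh(g) \colon \Hh(Y_1) \otimes \Hh(Y_2) \rightarrow \Hh(Y_1') \otimes \Hh(Y_2')$ over $R$, using the product of $R[\tau]/(\tau^2+1)$-complexes from Definition~\ref{def:tensor-product-tau}. The crucial claim is that the external tensor product of two local maps of knotlike $R[\tau]/(\tau^2+1)$-complexes is again local, with grading shift the sum of the two. To verify this I would invert $u$: localization commutes with $\otimes_R$, so $u^{-1}(X_1 \otimes_R X_2) \cong (u^{-1}X_1) \otimes_{u^{-1}R} (u^{-1}X_2)$, and $u^{-1}R = \f[u,u^{-1}]$ is a PID over which each $H_*(u^{-1}X_i) \cong \f[u,u^{-1}]$ is free. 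Hence all $\Tor$ terms in the Künneth formula vanish, $H_*(u^{-1}(X_1 \otimes_R X_2)) \cong H_*(u^{-1}X_1) \otimes_{\f[u,u^{-1}]} H_*(u^{-1}X_2) \cong \f[u,u^{-1}]$, and the map induced by $\Hh(f) \otimes \Hh(g)$ is the tensor product of two isomorphisms, hence an isomorphism.

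Finally I would apply $\Br$ to $\Hh(f) \otimes \Hh(g)$. By Lemma~\ref{lem:koszul-local} the result is a local map, and by Definition~\ref{def:tensor-product-borel} its source and target are $\Br(\Hh(Y_1) \otimes \Hh(Y_2)) = Y_1 \otimes_{\Br} Y_2$ and $\Br(\Hh(Y_1') \otimes \Hh(Y_2')) = Y_1' \otimes_{\Br} Y_2'$. Since $\Br$ is again given by tensoring with a bimodule in a fixed bidegree, the grading shift is preserved, yielding the desired local map of grading shift $\gr(f) + \gr(g)$.

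The step I expect to be the main obstacle is the middle one: checking that the external tensor product of local maps is local. The content there is not formal — one must use that the $u^{-1}$-localized homologies are \emph{free} of rank one, so that the Künneth $\Tor$ contributions disappear and the induced map on $u^{-1}$-homology is an honest isomorphism (rather than merely injective or surjective). The remaining grading bookkeeping through $\Hh$, $\otimes_R$, and $\Br$ is routine once one observes that each of these is built by tensoring with complexes supported in fixed bidegrees.
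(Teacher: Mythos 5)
Your proposal is correct and follows essentially the same route as the paper: apply $\Hh$ (local by Lemma~\ref{lem:koszul-local}), take the tensor product of Definition~\ref{def:tensor-product-tau}, and apply $\Br$ (local again by Lemma~\ref{lem:koszul-local}). The only difference is that where the paper simply asserts that the tensor product preserves locality, you justify it with the K\"unneth argument over $\f[u,u^{-1}]$, and you correctly read the statement's conclusion as a map $Y_1 \otimes_{\Br} Y_2 \rightarrow Y_1' \otimes_{\Br} Y_2'$.
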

\begin{proof}
Since $\Hh$ sends local maps to local maps, we obtain local maps $\Hh(Y_1) \rightarrow \Hh(Y_1')$ and $\Hh(Y_2) \rightarrow \Hh(Y_2')$. It is clear that the tensor product operation of Definition~\ref{def:tensor-product-tau} preserves locality. The claim then follows from the fact that $\Br$ sends local maps to local maps.
\end{proof}

\subsection{Equivariant genus}\label{subsec:bounding-strict}
We now prove Theorem~\ref{thm:kyle_knot_intro}. This will be based on the following fundamental algebraic calculation. In Lemma~\ref{lem:partial-computation-J} we constructed a local map from the complex $C_Q$ of Example~\ref{ex:borel-ex4} -- whose underlying complex $(C, \tau)$ we display again in Figure~\ref{fig:tensor-calculation} -- into $\Kcrm_Q(J)$. Recall that $C_Q$ has a nontrivial $\wt{s}_Q$-invariant; namely, $\wt{s}_{Q, 1, 2}(C_Q) = 2$. The first order of business will be to establish a similar calculation for the $m$-fold tensor product  $C_Q^m = \otimes_m C_Q$. Note that by Lemma~\ref{lem:simplifiedmodel}, we may calculate the quasi-isomorphism class of $C_Q^m$ by taking the self-tensor product of $(C, \tau)$ (in the sense of Definition~\ref{def:tensor-product-tau}) and applying the Borel construction.


\begin{figure}[h!]
\includegraphics[scale = 1]{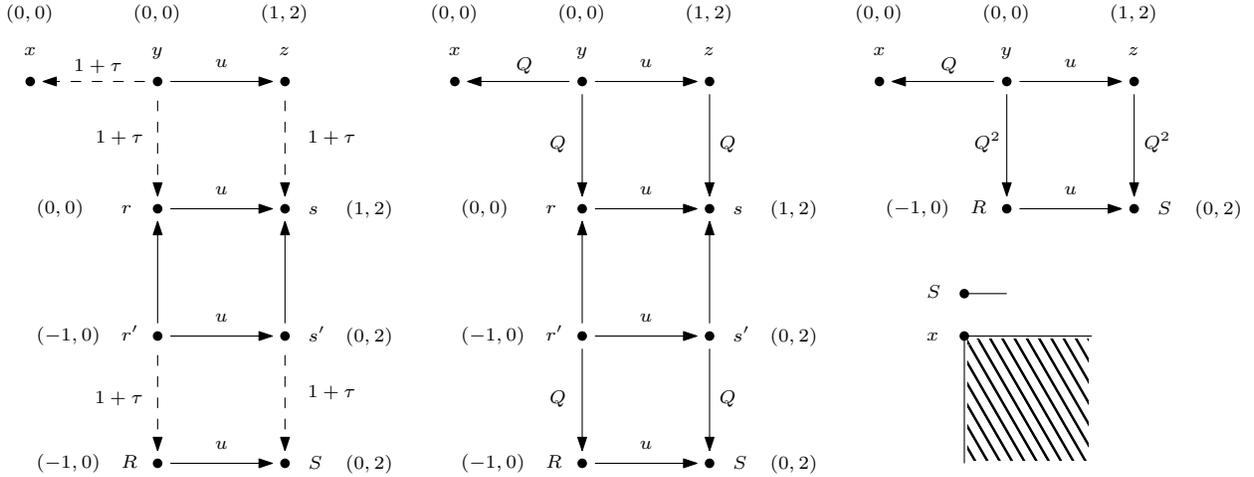}
\caption{Left: $(C, \tau)$. Middle: $C_Q$. Upper right: a simplified model for $C_Q$; by abuse of notation, we label these generators also by $x$, $y$, $z$, $R$, and $S$. Lower right: $H_*(C_Q) \cong \f[u, Q]_{(0,0)} \oplus \f[u, Q]/(u, Q^2)_{(0,2)}$.}\label{fig:tensor-calculation}
\end{figure}

\begin{lem}\label{lem:technical-calculation}
For all $m \geq 1$, 
\[
x^m = x \otimes \cdots \otimes x \in C_Q^m
\]
is a cycle in bigrading $(0, 0)$ with $u$-nontorsion homology class. Moreover, $Q^m x^m$ is homologous to a cycle of the form
\[
\sum_{i = 0}^{m} u^{m - i} Q^{2i} x_i.
\]
\end{lem}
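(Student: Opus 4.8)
\noindent
The plan is to induct on $m$, working in the model $C_Q^m \simeq \Br(C^{\otimes m})$ supplied by Lemma~\ref{lem:simplifiedmodel}, where $(C,\tau)$ is the honest $\tau$-complex of Example~\ref{ex:borel-ex4} (Figure~\ref{fig:borel-ex4}) and $C^{\otimes m}$ is the $m$-fold self-tensor of Definition~\ref{def:tensor-product-tau}, carrying the involution $\tau^{\otimes m}$. Write $\omega = 1+\tau$ on $C$, $\omega_m = 1+\tau^{\otimes m}$ on $C^{\otimes m}$, and record the structural facts used throughout: $\omega^2 = \omega_m^2 = 0$ over $\f$ (since $\tau^2 = 1$), and $\omega_m$ commutes with the Leibniz differential on $C^{\otimes m}$. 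From Figure~\ref{fig:borel-ex4} (equivalently, from the generators appearing in the simplified model of $\Kcrm_Q(J)$ in Figure~\ref{fig:borel-kyle}) one reads off a $\tau$-invariant cycle $x\in C$ of bigrading $(0,0)$, together with $\tau$-invariant elements $z,R\in C$ of bigradings $(1,2)$ and $(-1,0)$ and an element $\tilde y$ with $\partial_Q\tilde y = Qx + uz + Q^2R$; in particular $\partial_Q x = 0$, and $\eta := uz+Q^2R = Qx + \partial_Q\tilde y$ is a $\tau$-invariant cycle homologous to $Qx$. This already handles $m=1$: the cycle $x^1=x$ has bigrading $(0,0)$ and is $u$-nontorsion (its reduction mod $Q$ is the Bar-Natan generator of $C$, which survives the spectral sequence, and comparing bigradings forces $\iota_*[x]=1$), while $Q^1x^1 = Qx$ is homologous to $\eta = u^1Q^0z + u^0Q^2R$, a cycle of the required form with $x_0=z$, $x_1=R$.

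\noindent
For the inductive step, suppose $Q^mx^m = \xi + \partial_Q w$ in $\Br(C^{\otimes m})$ with $\xi = \sum_{i=0}^m u^{m-i}Q^{2i}x_i^{(m)}$ a cycle of bigrading $(m,0)$ and $x^m$ a $u$-nontorsion cycle of bigrading $(0,0)$. Write $C^{\otimes(m+1)} = C^{\otimes m}\otimes C$; Lemma~\ref{lem:modified-product} gives the Borel differential on the tensor as $\partial_Q(a\otimes b) = \partial_Q a\otimes b + a\otimes\partial_Q b + Q\,\omega_m(a)\otimes\omega(b)$. Applied to $x^{m+1}:=x^m\otimes x$, this shows $x^{m+1}$ is a cycle (the three terms vanish by induction, by $\partial_Q x=0$, and by $\omega(x)=0$) of bigrading $(0,0)$, and $x^{m+1}$ is $u$-nontorsion by the K\"unneth theorem over the PID $\f[u]$ applied mod $Q$, together with the bigrading argument from the base case.

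\noindent
Now the homology computation in $\Br(C^{\otimes(m+1)})$. Since $\partial_Q(Qx)=0$ and $\omega(Qx)=0$, one has $(\partial_Q w)\otimes(Qx) = \partial_Q\!\big(w\otimes(Qx)\big)$, so
\[
Q^{m+1}x^{m+1} = (Q^mx^m)\otimes(Qx) = \xi\otimes(Qx) + \partial_Q\!\big(w\otimes(Qx)\big) \ \sim\ \xi\otimes(Qx).
\]
Substituting $Qx = \eta + \partial_Q\tilde y$ and using that $\xi$ is a cycle to rewrite $\xi\otimes\partial_Q\tilde y = \partial_Q(\xi\otimes\tilde y) + Q\,\omega_m(\xi)\otimes\omega(\tilde y)$ gives
\[
Q^{m+1}x^{m+1} \ \sim\ \xi\otimes\eta + Q\,\omega_m(\xi)\otimes\omega(\tilde y).
\]
Expanding $\xi\otimes\eta = \sum_{i=0}^m u^{m-i}Q^{2i}\,x_i^{(m)}\otimes(uz+Q^2R)$ and re-indexing exhibits the first summand as $\sum_{i=0}^{m+1}u^{m+1-i}Q^{2i}x_i^{(m+1)}$, with $x_0^{(m+1)} = x_0^{(m)}\otimes z$, $x_i^{(m+1)} = x_i^{(m)}\otimes z + x_{i-1}^{(m)}\otimes R$ for $1\le i\le m$, and $x_{m+1}^{(m+1)} = x_m^{(m)}\otimes R$; a grading check confirms each summand has bigrading $(m+1,0)$.

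\noindent
The crux, and the step I expect to require the most care, is showing that the error term $Q\,\omega_m(\xi)\otimes\omega(\tilde y)$ is a boundary. There are two key points. First, $\omega_m(\xi)$ is a $\partial_Q$-boundary: from $\xi = Q^mx^m + \partial_Q w$, $\omega_m(x^m)=0$ (as $x^m$ is $\tau^{\otimes m}$-invariant), and $\omega_m^2=0$ with $\omega_m$ commuting with the Leibniz differential, we get $\omega_m(\partial_Q w) = \partial_Q(\omega_m w)$, hence $\omega_m(\xi) = \partial_Q(\omega_m w)$. Second, $\omega(\tilde y)$ is a $\partial_Q$-cycle: $\omega$ is a chain map for the Borel differential (since $\tau$ is a chain map and $\omega^2=0$), so $\partial_Q(\omega\tilde y) = \omega(\partial_Q\tilde y) = \omega(Qx+uz+Q^2R) = 0$, using the $\tau$-invariance of $x$, $z$, $R$. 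Granting these, the tensor differential formula gives $\partial_Q\!\big((\omega_m w)\otimes\omega(\tilde y)\big) = \partial_Q(\omega_m w)\otimes\omega(\tilde y) = \omega_m(\xi)\otimes\omega(\tilde y)$ (the cross term dies by the second point, the $Q$-term by $\omega_m^2=0$), whence $Q\,\omega_m(\xi)\otimes\omega(\tilde y) = \partial_Q\!\big(Q(\omega_m w)\otimes\omega(\tilde y)\big)$, completing the induction. Apart from this manipulation, the argument is bookkeeping with gradings and Lemma~\ref{lem:modified-product}; the only auxiliary input needing real checking is that $x$, $z$, $R$ may be taken $\tau$-invariant in Figure~\ref{fig:borel-ex4}, which is exactly what makes $\omega(\tilde y)$ a cycle in the second point.
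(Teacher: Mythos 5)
Your inductive mechanism (killing the error term via $\omega_m(\xi)=\partial_Q(\omega_m w)$ and $\omega(\tilde y)$ being a cycle) is sound algebra, but the input it rests on is false for the complex of Example~\ref{ex:borel-ex4}, and you yourself flag it as the one thing needing checking. In the honest nine-generator $\tau$-complex $(C,\tau)$ -- the only model in which Lemma~\ref{lem:modified-product} and your $\omega$-manipulations are available -- there is \emph{no} element $\tilde y$ with $\partial_Q\tilde y = Qx+uz+Q^2R$ for $\tau$-invariant $z,R$. What actually holds is $\partial_Q(y+Qr')=Qx+uz+uQs'+Q^2R$, and the extra $uQs'$ cannot be removed: if $\partial_Q\tilde y=Qx+u\zeta+Q^2\rho$ with $\zeta,\rho\in C$, then comparing powers of $Q$ in $\partial_Q^2=0$ forces $\zeta,\rho$ to be $\tau$-invariant cycles, hence in homology $Q[x]=u[\zeta]+Q^2[\rho]$; but $H_*(C_Q)\cong \f[u,Q]_{(0,0)}\oplus \f[u,Q]/(u,Q^2)_{(0,2)}$ (caption of Figure~\ref{fig:borel-ex4}) has its $(1,2)$-graded part annihilated by $u$ and its $(-1,0)$-graded part zero, while $Q[x]\neq 0$ since $[x]$ is $u$-nontorsion. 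Concretely, expanding $\partial_Q\bigl(u(z+Qs')+Q^2R\bigr)=0$ in powers of $Q$ forces $(1+\tau)z=s\neq 0$, so $z$ is not $\tau$-invariant and $\omega(\tilde y)$ is not a cycle for the actual bounding chain; your step killing $Q\,\omega_m(\xi)\otimes\omega(\tilde y)$ therefore collapses. The clean identity $\partial_Q y = uz+Qx+Q^2R$ that you ``read off'' does hold in the simplified five-generator model of $C_Q$ (and in Figure~\ref{fig:borel-kyle}), but that model's differential has a genuine $Q^2$-component and so does not arise from an honest involution on those five generators; you cannot simultaneously use it and the tensor-product formula of Lemma~\ref{lem:modified-product}, which requires the involution.

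This is exactly the wrinkle the paper's proof is built around: at each inductive stage it replaces the naive bounding chain $\zeta\otimes(y+Qr')$ by $\zeta\otimes(y+Qr')+(1+\tau)\zeta\otimes y$, absorbing the failure of $\tau$-invariance of $y$ and $r'$, at the cost of producing representatives such as $u(z+Qs')+Q^2R$ whose ``$x_i$'' involve $Q$. That is why the lemma only asserts the shape $\sum_i u^{m-i}Q^{2i}x_i$ with unspecified $x_i$, and why downstream one works with the truncated invariants $\wt{s}_{Q,m,m+1}$. To repair your argument you would need either to carry the $uQs'$-type correction terms through the induction (which essentially reproduces the paper's computation), or to verify an $\omega$-equivariance statement for the actual nullhomotopy -- e.g.\ that $\omega(y+Qr')=x+r+QR$ interacts with $\omega_m(\xi)$ in a controlled way -- neither of which follows from what you have written.
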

\begin{proof}
We first verify that $x^m$ is a cycle. This is clear for $m = 1$; we proceed by induction. Indeed, more generally, suppose that $\zeta \in C_Q^{m-1}$ is a cycle and consider $\zeta \otimes x$. Following Lemma~\ref{lem:modified-product} and suppressing subscripts, we have
\begin{align*}
\partial_Q(\zeta \otimes x ) = (\partial_Q \zeta) \otimes x + \zeta \otimes (\partial_Q x) + Q (1 + \tau)\zeta \otimes (1 + \tau)x.
\end{align*}
Since $\partial_Q \zeta = 0$ by assumption, $\partial_Q x = 0$ by inspection, and $(1 + \tau)x = 0$ by inspection, this shows $\zeta \otimes x$ is a cycle. The fact that $x^m$ has $u$-nontorsion class is clear since this is obviously true after setting $Q = 0$. 

To prove the second claim, we also proceed by induction. For the case $m = 1$, we observe that
\[
\partial_Q(y + Qr') = uz + Qx + uQs' + Q^2R.
\]
and hence $Qx$ is homologous to the cycle
\[
u(z + Qs') + Q^2R
\]
as claimed. For the inductive step, consider in general the structure of $\zeta \otimes Qx$ for a cycle $\zeta$ of $C_Q^{m-1}$. We observe the following algebraic identities:
\begin{align*}
\partial_Q(\zeta \otimes (y + Qr')) & = (\partial_Q \zeta) \otimes (y + Qr') + \zeta \otimes \partial_Q(y + Qr') + Q (1 + \tau)\zeta \otimes (1 + \tau)(y + Qr') \\
& = 0 + \zeta \otimes \partial_Q(y + Qr') + Q (1 + \tau)\zeta \otimes (1 + \tau)(y + Qr') \\
& = \zeta \otimes (uz + Qx + uQs' + Q^2R) + Q(1 + \tau)\zeta \otimes (x + r + QR).
\end{align*}
and
\begin{align*}
\partial_Q((1 + \tau)\zeta \otimes y) & = \partial_Q((1 + \tau)\zeta) \otimes y + (1 + \tau)\zeta \otimes \partial_Q y + Q (1 + \tau)^2 \zeta \otimes (1 + \tau)y \\
& = 0 + (1 + \tau)\zeta \otimes \partial_Q y + 0 \\
&=  (1 + \tau)\zeta \otimes (uz + Qx + Qr).
\end{align*}
Summing these together and simplifying, we see that the boundary of $\zeta \otimes (y + Qr') + (1 + \tau)\zeta \otimes y$ is thus given by
\[
\zeta \otimes (uz + Qx + uQs' + Q^2R) + Q(1 + \tau) \zeta \otimes QR + (1 + \tau)\zeta \otimes uz.
\]
Rearranging, this means that $\zeta \otimes Qx$ is homologous to the cycle
\[
u( \zeta \otimes z + (1 + \tau)\zeta \otimes z  + Q \zeta \otimes s' ) + Q^2(\zeta \otimes R + (1 + \tau)\zeta \otimes R).
\]
Letting $\zeta = (Qx)^{m-1}$ and substituting the inductive hypothesis into the above expression gives the claim. 
\end{proof}

It follows immediately that $C_Q^m$ has a $\wt{s}_Q$-invariant that grows with $m$:

\begin{lem}\label{lem:s-tensor-bound}
For all $m \geq 1$, we have $\wt{s}_{Q, m, m+1}(C_Q^m) \geq 2 \lceil m/2 \rceil$.
\end{lem}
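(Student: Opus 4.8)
The plan is to work inside the truncated complex $C_Q^m/Q^{m+1}$ and to extract the desired $u$-nontorsion class directly from the formula supplied by Lemma~\ref{lem:technical-calculation}. First I would recall from that lemma that $x^m$ is a cycle of bigrading $(0,0)$ with $u$-nontorsion homology class, and that $Q^m x^m$ is homologous in $C_Q^m$ (hence also in the quotient $C_Q^m/Q^{m+1}$) to a cycle of the form $\sum_{i=0}^m u^{m-i}Q^{2i}x_i$. Since $\gr(x)=(0,0)$ and $\gr(Q)=(1,0)$, $\gr(u)=(0,-2)$, the cycle $Q^m x^m$ has bigrading $(m,0)$, and hence each $x_i$ lies in bigrading $(m-2i,\,2(m-i))$; this is all the grading bookkeeping the argument requires.

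Next I would pass to $C_Q^m/Q^{m+1}$, where every term with $2i\geq m+1$ (equivalently $i>m/2$) vanishes. Thus in $H_*(C_Q^m/Q^{m+1})$ we have $[Q^m x^m]=[\zeta_m]$ with $\zeta_m=\sum_{i=0}^{\lfloor m/2\rfloor} u^{m-i}Q^{2i}x_i$. The key observation is that among the surviving terms the smallest power of $u$ occurring is $u^{m-\lfloor m/2\rfloor}=u^{\lceil m/2\rceil}$, appearing in the $i=\lfloor m/2\rfloor$ term. Hence one may factor $\zeta_m = u^{\lceil m/2\rceil}\eta_m$, where $\eta_m=\sum_{i=0}^{\lfloor m/2\rfloor} u^{\lfloor m/2\rfloor - i}Q^{2i}x_i$ is a well-defined element of $C_Q^m/Q^{m+1}$ (each exponent $\lfloor m/2\rfloor - i$ is $\geq 0$). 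Because $C_Q^m/Q^{m+1}$ is free, hence $u$-torsion-free, over $\f[u]$ and $\zeta_m$ is a cycle, $\eta_m$ is a cycle as well; and its bigrading is $(m,0)-\lceil m/2\rceil(0,-2)=(m,\,2\lceil m/2\rceil)$.

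Finally I would identify the localization class of $[\eta_m]$. Since $C_Q^m$ is a knotlike Borel complex (an iterated $\otimes_{\Br}$ of $C_Q$), the truncation statement preceding Definition~\ref{def:strong-s-general} gives $u^{-1}H_*(C_Q^m/Q^{m+1})\cong\f[u,u^{-1},Q]/Q^{m+1}$, and $\iota_*[x^m]=1$ since $1$ is the unique $u$-nontorsion generator in bigrading $(0,0)$. Therefore $\iota_*[Q^m x^m]=Q^m$, and from $u^{\lceil m/2\rceil}\iota_*[\eta_m]=\iota_*[Q^m x^m]=Q^m$ we obtain $\iota_*[\eta_m]=u^{-\lceil m/2\rceil}Q^m$, which is nonzero modulo $Q^{m+1}$. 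Thus $[\eta_m]$ is a $u$-nontorsion class in bigrading $(m,\,2\lceil m/2\rceil)$ with $\iota_*[\eta_m]=u^{-(2\lceil m/2\rceil)/2}Q^m$, so by Definition~\ref{def:strong-s-general} we conclude $\wt{s}_{Q,m,m+1}(C_Q^m)\geq 2\lceil m/2\rceil$.

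I do not anticipate a genuine obstacle here: the substantive input is entirely contained in Lemma~\ref{lem:technical-calculation}, and the remaining work is the elementary observation that truncating at $Q^{m+1}$ allows one to divide the homologous representative of $Q^m x^m$ by exactly $u^{\lceil m/2\rceil}$. The only point demanding (minor) care is checking that $C_Q^m/Q^{m+1}$ localizes to $\f[u,u^{-1},Q]/Q^{m+1}$, so that $\iota_*$ is defined and $Q^m\neq 0$ in the target; this is exactly the content of the truncation remark recalled above, applied to the knotlike Borel complex $C_Q^m$.
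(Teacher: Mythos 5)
Your argument is correct and follows the paper's proof essentially verbatim: both take the representative of $Q^m x^m$ from Lemma~\ref{lem:technical-calculation}, truncate at $Q^{m+1}$ to kill the terms with $2i > m$, and observe that the surviving cycle is divisible by exactly $u^{\lceil m/2 \rceil}$, yielding a $u$-nontorsion class detected by $\iota_*$ in quantum grading $2\lceil m/2\rceil$. Your extra care in checking that $u^{-1}H_*(C_Q^m/Q^{m+1}) \cong \f[u,u^{-1},Q]/Q^{m+1}$ and that $\iota_*[Q^m x^m] = Q^m \neq 0$ there just makes explicit what the paper leaves implicit.
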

\begin{proof}
Since the $\wt{s}_Q$-invariants are preserved under quasi-isomorphism, without loss of generality we may use Lemma~\ref{lem:technical-calculation} to compute $\wt{s}_Q$. As we have seen, $Q^m x^m$ has $u$-nontorsion class and is homologous to a chain of the form
\[
\sum_{i = 0}^{m} u^{m - i} Q^{2i} x_i.
\]
Modulo $Q^{m+1}$, the above expression reduces to
\[
\sum_{i = 0}^{\lfloor m/2 \rfloor} u^{m - i} Q^{2i} x_i.
\]
The least power of $u$ appearing in this chain is $m - \lfloor m/2 \rfloor = \lceil m/2 \rceil$. Thus, modulo $Q^{m+1}$, the cycle $Q^m x^m$ is actually in the image of $u^{\lceil m/2 \rceil}$. It follows that $\wt{s}_{Q, m, m+1}(C_Q^m) \geq 2 \lceil m/2 \rceil$.
\end{proof}

We now use Lemma~\ref{lem:s-tensor-bound} to bound the equivariant genus of $\#_mJ$.

\begin{lem}\label{lem:eq-gen-lower-bound}
For all $m \geq 1$, we have $\eg(\#_mJ) \geq \lceil m/2 \rceil$.
\end{lem}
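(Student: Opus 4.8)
The plan is to assemble results already in hand: the partial lower bound on $\Kcrm_Q(J)$ from Lemma~\ref{lem:partial-computation-J}, the connected sum formula (Theorem~\ref{thm:borel-tensor-koszul}, equivalently Theorem~\ref{thm:connected-sum-tau}), the algebraic calculation of Lemma~\ref{lem:s-tensor-bound}, and the equivariant genus bound coming from the $\wt{s}_{Q,A,B}$-invariants (Theorem~\ref{thm:equivariant_s_borel_intro}). The whole argument is formal once these inputs are granted; essentially all of the real content has been extracted in Lemmas~\ref{lem:partial-computation-J} and \ref{lem:technical-calculation}.

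First I would produce a local map of grading shift zero from $C_Q^m$ into $\Kcrm_Q(\#_m J)$. Starting from the local map $C_Q \to \Kcrm_Q(J)$ of grading shift zero given by Lemma~\ref{lem:partial-computation-J}, I would argue by induction on $m$: writing $\#_m J = (\#_{m-1} J) \# J$, Theorem~\ref{thm:borel-tensor-koszul} gives a homotopy equivalence $\Kcrm_Q(\#_m J) \simeq \Kcrm_Q(\#_{m-1} J) \otimes_{\Br} \Kcrm_Q(J)$ (the ambiguity in how the equivariant connected sum is formed is immaterial by the last sentence of Theorem~\ref{thm:connected-sum-tau}), while Lemma~\ref{lem:local-tensor-product} lets me tensor the inductive local map $C_Q^{m-1} \to \Kcrm_Q(\#_{m-1} J)$ with $C_Q \to \Kcrm_Q(J)$ to obtain a local map of grading shift zero
\[
C_Q^{m} = C_Q^{m-1} \otimes_{\Br} C_Q \longrightarrow \Kcrm_Q(\#_{m-1}J) \otimes_{\Br} \Kcrm_Q(J) \simeq \Kcrm_Q(\#_m J).
\]
(Here one uses that $\otimes_{\Br}$ is associative up to quasi-isomorphism, which is immediate from Lemma~\ref{lem:simplifiedmodel} and associativity of $\otimes_R$, so that the iterated $\otimes_{\Br}$-product $C_Q^m$ is unambiguous.) Since a local map of grading shift zero gives $\wt{s}_{Q,m,m+1}(C_Q^m) \leq \wt{s}_{Q,m,m+1}(\#_m J)$, combining with Lemma~\ref{lem:s-tensor-bound} yields $\wt{s}_{Q,m,m+1}(\#_m J) \geq 2\lceil m/2\rceil$.

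Finally I would convert this into a genus bound. Given an equivariant slice surface $\Sigma$ for $\#_m J$ of genus $g$, puncturing $\Sigma$ at a point of the fixed axis (such a point exists since $\partial\Sigma = \#_m J$ meets $\Fix(\tau)\cap S^3$, and one punctures inside an equivariant collar) produces an equivariant cobordism from $\#_m J$ to the unknot $U$ of genus $g$, exactly as in the proof of Theorem~\ref{thm:s-bound}. Since $\Kcrm_Q(U) \cong \f[u,Q]$ with trivial differential and generator in bigrading $(0,0)$, the only $u$-nontorsion classes of $H_*(\Kcrm_Q(U)/Q^{m+1})$ in homological grading $m$ sit in quantum grading $\leq 0$, with $Q^m$ realizing $0$; hence $\wt{s}_{Q,m,m+1}(U)=0$. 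Applying Theorem~\ref{thm:equivariant_s_borel_intro} to this cobordism gives $\wt{s}_{Q,m,m+1}(\#_m J) - 2g \leq \wt{s}_{Q,m,m+1}(U) = 0$, so $g \geq \lceil m/2\rceil$; taking the minimum over all equivariant slice surfaces $\Sigma$ gives $\eg(\#_m J)\geq \lceil m/2\rceil$. The only points requiring care are bookkeeping — checking that the iterated connected sum formula and the puncturing construction are applied consistently — rather than any genuine obstacle.
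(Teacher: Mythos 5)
Your proposal is correct and follows essentially the same route as the paper: a local map $C_Q^m \to \Kcrm_Q(\#_m J)$ obtained from Lemma~\ref{lem:partial-computation-J} together with Lemma~\ref{lem:local-tensor-product} (via the connected sum formula), the bound $\wt{s}_{Q,m,m+1}(C_Q^m) \geq 2\lceil m/2\rceil$ from Lemma~\ref{lem:s-tensor-bound}, and then Theorem~\ref{thm:equivariant_s_borel_intro}. The only difference is that you spell out steps the paper leaves implicit — the induction over summands and the puncturing argument converting an equivariant slice surface into a cobordism to the unknot with $\wt{s}_{Q,m,m+1}(U)=0$ — and these are carried out correctly.
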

\begin{proof}
In Lemma~\ref{lem:partial-computation-J}, we showed that there is a local map of grading shift zero from the Borel complex $C_Q$ of Example~\ref{ex:borel-ex4} into $\smash{\Kcrm_Q(J)}$. By Lemma~\ref{lem:local-tensor-product}, this gives a local map of grading shift zero from $C_Q^m$ into $\Kcrm_Q(\#_mJ)$. Lemma~\ref{lem:s-tensor-bound} thus gives
\[
2\lceil m/2 \rceil \leq \wt{s}_{Q, m, m + 1}(C_Q^m) \leq \wt{s}_{Q, m, m + 1}(\#_mJ).
\]
The claim then follows from Theorem~\ref{thm:equivariant_s_borel_intro}.
\end{proof}

To prove Theorem~\ref{thm:kyle_knot_intro}, it remains to show that the \textit{isotopy}-equivariant genus of $\#_mJ$ remains bounded. This is a straightforward topological argument:

\begin{lem}\label{lem:isoeq-genus-one}
For all $m \geq 1$, we have $\ieg(\#_mJ) \leq 1$.
\end{lem}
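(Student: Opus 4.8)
The plan is to produce a single isotopy-equivariant slice surface of genus one for $\#_m J$, uniformly in $m$, by exploiting the fact that an isotopy-equivariant surface need only satisfy $\tau_{B^4}(\Sigma) \simeq \Sigma$ rel boundary rather than $\tau_{B^4}(\Sigma) = \Sigma$ on the nose. The key geometric input is that $J = 17nh_{74}$ bounds a pair of $\Z$-disks $\Delta$ and $\tau_{B^4}(\Delta)$ in $B^4$ obtained by compressing the two obvious $1$-handles in Figure~\ref{fig:kyle}, and these two disks are topologically but not smoothly isotopic rel boundary (the discussion surrounding Example~\ref{ex:kyle_knot}). However, smooth isotopy is not what we need: what we do have smoothly is that $\Delta \cup \tau_{B^4}(\Delta)$, suitably arranged, can be tubed together into a single surface that is sent to itself up to \emph{isotopy}.

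First I would recall the standard construction showing $\ieg(J) \le 1$ for a single copy: take the genus-zero surface $\Delta$; then $\Delta$ and $\tau_{B^4}(\Delta)$ are two slice disks for $J$, and one can form a genus-one equivariant Seifert-type surface $\Sigma_1$ for $J$ as noted already in Example~\ref{ex:kyle_knot} (``$J$ indeed has an equivariant slice surface (in fact, Seifert surface) of genus one''). The cleanest route is actually to use that equivariant genus-one surface $\Sigma_1$ for $J$ directly. Then for $\#_m J$, I would take the boundary connected sum of $m$ copies of $\Sigma_1$ along the equivariant connected-sum region, but crucially perform the tubings so that the result has genus one rather than genus $m$: this uses that connected sum along a fixed equivariant basepoint arc allows the genus-one ``handles'' of the separate summands to be slid into a single handle after an isotopy. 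Concretely, the genus-one surface $\Sigma_1$ for $J$ can be viewed as a disk $\Delta$ with one tube attached; stacking $m$ such surfaces produces a disk for $\#_m J$ with $m$ tubes, and since all $m$ tubes are parallel and unknotted in the complement of the disk, they can be merged by an ambient isotopy (rel boundary) of $B^4$ into a single tube, at the cost of that isotopy not being equivariant — but the isotopy class of the resulting genus-one surface is still $\tau_{B^4}$-invariant.

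The main obstacle is making the last sentence precise: I must argue that after merging the $m$ tubes into one, the resulting genus-one surface $\Sigma_m$ satisfies $\tau_{B^4}(\Sigma_m) \simeq \Sigma_m$ rel boundary. The way I would handle this is to not merge tubes at all, but instead to observe directly that $\#_m J$ bounds a smooth genus-one surface $\Sigma_m$ whose $\tau$-image $\tau_{B^4}(\Sigma_m)$ differs from it only by an isotopy. For this, note that $\#_m J$ is the boundary of $\natural_m \Delta$ (a genus-zero surface, i.e. a disk, since $J$ is $\Z$-disk-sliceable) — but this disk is \emph{not} $\tau$-invariant even up to isotopy, which is exactly the content of $\eg$ being large. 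The genus-one upgrade comes from adding a single tube connecting $\natural_m\Delta$ to $\natural_m\tau_{B^4}(\Delta) = \tau_{B^4}(\natural_m \Delta)$: this ambient surface $\Sigma_m = (\natural_m\Delta) \# (\text{tube}) \# \tau_{B^4}(\natural_m\Delta)$, when the tube is chosen to run along $\Fix(\tau_{B^4})$, has the property that $\tau_{B^4}$ swaps the two halves and fixes the tube, hence $\tau_{B^4}(\Sigma_m) = \Sigma_m$ on the nose — wait, that would give $\eg \le 1$, contradicting Lemma~\ref{lem:eq-gen-lower-bound}. So the tube cannot be placed equivariantly; instead one shows that for \emph{one} choice of non-equivariant tube, $\tau_{B^4}$ carries $\Sigma_m$ to the surface built with the ``mirror'' tube, and these two tubes are isotopic rel the rest of the surface because tubes in a $4$-ball joining two fixed disks are unique up to isotopy (there is no framing or knotting obstruction in dimension four for an annulus). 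I would spell this out carefully: the space of such connecting tubes is connected, so $\Sigma_m$ and $\tau_{B^4}(\Sigma_m)$ are isotopic rel boundary. Granting this, $\Sigma_m$ is an isotopy-equivariant slice surface of genus one, completing the proof. The delicate point requiring care is verifying that the ambient isotopy between the two tube choices can indeed be taken rel boundary and rel the disk pieces, which follows from the standard fact that embedded annuli in $B^4$ connecting two disjoint disks, agreeing on the boundary, are ambiently isotopic — a ``light-bulb''-type argument — but I expect writing this cleanly to be the bulk of the work.
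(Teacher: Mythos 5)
There is a genuine gap: your construction of the genus-one surface does not make sense, and the key geometric input of the actual argument is missing. First, the surface $\Sigma_m = (\natural_m\Delta) \# (\text{tube}) \# \tau_{B^4}(\natural_m\Delta)$ is not a slice surface for $\#_mJ$: the two disks $\natural_m\Delta$ and $\tau_{B^4}(\natural_m\Delta)$ have the \emph{same} boundary $\#_mJ$, so either they intersect along that boundary (and the union is not an embedded surface with boundary $\#_mJ$), or, after pushing one into the interior to make them disjoint, the tubed surface has boundary two copies of $\#_mJ$. Your earlier fallback -- stacking $m$ genus-one surfaces and ``merging the $m$ parallel tubes into a single tube by an ambient isotopy rel boundary'' -- is also impossible, since genus is an isotopy invariant; merging tubes is a compression, not an isotopy. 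Second, even if one reads your construction as the once-stabilized disk $(\natural_m\Delta)\# h$ (which is the correct surface to use), you give no reason why its $\tau$-image should be isotopic to it rel boundary. The needed input, which the paper's proof hinges on, is that $\Delta$ and $\tau(\Delta)$ become isotopic rel boundary after a \emph{single} stabilization, because un-compressing either disk yields the pushed-in Seifert surface of Figure~\ref{fig:kyle}; the paper then slides the one stabilizing handle from factor to factor, converting each $\tau(\Delta)$-summand into a $\Delta$-summand, to get $(\natural_m\tau(\Delta))\#h \simeq (\natural_m\Delta)\#h$. Nothing in your write-up plays the role of this ``isotopic after one stabilization'' step.

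Finally, the uniqueness claim you lean on -- ``tubes in a 4-ball joining two fixed disks are unique up to isotopy, there is no framing or knotting obstruction'' -- is not a standard fact and is false in general: the isotopy class of a stabilization depends on the homotopy class of the guiding arc in the complement of the surface. The correct statement, and the one the paper proves (following \cite{CP,ConwayPowell}), is that stabilizing $1$-handles are unique up to isotopy when $\pi_1(B^4\setminus\Sigma)\cong\Z$ is generated by the meridian: two guiding arcs differ by a power of the meridian, arcs differing by a meridian give isotopic stabilizations (Figure~\ref{fig:two_arcs}), and homotopy of arcs rel endpoints in a $4$-manifold implies isotopy. This uses essentially that $\Delta$ and $\natural_m\Delta$ are $\Z$-disks; your ``light-bulb-type'' assertion skips exactly this point. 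Your instinct that the contradiction with Lemma~\ref{lem:eq-gen-lower-bound} forces the tube to be non-equivariant is a good sanity check, but the argument as proposed does not produce a well-defined isotopy-equivariant genus-one surface, and the missing steps are precisely the content of the paper's proof.
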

\begin{proof}
  As discussed in Example~\ref{ex:kyle_knot}, we have a symmetric pair of slice disks $D$ and $\tau(D)$ for $J$ given by compressing along the curves on the Seifert surface shown in Figure~\ref{fig:kyle}. While these disks are not isotopic to each other, they are isotopic after a single stabilization. Indeed, un-compressing $D$ gives a slice surface isotopic to the pushed-in Seifert surface for $J$, as does un-compressing $\tau(D)$. Moreover, note that since $B^4 - D$ has fundamental group $\Z$, there is no ambiguity when we speak of stabilizing $D$; we claim that two stabilizing 1-handles are isotopic (if we allow the feet to move along $D$).
  
  The argument for this claim appears in \cite{CP,ConwayPowell}, but since their work is set in topological category, we quickly review their reasoning. A stabilization $\Sigma\# h$ of a surface $\Sigma$ depends on the choice of an isotopy class of the cocore of $h$, which is a curve, say $\gamma$ in $B^4$, whose boundary is in $\Sigma$ and whose interior is disjoint from $\Sigma$. Any two such curves $\gamma,\gamma'$ differ
  by an element in $\pi_1(B^4\setminus\Sigma)$: we choose the endpoints of $\gamma,\gamma'$ to agree. Now, $\pi_1(B^4\setminus\Sigma)=\Z$; by Seifert-van Kampen theorem one checks that the meridian of $\Sigma$ (the fiber of the sphere bundle associated with the normal bundle of $\Sigma$).
  Two arcs $\gamma$ and $\gamma'$ differing by the meridian, are easily seen to yield the isotopic stabilizations, see Figure~\ref{fig:two_arcs}. Therefore, we may always assume that $\gamma-\gamma'=0\in\pi_1(B^4\setminus\Sigma)$; that is, $\gamma$ and $\gamma'$ are homotopic
  rel boundary. Now homotopy rel boundary of arcs in a 4-dimensional manifold implies isotopy, proving the claim.
  \begin{figure}[h!]
    \includegraphics[width=6cm]{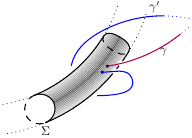}
    \caption{Proof of Lemma~\ref{lem:isoeq-genus-one}. The stabilization along two arcs $\gamma,\gamma'$ yields isotopic surfaces.}\label{fig:two_arcs}
  \end{figure}
%

In particular, we deduce that $D \# h \simeq \tau(D) \# h$ rel boundary, where we write $\# h$ for the unambiguous operation of stabilizing by a 1-handle. Now consider the boundary sum $\natural_m D$. Note that $B^4 - \natural_m D$ likewise has fundamental group $\Z$, so once again we may write $\# h$ for the unambiguous operation of stabilizing by any 1-handle. Observe that:
\begin{align*}
(\tau(D) \natural \tau(D) \natural \cdots \natural \tau(D)) \# h &\simeq (\tau(D) \# h) \natural \tau(D) \natural \cdots \natural \tau(D) \\
&\simeq (D \# h) \natural \tau(D) \natural \cdots \natural \tau(D) \\
&\simeq D \natural (\tau(D) \# h) \natural \cdots \natural \tau(D) \\
&\simeq D \natural (D \# h) \natural \cdots \natural \tau(D) \\
&\hspace{0.17cm} \vdots \\
&\simeq (D \natural D \natural \cdots \natural D) \# h.
\end{align*}
Roughly speaking, the idea is that we begin by moving the stabilizing $1$-handle $h$ onto the first copy of $\tau(D)$ in the boundary sum. As discussed in the previous paragraph, this allows us to effect the isotopy $\tau(D) \# h \simeq D \# h$. We then move the stabilizing $1$-handle onto the second factor of $\tau(D)$ in the boundary sum and again use the isotopy $\tau(D) \# h \simeq D \# h$. Continuing in this manner, we see that $\natural_m( \tau(D) ) \# h ) \simeq (\natural_m D) \# h$. Since (up to isotopy) these genus-one surfaces are related by $\tau$, this shows $\ieg(\#_mJ) \leq 1$. 
\end{proof}

We thus conclude:

\begin{proof}[Proof of Theorem~\ref{thm:kyle_knot_intro}]
Follows from Lemmas~\ref{lem:eq-gen-lower-bound} and \ref{lem:isoeq-genus-one}.
\end{proof}


\section{The Lobb-Watson Filtration}\label{sec:lobb-watson-filtration}

We now review the work of Lobb-Watson \cite{lobb-watson}. Our ultimate goal will be to merge the Lobb-Watson construction with the Borel formalism of the previous section. We first present a naive method for combining the two, before explaining several problems and subtleties that arise. Our discussion will lay the groundwork for the more sophisticated notion of a \textit{mixed complex} in the sequel.

\subsection{The work of Lobb-Watson}\label{sub:lw_work}
Let $L$ be an involutive link and $D$ be a transvergent diagram for $L$. The first input to the Lobb-Watson invariant is the construction of an additional half-integer grading $\deg_k$ on the Khovanov complex $\CKh(D)$. This is defined as follows. Let $o(c)$ and $u(c)$ be the half-integer functions on the set of crossings of $D$ given by:

\begin{enumerate}
	\item Suppose $c$ is off the axis of symmetry.  Then $o(c) = 0$ and $u(c) = \pm \frac{1}{2}$, according to whether $c$ is a positive or negative crossing, respectively.
	\item Suppose $c$ is on-axis and the involution reverses the orientation of $c$. Then $o(c) = 0$ and $u(c) = \pm 1$, according to whether $c$ is a positive or negative crossing, respectively.
	\item Suppose $c$ is on-axis and the involution preserves the orientation of $c$. Then $o(c) = \mp \frac{1}{2}$ and $v(c) = \pm \frac{1}{2}$, according to whether $c$ is a positive or negative crossing, respectively.
\end{enumerate}

Now consider a resolution $D_v$ of $D$. Within $D_v$, each crossing $c$ of $D$ is either smoothed in an oriented manner or an anti-oriented manner. Define 
\[
\deg_k(D_v) = \sum_{\substack{c \text{ smoothed}\\ \text{in an oriented} \\ \text{manner in } D_v}} o(c) \quad + \sum_{\substack{c \text{ smoothed}\\ \text{in an anti-oriented} \\ \text{manner in } D_v}} u(c).
\]
That is, each oriented smoothing in $D_v$ contributes the value $o(c)$ to $\deg_k$, while each anti-oriented smoothing contributes $u(c)$. We give each generator in $\mathfrak{F}(v)$ the $k$-grading $\deg_k(D_v)$, so that $\deg_k$ is constant within each vertex of the cube of resolutions.

Lobb-Watson then define a perturbed differential
\[
\partial_\mathrm{tot} = \partial + (1 + \tau)
\]
on $\CKh(D)$. Here, the map $\tau$ is as in Definition~\ref{def:diagrammatic_tau} (except on the Khovanov complex, rather than the Bar-Natan complex) and hence $\partial_\mathrm{tot}^2 = 0$. Note that $\partial_\mathrm{tot}$ is \textit{not} homogenous with respect to the homological grading -- in this setting, the homological grading on $\CKh(D)$ is relaxed to a homological filtration $\mathcal{F}$, with respect to which $\partial_\mathrm{tot}$ is non-decreasing. Lobb-Watson also show that $\partial_\mathrm{tot}$ is non-decreasing with respect to $\deg_k$, thus leading to a second filtration $\mathcal{G}$ on $\CKh(D)$. Taking the associated graded object with respect to this bifiltration (and keeping in mind the quantum grading) gives a triply-graded object $\mathbb{H}^{i, j, k}(D)$, which Lobb-Watson show is an invariant of the pair $(L, \tau)$.

For small knots, it is often the filtration which captures the behavior of $\tau$, rather than $\tau$ itself. Indeed, for many knots of low crossing number, one can check that the action of $\tau$ is the identity on Khovanov homology. Nevertheless, Lobb-Watson show that $\mathbb{H}^{i, j, k}$ can be interesting for many knots of low crossing number \cite[Section 6.5]{lobb-watson}, whereas the first instance of a non-trivial homological action of $\tau$ occurs for $9_{46}$. It is thus natural to bring together the Lobb-Watson filtration with the Borel construction so as to leverage the non-triviality of $\mathbb{H}^{i, k, j}$ for small knots. In the rest of this paper, we suggest an outline of this theory, leaving further development to future work.


\subsection{Encoding the filtration as an additional variable}

Our first goal is to promote the Lobb-Watson grading to a third grading on the Borel complex. Indeed, observe that there is no difficulty in defining $\deg_k$ on $\Kcm_Q(D)$, simply by setting $\deg_k(u) = \deg_k(Q) = 0$. The fact that $\partial_{\mathrm{tot}}$ is filtered with respect to $\deg_k$ on $\CKh(D)$ obviously implies the same for $\partial_Q$. We make $\partial_Q$ into a $\deg_k$-preserving map by introducing a new variable $W$ with $\deg_k(W) = -1/2$ and inserting powers of $W$ into the expression for $\partial_Q$ as appropriate. More precisely:

\begin{defn}\label{def:QWcomplex}
Let $C_Q$ be a Borel complex. Suppose that $C_Q$ is equipped with an additional half-integer grading $\deg_k$ with respect to which $\partial_Q$ is filtered and $\deg_k(u) = \deg_k(Q) = 0$. Set
\[
C_{Q, W} = C_Q \otimes_{\f} \f[W].
\]
Let $\deg_k(W) = -1/2$. To define a differential $\partial_{Q, W}$ on $C_{Q, W}$, pick a homogenous basis for $C_Q$ over $\f[u, Q]$. This gives a basis for $C_{Q, W}$ over $\f[u, Q, W]$. For each basis element $x$, write $\partial_Q x$ as a sum of basis elements $\sum_i c_i y_i$. We then let
\[
\partial_{Q, W} x = \sum_i W^{2(\deg_k(y_i) - \deg_k(x))} c_i y_i,
\]
extending linearly. Note here that since $\partial_Q$ is non-decreasing with respect to $\deg_k$, each $W$-exponent in this sum is non-negative. It is straightforward to check that this makes $C_{Q, W}$ a complex over $\f[u, Q, W]$ such that $\partial_{Q, W}$ is $\deg_k$-preserving.
\end{defn}

Carrying this out in the context of involutive links, we have:

\begin{defn}\label{def:QWdiagram}
Let $D$ be a transvergent diagram for an involutive link $L$. Define $\smash{\Kcm_{Q, W}(D)}$ by applying Definition~\ref{def:QWcomplex} to the Borel complex $\smash{\Kcm_Q(D)}$ equipped with the Lobb-Watson filtration. Likewise, define $\smash{\Kcrm_{Q, W}(D)}$ by applying Definition~\ref{def:QWcomplex} to the reduced Borel complex $\smash{\Kcrm_Q(D)}$. We refer to these as complexes as the \textit{$(Q, W)$-complex} and \textit{reduced $(Q, W)$-complex} associated to $D$, respectively.
\end{defn}

\begin{rmk}\label{rem:reducedQWdefined}
To establish independence of the choice of basepoint in the reduced case, note that for each $p \in D$, we obtain a reduced subcomplex of $\Kcm_Q(D)$ that inherits the Lobb-Watson filtration. It is clear that the unpointed reduced complex inherits the Lobb-Watson filtration, simply by defining $\deg_k$ on each resolution of $D$ in the usual way. The isomorphism between unpointed and pointed reduced sends generators in each given resolution of $D$ to generators in the same resolution, and is hence $\deg_k$-preserving. Thus we may unambiguously speak of the $\deg_k$-grading on the reduced complex. This shows that the isomorphism class of $\smash{\Kcrm_{Q, W}(D)}$ is well-defined, as desired.
\end{rmk}

For future reference, it will be useful to note that we can perform a similar operation whenever we have a chain map between two Borel complexes:

\begin{defn}\label{def:mapWlift}
Let $(C_1)_Q$ and $(C_2)_Q$ be two Borel complexes, each equipped with an additional filtration $\deg_k$ as in Definition~\ref{def:QWcomplex}. Let $f_Q \colon (C_1)_Q \rightarrow (C_2)_Q$ be a chain map. Pick homogenous bases for $(C_1)_Q$ and $(C_2)_Q$ over $\f[u, Q]$; these give bases for $(C_1)_{Q, W}$ and $(C_2)_{Q, W}$ over $\f[u, Q, W]$. For each basis element $x$, write $\partial_Q x$ as a sum of basis elements $f_Q(x) = \sum_i c_i y_i$. We then let
\[
f_{Q, W}(x) = \sum_i W^{2(\deg_k(y_i) - \deg_k(x))} c_i y_i,
\]
extending linearly. If $f$ is filtered, then this defines a map $f_{Q,W} \colon (C_1)_{Q,W} \rightarrow (C_2)_{Q,W}$. If $f$ is not filtered, then some of the powers of $W$ in the above equation may be negative. In this case, $f_{Q,W}$ only exists as a map $f_{Q,W} \colon W^{-1} (C_1)_{Q,W} \rightarrow W^{-1}(C_2)_{Q,W}$.
\end{defn}

\begin{rmk}
Strictly speaking, Definitions~\ref{def:QWcomplex} and \ref{def:mapWlift} are dependent on a choice of basis. However, we will always have in mind the standard basis coming from the construction of the Bar-Natan complex via the cube of resolutions. 
\end{rmk}

\subsection{Examples and non-invariance}\label{sub:bad_unknot}
It is clear that $\smash{\Kcm_{Q, W}(D)}$ encodes both the Borel complex and (in some sense) the Lobb-Watson filtration. Unfortunately, it turns out that the literal homotopy type of $\smash{\Kcm_{Q, W}(D)}$ is not an invariant of $D$, even for the unknot:

\begin{example}\label{ex:non_inv}
Let $U$ be the trivial equivariant unknot diagram. Then 
\[
\Kcm_{Q, W}(U) = \f[u, Q, W]_{(0, 1, 0)} \oplus \f[u, Q, W]_{(0, -1, 0)}.
\]
\end{example}

\begin{example}\label{ex:badunknot}
Let $U'$ be the equivariant unknot diagram displayed on the left in Figure~\ref{fig:badunknot}. Note that this differs from $U$ by a single R1-move. The complex $\smash{\Kcm_{Q, W}(U')}$ is displayed on the right in Figure~\ref{fig:badunknot}. It is easily checked that the homology of $\smash{\Kcm_{Q, W}(U')}$ is given by
\[
\f[u, Q, W]_{(0, 1, 0)} \oplus \f[u, Q, W]_{(0, -1, 0)} \oplus (\f[u, Q, W]/W^2)_{(1, 3, 1)} \oplus (\f[u, Q, W]/W^2)_{(1, 1, 1)}.
\]
Clearly, $\Kcm_{Q, W}(U)$ and $\Kcm_{Q, W}(U')$ are not homotopy equivalent. 
\begin{rmk}
  The diagram for $U'$ can be put in a braid form, showing that the complex $\Kcm_{Q,W}$ is not invariant even if we restrict to
  diagrams of palindromic braids
  of \cite{Merz}.
\end{rmk}

\begin{figure}[h!]
\includegraphics[scale = 1]{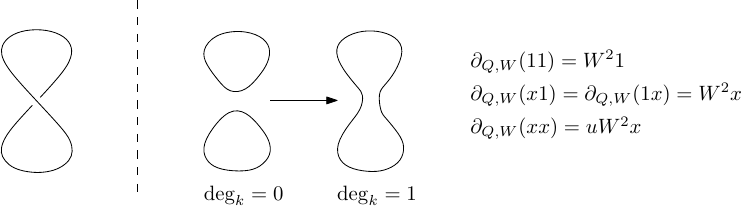}
\caption{Left: A non-trivial equivariant diagram for the unknot. The symmetry is given by rotation about the obvious vertical axis. Right: the resulting $(Q, W)$-complex with the value of $\deg_k$ labeled under each resolution. In $\smash{\Kcm_Q(U')}$, the cycles $1$ and $x$ in the right-hand resolution are boundaries; in $\smash{\Kcm_{Q, W}(U')}$, they are no longer boundaries but are instead $W^2$-torsion in homology.}\label{fig:badunknot}
\end{figure}
\end{example}

It turns out to be quite difficult to describe in what sense $\smash{\Kcm_{Q, W}(D)}$ is an invariant. An indication for the correct answer is suggested by the behavior of $\smash{\Kcm_{Q, W}(D)}$ under equivariant Reidemeister moves. The first question to ask is whether each Reidemeister map of Borel complexes is filtered with respect to $\deg_k$. While this is true for the IR1, IR2, IR3, R1, and R2 moves, it turns out that the maps for the M1, M2, and M3 moves defined in Section~\ref{sec:miinvariance} are \textit{not} filtered with respect to $\deg_k$. Hence in general if $D$ and $D'$ are two transvergent diagrams for the same knot, then Definition~\ref{def:mapWlift} only gives a map between $W$-localizations:
\[
f_Q \colon W^{-1} \Kcm_{Q, W}(D) \rightarrow W^{-1} \Kcm_{Q, W} (D').
\]
While this map is indeed a homotopy equivalence, it contains no more information than the usual homotopy equivalence between the Borel complexes for $D$ and $D'$. (This is why we again use the notation $f_Q$ in this situation, rather than $f_{Q, W}$.) Indeed, the reader may check that $\smash{W^{-1} \Kcm_{Q, W}(D)}$ is simply isomorphic to $\smash{\Kcm_Q(D) \otimes_{\f} \f[W, W^{-1}]}$.

Given this information, the reader may wonder how $\mathbb{H}^{i, j, k}$ can possibly be invariant of diagram. Lobb-Watson do establish constraints on the Reidemeister maps with respect to $\deg_k$, but the relation is somewhat subtle. In our context, their work shows that the maps associated to equivariant Reidemeister moves in Section~\ref{sec:miinvariance} can be \textit{homotoped} so that they are filtered. However, the catch is that this homotopy can only be done after inverting $Q$. If $D$ and $D'$ are two transvergent diagrams related by equivariant Reidemeister moves, we thus additionally obtain a map
\[
f_W \colon Q^{-1} \Kcm_{Q,W}(D) \rightarrow Q^{-1} \Kcm_{Q, W}(D')
\]
which is homotopic to $f_Q$ as a map from $Q^{-1}W^{-1} \Kcm_{Q, W}(D)$ to $Q^{-1}W^{-1} \Kcm_{Q, W}(D')$.

The mixed complex formalism will thus essentially consist of remembering the two localizations
\[
W^{-1} \Kcm_{Q, W}(D) \quad \text{and} \quad Q^{-1} \Kcm_{Q, W}(D).
\]
These are related inasmuch as they both arise from the same complex $\smash{\Kcm_{Q, W}}$. A map between two mixed complexes will be a pair of maps $(f_Q, f_W)$, each existing only on one of the two localizations, but which become homotopic after inverting \textit{both} $Q$ and $W$.


\section{Mixed Complexes}\label{sec:3}

We now introduce the mixed complex formalism and prove Theorem~\ref{thm:mixedcomplex}.

\subsection{Formal definitions} We begin with the definition of a mixed complex. Throughout, let $R = \f[u]$.

\begin{defn}\label{def:mixed}
A \emph{mixed complex} $M$ over $R$ is a tuple $(M_Q, M_W,\eta)$ such that:
      \begin{enumerate}
	\item $M_Q$ is a triply-graded, free finitely-generated complex over $R[Q,W,W^{-1}]$,
	\item $M_W$ is a triply-graded, free finite-generated complex over $R[Q,Q^{-1},W]$; and,
	\item $\eta\colon Q^{-1}M_Q\to W^{-1}M_W$ is a grading-preserving homotopy equivalence over $R[Q,Q^{-1},W,W^{-1}]$.
      \end{enumerate}
A mixed complex is thus a pair of complexes, together with a homotopy equivalence of their localizations. 
For both $M_Q$ and $M_W$, the grading lies in $\Z \times \Z \times \frac{1}{2} \Z$ and is denoted $(\deg_h, \deg_q, \deg_k)$. We refer to the first two as the homological and quantum grading, while in our application $\deg_k$ will be the Lobb-Watson grading. We require
\[
\deg(\partial) = (1, 0, 0), \quad \deg(u) = (0, -2, 0), \quad \deg(Q) = (1, 0, 0), \quad \deg(W) = (0, 0, -1/2).
\]
\end{defn}


\begin{example}\label{ex:simple_mixed}
As indicated in the previous section, all mixed complexes in our setting will arise out of an underlying complex over $R[Q, W]$. That is, let $C$ be a triply-graded complex over $R[Q,W]$ with the same grading properties as Definition~\ref{def:mixed}. Define a mixed complex $M(C)$ by setting
  \[
  M_Q = W^{-1} C \quad \text{and} \quad M_W = Q^{-1}C.
  \]
Then $Q^{-1} M_Q = Q^{-1} W^{-1} C = W^{-1} M_W$ and we let $\eta$ the identity map from $Q^{-1} W^{-1} C$ to itself. 
\end{example}

A morphism of mixed complexes is a pair of morphisms between the corresponding $M_Q$- and $M_W$-complexes, with a suitable agreement on localizations. This is made precise below:
\begin{defn}\label{def:mixed-morphism}
     A \emph{morphism between mixed complexes} $X$ and $Y$ is a triple $\frf = (f_Q,f_W,h)$ consisting of a pair of morphisms $f_Q\colon X_Q\to Y_Q$ and $f_W\colon X_W\to Y_W$, such that the diagram
      \[\begin{tikzcd}
	{Q^{-1}X_Q} && {W^{-1}X_W} \\
	\\
	{Q^{-1}Y_Q} && {W^{-1}Y_W}
	\arrow["{\eta^1}", from=1-1, to=1-3]
	\arrow["{f_Q}"', from=1-1, to=3-1]
	\arrow["{f_W}", from=1-3, to=3-3]
	\arrow["{\eta^2}", from=3-1, to=3-3]
\end{tikzcd}\]
commutes up to the homotopy $h\colon Q^{-1}X_Q\to W^{-1}Y_W$:
\[
\partial h+ h\partial =\eta^2f_Q+f_W\eta^1.
\]
 We require $f_Q$ and $f_W$ to preserve the homological and $\deg_k$-grading, and be homogenous (of the same degree) with respect to the quantum grading. We refer to $\deg_q(f_Q) = \deg_q(f_W)$ as the \textit{grading shift} of $\frf$. We say that $\frf$ is \textit{local} if $f_Q$ induces an isomorphism 
\[
f_Q \colon u^{-1} H_*(X_Q) \rightarrow u^{-1} H_*(Y_Q).
\]
\end{defn}

To compose morphisms we use the following recipe:
\begin{defn}\label{def:composition}
  Suppose $\frf^i = (f^i_Q,f^i_W,h^i)$ for $i=1,2$ are morphisms of mixed complexes from $X$ to $Y$ and $Y$ to $Z$, respectively.
  We define the composition
  \[
  \frf^2\circ \frf^1 = (f^{12}_Q,f^{12}_W,h^{12}) \colon X \rightarrow Z,
  \]
  where
  
  \[
  f^{12}_Q=f^2_Q\circ f^1_Q, \quad f^{12}_W=f^2_W\circ f^1_W, \quad \text{and} \quad h^{12}=f^2_Wh^1+h^2f^1_Q.
  \]
\end{defn}

\begin{lem}
  The composition of morphisms is associative.
\end{lem}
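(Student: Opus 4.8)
The statement is that composition of morphisms of mixed complexes is associative. The plan is to unwind Definition~\ref{def:composition} on both $(\frf^3 \circ \frf^2) \circ \frf^1$ and $\frf^3 \circ (\frf^2 \circ \frf^1)$ and check that the two triples agree component-by-component. The first two components are immediate: the $Q$-component of either triple is $f^3_Q \circ f^2_Q \circ f^1_Q$ and the $W$-component is $f^3_W \circ f^2_W \circ f^1_W$, both by associativity of ordinary composition of chain maps (equivalently, of composition in the categories $\bKom_{R[Q,W,W^{-1}]}$ and $\bKom_{R[Q,Q^{-1},W]}$). So the entire content of the lemma is the claim that the homotopy components agree.

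For the homotopy component, I would simply expand. Writing $\frf^i = (f^i_Q, f^i_W, h^i)$ for $i = 1,2,3$, Definition~\ref{def:composition} gives, for the left-associated composite, first $\frf^3 \circ \frf^2 = (f^3_Q f^2_Q,\, f^3_W f^2_W,\, f^3_W h^2 + h^3 f^2_Q)$, and then composing with $\frf^1$ on the right yields homotopy component
\[
(f^3_W f^2_W) h^1 + (f^3_W h^2 + h^3 f^2_Q) f^1_Q = f^3_W f^2_W h^1 + f^3_W h^2 f^1_Q + h^3 f^2_Q f^1_Q.
\]
For the right-associated composite, $\frf^2 \circ \frf^1 = (f^2_Q f^1_Q,\, f^2_W f^1_W,\, f^2_W h^1 + h^2 f^1_Q)$, and composing $\frf^3$ on the left yields homotopy component
\[
f^3_W (f^2_W h^1 + h^2 f^1_Q) + h^3 (f^2_Q f^1_Q) = f^3_W f^2_W h^1 + f^3_W h^2 f^1_Q + h^3 f^2_Q f^1_Q.
\]
These two expressions are literally equal, so the homotopy components coincide and associativity follows. (One should also note in passing that the composite is a well-defined morphism of mixed complexes in the first place --- i.e.\ $h^{12}$ witnesses the required commutation of the localization square --- but this is exactly the verification already implicit in the paragraph preceding the lemma, obtained by adding the homotopy identities $\partial h^i + h^i \partial = \eta^{i+1} f^i_Q + f^i_W \eta^i$ after pre/post-composing appropriately; grading conditions are additive and hence preserved.)

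\textbf{Main obstacle.} Honestly there is none of substance: the proof is a one-line bookkeeping check once the definitions are expanded, and the only thing to be careful about is getting the order of the factors right in the noncommutative composition (and remembering that the homotopy $h^i$ has domain $Q^{-1}X^i_Q$ and codomain $W^{-1}Y^i_W$, so that $f_W h$ and $h f_Q$ are the only sensible ways to pre/post-compose). I would present it exactly as the two displayed expansions above followed by the observation that they agree, occupying perhaps three or four lines total.
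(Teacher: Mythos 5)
Your proposal is correct and is essentially identical to the paper's own proof: both expand the homotopy component of $(\frf^3\circ\frf^2)\circ\frf^1$ and $\frf^3\circ(\frf^2\circ\frf^1)$ via Definition~\ref{def:composition} and observe that both equal $f^3_W f^2_W h^1 + f^3_W h^2 f^1_Q + h^3 f^2_Q f^1_Q$, the $Q$- and $W$-components being trivially associative.
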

\begin{proof}
Suppose $\frf^1,\frf^2$, and $\frf^3$ are three maps between mixed complexes. Write $\frf^{12}=\frf^2\circ \frf^1$ and $\frf^{23}=\frf^{3}\circ \frf^2$, and let $\frf^{1(23)}=\frf^{32}\circ \frf^1$ and $\frf^{(12)3}=\frf^3\circ \frf^{21}$. As the composition of the $Q$- and $W$-maps are obviously associative, it remains to check that the homotopies $h^{1(23)}$ and $h^{(12)3}$ are the same. Explicitly,
   \begin{align*}
    h^{1(23)}&=f^{23}_Wh^1+h^{23}f^1_Q=f^3_Wf^2_Wh^1+f^3_Wh^2f^1_Q+h^3f^2_Qf^1_Q\\
    h^{(12)3}&=f^{3}_Wh^{12}+h^{3}f^{12}_Q=f^3_Wf^2_Wh^1+f^3_Wh^2f^1_Q+h^3f^2_Qf^1_Q
  \end{align*}
  as desired.
\end{proof}

We now define the principal notion of equivalence of mixed complexes which we will use in this paper:

\begin{defn}\label{def:Qequivalence}
 We say that $X$ and $Y$ are \emph{$Q$-equivalent} if there exist mixed complex morphisms $\frf \colon X \rightarrow Y$ and $\frg \colon Y \rightarrow X$ such that their $Q$-maps $f_Q$ and $g_Q$ satisfy
 \[
 g_Q \circ f_Q\simeq \id_{X_Q} \quad \text{and} \quad f_Q \circ g_Q \simeq \id_{Y_Q}.
 \]
We refer to $\frf$ and $\frg$ as \textit{$Q$-equivalences}.
\end{defn}

A $Q$-equivalence is thus a pair of mixed complex morphisms whose $Q$-components $f_Q$ and $g_Q$ are homotopy inverses of $R[Q, W, W^{-1}]$-complexes in the usual sense. Note that in order to conclude that a $Q$-equivalence exists, it does \textit{not} suffice to simply present a homotopy equivalence between $X_Q$ and $Y_Q$ \textit{in vacuo}. Instead, we must find homotopy equivalences $f_Q$ and $g_Q$ that can be completed to a pair of mixed complex morphisms between $X$ and $Y$. This means we must additionally produce the corresponding $W$-side maps $f_W$ and $g_W$ satisfying the homotopy commutative square of Definition~\ref{def:mixed-morphism}. Such maps cannot always be found -- indeed, it may be helpful for the reader to observe the following:

\begin{lem}\label{lem:Wtorsion}
Let $\frf$ and $\frg$ be $Q$-equivalences between $X$ and $Y$. Then their $W$-maps $f_W$ and $g_W$ induce isomorphisms modulo $W$-torsion:
  \[
  (f_W)_*\colon H_*(X_W)/\Torso_W\xrightarrow{\cong} H_*(Y_W)/\Torso_W \quad \text{and} \quad (g_W)_*\colon H_*(Y_W)/\Torso_W\xrightarrow{\cong} H_*(X_W)/\Torso_W.
  \]
\end{lem}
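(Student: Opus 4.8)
The plan is to exploit the homotopy-commutative square of Definition~\ref{def:mixed-morphism} after passing to localizations and then deduce the claim on the $W$-side by a diagram chase modulo $W$-torsion.

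First I would record the consequence of the $Q$-equivalence hypothesis at the level of the maps $\eta$. Since $g_Q\circ f_Q\simeq\id_{X_Q}$ and $f_Q\circ g_Q\simeq\id_{Y_Q}$, localizing at $W$ and $Q$ gives that $f_Q$ and $g_Q$ are mutually inverse homotopy equivalences of $R[Q,Q^{-1},W,W^{-1}]$-complexes. The square of Definition~\ref{def:mixed-morphism} for $\frf$ reads $\eta^2 f_Q\simeq f_W\eta^1$ (after inverting $W$), and the corresponding square for $\frg$ reads $\eta^1 g_Q\simeq g_W\eta^2$. Composing these two homotopy-commutative squares and using $f_Q g_Q\simeq\id$ shows that $f_W\eta^1 g_Q\eta^{1,-1}\simeq \eta^2\eta^{2,-1}=\id$ on $W^{-1}Y_W$; more carefully, $f_W\circ(\eta^1 g_Q (\eta^1)^{-1})\simeq \id$ and symmetrically $(\eta^2 f_Q(\eta^2)^{-1})\circ f_W \simeq\id$-type relations, where the expressions $(\eta^i)^{-1}$ make sense because $\eta^i$ is a homotopy equivalence. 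The upshot is that $f_W$, \emph{after inverting $W$}, is a homotopy equivalence of $R[Q,Q^{-1},W,W^{-1}]$-complexes, with homotopy inverse built from $g_W$ and the $\eta$'s.

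Next I would descend from $W^{-1}M_W$ back to $M_W$. By definition $M_W$ is free and finitely generated over $R[Q,Q^{-1},W]$, and $W^{-1}M_W$ is obtained by inverting $W$; hence $H_*(W^{-1}M_W)=W^{-1}H_*(M_W)$, and the canonical map $H_*(M_W)\to W^{-1}H_*(M_W)$ has kernel exactly $\Torso_W$ and image the $W$-divisible part, so it induces an isomorphism $H_*(M_W)/\Torso_W \hookrightarrow W^{-1}H_*(M_W)$ after additionally noting that over the principal ideal domain $R[Q,Q^{-1},W]$ (or rather using the structure theorem degreewise) the localization map $H_*(M_W)/\Torso_W \to W^{-1}H_*(M_W)$ is in fact injective with image all of $u$-whatever—more precisely, $H_*(M_W)/\Torso_W$ injects into $W^{-1}H_*(M_W)$ and $(f_W)_*$ is compatible with this injection on both sides. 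Since $(f_W)_*$ becomes an isomorphism after inverting $W$ by the previous paragraph, and since the localization maps $H_*(X_W)/\Torso_W\to W^{-1}H_*(X_W)$ and $H_*(Y_W)/\Torso_W\to W^{-1}H_*(Y_W)$ are injective and natural, a chase shows $(f_W)_*$ carries $H_*(X_W)/\Torso_W$ isomorphically onto $H_*(Y_W)/\Torso_W$; the same argument applied to $\frg$ gives the statement for $(g_W)_*$.

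The main obstacle is the bookkeeping in the second paragraph: one has to be careful that the ``homotopy inverse of $f_W$ after inverting $W$'' is genuinely produced from $g_W$ and the $\eta$'s, using that $\eta^1,\eta^2$ are honest homotopy equivalences over the doubly-localized ring, and that all the homotopies involved respect the $R[Q,Q^{-1}]$-module structure so that the resulting statement is about $R[Q,Q^{-1},W,W^{-1}]$-modules and not merely $R$-modules. Once that is set up, the reduction modulo $W$-torsion is the standard fact that for a finitely generated module $N$ over a PID-like ring in each degree, $N/\Torso_W$ injects into $W^{-1}N$ functorially, so an isomorphism upstairs forces an isomorphism of the torsion-free quotients downstairs. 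I would also remark that the freeness and finite generation hypotheses on $M_W$ are exactly what guarantees $H_*(W^{-1}M_W)\cong W^{-1}H_*(M_W)$ and that $\Torso_W$ behaves well, so no further hypotheses are needed.
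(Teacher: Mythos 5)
Your overall route is the same as the paper's: use the homotopy\-/commutative squares of Definition~\ref{def:mixed-morphism} together with the invertibility of $\eta^1,\eta^2$ to conclude that $(f_W)_*$ and $(g_W)_*$ become mutually inverse after inverting $W$, and then descend to the torsion-free quotients via the injection $H_*(M_W)/\Torso_W \hookrightarrow W^{-1}H_*(M_W)$ (which, note, holds for any module since the kernel of localization at $\{W^n\}$ is exactly the $W$-power torsion; neither freeness nor the ``PID'' remark is needed, and indeed $R[Q,Q^{-1},W]$ with $R=\f[u]$ is not a PID). The index slip in your composite ($f_W\eta^1 g_Q(\eta^1)^{-1}$ should end with $(\eta^2)^{-1}$) is cosmetic.

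There is, however, one step that does not go through as written: the principle ``an isomorphism after inverting $W$, plus injectivity and naturality of the localization maps, forces an isomorphism of the torsion-free quotients'' is false in general. For example, multiplication by $W$ on the free module $R[Q,Q^{-1},W]$ (viewed as a complex with zero differential) becomes an isomorphism after inverting $W$ and has zero $W$-torsion, yet it is not surjective. Your chase does give injectivity of $(f_W)_*$ modulo torsion, but surjectivity requires an inverse that is defined \emph{before} localizing --- and that is precisely what $g_W$ supplies. Since $(g_W)_*(f_W)_*$ and $\id_{H_*(X_W)}$ agree after inverting $W$, their difference takes values in $\Torso_W$, and likewise for $(f_W)_*(g_W)_*$ and $\id_{H_*(Y_W)}$; hence $(f_W)_*$ and $(g_W)_*$ are mutually inverse on the torsion-free quotients, and in particular for any $y\in H_*(Y_W)$ one has $y \equiv (f_W)_*\bigl((g_W)_*y\bigr) \bmod \Torso_W$. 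This is a one-line repair using data you already set up in your first paragraph (the map $g_W$ and its square), but the surjectivity argument you actually wrote down --- and the closing remark repeating the same general principle --- would not survive scrutiny without it.
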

\begin{proof}
Since $(f_Q)_*$ and $(g_Q)_*$ are inverse as maps between $H_*(X_Q)$ and $H_*(Y_Q)$, the same is true after inverting $Q$. By the compatibility condition of Definition~\ref{def:mixed-morphism}, this means that the maps $(f_W)_*$ and $(g_W)_*$ are inverse as maps between $W^{-1} H_*(X_W)$ and $W^{-1} H_*(Y_W)$. It easily follows that $(f_W)_*$ and $(g_W)_*$ are isomorphisms between $H_*(X_W)$ and $H_*(Y_W)$ modulo $W$-torsion.
\end{proof}

Although Lemma~\ref{lem:Wtorsion} will not be used in this paper, it indicates that $Q$-equivalence places a non-trivial requirement on the relation between $X_W$ and $Y_W$. Definition~\ref{def:Qequivalence} is thus more stringent than simply having a homotopy equivalence of Borel complexes. In our context, note that if we have a homotopy equivalence of Borel complexes whose maps are filtered with respect to $\deg_k$, then these may be promoted to maps of $(Q, W)$-complexes. This yields the desired $Q$- and $W$-side maps after localizing at $W$ and $Q$, respectively.

It is still useful to define homotopy equivalences of mixed complexes. For this, we first consider the following notion: 

\begin{defn}\label{def:homotopy-of-mixed}
Let $\frf^i=(f^i_Q,f^i_W,h^i)$ for $i=1,2$ be two morphisms of mixed complexes from $X$ to $Y$. A \textit{chain homotopy} from $\frf^1$ to $\frf^2$ consists of:
\begin{enumerate}
\item A chain homotopy $H_Q$ between the $Q$-maps $f^1_Q \colon X_Q \rightarrow Y_Q$ and $f^2_Q \colon X_Q \rightarrow Y_Q$;
\item A chain homotopy $H_W$ between the $W$-maps $f^1_W \colon X_W \rightarrow Y_W$ and $f^2_W \colon X_W \rightarrow Y_W$;
\item A diagonal map $\Delta \colon Q^{-1} X_Q \rightarrow W^{-1} Y_W$ such that
\[
\partial \Delta + \Delta \partial = \eta_Y H_Q + H_W \eta_X + h^1 + h^2.
\]
after $H_Q$ and $H_W$ have been localized at $Q$ and $W$, respectively.
\end{enumerate}
It may be helpful to consider these maps as fitting into the homotopy coherent cube:

\[
\begin{tikzcd}
	& {Q^{-1}X_Q} &&&& {W^{-1}X_W} \\
	\\
	{Q^{-1}X_Q} &&&& {W^{-1}X_W} \\
	\\
	& {Q^{-1}Y_Q} &&&& {W^{-1}Y_W} \\
	\\
	{Q^{-1}Y_Q} &&&& {W^{-1}Y_W}
	\arrow["{\eta_X}"{description}, from=1-2, to=1-6]
	\arrow["{f^2_Q}"{description, pos=0.4}, dotted, from=1-2, to=5-2]
	\arrow["{h^2}"{pos=0.4}, dotted, from=1-2, to=5-6]
	\arrow["{f^2_W}"{description}, from=1-6, to=5-6]
	\arrow[from=3-1, to=1-2]
	\arrow["{\eta_X}"{description}, from=3-1, to=3-5]
	\arrow["{H_Q}"', dotted, from=3-1, to=5-2]
	\arrow["{f^1_Q}"{description}, from=3-1, to=7-1]
	\arrow["{h^1}"{description, pos=0.6}, from=3-1, to=7-5]
	\arrow[from=3-5, to=1-6]
	\arrow["{H_W}", shift left, from=3-5, to=5-6]
	\arrow["{f^1_W}"{description, pos=0.6}, from=3-5, to=7-5]
	\arrow["{\eta_Y}"{description}, dotted, from=5-2, to=5-6]
	\arrow[dotted, from=7-1, to=5-2]
	\arrow["{\eta_Y}"{description}, from=7-1, to=7-5]
	\arrow[from=7-5, to=5-6]
\end{tikzcd}
\]
	
Here, the front and back faces of the cube constitute the maps $\frf^1$ and $\frf^2$, respectively. The top and bottom faces are the identity, in the sense that each edge is assigned the identity map and the diagonal map is zero. The homotopies $H_Q$ and $H_W$ -- after localizing at $Q$ and $W$, respectively -- are the diagonal maps on the left and right faces. The map $D$ witnesses the fact that the sum of all two-step paths from the source to the sink of the cube is homotopic to zero. 
\end{defn}

We stress that in Definition~\ref{def:homotopy-of-mixed}, $H_Q$ and $H_W$ are required to be defined on $X_Q$ and $X_W$. However, it is helpful to also consider a slightly relaxed version of Definition~\ref{def:homotopy-of-mixed}: 

\begin{defn}\label{def:homotopy-delta}
Let $\frf^i=(f^i_Q,f^i_W,h^i)$ for $i=1,2$ be two mixed complex morphisms from $X$ to $Y$. We say that $\frf^1$ and $\frf^2$ are \textit{chain homotopic up to defect $\delta$} if there exist $H_Q$, $H_W$, and $\Delta$ as in Definition~\ref{def:homotopy-of-mixed}, but with the condition for $H_W$ replaced by the following:
\begin{enumerate}
\item[(2$^*$)] A map $H_W \colon W^{-1} X_W \rightarrow W^{-1} Y_W$ such that
\begin{enumerate}
\item $H_W$ is a chain homotopy between the $W$-localized maps 
\[
f^1_W \colon W^{-1}X_W \rightarrow W^{-1}Y_W \quad \text{and} \quad f^2_W \colon W^{-1}X_W \rightarrow W^{-1}Y_W;
\]
\item $H_W$ maps $X_W\subset W^{-1}X_W$ to $(W^{-2\delta})Y_W\subset W^{-1}Y_W$.
\end{enumerate}
\end{enumerate}
Here, $(W^{-2\delta})Y_W$ is the subcomplex of $W^{-1} Y_W$ consisting of elements such that multiplying by $W^{2 \delta}$ lands in $Y_W \subset W^{-1} Y_W$.
\end{defn}

The import of Definition~\ref{def:homotopy-delta} is that we no longer require the $W$-side maps $f^1_W$ and $f^2_W$ to be chain homotopic as maps from $X_W$ to $Y_W$, but instead as maps from $W^{-1}X_W$ to $W^{-1}Y_W$. However, we still exert some control over $H_W$ by requiring $H_W$ to map $X_W\subset W^{-1}X_W$ into the subcomplex $(W^{-2\delta})Y_W\subset W^{-1}Y_W$. Note that a chain homotopy up to defect $\delta = 0$ is a chain homotopy in the usual sense, using the fact that $X_W$ and $Y_W$ are free.

\begin{defn}\label{def:homotopy-equivalence-mixed}
We say two mixed complexes $X$ and $Y$ are \textit{homotopy equivalent} if there exist mixed complex morphisms $\frf$ and $\frg$ between them such that $\frg \circ \frf$ and $\frf \circ \frg$ are homotopic to the identity, as per Definition~\ref{def:homotopy-of-mixed}. We say $X$ and $Y$ are \textit{homotopy equivalent up to defect $\delta$} if there exist mixed complex morphisms $\frf$ and $\frg$ between them such that $\frg \circ \frf$ and $\frf \circ \frg$ are homotopic to the identity up to defect $\delta$, as per Definition~\ref{def:homotopy-delta}. Note that a homotopy equivalence up to defect $\delta = 0$ is a homotopy equivalence.
\end{defn}

Roughly speaking, the relation between Definiton~\ref{def:Qequivalence} and Definition~\ref{def:homotopy-equivalence-mixed} is the following. Consider a pair of mixed complexes $X$ and $Y$ such that $X_Q$ and $Y_Q$ are homotopy equivalent via maps $f_Q$ and $g_Q$. As we have seen, we can ask that $f_Q$ and $g_Q$ be promoted to maps $\frf$ and $\frg$ of mixed complexes. This results in the notion of $Q$-equivalence and requires finding compatible maps $f_W$ and $g_W$ on the $W$-side. Definition~\ref{def:homotopy-equivalence-mixed} requires the homotopies witnessing $g_Qf_Q \simeq \id$ and $f_Q g_Q \simeq \id$ to also lift to maps of mixed complexes. That is, we ask for $W$-side homotopies making $g_W f_W \simeq \id$ and $f_W g_W \simeq \id$ which satisfy the compatibility condition of Definition~\ref{def:homotopy-of-mixed}. It turns out, however, that finding such homotopies is often impossible. We thus sometimes allow our homotopies on the $W$-side to not quite be defined on $X_W$ and $Y_W$, and instead be defined on $W^{-1}X_W$ and $W^{-1}Y_W$ but with controlled behavior with respect to $W$.

We now list several technical results will be useful in our proof of M3 invariance. 

\begin{lem}\label{lem:composition-defect-delta}
Suppose $\frf^1$ and $\frf^2$ from $X$ to $Y$ are chain homotopic up to defect $\delta$, and likewise that $\frg^1$ and $\frg^2$ from $Y$ to $Z$ are chain homotopic up to defect $\delta$. Then $\frg^1 \circ \frf^1$ and $\frg^2 \circ \frf^2$ from $X$ to $Z$ are chain homotopic up to defect $\delta$.
\end{lem}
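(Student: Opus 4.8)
The plan is to reduce the lemma to two formal facts: that the relation ``chain homotopic up to defect $\delta$'' (Definition~\ref{def:homotopy-delta}) is preserved by pre- and post-composition with a fixed mixed complex morphism, and that it is transitive with the defect unchanged. Granting these, I would factor
\[
\frg^1 \circ \frf^1 \;\sim_\delta\; \frg^1 \circ \frf^2 \;\sim_\delta\; \frg^2 \circ \frf^2,
\]
where the first equivalence is obtained by post-composing a homotopy witnessing $\frf^1 \sim_\delta \frf^2$ with $\frg^1$, and the second by pre-composing a homotopy witnessing $\frg^1 \sim_\delta \frg^2$ with $\frf^2$; transitivity then finishes the proof.

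Transitivity is immediate: if $(H_Q, H_W, \Delta)$ witnesses $\frf^1 \sim_\delta \frf^2$ and $(H_Q', H_W', \Delta')$ witnesses $\frf^2 \sim_\delta \frf^3$, then I would check directly that $(H_Q + H_Q', H_W + H_W', \Delta + \Delta')$ witnesses $\frf^1 \sim_\delta \frf^3$. The defining identities simply add (all signs vanish over $\f$, and $f^2_Q$, $f^2_W$ cancel in pairs), and $H_W + H_W'$ still carries $X_W$ into the submodule $(W^{-2\delta})Y_W$ since the latter is closed under addition.

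The substantive step is compatibility with composition, and here I would exhibit explicit homotopy data. Suppose $\frf^1 \sim_\delta \frf^2 \colon X \to Y$ via $(H_Q, H_W, \Delta)$ and let $\frg = (g_Q, g_W, h^g) \colon Y \to Z$. I claim $\frg \circ \frf^1 \sim_\delta \frg \circ \frf^2$ via $\bigl(\,g_Q H_Q,\ g_W H_W,\ g_W \Delta + h^g H_Q\,\bigr)$. The homological and $W$-side identities follow by applying the chain maps $g_Q$, resp.\ $g_W$, to the corresponding identities for $\frf^1 \sim_\delta \frf^2$; the defect is preserved because $g_W$ is $W$-linear, so it sends $(W^{-2\delta})Y_W$ into $(W^{-2\delta})Z_W$. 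For the diagonal identity of Definition~\ref{def:homotopy-delta} one expands $[\partial,\, g_W \Delta + h^g H_Q]$ with the graded Leibniz rule, substitutes $[\partial,\Delta] = \eta_Y H_Q + H_W \eta_X + h^1 + h^2$ and the morphism relation $[\partial, h^g] = \eta_Z g_Q + g_W \eta_Y$ for $\frg$, and observes that the two copies of $g_W \eta_Y H_Q$ cancel, leaving exactly $\eta_Z (g_Q H_Q) + (g_W H_W)\eta_X + (g_W h^1 + h^g f^1_Q) + (g_W h^2 + h^g f^2_Q)$, which is the required expression since $g_W h^i + h^g f^i_Q$ is precisely the homotopy datum of $\frg \circ \frf^i$ by Definition~\ref{def:composition}. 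Symmetrically, if $\frg^1 \sim_\delta \frg^2 \colon Y \to Z$ via $(K_Q, K_W, \Delta')$ and $\frf = (f_Q, f_W, h) \colon X \to Y$, I would use the triple $\bigl(\,K_Q f_Q,\ K_W f_W,\ \Delta' f_Q + K_W h\,\bigr)$; the verification is structurally identical, the cancelling terms now being the two copies of $K_W \eta_Y f_Q$, and the defect is preserved because $f_W$ maps $X_W$ into $Y_W$ on the nose, so $K_W f_W$ lands in $(W^{-2\delta})Z_W$.

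I do not expect a genuine obstacle: the argument is entirely formal once the candidate homotopy data are written down. The only point requiring care is that at each stage the $W$-side homotopy continues to take values in the prescribed submodule $(W^{-2\delta})(-)_W$, which is automatic from the $\f[u]$-, $Q$-, and $W$-linearity of every map in sight; the ``hard part'' is therefore just the bookkeeping of the graded Leibniz expansions and confirming the two cancellations above.
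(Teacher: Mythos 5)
Your proposal is correct. The core mechanism is the same as the paper's -- the composite homotopies are the standard ones (products of a homotopy with a morphism component), and the defect is controlled because the $W$-side maps $f_W$, $g_W$ of genuine morphisms carry $X_W$ into $Y_W$ and commute with $W$, so composing them with a defect-$\delta$ homotopy stays within $(W^{-2\delta})(\cdot)_W$ -- but your packaging differs in a worthwhile way. The paper produces the homotopy between $\frg^1\circ\frf^1$ and $\frg^2\circ\frf^2$ in one shot, records only the $Q$- and $W$-side formulas $H_Q^{gf}=H_Q^g f^1_Q+g^2_Q H_Q^f$ and $H_W^{gf}=H_W^g f^1_W+g^2_W H_W^f$, and obtains the diagonal datum by stacking the two homotopy-coherent cubes and invoking the hyperbox compression procedure of \cite[Section 5.2]{MOlink}; the defect check is then exactly your observation about $H^g_W f^1_W$ and $g^2_W H^f_W$. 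You instead factor through $\frg^1\circ\frf^2$, prove the two one-sided composition statements with fully explicit triples $(g_QH_Q,\,g_WH_W,\,g_W\Delta+h^gH_Q)$ and $(K_Qf_Q,\,K_Wf_W,\,\Delta' f_Q+K_Wh)$, verify the third condition of Definition~\ref{def:homotopy-delta} by the Leibniz expansion and the cancellation of $g_W\eta_YH_Q$ (resp.\ $K_W\eta_Yf_Q$), and finish by the additivity of homotopy data over $\f$, under which the defect bound is preserved. Your computations are right (the only point worth keeping an eye on, which you handle, is that $K_W$ and $\Delta$ are only defined after localizing, so all compositions must be read in $Q^{-1}W^{-1}$ of the target), and the two witnessing homotopies you end up with ($g^1_QH_Q+K_Qf^2_Q$, etc.) differ from the paper's only by the inessential choice of which factorization of the square one compresses. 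What your route buys is self-containedness: the diagonal map is exhibited and checked by hand rather than delegated to the cited hyperbox machinery; what the paper's route buys is brevity and a formulation that generalizes immediately to longer stacks of cubes, as used in Lemma~\ref{lem:homotopy-defect-delta}.
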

\begin{proof}
  Let $\frf^1$ and $\frf^2$ be chain homotopic (up to defect $\delta$) via the data $\smash{(H_Q^f, H_W^f, \Delta^f)}$ and $\frg^1$ and $\frg^2$ be chain homotopic (up to defect $\delta$) via the data $\smash{(H_Q^g, H_W^g, \Delta^g)}$. The chain homotopy between $\frg^1 \circ \frf^1$ and $\frg^2 \circ \frf^2$ can be computed by stacking the homotopy coherent cubes resulting from Definition~\ref{def:homotopy-of-mixed} above each other and then applying the hyperbox compression procedure of e.g.\ \cite[Section 5.2]{MOlink}. 

\[\begin{tikzcd}
	& {Q^{-1}X_Q} &&& {W^{-1}X_W} \\
	{Q^{-1}X_Q} &&& {W^{-1}X_W} \\
	\\
	& {Q^{-1}Y_Q} &&& {W^{-1}Y_W} \\
	{Q^{-1}Y_Q} &&& {W^{-1}Y_W} \\
	\\
	& {Q^{-1}Z_Q} &&& {W^{-1}Z_W} \\
	{Q^{-1}Z_Q} &&& {W^{-1}Z_W}
	\arrow[from=1-2, to=1-5]
	\arrow[dotted, from=1-2, to=4-2]
	\arrow[shift right, dotted, from=1-2, to=4-5]
	\arrow[from=1-5, to=4-5]
	\arrow[from=2-1, to=1-2]
	\arrow[from=2-1, to=2-4]
	\arrow[dotted, from=2-1, to=4-2]
	\arrow[from=2-1, to=5-1]
	\arrow[from=2-1, to=5-4]
	\arrow[from=2-4, to=1-5]
	\arrow[from=2-4, to=4-5]
	\arrow[from=2-4, to=5-4]
	\arrow[dotted, from=4-2, to=4-5]
	\arrow[dotted, from=4-2, to=7-2]
	\arrow[shift right, dotted, from=4-2, to=7-5]
	\arrow[from=4-5, to=7-5]
	\arrow[dotted, from=5-1, to=4-2]
	\arrow[from=5-1, to=5-4]
	\arrow[dotted, from=5-1, to=7-2]
	\arrow[from=5-1, to=8-1]
	\arrow[from=5-1, to=8-4]
	\arrow[from=5-4, to=4-5]
	\arrow[from=5-4, to=7-5]
	\arrow[from=5-4, to=8-4]
	\arrow[dotted, from=7-2, to=7-5]
	\arrow[dotted, from=8-1, to=7-2]
	\arrow[from=8-1, to=8-4]
	\arrow[from=8-4, to=7-5]
\end{tikzcd}\]

In particular, the $Q$- and $W$-side homotopies are given by
\[
H_Q^{gf} = H_Q^gf^1_Q + g^2_QH_Q^f \quad \text{and} \quad H_W^{gf} = H_W^gf^1_W + g^2_WH_W^f,
\]
where the $W$-side homotopy is defined after inverting $W$. Now, $\smash{f^1_W}$ maps $\smash{X_W \subset W^{-1}X_W}$ into $\smash{Y_W \subset W^{-1}Y_W}$, while $\smash{H_W^g}$ maps $\smash{Y_W \subset W^{-1}Y_W}$ into $\smash{(W^{-2\delta})Z_Q \subset W^{-1}Z_W}$. Hence we see that their composition $H_W^gf^1_W$ maps $\smash{X_W \subset W^{-1}X_W}$ into $\smash{(W^{-2\delta})Z_Q \subset W^{-1}Z_W}$. An analogous statement holds for $\smash{g^2_WH_W^f}$. But this shows that $\smash{H_W^{gf}}$ satisfies the requirements of Definition~\ref{def:homotopy-delta} with defect $\delta$, as desired.
\end{proof}

In particular:

\begin{lem}\label{lem:homotopy-defect-delta}
If $X$ and $Y$ are homotopy equivalent up to defect $\delta$ and $Y$ and $Z$ are homotopy equivalent up to defect $\delta$, then $X$ and $Z$ are homotopy equivalent up to defect $\delta$ via the composition of the relevant homotopy equivalences.
\end{lem}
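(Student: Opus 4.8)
The plan is to reduce this entirely to Lemma~\ref{lem:composition-defect-delta}, together with two elementary observations about the relation ``chain homotopic up to defect $\delta$'' on morphisms of mixed complexes: that it is reflexive and that it is transitive. Reflexivity is immediate, since the data $H_Q = H_W = \Delta = 0$ exhibits any morphism as homotopic to itself, and the $W$-control condition $H_W(X_W) \subseteq (W^{-2\delta})Y_W$ of Definition~\ref{def:homotopy-delta} holds trivially for $H_W = 0$ (this also applies to $\id_X$, which is a mixed-complex morphism with $f_Q = f_W = \id$ and $h = 0$). For transitivity, if $\frf$ and $\frf'$ are homotopic up to defect $\delta$ via $(H_Q, H_W, \Delta)$ and $\frf'$ and $\frf''$ via $(H_Q', H_W', \Delta')$, then the sums $(H_Q + H_Q', H_W + H_W', \Delta + \Delta')$ exhibit $\frf$ and $\frf''$ as homotopic up to defect $\delta$: adding the two defining identities of Definition~\ref{def:homotopy-delta}(3) makes the diagonal term $h$ of $\frf'$ appear twice, hence cancel over $\f = \f_2$, leaving exactly the required identity; and the $W$-control condition is preserved under sums.

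Granting these observations, write $\frf_1 \colon X \to Y$, $\frg_1 \colon Y \to X$ and $\frf_2 \colon Y \to Z$, $\frg_2 \colon Z \to Y$ for the given homotopy equivalences up to defect $\delta$, and set $\frf_2 \circ \frf_1 \colon X \to Z$ and $\frg_1 \circ \frg_2 \colon Z \to X$ as the candidate homotopy equivalence. Since composition of mixed-complex morphisms is associative, $(\frg_1 \circ \frg_2) \circ (\frf_2 \circ \frf_1) = \frg_1 \circ (\frg_2 \circ \frf_2) \circ \frf_1$. I would apply Lemma~\ref{lem:composition-defect-delta} twice. First, to the parallel pair $(\frf_1, \frf_1)$ from $X$ to $Y$ (homotopic to itself, defect $0 \leq \delta$) and the parallel pair $(\frg_2 \circ \frf_2,\ \id_Y)$ from $Y$ to $Y$ (homotopic up to defect $\delta$ by hypothesis): this yields $(\frg_2 \circ \frf_2) \circ \frf_1 \simeq \id_Y \circ \frf_1 = \frf_1$ up to defect $\delta$. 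Second, to the parallel pair $\bigl((\frg_2 \circ \frf_2) \circ \frf_1,\ \frf_1\bigr)$ from $X$ to $Y$ (just shown homotopic up to defect $\delta$) and the parallel pair $(\frg_1, \frg_1)$ from $Y$ to $X$: this yields $\frg_1 \circ (\frg_2 \circ \frf_2) \circ \frf_1 \simeq \frg_1 \circ \frf_1$ up to defect $\delta$. Since $\frg_1 \circ \frf_1 \simeq \id_X$ up to defect $\delta$ by hypothesis, transitivity gives $(\frg_1 \circ \frg_2) \circ (\frf_2 \circ \frf_1) \simeq \id_X$ up to defect $\delta$. The same argument with the roles of $X$ and $Z$ interchanged gives $(\frf_2 \circ \frf_1) \circ (\frg_1 \circ \frg_2) \simeq \id_Z$ up to defect $\delta$, so by Definition~\ref{def:homotopy-equivalence-mixed} the morphisms $\frf_2 \circ \frf_1$ and $\frg_1 \circ \frg_2$ exhibit a homotopy equivalence up to defect $\delta$ between $X$ and $Z$.

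I do not expect any genuine obstacle here; this is a bookkeeping lemma. The only points that require care are (i) verifying that $\id_X$ really is a mixed-complex morphism homotopic to itself up to defect $\delta$, (ii) the transitivity computation, which uses characteristic $2$, and (iii) matching the hypotheses of Lemma~\ref{lem:composition-defect-delta}, which is phrased in terms of two pairs of \emph{parallel} morphisms, so one must take one of each pair to be an identity morphism (or $\frf_1$, $\frg_1$) paired with itself. If one preferred to avoid the two-step invocation, one could instead directly stack the homotopy-coherent cubes arising from $\frg_2 \circ \frf_2 \simeq \id_Y$ and $\frg_1 \circ \frf_1 \simeq \id_X$ (along with the trivial cubes for $\frf_1$ and $\frg_1$) and compress, exactly as in the proof of Lemma~\ref{lem:composition-defect-delta}; but the inductive approach above is cleaner and self-contained.
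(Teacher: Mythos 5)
Your proof is correct and follows essentially the same route as the paper: associativity of composition plus repeated application of Lemma~\ref{lem:composition-defect-delta} to reduce everything to the hypotheses $\frg_1\circ\frf_1\simeq\id_X$ and $\frg_2\circ\frf_2\simeq\id_Y$ (and symmetrically). The only difference is that you make explicit the bookkeeping the paper leaves implicit in ``repeatedly applying'' --- namely that $\id$ is a mixed-complex morphism, that defect~$0$ homotopies count as defect~$\delta$, and that the defect-$\delta$ homotopy relation is transitive (by summing the data over $\f_2$) --- all of which you verify correctly.
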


\begin{proof}
Let $\frf$ and $\bar{\frf}$ be homotopy equivalences (up to defect $\delta$) between $X$ and $Y$ and $\frg$ and $\bar{\frg}$ be homotopy equivalences (up to defect $\delta$) between $Y$ and $Z$. Repeatedly applying Lemma~\ref{lem:composition-defect-delta} shows $(\bar{\frf} \circ \bar{\frg}) \circ (\frg \circ \frf) = \bar{\frf} \circ (\bar{\frg} \circ \frg) \circ \frf$ is homotopy equivalent to the identity up to defect $\delta$, and likewise for the composition in the other direction.
\end{proof}

We will sometimes precompose a chain homotopy up to defect $\delta$ with a mixed complex morphism whose $W$-side map lands in the image of $W^{2 \delta}$. This turns the chain homotopy up to defect $\delta$ into a genuine chain homotopy, as the following lemma indicates: 

\begin{lem}\label{lem:technical-composition}
Let $X$, $Y$, and $Z$ be three mixed complexes. Suppose:
\begin{enumerate}
\item $\frf$ is a mixed complex morphism from $X$ to $Y$ whose $W$-side map $f_W$ maps $X_W$ into $(W^{2\delta}) Y_W$.
\item $\frg^1$ and $\frg^2$ are mixed complex morphisms from $Y$ to $Z$ that are chain homotopic up to defect $\delta$.
\end{enumerate}
Then $\frg^1 \circ \frf$ and $\frg^2 \circ \frf$ are chain homotopic.
\end{lem}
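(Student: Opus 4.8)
The plan is to chase the relevant homotopies through the definitions and verify that precomposing with $\frf$ absorbs the $W$-defect. First I would unwind the hypotheses: write $\frf = (f_Q, f_W, h^f)$, write $\frg^i = (g^i_Q, g^i_W, h^i)$ for $i = 1, 2$, and let $(H_Q, H_W, \Delta)$ be the data witnessing that $\frg^1$ and $\frg^2$ are chain homotopic up to defect $\delta$ as in Definition~\ref{def:homotopy-delta}. Recall from Definition~\ref{def:composition} that $\frg^i \circ \frf = (g^i_Q \circ f_Q, \ g^i_W \circ f_W, \ g^i_W h^f + h^i f_Q)$. The goal is to produce a genuine chain homotopy (in the sense of Definition~\ref{def:homotopy-of-mixed}, i.e.\ defect $0$) between these two composite morphisms.

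The natural candidates are the composite homotopies
\[
\widetilde{H}_Q = H_Q \circ f_Q \colon X_Q \to Z_Q, \qquad \widetilde{H}_W = H_W \circ f_W \colon X_W \to Z_W, \qquad \widetilde{\Delta} = \Delta \circ f_Q + \text{(correction term)} \colon Q^{-1}X_Q \to W^{-1}Z_W.
\]
The $Q$-side is routine: since $f_Q$ is a chain map, $\partial \widetilde{H}_Q + \widetilde{H}_Q \partial = (\partial H_Q + H_Q \partial) f_Q = (g^1_Q + g^2_Q) f_Q = g^1_Q f_Q + g^2_Q f_Q$, which is exactly the required identity for the $Q$-components of $\frg^1 \circ \frf$ and $\frg^2 \circ \frf$. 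The key point — and the step I expect to be the main obstacle — is the $W$-side: a priori $H_W$ is only defined on $W^{-1}Y_W$ and carries $Y_W$ into $(W^{-2\delta})Z_W$, so $\widetilde{H}_W = H_W f_W$ would only carry $X_W$ into $(W^{-2\delta})Z_W$, which is not good enough for a defect-$0$ homotopy. This is precisely where hypothesis (1) enters: $f_W$ maps $X_W$ into $(W^{2\delta})Y_W$. Therefore $\widetilde{H}_W = H_W \circ f_W$ maps $X_W$ into $H_W\big((W^{2\delta})Y_W\big) \subseteq W^{2\delta} \cdot (W^{-2\delta}) Z_W = Z_W$. Hence $\widetilde{H}_W$ restricts to an honest map $X_W \to Z_W$, and since $Z_W$ is free (so that $Z_W \hookrightarrow W^{-1}Z_W$ is injective) the homotopy identity $\partial \widetilde{H}_W + \widetilde{H}_W \partial = (g^1_W + g^2_W) f_W$, which holds after inverting $W$, holds already on $Z_W$. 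Thus $\widetilde{H}_W$ is a bona fide chain homotopy between the $W$-components $g^1_W f_W$ and $g^2_W f_W$.

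Finally I would check the diagonal compatibility. Using $\partial \Delta + \Delta \partial = \eta_Y H_Q + H_W \eta_X + h^1 + h^2$ (the defect-$\delta$ version of condition (3), valid after localization), and the commuting-square homotopy for $\frf$ (namely $\partial h^f + h^f \partial = \eta_Y f_Q + f_W \eta_X$), one computes that
\[
\widetilde{\Delta} = \Delta \circ f_Q + H_W \circ h^f
\]
satisfies $\partial \widetilde{\Delta} + \widetilde{\Delta} \partial = \eta_Z \widetilde{H}_Q + \widetilde{H}_W \eta_X + (g^1_W h^f + h^1 f_Q) + (g^2_W h^f + h^2 f_Q)$, which is exactly the defect-$0$ condition of Definition~\ref{def:homotopy-of-mixed}(3) for the pair of composites $\frg^1 \circ \frf$ and $\frg^2 \circ \frf$. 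This is a direct but somewhat fiddly computation using the chain-map and homotopy relations for $f_Q$, $f_W$, $g^i_Q$, $g^i_W$, $h^f$, $h^i$, $H_Q$, $H_W$, and $\Delta$; I would carry it out by expanding both sides and cancelling in pairs. The only genuinely nontrivial input is the $W$-adic bookkeeping in the previous paragraph; everything else is formal diagram-chasing in the homotopy coherent cube. Assembling $(\widetilde{H}_Q, \widetilde{H}_W, \widetilde{\Delta})$ then gives the desired chain homotopy, completing the proof.
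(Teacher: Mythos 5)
Your proposal is correct and follows essentially the same route as the paper: the paper applies its composition lemma (Lemma~\ref{lem:composition-defect-delta}) with $\frf^1=\frf^2=\frf$ (zero homotopy), which yields exactly your composite homotopies, with $W$-side $H_W\circ f_W$, and then uses the hypothesis that $f_W$ lands in $(W^{2\delta})Y_W$ to conclude the composite maps $X_W$ into $Z_W$, killing the defect. Your only departure is writing out the $Q$-side, $W$-side, and diagonal formulas explicitly instead of citing that lemma, and your verification (including $\widetilde{\Delta}=\Delta f_Q+H_W h^f$) checks out.
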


\begin{proof}
We apply the argument of Lemma~\ref{lem:composition-defect-delta} to the pairs $\frf^1 = \frf^2 = \frf$ and $\frg^1$ and $\frg^2$. Then $\frg^1 \circ \frf$ and $\frg^2 \circ \frf$ are chain homotopic up to defect $\delta$, with $W$-side homotopy given by
\[
H_W^{gf} = H_W^g f_W.
\]
Here, we have used that for the pair $\frf^1 = \frf^2 = \frf$, the $W$-side homotopy is zero. By assumption, $f_W$ maps $X_W \subset W^{-1}X_W$ into $(W^{2\delta})Y_W \subset W^{-1} Y_W$ while $H_W^g$ maps $Y_W \subset W^{-1} Y_W$ into $(W^{-2\delta}) Z_W \subset W^{-1} Z_W$. Hence their composition in fact maps $X_W \subset W^{-1} X_W$ into $Z_W \subset W^{-1} Z_W$, showing that $\frg^1 \circ \frf$ and $\frg^2 \circ \frf$ are chain homotopic with zero defect.
\end{proof}

We close with some standard homological algebra which will be important in the sequel: 

\begin{defn}\label{def:mapping-cone}
The \emph{mapping cone} of a morphism $\frf=(f_Q,f_W,h)\colon X\to Y$ is given by
\[ 
\mathrm{Cone}(\frf) = (\mathrm{Cone}(f_Q), \mathrm{Cone}(f_W),\eta_{\mathrm{Cone}(\frf)}).
\] 
Here, $\mathrm{Cone}(f_Q)$ is the $R[Q, W, W^{-1}]$-complex given by the usual mapping cone of $f_Q \colon X_Q \rightarrow Y_Q$, and likewise for $\mathrm{Cone}(f_W)$. The homotopy equivalence $\eta_{\mathrm{Cone}(\frf)}$ is defined by
\begin{equation}\label{eq:eta_equiv}
  \eta_{\mathrm{Cone}(\frf)}=\left(\begin{array}{cc}
      \eta_{X} & 0 \\ h & \eta_{Y}
\end{array}\right)
\end{equation}
as a map from $Q^{-1} (X_Q \oplus Y_Q)$ to $W^{-1} (X_W \oplus Y_W)$. 
\end{defn}

\color{black}

\begin{defn}\label{def:composition-cone}
Let $\frf=(f_Q,f_W,h^f)\colon X\to Y$ and $\frg = (g_Q, g_W, h^g) \colon Y \rightarrow Z$ be morphisms of mixed complexes. Then we may define the \textit{composition morphism} from $\mathrm{Cone}(\frf)$ to $\mathrm{Cone}(\frg \circ \frf)$ via the triple
\[
\left(\begin{array}{cc}\id & 0 \\0 & g_Q\end{array}\right), \quad  \left(\begin{array}{cc}\id& 0 \\0 & g_Q\end{array}\right), \quad \text{and} \quad \left(\begin{array}{cc}0 & 0 \\0 & h^g\end{array}\right).
\]
It is tedious but straightforward to check that this is a morphism of mixed complexes.
\end{defn}

\begin{lem}\label{lem:isomorphic-cones}
Suppose that $\frf^1 = (f^1_Q, f^1_W, h^1)$ and $\frf^2 = (f^2_Q, f^2_W, h^2)$ are chain homotopic morphisms from $X$ to $Y$. Then $\mathrm{Cone}(\frf^1)$ and $\mathrm{Cone}(\frf^2)$ are isomorphic.
\end{lem}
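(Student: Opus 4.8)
<br>

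The plan is to mimic the standard fact that chain homotopic maps induce isomorphic mapping cones, but carried out for each of the three pieces of the data of a mixed complex morphism. Suppose $\frf^1$ and $\frf^2$ are chain homotopic via $(H_Q, H_W, \Delta)$ as in Definition~\ref{def:homotopy-of-mixed}. On the $Q$-side, the usual formula
\[
\Phi_Q = \left(\begin{array}{cc} \id & 0 \\ H_Q & \id \end{array}\right) \colon \mathrm{Cone}(f^1_Q) \rightarrow \mathrm{Cone}(f^2_Q)
\]
is an isomorphism of $R[Q,W,W^{-1}]$-complexes, with inverse given by the same formula (since $H_Q$ is strictly nilpotent in the relevant sense, or more simply since the matrix is lower-triangular with identity diagonal). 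The same formula with $H_W$ in place of $H_Q$ gives an isomorphism $\Phi_W$ on the $W$-side. The content of the proof is then to produce the diagonal homotopy $h^\Phi \colon Q^{-1}\mathrm{Cone}(f^1_Q) \to W^{-1}\mathrm{Cone}(f^2_W)$ making the triple $\Phi = (\Phi_Q, \Phi_W, h^\Phi)$ into a morphism of mixed complexes, and then to observe that the analogous triple built from $-H_Q$ and $-H_W$ (i.e.\ $H_Q$ and $H_W$ again, since we are over $\F$) is a strict two-sided inverse.

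First I would write down the candidate $h^\Phi$ as a $2\times 2$ matrix. The natural guess, dictated by the homotopy-coherent-cube picture, is
\[
h^\Phi = \left(\begin{array}{cc} 0 & 0 \\ \Delta & 0 \end{array}\right)
\]
where $\Delta$ is the diagonal map from the chain homotopy data, localized appropriately; one may need to add a correction term involving $h^1$, $h^2$, $H_Q$, $H_W$ on the off-diagonal, and determining the exact form of that entry is the one genuinely fiddly calculation. The verification that $\Phi$ is a morphism amounts to checking the single equation $\partial h^\Phi + h^\Phi \partial = \eta_{\mathrm{Cone}(\frf^2)}\,\Phi_Q + \Phi_W\,\eta_{\mathrm{Cone}(\frf^1)}$, where the $\eta$'s are the block matrices of \eqref{eq:eta_equiv}. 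Expanding both sides into their four matrix entries, the diagonal entries reduce to the statement that $\eta_X, \eta_Y$ are chain maps together with $H_Q, H_W$ being $\F[u,Q,W^{\pm}]$- resp.\ $\F[u,Q^{\pm},W]$-linear, and the lower-left entry is exactly the defining relation $\partial\Delta + \Delta\partial = \eta_Y H_Q + H_W\eta_X + h^1 + h^2$ from Definition~\ref{def:homotopy-of-mixed}, possibly after absorbing the correction term. This is where I expect the bulk of the bookkeeping to live.

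Finally, to conclude that $\Phi$ is an isomorphism of mixed complexes in the sense required, I would exhibit its inverse. Since $\Phi_Q$ and $\Phi_W$ are strict isomorphisms (lower-triangular unipotent over rings of characteristic $2$, so each is its own inverse), the pair $(\Phi_Q^{-1}, \Phi_W^{-1})$ is again a morphism of mixed complexes once equipped with the diagonal homotopy $h^{\Phi^{-1}}$ obtained by transporting $h^\Phi$ through $\Phi_Q^{-1}$ and $\Phi_W^{-1}$; using the composition rule of Definition~\ref{def:composition} one checks $\Phi^{-1}\circ\Phi$ and $\Phi\circ\Phi^{-1}$ are literally the identity morphism $(\id, \id, 0)$ — here it is convenient that the composition of the diagonal homotopies, $f^2_W h^{\Phi} + h^{\Phi^{-1}} f^1_Q$-type expression, vanishes on the nose by our choice, or at worst is a null-homotopic defect that we can arrange to be zero. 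Thus $\mathrm{Cone}(\frf^1) \cong \mathrm{Cone}(\frf^2)$ as mixed complexes. The main obstacle is pinning down the precise off-diagonal entry of $h^\Phi$ so that the morphism equation holds exactly rather than merely up to a further homotopy; I would sort this out by writing the stacked homotopy-coherent cube and reading off the induced diagonal map, as in the hyperbox-compression bookkeeping used in Lemma~\ref{lem:composition-defect-delta}.
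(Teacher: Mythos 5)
Your proposal is correct and follows essentially the same route as the paper: the paper's proof uses exactly the triple $\bigl(\begin{smallmatrix}\id & 0 \\ H_Q & \id\end{smallmatrix}\bigr)$, $\bigl(\begin{smallmatrix}\id & 0 \\ H_W & \id\end{smallmatrix}\bigr)$, $\bigl(\begin{smallmatrix}0 & 0 \\ \Delta & 0\end{smallmatrix}\bigr)$, observes that the same triple is a morphism in the other direction, and checks the two are inverse. Your one hedge resolves in your favor: no correction term to the diagonal entry is needed, since the lower-left component of the morphism equation is literally the relation $\partial\Delta + \Delta\partial = \eta_Y H_Q + H_W\eta_X + h^1 + h^2$ from Definition~\ref{def:homotopy-of-mixed}, and the diagonal part of the composed morphism vanishes on the nose in characteristic two.
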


\begin{proof}
Let $(H_Q, H_W, \Delta)$ constitute the data of the chain homotopy. It is tedious but straightforward to check that the following triple defines a mixed complex morphism from $\mathrm{Cone}(\frf^1)$ to $\mathrm{Cone}(\frf^2)$:
\[
\left(\begin{array}{cc}\id & 0 \\H_Q & \id\end{array}\right), \quad  \left(\begin{array}{cc}\id& 0 \\H_W & \id\end{array}\right), \quad \text{and} \quad \left(\begin{array}{cc}0 & 0 \\ \Delta & 0\end{array}\right).
\]
In fact, the same triple defines a mixed complex morphism from $\mathrm{Cone}(\frf^2)$ to $\mathrm{Cone}(\frf^1)$; it is straightforward to check that these are inverse to each other.
\end{proof}
\begin{rmk}
  We present
  a concept explaining the definition of a mixed complex (but not a map between them). Suppose $\F$ is a field, and $M_Q,M_W$ are two projective modules over $\F[Q,W,W^{-1}]$, respectively $\F[Q,Q^{-1},W]$. Then $M_Q,M_W$ can be regarded as vector bundles over $\F\times \F^*$
  and $\F^*\times\F$. Identification of $Q^{-1}M_Q$ with $W^{-1}M_W$ is gluing these two vector bundles over $\F^*\times\F^*$. In other
  words, two projective modules $M_Q$ with $M_W$ together with an identification of $Q^{-1}M_Q\cong W^{-1}M_W$ specify
  a vector bundle over $\F\times\F\setminus\{(0,0)\}$.
\end{rmk}

\subsection{Mixed complexes for links}\label{sub:mixed_links}
We now define mixed complexes for links and establish invariance up to $Q$-equivalence. We then discuss the construction of cobordism maps for mixed complexes.

\begin{defn}
Let $D$ be a transvergent diagram for an involutive link $L$. Define 
\[
\Mkc(D) = M(\Kcm_{Q, W}(D)) \quad \text{and} \quad \Mkcr(D) = M(\Kcrm_{Q, W}(D))
\]
by applying the procedure of Example~\ref{ex:simple_mixed} to the $(Q, W)$-complex and reduced $(Q, W)$-complex associated to $D$ in Definition~\ref{def:QWdiagram}.
\end{defn}

We aim to show that $\Mkc(D)$ (up to $Q$-equivalence) is actually a link invariant, unlike $\Kcrm_{Q,W}(D)$ itself.
Our general strategy will be to lift the Borel maps defined in Sections~\ref{sec:borel-transvergent} and \ref{sec:miinvariance} to morphisms of mixed complexes. More precisely, each time we encounter a map $f_Q$ of Borel complexes, we check that $f_Q$ is filtered with respect to $\deg_k$. (In the case of the M1, M2, and M3 moves, it will be necessary to first homotope $f_Q$ over $Q^{-1}W^{-1}\Kcm_Q$ in order to make it filtered.) We then apply Definition~\ref{def:mapWlift} to obtain a corresponding $W$-side map $f_W$, thus completing $f_Q$ to a morphism of mixed complexes. Likewise, each time we encounter a homotopy $H_Q$ between Borel maps, we check whether $H_Q$ is filtered with respect to $\deg_k$. In general, the homotopies constructed in Sections~\ref{sec:borel-transvergent} and \ref{sec:miinvariance} will not be filtered, but will decrease $\deg_k$ by at most some constant $2\delta$. Applying Definition~\ref{def:mapWlift} then results in a chain homotopy up to defect $\delta$ as defined in Definition~\ref{def:homotopy-delta}. While an examination of the homotopies of Sections~\ref{sec:borel-transvergent} and \ref{sec:miinvariance} is not strictly necessary for establishing $Q$-equivalence, this additional analysis will be useful in our discussion of the M3 move.

\subsubsection*{Invariance of the mixed complex}

The proof of invariance will be split into several different cases. 




\begin{lem}\label{lem:simpleQequivalence}
If $D$ and $D'$ differ by:
\begin{enumerate}
\item an R-move or I-move, then $\Mkc(D)$ and $\Mkc(D')$ are homotopy equivalent;
\item an IR1, IR2, or IR3 move, then $\Mkc(D)$ and $\Mkc(D')$ are homotopy equivalent up to defect $\delta = 1/2$;
\item an R1 or R2 move, then $\Mkc(D)$ and $\Mkc(D')$ are homotopy equivalent up to defect $\delta = 1$.
\end{enumerate}
Analogous statements hold for $\Mkcr(D)$ and $\Mkcr(D')$.
\end{lem}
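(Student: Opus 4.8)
\textbf{Proof strategy for Lemma~\ref{lem:simpleQequivalence}.}
The plan is to bootstrap from the Borel-complex maps already constructed in Lemma~\ref{lem:invariant-formal-case} and promote each of them, together with its witnessing homotopies, to a morphism of mixed complexes in the sense of Definition~\ref{def:mixed-morphism}, tracking the $\deg_k$-behavior at each stage. Recall from Lemma~\ref{lem:invariant-formal-case} that in each of the listed cases we have $\tau$-equivariant chain maps $f$ and $g$ between $\Kcm(D)$ and $\Kcm(D')$ which are homotopy inverse via $\tau$-equivariant homotopies $F$ and $G$; tensoring with $\f[Q]$ gives Borel maps $f_Q = f\otimes\id$, $g_Q = g\otimes\id$ and Borel homotopies $F_Q = F\otimes\id$, $G_Q = G\otimes\id$. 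The first task is to check that each of $f$, $g$, $F$, $G$ is filtered with respect to $\deg_k$ for the R- and I-moves (so $\delta=0$), decreases $\deg_k$ by at most $1$ for the IR1, IR2, IR3 moves (so $\delta = 1/2$), and decreases $\deg_k$ by at most $2$ for the R1 and R2 moves (so $\delta = 1$). This is a finite check on the explicit cobordism maps recorded in the figures of Lemma~\ref{lem:invariant-formal-case} (Figures~\ref{fig:IR1}, \ref{fig:bn_R1}, \ref{fig:new_R1_f}, \ref{fig:new_R1_gh}, \ref{fig:bn_R2}, \ref{fig:R2_1}, \ref{fig:R2_2}): one computes $\deg_k$ of the source and target resolutions of each elementary cobordism appearing, using the weights $o(c), u(c)$ from Section~\ref{sec:lobb-watson-filtration} and the fact that a single classical R1 or R2 move involves an off-axis crossing pair contributing total $\deg_k$-shift at most the stated bound.

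Once the $\deg_k$-bounds are in hand, I would apply Definition~\ref{def:mapWlift} to promote $f_Q$ and $g_Q$ to maps of $(Q,W)$-complexes (in the R/I case these are honest $\deg_k$-preserving maps of $\Kcm_{Q,W}$; in the IR or R1/R2 cases they are maps defined after inverting $W$, but in fact land in $W^{2\cdot 0}$ of the target up to the stated defect — more precisely, the point is that $f_Q, g_Q$ are filtered, so Definition~\ref{def:mapWlift} gives genuine maps $f_{Q,W}, g_{Q,W}$, and only the homotopies $F_Q, G_Q$ fail to be filtered). Passing to localizations $W^{-1}$ and $Q^{-1}$ as in Example~\ref{ex:simple_mixed}, this yields the $Q$-side and $W$-side maps of a mixed complex morphism $\frf = (f_Q, f_W, 0)$ (the diagonal homotopy $h$ is zero here because both localizations come from the same underlying $(Q,W)$-complex and the promoted maps agree on the common localization $Q^{-1}W^{-1}$). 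Similarly one builds $\frg$. To verify $\frg\circ\frf \simeq \id$ and $\frf\circ\frg\simeq\id$ as morphisms of mixed complexes up to defect $\delta$, one uses the Borel homotopies: $F_Q$ and $G_Q$ give the $Q$-side chain homotopies, while applying Definition~\ref{def:mapWlift} to $F_Q$ and $G_Q$ gives $W$-side maps $H_W$ which — because $F$, $G$ decrease $\deg_k$ by at most $2\delta$ — map $X_W$ into $(W^{-2\delta})Y_W$, which is exactly condition $(2^*)$ of Definition~\ref{def:homotopy-delta}. The diagonal map $\Delta$ can be taken to be zero, since everything is induced from the single complex $\Kcm_{Q,W}$ and the homotopy coherence cube of Definition~\ref{def:homotopy-of-mixed} collapses onto the common localization where $H_Q$ and $H_W$ literally agree.

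For the reduced statements, one repeats the argument using the reduced Borel maps; the last clause of Lemma~\ref{lem:invariant-formal-case} guarantees that for an appropriate choice of equivariant basepoint $p$ (outside the region where the move occurs, as in Remark~\ref{rem:basepointjumping2}), all of $f$, $g$, $F$, $G$ preserve $\Kcrm_p$, and Remark~\ref{rem:reducedQWdefined} ensures the reduced $(Q,W)$-complex is a well-defined (basepoint-independent) isomorphism class inheriting the $\deg_k$-grading. Thus the same promotion procedure applies verbatim to $\Mkcr$. The main obstacle here is the bookkeeping of $\deg_k$-defects: one must be careful that the \emph{composite} homotopies (for instance when an IR-move is realized as a pair of commuting classical Reidemeister moves in disjoint disks) do not accumulate defect beyond the claimed $\delta$; this is where one uses that the two halves of an IR-move occur in disjoint planar disks on opposite sides of $\Fix(\tau)$, so their $\deg_k$-defects do not add but are each individually bounded by the single-crossing bound. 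Everything else is a routine — if somewhat lengthy — diagram chase through the definitions of Section~\ref{sec:3}.
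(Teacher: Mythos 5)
Your overall route is the same as the paper's: promote the maps $f$, $g$ and homotopies $F$, $G$ of Lemma~\ref{lem:invariant-formal-case} to the $(Q,W)$-level via Definition~\ref{def:mapWlift}, observe that $f$ and $g$ are genuinely $\deg_k$-filtered so that $\frf=(f_Q,f_W=f_Q,0)$ and $\frg=(g_Q,g_W=g_Q,0)$ are mixed complex morphisms, take the diagonal map $\Delta=0$, and read the defect off the $\deg_k$-behavior of $F$ and $G$; the reduced case is handled exactly as in the paper by choosing the equivariant basepoint away from the region of the move. However, the quantitative bookkeeping -- which is the entire content of parts (2) and (3) -- does not go through as you have written it. By the weights $o(c),u(c)$ of Section~\ref{sec:lobb-watson-filtration}, changing the resolution of a single \emph{off-axis} crossing changes $\deg_k$ by at most $|o(c)-u(c)|=1/2$, and of an \emph{on-axis} crossing by at most $1$; since the homotopy for an IR move is the R1/R2 homotopy on one side tensored with the identity on the other, $F$ and $G$ drop $\deg_k$ by at most $1/2$ for IR1, IR2, IR3 and by at most $1$ for the on-axis R1, R2 moves -- not by $1$ and $2$ as you assert.

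Moreover, your conversion from $\deg_k$-drop to defect is inconsistent with Definitions~\ref{def:mapWlift} and \ref{def:homotopy-delta}: a component dropping $\deg_k$ by $d$ acquires the coefficient $W^{2(\deg_k(y)-\deg_k(x))}=W^{-2d}$, so landing in $(W^{-2\delta})Y_W$ means drop at most $\delta$, not $2\delta$ (the factor of $2$ in the exponent already compensates for $\deg_k(W)=-1/2$). Thus if one took your stated bounds ($1$ for IR, $2$ for R1/R2) at face value and applied the definitions correctly, one would only obtain defects $1$ and $2$, which does not prove the lemma; your two factor-of-two errors cancel only because of the incorrect conversion rule. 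The fix is simply to redo the crossing-weight computation as above: with drops $\le 1/2$ (off-axis) and $\le 1$ (on-axis), condition $(2^*)$ holds with $\delta=1/2$ and $\delta=1$ respectively, and the rest of your argument (including the correct observation that the two halves of an IR move occur in disjoint disks, so only one homotopy factor appears in each term and defects do not accumulate) then coincides with the paper's proof. Getting the tight values matters later, since the M3 argument composes these homotopies through Lemma~\ref{lem:technical-composition}, where the precise defect is used.
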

\begin{proof}
Let $D$ and $D'$ differ by an IR1, IR2, IR3, R1, R2, R-move, or I-move. Let
\[
f_Q \colon \Kcm_Q(D) \rightarrow \Kcm_Q(D') \quad \text{and} \quad g_Q \colon \Kcm_Q(D') \rightarrow \Kcm_Q(D)
\]
be the homotopy equivalences between $\Kcm_Q(D)$ and $\Kcm_Q(D')$ constructed in the proofs of Lemmas~\ref{lem:invariant-formal-case} and \ref{lem:invariant-formal-case-ii}. It is straightforward to check that in each case these are filtered with respect to $\deg_k$. For the R-move or I-move this is obvious, since the maps in question are $\deg_k$-preserving isomorphisms. For the R1 and R2 moves, we reproduce Figures~\ref{fig:bn_R1}, and~\ref{fig:bn_R2} with the additional data of the $k$-grading. See Figures~\ref{fig:bn_R3}, \ref{fig:bn_R4}. The case of IR1, IR2, and IR3
is straightforward, see  Figure~\ref{fig:bn_R5}.

\begin{figure}
  \includegraphics[width=12cm]{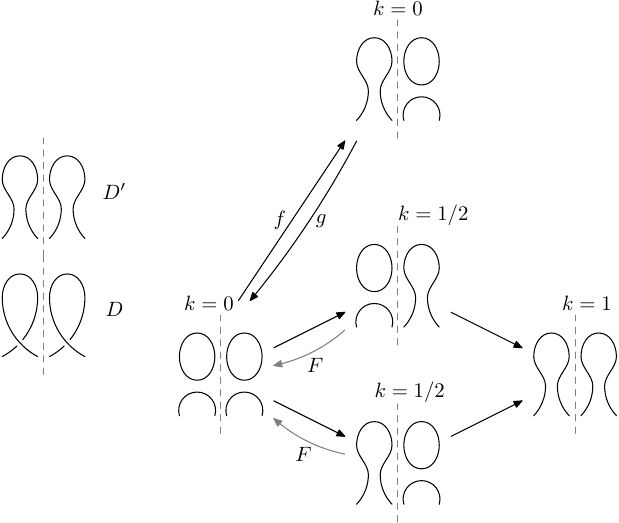}
  \caption{Figure~\ref{fig:IR1} with the $k$-grading indicated.}\label{fig:bn_R5}.
\end{figure}

\begin{figure}
  \includegraphics[width=10cm]{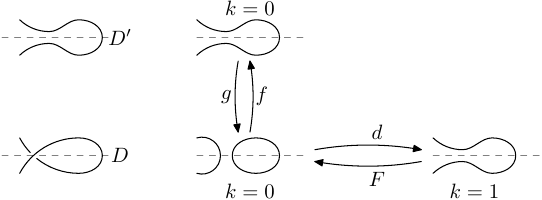}
  \caption{Figure~\ref{fig:bn_R1} with the $k$-grading indicated.}\label{fig:bn_R3}
\end{figure}
\begin{figure}
  \includegraphics[width=12cm]{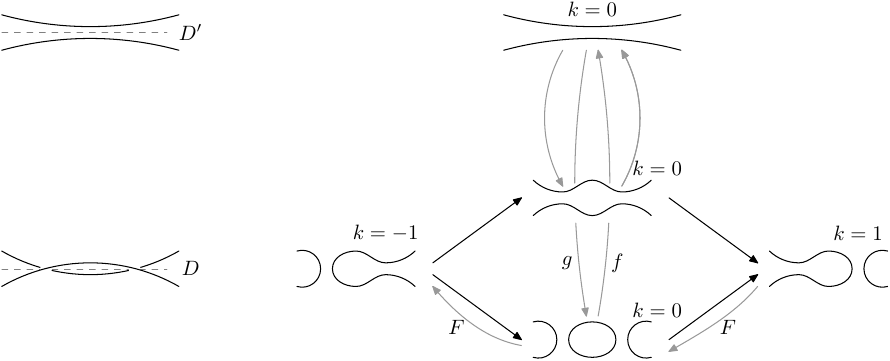}
  \caption{Figure~\ref{fig:bn_R2} with the $k$-grading indicated.}\label{fig:bn_R4}
\end{figure}

Using the same procedure as in Definition~\ref{def:mapWlift}, we may promote $f_Q$ and $g_Q$ to maps between $\smash{\Kcm_{Q, W}(D)}$ and $\smash{\Kcm_{Q, W}(D')}$. Localizing these maps with respect to $W$ and $Q$ gives us mixed complex morphisms 
\[
\frf = (f_Q, f_W = f_Q, 0) \quad \text{and} \quad \frg = (g_Q, g_W = g_Q, 0)
\]
between $\Mkc(D)$ and $\Mkc(D')$, showing that $\Mkc(D)$ and $\Mkc(D')$ are $Q$-equivalent. 

In order to establish $\Mkc(D)$ and $\Mkc(D')$ are homotopy equivalent, we must further examine the homotopies making $g_Q f_Q \simeq \id$ and $f_Q g_Q \simeq \id$ from the proofs of Lemmas~\ref{lem:invariant-formal-case} and \ref{lem:invariant-formal-case-ii}. For the R-move and I-move, these homotopies are of course zero.

For the moves R1, R2, IR1, IR2, IR3, the homotopy goes one step back in the resolution, see Figure~\ref{fig:IR1} for the IR1 move.
Each such map, by the definition, changes the $k$-grading by the amount given by the change between $o(c)$ and $u(c)$ on
the relevant crossing, see Subsection~\ref{sub:lw_work}. This change is at most $1/2$ for crossings off-the-axis,
and at most $1$ for crossings on the axis. That is, the homotopy drops the $k$ grading by at most $1/2$ for any of the IR1, IR2, IR3 moves,
while it drops the $k$ grading at most by $1$ for the R1 and R2 moves.

In each case, applying Definition~\ref{def:mapWlift} gives a $W$-side homotopy defined on $Q^{-1}W^{-1} \Kcm_{Q, W}$ that has controlled behavior with respect to $W$. We thus obtain a homotopy equivalence up to the appropriate defect, as desired. (Explicitly, to verify the third condition of Definition~\ref{def:homotopy-of-mixed}, take $\Delta = 0$ and recall that in this situation $\eta_X = \eta_Y = \id$ and $h^1 = h^2 = 0$.) The reduced case follows from treating basepoints as in the proof of Lemma~\ref{lem:reduced-invariant}.

\end{proof}

The proof of invariance for the M1, M2, and M3 moves is somewhat more complicated due to the fact that the Borel map $f_Q$ is not filtered with respect to $\deg_k$. Instead, we first homotope $f_Q$ over $Q^{-1}W^{-1}\Kcm_Q$ to make it filtered. As the resulting analysis is slightly more involved, we delay this to the next subsection.

\subsubsection*{Cobordism maps for mixed complexes}
Construction of the cobordism maps in the mixed complex setting is entirely straightforward. It is easy to check that each elementary cobordism move is filtered with respect to $\deg_k$. We can thus directly apply Definition~\ref{def:mapWlift} to lift these to morphisms of mixed complexes, as in the proof of Lemma~\ref{lem:simpleQequivalence}. As usual, the overall cobordism map for mixed complexes is then obtained by composing the mixed complex morphisms for elementary cobordisms and equivariant Reidemeister moves, as appropriate.

Defining cobordism maps for the reduced mixed complexes follows the same strategy as in the Bar-Natan and Borel cases. For this, note that the Wigderson splitting of Borel complexes extends to a splitting of $(Q, W)$-complexes. Indeed, both $i_p$ and the section map $\sigma_p$ send generators within a resolution of $D$ to generators in the same resolution, and are hence $\deg_k$-preserving. As in Remark~\ref{rem:reducedQWdefined}, this shows that $i_p$ and $\sigma_p$ can be promoted to ($\deg_k$-preserving) maps of $(Q, W)$-complexes, from which we obtain mixed complex morphisms $\mathfrak{i}_p$ and $\mathfrak{s}_p$ after localizing. We thus define
\[
\Mkcr(\Sigma) = \mathfrak{s}_p \circ \Mkc(\Sigma) \circ \mathfrak{i}_p.
\]

Modulo our discussion of the M1, M2, and M3 moves, we thus obtain a proof of Theorem~\ref{thm:mixedcomplex}:

\begin{proof}[Proof of Theorem~\ref{thm:mixedcomplex}]
Invariance up to $Q$-equivalence follows from Lemma~\ref{lem:simpleQequivalence}, combined with our discussion of the M1, M2, and M3 moves in the next subsection. To establish locality, observe that the $Q$-side part of the reduced mixed complex cobordism map is just the usual reduced Borel map, localized at $W$. This follows from examining the composition rule for mixed complex morphisms. Since the reduced Borel map is local, it easily follows that $\Mkcr(\Sigma)$ is local. 
\end{proof}

\subsection{Invariance under M1, M2, and M3 moves}
We now turn to the M1, M2, and M3 moves. 

\subsubsection*{Invariance for M1.} Let $D$ and $D'$ differ by an M1 move. We begin by promoting the map $f_Q$ of Borel complexes from Section~\ref{sec:miinvariance} to a mixed complex morphism.  The grading $\deg_k$ for $D,D'$ depend on the orientation of the underlying link (in fact, only on the orientation up to reversing the orientations of all components, that is, $\deg_k$ depends on the quasi-orientation).  However, by inspecting the rules in Section \ref{sub:lw_work}, we see that $\deg_k$ only depends on the quasi-orientation up to an overall shift.  In particular, for a given diagram $D$, the mixed complex $\MKc(D)$ is well-defined as a relatively $\deg_k$-graded complex independent of quasi-orientation.

 For M1, there are four separate quasi-orientations of the tangle in the relevant regions; we will define below maps $\frf,\frg$ between the mixed complexes $\MKc(D),\MKc(D')$, for a particular orientation on the underlying link.  It will turn out that the maps $\frf,\frg$ remain maps of mixed complexes for the other underlying orientations (where we identify the different complexes $\MKc(D)$ for different orientations, up to a grading shift), to give that for any orientation on the underlying link $L$, there is a homotopy equivalence of mixed complexes with defect at most $1$.

In order to define the $\deg_k$ grading on $D,D'$, we choose an orientation for which $L$ is strongly-invertible; this puts the resolution with a disjoint circle in $k$-grading zero.

Unlike in the previous cases, $f_Q$ is not filtered with respect to $\deg_k$. Instead, following the work of Lobb-Watson, we define a homotopy
\[
h \colon Q^{-1} W^{-1}\Kcm_{Q, W}(D) \rightarrow Q^{-1} W^{-1}\Kcm_{Q, W}(D')
\]
as shown in Figure \ref{fig:M1mixed_1}. It is a direct computation to verify that:
\begin{enumerate}
\item $\tau h + h \tau = 0$; and, 
\item $f_Q + \partial_Q h + h \partial_Q$ sends $\smash{Q^{-1} \Kcm_{Q, W}(D)}$ to $\smash{Q^{-1} \Kcm_{Q, W}(D')}$. 
\end{enumerate}
Given this, we obtain a mixed complex morphism
\[
\frf = (f_Q, f_W = f_Q + \partial_Q h + h \partial_Q, h).
\]
The analogous construction for $\frg$ is obtained by rotating Figure \ref{fig:M1mixed_1} by 180 degrees.  To be more precise, each entry in the top cube of resolutions \ref{fig:M1mixed_1}, under rotation is identified with an entry in the bottom cube; we define $\frg$ by first performing this identification, applying $\frf$, and then rotating back.  We obtain a mixed complex morphism
\[
\frg = (g_Q, g_W = g_Q + \partial_Q h' + h' \partial_Q, h').
\]
See Figure~\ref{fig:M1mixed_2}.
This shows that $\Mkc(D)$ and $\Mkc(D')$ are $Q$-equivalent.

\begin{figure}[h!]
\includegraphics[scale = 1.3]{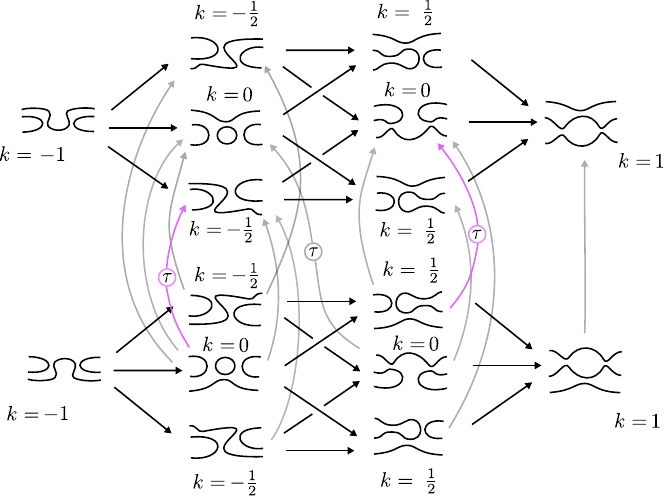}
\caption{M1 move maps. The map $f_Q$ in grey and the homotopy $h$ in pink.}\label{fig:M1mixed_1}
\end{figure}

As before, we now attempt to establish homotopy equivalence up to some defect $\delta$. An analysis of the chain homotopies $F_Q$ and $G_Q$ constructed in the proof of M1 invariance shows that these decrease $\deg_k$ by at most $1$; see Figure~\ref{fig:M1mixed_3}.

\begin{figure}[h!]
\includegraphics[scale = 1.3]{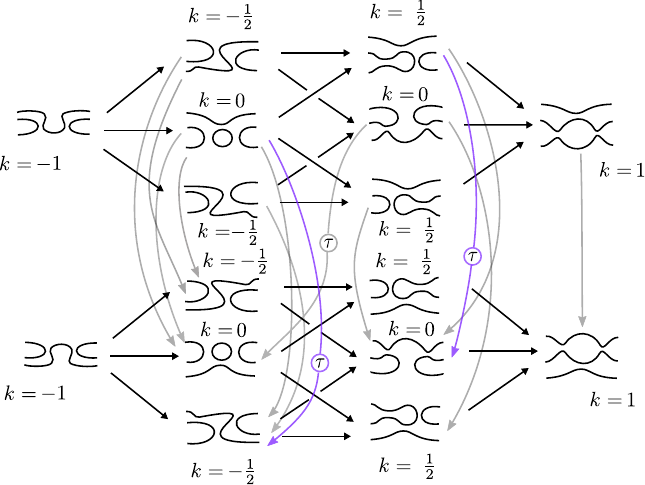}
\caption{M1 move maps. The map $g_Q$ in grey and the homotopy $h'$ in purple.}\label{fig:M1mixed_2}
\end{figure}

\begin{figure}[h!]
\includegraphics[scale = 1.3]{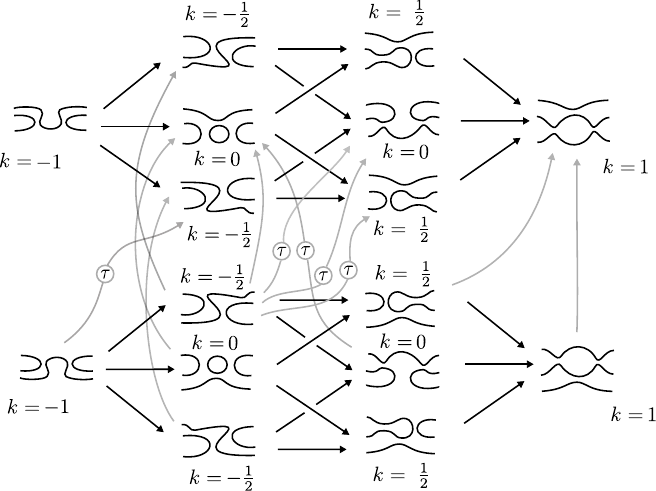}
\caption{M1 move maps. The map $f_W$ is grey, and the map $F_Q$ is green.}\label{fig:M1mixed_3}
\end{figure}

However, the $W$-side maps $f_W$ and $g_W$ are not quite equal to $f_Q$ and $g_Q$ after localizing, but are rather chain homotopic to these. Thus (for example) using Definition~\ref{def:mapWlift} to promote $F_Q$ to a $W$-side map does not quite give a chain homotopy from $g_W f_W$ to the identity. Instead, it is straightforward to verify that
\begin{align*}
g_Wf_W &= g_W(f_Q + \partial_Q h + h \partial_Q) \\
&= (g_Q + \partial_Q h' + h' \partial_Q) f_Q + \partial_Q (g_Wh) + (g_Wh) \partial_Q \\
&= g_Qf_Q + \partial_Q(h'f_Q + g_Wh) + (h'f_Q + g_Wh)\partial_Q.
\end{align*}
We thus let
\[
F_W = F_Q + (h'f_Q + g_Wh)
\]
as a map from $Q^{-1} W^{-1}\Kcm_{Q, W}(D)$ to itself. Then $F_W$ is a chain homotopy from $g_W f_W$ to the identity. We check from Figures \ref{fig:M1mixed_1}, \ref{fig:M1mixed_2}, and \ref{fig:M1mixed_3} that:
\begin{enumerate}
\item $F_Q$ decreases $\deg_k$ by at most $1$;
\item $f_Q$ and $h'$ decrease $\deg_k$ by at most $1/2$, and hence $h'f_Q$ decreases $\deg_k$ by at most $1$; and,
\item $h$ decreases $\deg_k$ by at most $1/2$, and hence $g_Wh$ decreases $\deg_k$ by at most $1/2$.
\end{enumerate}
Thus $F_W$ decreases $\deg_k$ by at most one. It follows that $F_Q$ and $F_W$ together make up a chain homotopy of mixed complexes up to defect $\delta = 1$. (Explicitly, to verify the third condition of Definition~\ref{def:homotopy-of-mixed}, take $\Delta = 0$ and recall that in this situation $\eta_X = \eta_Y = \id$, $h^1 = h'f_Q + g_Wh$ by Definition~\ref{def:composition}, and $h^2 = 0$.) The construction for $f_Wg_W$ is analogous. We conclude that $\Mkc(D)$ and $\Mkc(D')$ are homotopy equivalent up to defect $\delta = 1$. The reduced case is analogous. 

One may check directly that $\frf$ and $\frg$ so-defined, together with $F_Q,F_W$, also define a homotopy equivalence of defect $1$ for the other orientations on M1.


\subsubsection*{Invariance for M2.} Let $D$ and $D'$ differ by an M2 move, as in \ref{fig:all_equi_moves_lobb_watson}, with $D$ on the right-hand side of M2 and $D'$ on the left. 

Once again, we must check M2 for each of the orientations on the underlying link $L$.  In the following discussion, we use $\deg_k$ corresponding to the strongly-invertible orientation on $L$.  For the other quasi-orientation, the same maps and homotopies, define a homotopy equivalence of mixed complexes of defect $1$, just as in the M1 case, and we will not comment further on the other case.

We begin by promoting the map $f_Q$ of Borel complexes from Section~\ref{sec:miinvariance} to a mixed complex morphism. Once again following the work of Lobb-Watson, we define a homotopy
\[
h \colon Q^{-1} W^{-1}\Kcm_{Q, W}(D) \rightarrow Q^{-1} W^{-1}\Kcm_{Q, W}(D')
\]
as shown in Figure \ref{fig:m2-h-homotopy}.  
\begin{figure}
	\centering
	\includegraphics[width=14cm]{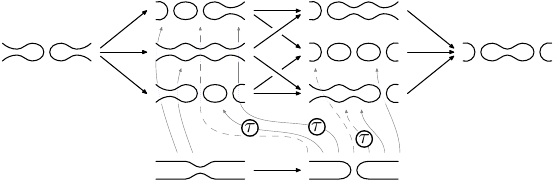}
	\caption{M2 move maps. The homotopy $h$ for $\Kcm(D)\to \Kcm(D')$.  The solid arrows indicate the maps $f_0,f_1$ from Section \ref{sec:borel}, while the dashed arrows constitute the map $h$.  The component of $h$ decreasing homological degree is given by a birth, the component $D_1$ to $D'_{011}$ is given by the identity cobordism (composed with $\tau$, as indicated).  Finally, the dashed arrow from $D_1$ to $D'_{101}$ is given by precomposition with $\tau$, birthing the two circles, and acting by a dot on the leftmost part of the tangle. }
	\label{fig:m2-h-homotopy}
\end{figure}

It is a direct computation to verify that:
\begin{enumerate}
\item $\tau h + h \tau = 0$; and, 
\item $f_Q + \partial_Q h + h \partial_Q$, viewed as a map from $\smash{Q^{-1} W^{-1}\Kcm_{Q, W}(D)}$ to $\smash{Q^{-1} W^{-1}\Kcm_{Q, W}(D')}$, has that the restriction sends $\smash{Q^{-1} \Kcm_{Q, W}(D)}$ to $\smash{Q^{-1}\Kcm_{Q,W}(D')}$. 
\end{enumerate}
Given this, we obtain a mixed complex morphism
\[
\frf = (f_Q, f_W = f_Q + \partial_Q h + h \partial_Q, h).
\]
It is readily checked that $g_Q$, as defined in Section \ref{sec:borel} is non-decreasing in $\deg_k$; to see this we reproduce here, in Figure \ref{fig:m2-g0-g1-with-k}, the Figure \ref{fig:m2-g0-g1} defining $g_Q$, now with $k$-gradings pictured.  
\begin{figure}
	\centering
	\includegraphics[width=14cm]{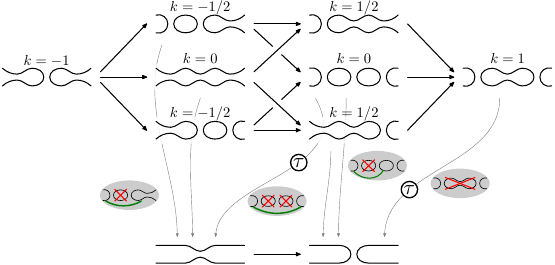}
	\caption{M2 move. The $k$-gradings for the components of $g$.  }
	\label{fig:m2-g0-g1-with-k}  
\end{figure}

Since $g_Q$ is non-decreasing in $\deg_k$, we can set $h'=0$ to define a mixed complex morphism for $f_Q$.  Namely, we set
\[
\frg = (g_Q,g_Q,0).
\]
The existence of maps $\frf$ and $\frg$ lifting $f,g$, respectively, implies that $\Mkc(D)$ and $\Mkc(D')$ are $Q$-equivalent.

Once again, we can actually prove something slightly stronger.  We show that $\frf$ and $\frg$ above are homotopy-inverse with defect $1$.  

We will write down homotopies from both $\frg \frf$ and $\frf\frg$ to the identity morphism (of mixed complexes), with defect as above. 

We start with $\frg\frf$.
As discussed in Section~\ref{sec:miinvariance}, $g_Qf_Q=\mathrm{Id}$, but $g_Wf_W$ is not the identity - there is a nonzero term $D_1$ to $D_0$ by a split, call this $\alpha$, and $g_Wf_W=\mathrm{Id}+\alpha$.  The homotopy $H$ from $g_Qf_Q$ to $g_Wf_W$ is given by the identity map $\Kcm_{Q,W}(D_1)$ to itself, and vanishes on $\Kcm_{Q,W}(D_0)$.   That is,
\[
\frg\frf = (\mathrm{Id},\mathrm{Id}+\alpha,H ), 
\]
with $H$ described as above.  

We now need a homotopy $\mathfrak{H}$ from $\frg \frf$ to $\mathrm{Id}$.  On the $Q$-side, we can use the $0$ map, as $g_Qf_Q=\mathrm{Id}$.  On the $W$-side, we use $H$ itself.  In this case, we may set the diagonal map in Definition \ref{def:homotopy-of-mixed} to be zero.   Since $H$ preserves $\deg_k$, it follows that $\frg\frf$ is homotopic to $\mathrm{Id}$ with defect zero.

We now construct a homotopy $\frH'$ between $\frf \frg $ and $\mathrm{Id}$.  We first write out $\frf \frg$:
\[
\frf\frg=(f_Qg_Q,f_Qg_Q+[\partial_Q,h]g_Q,hg_Q).
\]
In the $Q$-side of $\frH'$, we use the homotopy $G_Q$ from (\ref{eq:m2-g-homotopy}).  There exists a homotopy $\frH'$ with defect at most $1$, where the $Q$-side of $\frH'$ is $G_Q$, exactly when
\begin{equation}\label{eq:error-term}
G_Q+hg_Q
\end{equation}
is homotopic, over $Q^{-1}W^{-1}\Kcm_{Q,W}(D')\to Q^{-1}W^{-1}\Kcm_{Q,W}(D')$, to a map which decreases $\deg_k$ by at most $1$.  We now show that there exists such a homotopy from (\ref{eq:error-term}) to a map decreasing $\deg_k$ by at most $1$.


This argument consists of two parts.  First, we will show that $G_Q$ has only one entry where $\deg_k$ is decreased by more than $1$, and similarly we will find a single such term for $hg_Q$; once these terms are identified it is easy to find a homotopy $H'$ so that 
\begin{equation}\label{eq:k-right}
	[\partial_Q,H']+G_Q+hg_Q
\end{equation}  
decreases $\deg_k$ by at most $1$.

Let us show that $G_Q$ has the desired property.  Our first observation is that $EG_0$, evaluated on terms $D'_{i}$ with $i\neq 000$, can never have support on $D'_{000}$.  Indeed, this is because the only term of $E$ which has image supported in $D'_{000}$ is $D'_{100}$, by a death, but the image of $G_0$ is only supported on births to $D'_{100}$.  So, we need only find terms of $G_Q$ that go from $D'_{111}$ to either $D'_{100}$ or $D'_{001}$, as these are only remaining pairs of resolutions with a difference of $\deg_k$ greater than $1$.  First, to get to $D'_{100}$ or $D'_{001}$ by terms of the form $\frac{1}{1+QE}G_0$, we need only consider $(1+QE)G_0$, as $E$ takes a generator at any resolution $D'_{i}$ only to resolutions $D'_{j}$ with $j\leq i$ (in the sense that each entry of $j$ is at most as large as the corresponding entry of $i$). So we are left to consider $QEG_0$.   The component of this which decreases $\deg_k$ by more than $1$ is readily calculated to be the map by precomposition with $\tau$, death of the central circle, and then birth of a new central circle from $D'_{111}$ to $D'_{001}$, as depicted in Figure \ref{fig:m2-first-error-term}. 

\begin{figure}
	\centering
	\includegraphics[width=10cm]{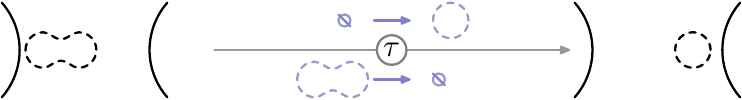}
	\caption{M2 move. Pictured is the term of $QEG_0$ which decreases $\deg_k$ by more than $1$.  This is a morphism from $\Kcm(D_{111}')$ to $\Kcm(D'_{001})$.}
	\label{fig:m2-first-error-term}
\end{figure}

A priori there is also the contribution $\frac{1}{1+QE}G_0f_Qg_Q$ to $G_Q$ as well.  Since $g_Q^0|_{\Kcm(D'_{111})}=0$, this is $G_0f_Q^0g_Q^1$ (no $E$ terms as these move resolutions to the left, as before).  A direct calculation shows that this map is zero.

Finally, we consider $hg_Q$, which happens to agree with $hg_W$, as $g_Q=g_W$.  A direct computation shows that $D'_{111}$ is sent to $D'_{100}$ by precomposing with $\tau$, death on the central circle and a birth of a new central circle, as depicted in Figure \ref{fig:m2-second-error-term}, and there are no other components decreasing $\deg_k$ by more than $1$.   

\begin{figure}
	\centering
	\includegraphics[width=10cm]{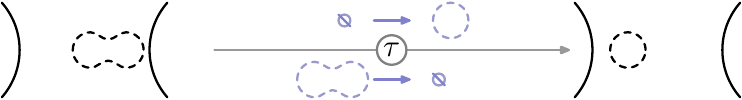}
	\caption{M2 move. Pictured is the term of $hg_Q$ which decreases $\deg_k$ by more than $1$.  This is a morphism from $\Kcm(D_{111}')$ to $\Kcm(D'_{100})$.}
	\label{fig:m2-second-error-term}
\end{figure}

Let $\zeta \colon Q^{-1}W^{-1}\Kcm_{Q,W}(D')\to Q^{-1}W^{-1}\Kcm_{Q,W}(D')$ be the map which sends $\Kcm(D_{111}')$ to $\Kcm(D_{100}')$ by death on the central circle, followed by birth of a central circle and is zero on the other resolutions.  This map does not preserve $\deg_k$, however, $[\partial_Q,\zeta]=hg_Q+G_Q$ modulo terms that decrease $\deg_k$ by at most $1$.  Thus $\zeta$ is the needed homotopy.   

That is, $\frH'=(G_Q,G_Q+[\partial_Q,\zeta],\zeta)$ is a homotopy from $\frf\frg$ to $\mathrm{Id}$, with defect at most $1$, as needed.

\subsubsection*{Invariance for M3.}  
We now turn to the actual proof of M3 invariance. Let $D$ and $D'$ differ by an M3 move.  We fix an orientation on the underlying link, which will be mostly suppressed from the notation, in order to define $\deg_k$.  It will turn out, as in the other cases, that the mixed complex morphisms (and homotopies) we define for a particular orientation will induce Q-equivalences for the other orientations.  Recall from Section~\ref{sec:miinvariance} that we identified $\Kcm_Q(D)$ and $\Kcm_Q(D')$ with the mapping cones of the saddle maps $s_\alpha$ and $s_\beta$, respectively, of Figure~\ref{fig:m3cones}. We then argued that
\[
\mathrm{Cone}(\Kcm_Q(D_0) \xrightarrow{s_\alpha} \Kcm_Q(D_1)) \simeq \mathrm{Cone}(\Kcm_Q(D_0) \xrightarrow{(F_{\mathrm{aux}} \circ s_\beta) \circ s_\alpha} \Kcm_Q(D_5))
\]
and
\[
\mathrm{Cone}(\Kcm_Q(D_0) \xrightarrow{s_\beta} \Kcm_Q(D_1')) \simeq \mathrm{Cone}(\Kcm_Q(D_0) \xrightarrow{(F_{\mathrm{aux}} \circ s_\alpha) \circ s_\beta} \Kcm_Q(D_5)).
\]
This was done by showing that (for example) $F_{\mathrm{aux}} \circ s_\beta = F_\mathrm{IR2} \circ F_\mathrm{M1}$ and thus constitutes (half of) a homotopy equivalence of Borel complexes. Since $s_\beta \circ s_\alpha = s_\alpha \circ s_\beta$, this completed the proof.

In the setting of mixed complexes, the initial and final steps still hold. That is, the mixed complexes for $D$ and $D'$ are still the mapping cones of the mixed complex morphisms associated to $s_\alpha$ and $s_\beta$. Moreover, it is still true that $s_\alpha$ and $s_\beta$ commute, since this holds for the underlying Borel maps. Our principal claim is thus that we have $Q$-equivalences between 
\[
\mathrm{Cone}(\Mkc(D_0) \xrightarrow{s_\alpha} \Mkc(D_1)) \quad \text{and} \quad \mathrm{Cone}(\Mkc(D_0) \xrightarrow{(F_{\mathrm{aux}} \circ s_\beta) \circ s_\alpha} \Mkc(D_5)),
\]
as well as between
\[
\mathrm{Cone}(\Mkc(D_0) \xrightarrow{s_\beta} \Mkc(D_1')) \quad \text{and} \quad \mathrm{Cone}(\Mkc(D_0) \xrightarrow{(F_{\mathrm{aux}} \circ s_\alpha) \circ s_\beta} \Mkc(D_5)).
\]
Proving the claim will conclude the proof.

We consider the former. Let $\frf$ denote the mixed complex morphism 
\[
\frf \colon X = \Mkc(D_0) \rightarrow Y = \Mkc(D_1)
\]
associated to $s_\alpha$ and let $\frg$ denote the mixed complex morphism 
\[
\frg \colon Y = \Mkc(D_1) \rightarrow Z = \Mkc(D_5)
\]
associated to $F_{\mathrm{aux}} \circ s_\beta = F_\mathrm{IR2} \circ F_\mathrm{M1}$. Doing the IR2 and M1 moves in reverse gives a mixed complex morphism 
\[
\bar{\frg} \colon Z = \Mkc(D_5) \rightarrow Y = \Mkc(D_1)
\]
such that $\frg$ and $\bar{\frg}$ are homotopy equivalences up to defect $\delta = 1$. This follows from our calculation of the homotopy defect for the IR2 and M1 moves combined with Lemma~\ref{lem:homotopy-defect-delta}. Consider the sequence of mixed complex morphisms
\[
\mathrm{Cone}(\frf) \rightarrow \mathrm{Cone}(\frg \circ \frf) \rightarrow \mathrm{Cone}(\bar{\frg} \circ \frg \circ \frf) \xrightarrow{\cong} \mathrm{Cone}(\frf).
\]
The first two of these are the composition morphisms of Definition~\ref{def:composition-cone}. To understand the final morphism, note that by direct inspection, the $Q$-side map for $\frf$ lands in the image of $W$. Since $\frg$ and $\bar{\frg}$ are homotopy equivalences up to defect $\delta = 1$, by Lemma~\ref{lem:technical-composition} we have that $\bar{\frg} \circ \frg \circ \frf$ and $\frf$ are chain homotopic with zero defect. Hence Lemma~\ref{lem:isomorphic-cones} shows that $\mathrm{Cone}(\bar{\frg} \circ \frg \circ \frf)$ and $\mathrm{Cone}(\frf)$ are isomorphic.

We thus have mixed complex morphisms
\[
\mathrm{Cone}(\frf) \rightarrow \mathrm{Cone}(\frg \circ \frf) \quad \text{and} \quad \mathrm{Cone}(\frg \circ \frf) \rightarrow \mathrm{Cone}(\bar{\frg} \circ \frg \circ \frf) \xrightarrow{\cong} \mathrm{Cone}(\frf).
\]
We claim that these are $Q$-equivalences. To see this, consider the $Q$-side maps associated to these morphisms. Let $H_Q$ be the Borel homotopy such that $\bar{g}_Q g_Q + \id = \partial_Q H_Q + H_Q \partial_Q$, so that $H_Qf_Q$ gives the Borel homotopy from $\bar{g}_Q g_Q f_Q$ to $f_Q$. Examining Definition~\ref{def:composition-cone} and the proof of Lemma~\ref{lem:isomorphic-cones}, the $Q$-side maps for the above morphisms are given by
\[
\left(\begin{array}{cc}\id & 0 \\ 0 & g_Q\end{array}\right) \quad \text{and} \quad \left(\begin{array}{cc}\id & 0 \\ H_Q f_Q& \id\end{array}\right)\left(\begin{array}{cc}\id & 0 \\ 0 & \bar{g}_Q\end{array}\right) = \left(\begin{array}{cc}\id & 0 \\ H_Q f_Q& \bar{g}_Q\end{array}\right),
\]
respectively. We claim that these are homotopy equivalences. It is straightforward to check that the composition in one direction is chain homotopic to the identity by verifying that
\[
\left(\begin{array}{cc}\id & 0 \\ H_Q f_Q& \bar{g}_Q\end{array}\right)\left(\begin{array}{cc}\id & 0 \\ 0 & g_Q\end{array}\right) + \id = \left[ \partial_{\mathrm{Cone}(f_Q)}, \left(\begin{array}{cc}0 & 0 \\ 0& H_Q\end{array}\right)\right],
\]
where we use $[\cdot,\cdot]$ to denote the commutator of maps.
To verify that the composition in the other direction is chain homotopic to the identity, let $\overline{H}_Q$ be the Borel homotopy such that $g_Q \bar{g}_Q + \id = \partial_Q \overline{H}_Q + \overline{H}_Q \partial_Q$. Then it is an extremely tedious but straightforward exercise to show that
\begin{align*}
\left(\begin{array}{cc}\id & 0 \\ 0 & g_Q\end{array}\right)&\left(\begin{array}{cc}\id & 0 \\ H_Q f_Q& \bar{g}_Q\end{array}\right) + \id = \\
&\left[ \partial_{\mathrm{Cone}(g_Qf_Q)}, \left(\begin{array}{cc}0 & 0 \\ (g_QH_Q + \overline{H}_Qg_Q)H_Qf_Q& (g_QH_Q + \overline{H}_Qg_Q)\bar{g}_Q\end{array}\right) + \left(\begin{array}{cc}0 & 0 \\ 0& \overline{H}_Q\end{array}\right)  \right].
\end{align*}
This establishes $Q$-equivalence for the mapping cone of $s_\alpha$. The mapping cone of $s_\beta$ is similar, completing the proof of $Q$-equivalence under the M3 move. The reduced case is analogous.

\subsection{Connected sums}
We close with a discussion of the connected sum formula for mixed complexes. Let $(K_1, \tau_1)$ and $(K_2, \tau_2)$ be two strongly invertible knots with transvergent diagrams $D_1$ and $D_2$. The essential observation is the following:

\begin{lem}\label{lem:graded-connected-sum}
The equivariant isomorphism
\[
(\Kcrm(D_1 \# D_2), \tau_1 \# \tau_2) \cong (\Kcrm(D_1), \tau_1) \otimes (\Kcrm(D_2), \tau_2)
\]
of Theorem~\ref{thm:connected-sum-tau} is $\deg_k$-preserving.
\end{lem}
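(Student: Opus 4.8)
The plan is to trace through the proof of Theorem~\ref{thm:connected-sum-tau} and verify at each step that all the maps involved are compatible with the Lobb-Watson grading $\deg_k$. Recall that the isomorphism of Theorem~\ref{thm:connected-sum-tau} is built as the composition $\phi \circ \iota$, where $\iota \colon \Kcrm_{un}(D_1) \otimes \Kcrm_{un}(D_2) \to \Kcrm_{un}(D_1 \sqcup D_2)$ is the obvious inclusion and $\phi \colon \Kcrm_{un}(D_1 \sqcup D_2) \to \Kcrm_{un}(D_1 \# D_2)$ is the merge map associated to the connecting saddle. First I would observe that $\deg_k$ is defined resolution-by-resolution: each vertex $v$ of the cube of resolutions carries a single value $\deg_k(D_v)$ which is the sum over crossings of the local contributions $o(c)$ or $u(c)$, and every generator in $\mathfrak{F}(v)$ inherits that value. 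The crossings of $D_1 \# D_2$ (and of $D_1 \sqcup D_2$) are precisely the disjoint union of the crossings of $D_1$ and those of $D_2$, with their axis-position, orientation, and sign unchanged by the connected-sum operation (which is performed near equivariant basepoints, away from all crossings). Hence for a pair of resolutions $(D_1)_v$ and $(D_2)_w$, the combined resolution $(D_1 \# D_2)_{v \# w}$ satisfies
\[
\deg_k\bigl((D_1 \# D_2)_{v \# w}\bigr) = \deg_k\bigl((D_1)_v\bigr) + \deg_k\bigl((D_2)_w\bigr),
\]
and the same identity holds for $(D_1 \sqcup D_2)_{v \sqcup w}$.

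Given this additivity, the two building maps are easily seen to preserve $\deg_k$. The inclusion $\iota$ sends a generator of $\Kcrm_{un}((D_1)_v) \otimes \Kcrm_{un}((D_2)_w)$ to the corresponding generator of $\Kcrm_{un}((D_1 \sqcup D_2)_{v \sqcup w})$ lying in the union of the same circles; since the source has $\deg_k$ equal to the sum of the two resolution values by definition of the tensor-product grading, and the target has $\deg_k$ equal to that same sum by the displayed identity, $\iota$ is $\deg_k$-preserving. For the merge map $\phi$, recall from Section~\ref{sec:reduced-bar-natan} that on each pair of resolutions it is the algebra map induced by the index identification $\pi$ with $\pi(1) = \pi(m+1) = 0$; it does not change the resolution, only the labelling of circles within it, and $(D_1 \sqcup D_2)_{v \sqcup w}$ and $(D_1 \# D_2)_{v \# w}$ have the same value of $\deg_k$. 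Since $\deg_k$ is constant on each resolution, $\phi$ preserves it. Therefore the composite $\phi \circ \iota$ is $\deg_k$-preserving, which is the assertion of the lemma.

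A small point worth addressing is the consistency between the pointed and unpointed descriptions of the reduced complex: Remark~\ref{rem:reducedQWdefined} already establishes that the isomorphism between $\Kcrm_{un}$ and $\Kcrm_p$ is $\deg_k$-preserving, because it too sends generators in a given resolution to generators in the same resolution, so there is no ambiguity in speaking of the $\deg_k$-grading on either model. The only genuinely substantive verification is the additivity of $\deg_k$ under connected sum, which as explained follows immediately from the fact that connected sum neither creates nor destroys crossings and leaves each crossing's type unchanged; I do not anticipate any real obstacle here, so the proof will be short.
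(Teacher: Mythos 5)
Your argument is correct and matches the paper's own proof: both decompose the isomorphism as the inclusion $\Kcrm_{un}(D_1)\otimes\Kcrm_{un}(D_2)\to\Kcrm_{un}(D_1\sqcup D_2)$ followed by the merge map, and both reduce the claim to the additivity $\deg_k((D_1)_v)+\deg_k((D_2)_w)=\deg_k((D_1)_v\sqcup(D_2)_w)=\deg_k((D_1\# D_2)_{v\# w})$, which holds because $\deg_k$ depends only on the crossings and these are unaffected by the connected sum. Your extra remarks (basepoint independence via Remark~\ref{rem:reducedQWdefined}, and the clarification that the saddle map relates resolutions with equal $\deg_k$) are consistent with, and no more than elaborations of, the paper's argument.
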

\begin{proof}
Recall that the isomorphism in question is given by 
\[
\Kcrm_{un}(D_1) \otimes \Kcrm_{un}(D_2) \rightarrow \Kcrm_{un}(D_1 \sqcup D_2) \rightarrow \Kcrm_{un}(D_1 \# D_2)
\]
with the first map being the obvious inclusion and the second map being the saddle cobordism. It is straightforward to check that these are $\deg_k$-preserving. Indeed, if $(D_1)_v$ and $(D_2)_w$ are resolutions of $D_1$ and $D_2$, respectively, then an examination of the Lobb-Watson grading from Section~\ref{sec:lobb-watson-filtration} shows that
\[
\deg_k((D_1)_v) + \deg_k((D_2)_w) = \deg_k((D_1)_v \sqcup (D_2)_w) = \deg_k((D_1 \# D_2)_{v \# w}).
\]
Since $\deg_k$ on $\Kcrm(D_1) \otimes \Kcrm(D_2)$ is defined by $\deg_k(x \otimes y) = \deg_k(x) + \deg_k(y)$, this gives the claim.
\end{proof}

As in Section~\ref{sec:koszul-borel}, we can re-phrase the connected sum formula using Koszul duality. For this, we replace the ground ring $R = \f[u]$ throughout Section~\ref{sec:koszul-borel} with $R = \f[u, W]$ and consider all of our complexes to be triply-graded, rather than bigraded. Define the Koszul bimodules $\smash{{}_{R[\tau]/(\tau^2+1)}\Br_{R[Q]}}$ and $\smash{{}_{R[\tau]/(\tau^2+1)}\Hh_{R[Q]}}$ exactly as before -- the bases in Figure~\ref{fig:bimodules} remains bases over $\f[u, W]$, lying in $\deg_k$-grading zero. Tensoring with $\smash{{}_{R[\tau]/(\tau^2+1)}\Br_{R[Q]}}$ and $\smash{{}_{R[\tau]/(\tau^2+1)}\Hh_{R[Q]}}$ gives functors
\[
\Br \colon \bKom_{R[\tau]/(\tau^2 + 1)} \rightarrow \bKom_{R[Q]}
\]
and
\[
\Hh \colon \bKom_{R[Q]} \rightarrow \bKom_{R[\tau]/(\tau^2 + 1)}.
\]
These induce equivalences of derived categories, as in Section~\ref{sec:koszul-borel}. We define the tensor product as in Definitions~\ref{def:tensor-product-tau} and \ref{def:tensor-product-borel}: for $X_1$ and $X_2$ in $\bKom_{R[\tau]/(\tau^2 + 1)}$, let
\[
X_1 \otimes X_2 = X_1 \otimes_R X_2,
\]
where we give the right-hand-side the diagonal action $\tau \otimes \tau$. For $Y_1$ and $Y_2$ in $\bKom_{R[Q]}$, let
\[
Y_1 \otimes_{\Br} Y_2 = \Br(\Hh(Y_1) \otimes \Hh(Y_2)).
\]
The same argument as in Section~\ref{sec:koszul-borel} shows that these respect quasi-isomorphisms.

To see the relevance to the present situation, let $D$ be a transvergent diagram for $(L, \tau)$. Applying Definition~\ref{def:mapWlift} to the Bar-Natan differential and (separately) to the action of $\tau$ on $\Kcrm(D)$ gives a complex $X$ in $\smash{\bKom_{R[\tau]/(\tau^2 + 1)}}$. Note that $\Br(X) = \Kcrm_{Q, W}(D)$. We thus have:

\begin{thm}
Let $D_1$ and $D_2$ be transvergent diagrams for $(L_1, \tau_1)$ and $(L_2, \tau_2)$. Then we have a homotopy equivalence
\[
\Kcrm_{Q, W}(D_1 \# D_2) \simeq \Kcrm_{Q, W}(D_1) \otimes_{\Br} \Kcrm_{Q, W}(D_2).
\]
\end{thm}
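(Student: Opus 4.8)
The plan is to reduce this theorem to its analogue in the bigraded setting, namely Theorem~\ref{thm:borel-tensor-koszul}, by mimicking the argument given there but carrying along the $\deg_k$-grading throughout. The key point is that all the structural inputs to the Koszul-duality formalism---the bimodules $\smash{{}_{R[\tau]/(\tau^2+1)}\Br_{R[Q]}}$ and $\smash{{}_{R[Q]}\Hh_{R[\tau]/(\tau^2+1)}}$, the functors $\Br$ and $\Hh$, the tensor products of Definitions~\ref{def:tensor-product-tau} and \ref{def:tensor-product-borel}, the derived equivalence of Theorem~\ref{thm:derived-equivalence}, and the gluing Lemma~\ref{lem:simplifiedmodel}---all continue to make sense verbatim after enlarging the ground ring from $R = \f[u]$ to $R = \f[u, W]$ and promoting everything to triply-graded complexes, as the excerpt has already spelled out in the paragraph preceding the statement. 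So the first step is simply to invoke this re-setup: we work in $\smash{\bKom_{R[\tau]/(\tau^2 + 1)}}$ and $\smash{\bKom_{R[Q]}}$ over $R = \f[u, W]$, with the bases of Figure~\ref{fig:bimodules} lying in $\deg_k$-grading zero, so that $\Br$ and $\Hh$ are $\deg_k$-preserving and still induce equivalences of derived categories.

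Second, I would construct the triply-graded complex $X_i \in \smash{\bKom_{R[\tau]/(\tau^2+1)}}$ associated to $D_i$: start from the Bar-Natan complex $\Kcrm(D_i)$ with its involution $\tau_i$, apply Definition~\ref{def:mapWlift} to the Bar-Natan differential and (separately) to $\tau_i$ to insert the appropriate powers of $W$ dictated by the Lobb-Watson grading, and record $\tau_i$ as an honest algebra element with $\tau_i^2 = 1$. By construction $\Br(X_i) = \Kcrm_{Q, W}(D_i)$, exactly as in the bigraded case. Third, I would apply Theorem~\ref{thm:connected-sum-tau} together with Lemma~\ref{lem:graded-connected-sum}: the former gives an equivariant isomorphism $(\Kcrm(D_1 \# D_2), \tau_1 \# \tau_2) \cong (\Kcrm(D_1), \tau_1) \otimes (\Kcrm(D_2), \tau_2)$, and the latter upgrades this to a $\deg_k$-preserving isomorphism. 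Consequently the complex $X_{12}$ associated to $D_1 \# D_2$ is isomorphic (in $\smash{\bKom_{R[\tau]/(\tau^2+1)}}$, over $R = \f[u, W]$) to $X_1 \otimes X_2$ in the sense of Definition~\ref{def:tensor-product-tau}; here one needs to check that the $W$-powers inserted by Definition~\ref{def:mapWlift} on the connected-sum diagram match those obtained from the tensor product, which follows from the additivity of $\deg_k$ under $\#$ recorded in the proof of Lemma~\ref{lem:graded-connected-sum}. Applying $\Br$ gives $\Kcrm_{Q, W}(D_1 \# D_2) = \Br(X_{12}) \cong \Br(X_1 \otimes X_2)$.

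Fourth, I would invoke Lemma~\ref{lem:simplifiedmodel} (in its $R = \f[u, W]$ incarnation): since $\Br(X_i)$ is quasi-isomorphic to $Y_i := \Kcrm_{Q, W}(D_i)$, the lemma gives that $Y_1 \otimes_{\Br} Y_2$ is quasi-isomorphic to $\Br(X_1 \otimes X_2)$, hence to $\Kcrm_{Q, W}(D_1 \# D_2)$. Finally, since both $\Kcrm_{Q, W}(D_1 \# D_2)$ and $\Kcrm_{Q, W}(D_1) \otimes_{\Br} \Kcrm_{Q, W}(D_2)$ are free over $R = \f[u, W]$ (the former because it is the Borel–$W$ extension of a free Bar-Natan complex, the latter by the construction of $\otimes_{\Br}$ via free bimodules), Lemma~\ref{lem:qitohe} promotes the quasi-isomorphism to a homotopy equivalence, which is the assertion.

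I expect the only genuine obstacle to be bookkeeping the $\deg_k$-grading consistently: one must verify that Definition~\ref{def:mapWlift}, applied to the Bar-Natan differential and the $\tau$-action on the connected-sum diagram, produces the \emph{same} $W$-decorated complex as one gets by first forming $X_1 \otimes X_2$ (with the diagonal $\tau$-action, whose $\omega$-component is $1 \otimes \omega + \omega \otimes 1 + \omega \otimes \omega$ as in Definition~\ref{def:tensor-product-tau}) and then applying the $W$-lift. This is precisely where Lemma~\ref{lem:graded-connected-sum}---and in particular the identity $\deg_k((D_1)_v) + \deg_k((D_2)_w) = \deg_k((D_1 \# D_2)_{v \# w})$ from its proof---does the work, so the verification is routine but requires care. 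Everything else is a formal transcription of the proof of Theorem~\ref{thm:borel-tensor-koszul} with $\f[u]$ replaced by $\f[u, W]$.
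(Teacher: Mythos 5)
Your proposal is correct and follows essentially the same route as the paper: build the triply-graded complexes $X_i$ over $R=\f[u,W]$ via the $W$-lift so that $\Br(X_i)=\Kcrm_{Q,W}(D_i)$, use the $\deg_k$-preserving connected sum isomorphism (Lemma~\ref{lem:graded-connected-sum}) to identify $\Kcrm_{Q,W}(D_1\# D_2)$ with $\Br(X_1\otimes X_2)$, pass through the Koszul derived equivalence to compare with $\otimes_{\Br}$, and upgrade the resulting quasi-isomorphism to a homotopy equivalence by freeness and Lemma~\ref{lem:qitohe}. Your appeal to Lemma~\ref{lem:simplifiedmodel} is just a packaged form of the paper's direct use of Theorem~\ref{thm:derived-equivalence}, so the arguments coincide.
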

\begin{proof}
Denote by $X_1$ and $X_2$ the complexes in $\smash{\bKom_{R[\tau]/(\tau^2 + 1)}}$ obtained from $(\Kcm(D_1), \tau_1)$ and $(\Kcm(D_2), \tau_2)$ via Definition~\ref{def:mapWlift}. Lemma~\ref{lem:graded-connected-sum} then implies
\[
\Kcrm_{Q, W}(D_1 \# D_2) \cong \Br(X_1 \otimes X_2).
\]
Note that $\smash{\Br(X_i) = \Kcrm_{Q, W}(D_i)}$. Using the fact that $\Br$ and $\Hh$ induce equivalences of derived categories, it follows that $X_i$ is quasi-isomorphic to $\smash{\Hh(\Kcrm_{Q, W}(D_i))}$. Hence we have a quasi-isomorphism between
\[
\Kcrm_{Q, W}(D_1 \# D_2) \quad \text{and} \quad \Kcrm_{Q, W}(D_1) \otimes_{\Br} \Kcrm_{Q, W}(D_2).
\]
Since both of these complexes are free over $R = \f[Q, W]$, invoking Lemma~\ref{lem:qitohe} completes the proof.
\end{proof}

\appendix
\section{Invariance of $\eta$ under topological cobordism}\label{sec:appendixa}
Let $K\subset S^3$ be a strongly invertible knot. In \cite{Sakuma} Sakuma associates to $K$ an invariant $\eta\in\Z[t,t^{-1}]$, which is an invariant
of smooth equivariant concordance. It is well-known to the experts that $\eta$ is invariant under topological (locally flat) concordance, however no formal proof of that fact seems to have appeared in the literature; compare the discussion in \cite{MillerPowell}. In this appendix we fill this gap.

First, we recall from \cite{Sakuma} the formal definition of $\eta$.
Suppose $K$ is strongly invertible and $\tau\colon S^3\to S^3$ the strong inversion. Let $O$ be the fixed point set, and let $\pi\colon S^3\to S^3/\tau\cong S^3$ be the projection. Let $O_\tau=\pi(O)$.
Set $\lambda$ to be a preferred longitute of $K$, i.e., $\lk(\lambda,K)=0$. Assume that $\lambda$ is disjoint from $O$.
Set $\lambda_\tau=\pi(\lambda_\tau)$. This is a knot in $S^3\setminus O_\tau$.
\begin{defn}
  The $\eta$ invariant of $K$ is the Kojima-Yamasaki polynomial associated with the pair $(\lambda_\tau,O_\tau)$, equivalently
  the $\Z[t,t^{-1}]$ equivariant self-linking of $\lambda_\tau$ in $S^3\setminus O_\tau$.
\end{defn}
We refer the reader to \cite{KojimaYamasaki} for the properties of the $\eta$-polynomial. Also, \cite[Section 3]{BorodzikFriedl}
gives a detailed 
detailed study of equivariant linking numbers.

The proof of invariance of $\eta$ under topological concordance adapts the proof of Sakuma, which in turn relies on invariance of the Kojima-Yamasaki polynomial.
We begin with clarify the assumptions on the action of $\Z_2$ on $S^3\times[0,1]$.
\begin{defn}
  Suppose a finite group $G$ acts on a topological space $X$. Suppose $P\subset X$ 
  is an orbit type $G/H$ and $V$ is a finite dimensional linear space
  on which $H$ acts orthogonally. A \emph{linear tube} of $P$ is a $G$-equivariant embedding into open neighborhood of $P$, $\phi\colon G\times_H V\to X$ mapping $G\times_H\{0\}$ to $P$. We say that the action of $G$ is \emph{locally linear} if each orbit $P$ admits a linear tube.
\end{defn}
\begin{rmk}
  In some sources, like \cite{Bredon}, the notion `locally smooth' is used instead of `locally linear'.
\end{rmk}
We can now state the main result of the appendix.
\begin{thm}\label{thm:eta}
  Suppose $K_0,K_1$ are two strongly invertible knots in $S^3$. Suppose there exists a locally smooth
  $\Z_2$ action $\tau_4\colon S^3\times[0,1]$ extending the action of $\tau$ on $S^3\times\{0\}$ and $S^3\times\{1\}$. Assume also that there is a $\tau_4$-invariant
  locally flat annulus $A\in S^3\times[0,1]$ such that $\partial A=K_1\times\{1\}\sqcup K_0\times\{0\}$. Then, $\eta(K_0)=\eta(K_1)$.
\end{thm}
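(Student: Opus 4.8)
\emph{Overview.} The plan is to adapt Sakuma's proof that $\eta$ is a smooth equivariant concordance invariant \cite{Sakuma} to the locally linear category. Recall that $\eta(K_i)$ is by definition the Kojima--Yamasaki polynomial of the pair $(\lambda_{\tau,i}, O_{\tau,i})$ in $S^3$, equivalently the $\Z[t,t^{-1}]$-equivariant self-linking of the lift of $\lambda_{\tau,i}$ to the infinite cyclic cover of $S^3\setminus O_{\tau,i}$ (the lift exists since $\lk(\lambda_{\tau,i},O_{\tau,i})=0$). Since this is computed entirely from homological data in that cover, it is enough to produce a \emph{homology concordance of pairs}: a compact oriented $4$-manifold $W$ with $\partial W = S^3\sqcup S^3$ which is a $\mathbb{Q}$-homology $S^3\times I$, containing disjoint locally flat annuli $\mathcal{C}_O$ from $O_{\tau,0}$ to $O_{\tau,1}$ and $\mathcal{C}_\lambda$ from $\lambda_{\tau,0}$ to $\lambda_{\tau,1}$, such that $\mathcal{C}_O$ is ``homologically unknotted,'' i.e. $H_1(W\setminus\mathcal{C}_O;\mathbb{Q})\cong\mathbb{Q}$ is generated by a meridian and the infinite cyclic cover of $W\setminus\nu(\mathcal{C}_O)$ has the rational homology of a solid torus. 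Rational coefficients suffice because $\eta\in\Z[t,t^{-1}]$. Granting this, the conclusion $\eta(K_0)=\eta(K_1)$ follows from the usual argument: $\mathcal{C}_\lambda$ lifts to a cobordism between the two lifts $\widetilde\lambda_0,\widetilde\lambda_1$ in the infinite cyclic cover, so $\lk(\widetilde\lambda_0, t^n\widetilde\lambda_0)=\lk(\widetilde\lambda_1, t^n\widetilde\lambda_1)$ for all $n$; this is precisely the homological concordance invariance underlying \cite{KojimaYamasaki}.

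\emph{The quotient and the branch annulus.} First I would set $W=(S^3\times[0,1])/\tau_4$, with branched double cover $p\colon S^3\times[0,1]\to W$. Local linearity provides a $\tau_4$-invariant tubular neighborhood of $\Fix(\tau_4)$ on which $\tau_4$ acts by fiberwise rotation, so $\Fix(\tau_4)$ is a locally flat $2$-submanifold and $W$ is a topological $4$-manifold; since $\tau$ is the standard involution on each end (so $S^3/\tau\cong S^3$, branched over the unknot $O_\tau$), $\partial W = S^3\sqcup S^3$. Next, analyzing $\tau_4|_A$: it is an orientation-reversing involution of the annulus $A$ (it has boundary fixed points, namely the two fixed points of $\tau|_{K_i}$ on each $K_i$), so its fixed set is a properly embedded $1$-manifold with four endpoints; the possibility that it consist of two ``cap'' arcs, one at each end of $A$, is excluded because it would force a free involution on the middle annulus with a fixed boundary arc, which is impossible. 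Hence $\Fix(\tau_4|_A)$ consists of two arcs each running from $K_0$ to $K_1$; these lie in $\Fix(\tau_4)$ and join $O_0$ to $O_1$, so $\Fix(\tau_4)$ is connected. By the Smith inequality its total $\Z/2$-Betti number is at most that of $S^3\times I$, i.e. at most $2$, so a connected surface with boundary $O_0\sqcup O_1$ forces $\Fix(\tau_4)$ to be an annulus $\mathcal{O}$. Then $\mathcal{C}_O:=p(\mathcal{O})$ is a locally flat annulus in $W$. A transfer plus Mayer--Vietoris computation, splitting $W$ along $\partial\nu(\mathcal{C}_O)$ into the free quotient of $(S^3\times I)\setminus\nu(\mathcal{O})$ and the model piece $\mathcal{O}\times D^2$ (and using that $\tau_4$ acts trivially on $H_*(S^3\times I;\mathbb{Q})$ since it is orientation-preserving and standard on the ends), shows $W$ is a $\mathbb{Q}$-homology $S^3\times I$ and that $\mathcal{C}_O$ is homologically unknotted in the sense above.

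\emph{The concordance of longitudes.} Next I would choose the preferred longitude $\lambda$ of $K$ to be $\tau$-invariant and disjoint from $O$: the $0$-framing of $K$ is canonical and preserved by $\tau$, so $\tau(\lambda)$ is again a preferred longitude, hence isotopic to $\lambda$ through such; uniqueness then yields an invariant representative. Now take a normal pushoff $A^{+}$ of the invariant annulus $A$ realizing this $0$-framing on each end, so $\partial A^{+} = \lambda_0\sqcup\lambda_1$. Since $A$ and $\mathcal{O}=\Fix(\tau_4)$ both have codimension two and meet only in arcs, the pushoff field may be chosen to point off $\mathcal{O}$ near $\mathcal{O}$; an obstruction-theoretic argument on the trivial oriented $2$-plane bundle $\nu(A)\cong(S^1\times I)\times\mathbb{R}^2$ then arranges $A^{+}$ to be disjoint from $\Fix(\tau_4)$ and from $\tau_4(A^{+})$. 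Consequently $p|_{A^{+}}$ is an embedding, and $\mathcal{C}_\lambda:=p(A^{+})$ is a locally flat annulus in $W\setminus\mathcal{C}_O$ from $\lambda_{\tau,0}$ to $\lambda_{\tau,1}$ disjoint from $\mathcal{C}_O$. Together with the previous paragraph this supplies the homology concordance of pairs, and the Overview completes the proof.

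\emph{Main obstacle.} The crux is the middle paragraph: showing, in the locally flat category and without transversality, that $W=(S^3\times I)/\tau_4$ is a rational homology $S^3\times I$ and that the branch annulus $\mathcal{C}_O$ is homologically unknotted, so that the infinite cyclic cover used to define $\eta$ behaves as in the classical setting. This is handled by leveraging local linearity (equivariant tubular neighborhoods and $\Fix(\tau_4)$ a locally flat submanifold; see \cite{Bredon}), identifying $\Fix(\tau_4)$ with an annulus via the Smith inequality and the analysis of $\tau_4|_A$, and a transfer/Mayer--Vietoris argument. The remaining steps --- the reduction to Kojima--Yamasaki and the construction of $\mathcal{C}_\lambda$ --- are routine once those tools are available.
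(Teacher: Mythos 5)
Your overall route is the same as the paper's (and Sakuma's): pass to the quotient, identify the fixed-point set as an annulus, push the invariant concordance annulus off itself and off the fixed set, project, and invoke Kojima--Yamasaki concordance invariance. The gap is at the crucial step, the construction of the pushoff $A^{+}$. First, in the locally flat category the existence of a tubular neighborhood (normal $2$-plane bundle) for $A$ is not free --- there is no transversality or exponential map --- and your ``obstruction-theoretic argument on the trivial bundle $\nu(A)$'' presupposes it. More seriously, even granting $\nu(A)$, your argument gives no control over $\tau_4(A^{+})$: that surface is a pushoff of $A$ inside the \emph{other} neighborhood $\tau_4(\nu(A))$, and comparing sections of two a priori unrelated topological tubular neighborhoods is precisely the uniqueness/equivariance problem that has to be solved. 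The paper's proof exists for exactly this point: it first checks that $(S^3\times[0,1])/\tau_4$ is a topological manifold, applies the Freedman--Quinn tubular neighborhood theorem to the locally flat quotient surface $A/\Z_2$ downstairs, and pulls the result back to an \emph{equivariant} tubular neighborhood of $A$; only inside such a neighborhood can one legitimately choose the parallel copy $A'$ with $A'\cap\tau_4(A')=\emptyset$. Without this (or an equivalent equivariant normal-structure statement) your disjointness claim is unsupported.

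In addition, your construction is internally inconsistent at the boundary. You require $\partial A^{+}=\lambda_0\sqcup\lambda_1$ with each $\lambda_i$ $\tau$-invariant and simultaneously $A^{+}\cap\tau_4(A^{+})=\emptyset$; but if $\lambda_i$ is $\tau$-invariant then $\lambda_i\subset A^{+}\cap\tau_4(A^{+})$. Moreover $p$ restricted to a $\tau$-invariant longitude is a $2$-to-$1$ covering onto $\lambda_{\tau,i}$, not an embedding, so $p(A^{+})$ is not a locally flat annulus with boundary $\lambda_{\tau,0}\sqcup\lambda_{\tau,1}$ (near the boundary the image folds and $\lambda_{\tau,i}$ lies in its interior), and the Kojima--Yamasaki concordance argument does not apply to it. The fix, as in Sakuma and in the paper, is to take the boundary longitudes of the pushoff to be disjoint from their $\tau$-images, so that they embed under $p$ and project to the knots $\lambda_\tau$ appearing in the definition of $\eta$; again this is most naturally arranged inside the equivariant tubular neighborhood. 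The remaining parts of your outline --- the analysis of $\tau_4|_A$ forcing the fixed arcs to run from $K_0$ to $K_1$, Smith theory giving that $\Fix(\tau_4)$ is an annulus, and the homological input feeding into Kojima--Yamasaki invariance --- are sound and parallel the paper, which cites Bredon for simple connectivity of the quotient and Kojima--Yamasaki's Theorem~2 directly rather than your rational-homology-concordance reformulation.
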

\begin{proof}
  The proof adapts the proof of Sakuma. We need first to show the following.
  \begin{lem}\label{lem:topological}
    The quotient $S^3\times[0,1]/\tau_4$ is a topological manifold.
  \end{lem}
  \begin{proof}[Proof of Lemma~\ref{lem:topological}]
    Suppose $y\in S^3\times[0,1]/\tau_4$. 
    If $y\notin O_\tau$, then a neighborhood $U_y$ has preimage $\wt{U}_y\in S^3\times[0,1]$ disjoint from $O_\tau$. 
    Now, if $U_y$
    is sufficiently small, then $\wt{U}_y$ is a union of two open subsets of $S^3\times[0,1]$.

    Suppose $y\in O_4$ and let $\wt{y}$ be the preimage. As the action is locally smooth, 
    there is an open subset $\wt{U}\subset X$,
    such that $\wt{U}=\Z_2\times_{\Z_2}V$,where $\Z_2$ acts orthogonally on $V$. As the dimension of the fixed point set is 2, we infer
    that $V$ has dimension $4$ with a fixed point set of dimension $2$. That is, $V=\R^2\oplus\R^2_-$, where $\Z_2$ acts on $\R^2_-$
    by symmetry. We have $\wt{U}/\Z_2=V/\Z_2=\R^2\oplus(\R^2_-/\Z_2)$. Now $\R^2_-/\Z_2\cong \R^2$.
    So $\wt{U}/\Z_2$ is a neighborhood of $y$, which is homeomorphic to $\R^4$.
  \end{proof}
    We will need the following fundamental result of Freedman and Quinn.
  \begin{thm}[\cite{FQ}] \label{thm:fq}
    Suppose $A$ is a locally flat submanifold of a 4-dimensional topological manifold. Then $A$ has a tubular neighborhood and a normal microbundle.
  \end{thm}
  Lemma~\ref{lem:topological} allows us to deduce the following statement from Theorem~\ref{thm:fq}.
\begin{corollary}
  If $A$ is a concordance in $S^3\times[0,1]$ and $\tau_4 A =\tau_4$, then $A$ has an equivariant tubular neighborhood.
\end{corollary}
\begin{proof}
  Consider $A/\Z_2\subset S^3\times[0,1]/\Z_2$. As $\tau_4$ acts locally linearly,
  the quotient $A/\Z_2$ is a locally flat submanifold of $S^3\times[0,1]/\Z_2$. By Theorem~\ref{thm:fq}, it has a tubular neighborhood.
  The pull-back of that neighborhood via the quotient map yields an equivariant tubular neighborhood for $A$.
\end{proof}
The remaining part of the proof of Theorem~\ref{thm:eta} follows the line of Sakuma \cite[Proof of Theorem III]{Sakuma}. 
Namely, an equivariant tubular neighborhood $\nu(A)$ of $A$
allows us to find an annulus $A'$ in $\nu(A)$ with the property that $\tau_4A'\cap A'=\emptyset$ and $A'\cap S^3\times\{i\}$, $i=0,1$,
is the preferred longitude for $K_i$.

Let $O$ be 
the fixed point set of $\tau_4$. As the action is locally linear, $O$ is a topological manifold. The boundary of $O$ is equal to two circles (one on $S^3\times\{0\}$, another on $S^3\times\{1\}$), in particular, $\dim O=2$. By Smith theorem, see e.g. \cite[Theorem III.5.1]{Bredon}, $O$ is a $\Z_2$-homology sphere, which means it is homeomorphic to an annulus. 

The quotient $S^3\times[0,1]/\Z_2$ is simply-connected by e.g. \cite[Corollary II.6.3]{Bredon}, and it is easy to see that it is a $\Z HS^3$.
Let $O_\tau$ be the image of $O$ under the projection. Let $A'_\tau$ be the image of $A'$ under the projection. Then, $A'_\tau$ and $O_\tau$
are annuli. We invoke \cite[Theorem 2]{KojimaYamasaki} to show that the Kojima-Yamasaki polynomials associated with $(A'_\tau,O_\tau)\cap S^3\times\{i\}/\Z_2$ are equal for $i=0$ and $1$, but these polynomials are, by definition, the $\eta$ polynomials of $K_0$ and $K_1$.
\end{proof}

We apply Theorem~\ref{thm:eta} to one of the main knots in the article, $J=17nh_{74}$.
\begin{lem}\label{lem:J_eta}
  The knot $J=17nh_{74}$ with the strong inversion of Figure~\ref{fig:kyle} is not topologically equivariantly slice.
\end{lem}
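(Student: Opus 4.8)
The statement to prove is Lemma~\ref{lem:J_eta}: the knot $J = 17nh_{74}$, equipped with the strong inversion of Figure~\ref{fig:kyle}, is not topologically equivariantly slice. The strategy is to combine Theorem~\ref{thm:eta} with an explicit computation of Sakuma's $\eta$-polynomial. Recall that Theorem~\ref{thm:eta} shows $\eta$ is invariant under topological equivariant concordance (via a locally linear $\Z_2$-action), so in particular if $(J,\tau)$ were topologically equivariantly slice, we would have $\eta(J) = \eta(U) = 0$, where $U$ is the unknot with its unique strong inversion. Thus it suffices to exhibit a strong inversion of $J$ (namely the one in Figure~\ref{fig:kyle}) whose $\eta$-polynomial is nonzero.

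\textbf{First step: computing $\eta(J)$.} I would run Sakuma's algorithm \cite[Algorithm on pages 181--182]{Sakuma} on the transvergent diagram of $J$ in Figure~\ref{fig:kyle}, exactly as was done for the Lobb--Watson knots in the proof of Lemma~\ref{lem:distinguished}. Concretely: pass from the knot to the collection of arcs in the pseudo-fundamental cycle, read off $\wt{\eta}$ as an integer combination of the $x_i$'s (Step~5), substitute $x_i \mapsto t^{i-1} - 2t^i + t^{i+1}$ to obtain $\eta'$ (Step~6), and then recover $\eta$ from $\eta'$. Since the diagram in Figure~\ref{fig:kyle} is built from a pair of symmetric $1$-handles attached along an obvious Seifert surface, the pseudo-fundamental cycle should be tractable; I expect the output to be a nonzero element of $\Z[t,t^{-1}]$, which I would record explicitly (e.g.\ in the $[a_0, a_1, \dots]$ notation of Sakuma used earlier in the paper). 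This is the one genuine computation the lemma requires.

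\textbf{Second step: assembling the argument.} Given the nonzero value of $\eta(J)$, the proof is immediate: if $(J,\tau)$ admitted a topologically locally flat equivariant slice disk, then capping off would produce a $\tau_4$-invariant locally flat annulus from $J$ to the unknot inside $S^3 \times [0,1]$ with a locally linear extension of $\tau$; Theorem~\ref{thm:eta} then forces $\eta(J) = \eta(U)$. But $\eta(U) = 0$ (the equivariant self-linking of the longitude of the unknot in the quotient vanishes), contradicting the computation. Hence $(J,\tau)$ is not topologically equivariantly slice. I would also remark that this, combined with $\eg(J) = 1 = \ieg(J)$ in the smooth category (Theorem~\ref{thm:kyle_knot_intro} and the footnote there), yields Corollary~\ref{cor:J_topo}: in the topological category $\eg$ and $\ieg$ of $J$ differ, since $\ieg^{\mathrm{top}}(J) \le 1$ (the smooth isotopy-equivariant surface works topologically too) while $\eg^{\mathrm{top}}(J) \ge 1$ by the present lemma.

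\textbf{Expected main obstacle.} The only real work is the bookkeeping in Sakuma's algorithm for the specific diagram of $17nh_{74}$: correctly identifying the quotient orbifold data, the image $\lambda_\tau$ of the preferred longitude, and the arcs of the pseudo-fundamental cycle, and then not making a sign or indexing error in the substitution steps. Everything downstream is formal, relying entirely on Theorem~\ref{thm:eta}, which is already proved in the excerpt. A minor point to be careful about is that the strong inversion in Figure~\ref{fig:kyle} is the one for which the symmetric $1$-handle disks exist; one should confirm that this is the inversion for which $\eta$ is being computed, so that the topological obstruction matches the knot with its distinguished symmetry.
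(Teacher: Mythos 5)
Your plan is exactly the paper's proof: compute Sakuma's $\eta$-polynomial for $(J,\tau)$ from its transvergent diagram via Sakuma's algorithm and invoke Theorem~\ref{thm:eta} (equivariant sliceness would force $\eta(J)=\eta(U)=0$). The only content you defer is the computation itself, which the paper carries out by drawing the fundamental region for $J$ and reading off $\wt{\eta}=x_1+x_{-1}-x_4-x_{-4}$, giving the nonzero polynomial $\eta(t)=-6-2(t+t^{-1})+(t^2+t^{-2})-(t^3+t^{-3})+2(t^4+t^{-4})-(t^5+t^{-5})$, confirming your expected nonvanishing.
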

\begin{figure}
  \begin{tikzpicture}
    \node at (-3,0) {\includegraphics[width=3cm]{Kyle.pdf}};
    \node at (3,0) {\includegraphics[width=2.6cm]{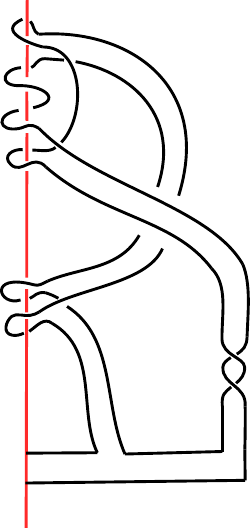}};
  \end{tikzpicture}
  \caption{Left: The knot $J$. Right: The intermediate step for computing the fundamental region for $J$,
  see \cite[Figure 2.2]{Sakuma}.}\label{fig:sakuma_kyle}
\end{figure}
\begin{figure}
  \includegraphics[width=8cm]{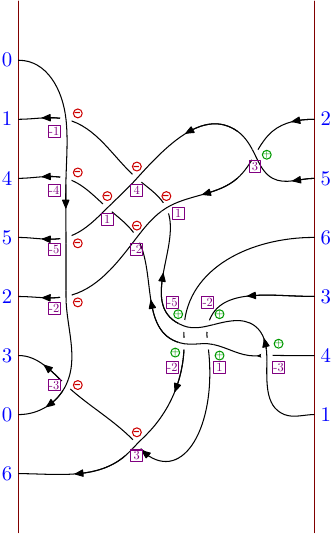}
\caption{The fundamental region for the knot $J$. The arcs are assigned numbers according to Sakuma's algorithm. The signs
of crossings are also marked. Boxed there are integers $d_p$ of Sakuma: these are differences of labels of the two arcs in the crossing.}\label{fig:sakuma_kyle2} 
\end{figure}
\begin{proof}
  We show that $\eta(J)\neq 0$. To this end, we reproduce the steps in Sakuma's algorithm. In Figure~\ref{fig:sakuma_kyle}, we draw the knot $J$ and its `right half', where each crossing on-the-axis is replaced by a small hook. Next, we present the fundamental region for $J$. After some
  simplifications, it is given in Figure~\ref{fig:sakuma_kyle2}.

From Figure~\ref{fig:sakuma_kyle2} we calculate the $\eta$ polynomial. We count $\sum \epsilon_p x_{d_p}$, where $p$ runs over all crossings. The number $\epsilon_p$ is the sign of the crossing, $d_p$ is the integer assigned to it, and $x_{d_p}$ is a formal variable. We see that the sum
is $x_1+x_{-1}-x_4-x_{-4}$. Next, we substitute $t^{i+1}-2t^i+t^{i-1}$ for each occurrence of $x_i$. We obtain
$\wt{\eta}=-(t^5+t^{-5})+2(t^4+t^{-4})-(t^3+t^{-3})+(t^2+t^{-2})-2(t+t^{-1})+2$. That is,
\[\eta(t)=-6-2(t+t^{-1})+(t^2+t^{-2})-(t^3+t^{-3})+2(t^4+t^{-4})-(t^5+t^{-5}).
\]
\end{proof}
\begin{corollary}\label{cor:J_topo}
  In topological category,
  the knot $J=17nh_{74}$ has different equivariant genus and isotopy equivariant genus.
\end{corollary}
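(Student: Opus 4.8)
The plan is to deduce Corollary~\ref{cor:J_topo} directly from Lemma~\ref{lem:J_eta} together with the topological invariance of the $\eta$-polynomial established in Theorem~\ref{thm:eta}. The key point is that the topological equivariant slice genus of $J$ is at least $1$, whereas the topological isotopy-equivariant slice genus of $J$ is $0$.

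\textbf{Step 1: the isotopy-equivariant side.} Recall from Example~\ref{ex:kyle_knot} that $J$ bounds a pair of slice disks $D$ and $\tau_{B^4}(D)$ obtained by compressing the obvious genus-one Seifert surface of Figure~\ref{fig:kyle} along the cores of the $1$-handles. By the work of Conway--Powell \cite{ConwayPowell}, $D$ and $\tau_{B^4}(D)$ are \emph{topologically} isotopic rel boundary. (This uses only that the $\pi_1$ of the disk complement is $\Z$, so that two $\Z$-disks with the same Alexander module are topologically isotopic rel boundary.) Hence $J$ admits a topological isotopy-equivariant slice disk, so $\ieg^{\mathrm{top}}(J) = 0$.

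\textbf{Step 2: the equivariant side.} I claim $\eg^{\mathrm{top}}(J) \geq 1$. Suppose for contradiction that $J$ bounds a topological locally flat equivariant slice disk $\Sigma$ in $B^4$, i.e.\ $\tau_{B^4}(\Sigma) = \Sigma$ setwise with $\tau_{B^4}$ the standard extension. Doubling $B^4$ along $S^3$ and correspondingly doubling $\Sigma$, or working directly with $\Sigma$ as a topological concordance from $J$ to the unknot $U$, we obtain a $\tau_4$-invariant locally flat annulus in $S^3 \times [0,1]$ whose boundary is $J \sqcup U$. Here $\tau_4$ is the standard (hence locally linear) extension of the involution. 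Theorem~\ref{thm:eta} then gives $\eta(J) = \eta(U) = 0$. But Lemma~\ref{lem:J_eta} computes
\[
\eta(J)(t) = -6 - 2(t + t^{-1}) + (t^2 + t^{-2}) - (t^3 + t^{-3}) + 2(t^4 + t^{-4}) - (t^5 + t^{-5}) \neq 0,
\]
a contradiction. Therefore no such $\Sigma$ exists, and combined with the observation that $J$ is not (even smoothly) slice we conclude $\eg^{\mathrm{top}}(J) \geq 1$. Together with Step 1 this yields $\eg^{\mathrm{top}}(J) \neq \ieg^{\mathrm{top}}(J)$, proving the corollary.

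\textbf{Main obstacle.} The one point that requires care is the reduction of an equivariant \emph{slice disk} (in $B^4$) to the hypothesis of Theorem~\ref{thm:eta}, which is phrased in terms of an invariant annulus in $S^3 \times [0,1]$ with boundary two strongly invertible knots. One must verify that the standard extension $\tau_{B^4}$ restricts on a neighborhood of the removed point (or on the complement of a small equivariant ball containing a chosen unknotting region) to the standard $\tau_{S^3 \times I}$, so that capping off with the obvious equivariant annulus for the unknot produces a genuine $\tau_4$-invariant concordance; local linearity of $\tau_4$ is then automatic since it is the standard linear model. This is routine but is where the argument's topological input actually enters, so I would spell it out explicitly. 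Everything else --- Step 1 and the computation in Lemma~\ref{lem:J_eta} --- is already in hand.
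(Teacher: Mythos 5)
Your proposal follows the paper's proof in all essentials: you observe that $\eta(J) \neq 0$ (Lemma~\ref{lem:J_eta}) obstructs a topological equivariant slice disk via Theorem~\ref{thm:eta}, and that the Conway--Powell $\Z$-disk argument from Example~\ref{ex:kyle_knot} shows $J$ bounds a topological isotopy-equivariant slice disk, so $\ieg^{\mathrm{top}}(J) = 0 < 1 \le \eg^{\mathrm{top}}(J)$. The "main obstacle" you flag --- passing from an equivariant slice disk in $B^4$ to the invariant concordance in $S^3 \times [0,1]$ that Theorem~\ref{thm:eta} actually requires --- is a legitimate detail that the paper's own proof leaves implicit, so pointing it out and sketching the cap-off is a genuine improvement in rigor.

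There is one error worth fixing in your Step 2: you write that one should combine the non-existence of an equivariant disk "with the observation that $J$ is not (even smoothly) slice." This is false --- $J$ \emph{is} smoothly slice, which is the whole point of Example~\ref{ex:kyle_knot} (compressing the genus-one Seifert surface yields a pair of $\Z$-disks $D$ and $\tau(D)$). Fortunately the extra observation is not needed: the non-existence of an \emph{equivariant} slice disk for $J$ already gives $\eg^{\mathrm{top}}(J) \ge 1$ directly, since $\eg$ is by definition a minimum over equivariant slice surfaces. Strike that clause and the proof is correct.
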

\begin{proof}
  We have shown that $\eta(J)\neq 0$, so that the equivariant genus is at least $1$. On the other hand, $J$ bounds a (non-equivariant) slice
  disk $D\subset B^4$ such that $\pi_1(B^4\setminus D)=\Z$. The action $\tau$ on $B^4$ takes $D$ to $\tau(D)$, whose boundary is still $J$.
  As $\pi_1(B^4\setminus D)=\Z$, by \cite{ConwayPowell} $D$ and $\tau(D)$ are topologically isotopic rel boundary. That is $D$ is isotopy-equivariant. Hence the isotopy equivariant genus of $J$ is zero.
\end{proof}

\section{Table of knots with $\wt{s}$ different than $s$.}\label{sec:different}
Here, we provide the table of knots for which $\wt{s}$ is different than $s$.
The tables should be read as follows. The PD code entry gives the PD code of the diagram of the knot. The column `action' provides the
action. For example, $\tau=17$ for $9_{46}$ means that strand 17 of the PD code is fixed. Then, strands 18 and 16 are swapped, strands 1 and 15 are swapped, and so on.

  \begin{longtable}{|c|>{\tiny} p{9cm}<{\normalsize}|c|c|}\hline
    knot & \normalsize PD code & action & $s$ \\ \hline
    $9\_46$ &
    [11,18,12,1], [1,10,2,11], [9,2,10,3], [14,4,15,3], [4,14,5,13], [12,6,13,5], [17,6,18,7], [7,16,8,17], [15,8,16,9] &
    17 & 0 \\ \hline
    $10\_141$ &
[20,7,1,8], [1,14,2,15], [9,2,10,3], [16,3,17,4], [4,11,5,12], [18,6,19,5], [6,14,7,13], [15,9,16,8], [10,18,11,17], [12,19,13,20] &
3 & 0 \\ \hline
$10\_160$ &
[20,16,1,15], [8,1,9,2], [13,3,14,2], [3,13,4,12], [17,5,18,4], [10,6,11,5], [6,20,7,19], [14,7,15,8], [16,10,17,9], [11,19,12,18] & 
15 & 4 \\ \hline
$10\_163$ &
[7,20,8,1], [1,14,2,15], [2,9,3,10], [10,3,11,4], [4,18,5,17], [16,6,17,5], [6,16,7,15], [13,9,14,8], [18,11,19,12], [12,19,13,20] &
20 & -2\\ \hline
$12n744$ &
[5,1,6,24], [1,17,2,16], [2,10,3,9], [20,4,21,3], [13,5,14,4], [15,6,16,7], [22,8,23,7], [8,22,9,21], [19,11,20,10], [11,19,12,18], [17,13,18,12], [23,14,24,15] &
5 & 4\\ \hline 
$12n829$ &
[5,1,6,24], [8,1,9,2], [2,9,3,10], [20,3,21,4], [4,21,5,22], [15,6,16,7], [7,16,8,17], [10,18,11,17], [18,12,19,11], [12,20,13,19], [13,22,14,23], [23,14,24,15] & 
5 & -2\\ \hline
$12n832$ &
[5,1,6,24], [8,1,9,2], [2,9,3,10], [20,3,21,4], [4,21,5,22], [15,6,16,7], [7,16,8,17], [10,20,11,19], [18,12,19,11], [12,18,13,17], [13,22,14,23], [23,14,24,15] & 
5 & -4\\ \hline 
$12n847$ &
[5,1,6,24], [1,17,2,16], [11,2,12,3], [3,18,4,19], [13,5,14,4], [15,6,16,7], [22,8,23,7], [8,22,9,21], [20,10,21,9], [10,20,11,19], [17,13,18,12], [23,14,24,15] & 
5 & 2\\ \hline 
$12n867$ &
[5,1,6,24], [10,1,11,2], [2,11,3,12], [18,3,19,4], [4,19,5,20], [15,6,16,7], [7,22,8,23], [21,8,22,9], [9,16,10,17], [12,18,13,17], [13,20,14,21], [23,14,24,15] & 5 & -4\\ \hline
12n882 &
[5,1,6,24], [18,1,19,2], [2,19,3,20], [10,3,11,4], [4,11,5,12], [15,6,16,7], [7,16,8,17], [17,8,18,9], [20,10,21,9], [21,12,22,13], [13,22,14,23], [23,14,24,15] & 5 & -6\\ \hline
13n2400 &
[3,1,4,26], [1,9,2,8], [9,3,10,2], [4,15,5,16], [16,5,17,6], [6,17,7,18], [14,7,15,8], [10,23,11,24], [24,11,25,12], [19,13,20,12], [13,21,14,20], [21,19,22,18], [22,25,23,26] & 21 & 0\\ \hline
13n3659 &
[3,1,4,26], [1,15,2,14], [15,3,16,2], [4,9,5,10], [12,5,13,6], [6,23,7,24], [22,7,23,8], [8,13,9,14], [10,19,11,20], [18,11,19,12], [16,21,17,22], [24,17,25,18], [20,25,21,26] & 3 & -2\\ \hline
13n3720 &
[3,1,4,26], [1,15,2,14], [15,3,16,2], [4,21,5,22], [12,5,13,6], [6,11,7,12], [22,7,23,8], [8,25,9,26], [16,9,17,10], [10,19,11,20], [20,13,21,14], [24,17,25,18], [18,23,19,24] & 3 & -2\\ \hline
13n4370 &
[5,1,6,26], [10,2,11,1], [2,10,3,9], [16,3,17,4], [4,17,5,18], [11,7,12,6], [20,7,21,8], [8,21,9,22], [19,12,20,13], [24,14,25,13], [14,24,15,23], [22,16,23,15], [25,18,26,19] & 11 & 0\\ \hline
13n4374 &
[5,1,6,26], [10,2,11,1], [2,10,3,9], [3,17,4,16], [17,5,18,4], [11,7,12,6], [7,21,8,20], [21,9,22,8], [19,12,20,13], [24,14,25,13], [14,24,15,23], [22,16,23,15], [25,18,26,19] & 11 & 6\\ \hline
13n4381 &
[5,1,6,26], [10,2,11,1], [2,10,3,9], [16,3,17,4], [4,17,5,18], [11,7,12,6], [20,7,21,8], [8,21,9,22], [25,13,26,12], [13,25,14,24], [14,20,15,19], [22,16,23,15], [18,24,19,23] & 11 & 2\\ \hline
13n4906 &
[5,1,6,26], [1,12,2,13], [11,2,12,3], [3,20,4,21], [19,4,20,5], [6,15,7,16], [24,8,25,7], [17,8,18,9], [9,18,10,19], [21,10,22,11], [13,22,14,23], [23,14,24,15], [16,25,17,26] & 5 & -4\\ \hline
13n4959 &
[5,1,6,26], [1,20,2,21], [13,3,14,2], [3,19,4,18], [11,4,12,5], [6,15,7,16], [24,8,25,7], [8,24,9,23], [22,10,23,9], [17,10,18,11], [19,13,20,12], [21,14,22,15], [16,25,17,26] & 5 & 0\\ \hline
13n4963 &
[5,1,6,26], [1,20,2,21], [13,3,14,2], [3,19,4,18], [11,4,12,5], [25,7,26,6], [7,25,8,24], [8,15,9,16], [22,10,23,9], [17,10,18,11], [19,13,20,12], [21,14,22,15], [16,23,17,24] & 5 & 0\\ \hline
13n4982 &
[5,1,6,26], [1,18,2,19], [13,2,14,3], [3,14,4,15], [17,4,18,5], [19,7,20,6], [7,24,8,25], [21,8,22,9], [16,10,17,9], [10,16,11,15], [11,22,12,23], [23,12,24,13], [25,21,26,20] & 25 & -2\\ \hline
13n5007 &
[5,1,6,26], [12,1,13,2], [2,13,3,14], [18,3,19,4], [4,19,5,20], [25,7,26,6], [20,7,21,8], [8,17,9,18], [16,9,17,10], [10,21,11,22], [24,11,25,12], [14,23,15,24], [22,15,23,16] & 5 & -4\\ \hline
13n5084 &
[5,1,6,26], [1,12,2,13], [21,3,22,2], [3,11,4,10], [19,4,20,5], [6,15,7,16], [24,8,25,7], [17,8,18,9], [9,18,10,19], [11,21,12,20], [13,22,14,23], [23,14,24,15], [16,25,17,26] & 5 & -2\\ \hline
  \caption{Knots for which $s\neq\wt{s}$.}\label{tab:le}
\end{longtable}

  \begin{longtable}{|c|>{\tiny} p{9cm}<{\normalsize}|c|c|}
    \hline
    knot & \normalsize PD code & action & $s$  \\ \hline
10\_157 & 
[20,15,1,16], [8,2,9,1], [2,10,3,9], [3,17,4,16], [17,5,18,4], [5,10,6,11], [6,14,7,13], [14,8,15,7], [11,19,12,18], [19,13,20,12]  & 
15 & -4 
\\ \hline
10\_166 & 
[20,7,1,8], [6,1,7,2], [15,3,16,2], [3,19,4,18], [9,5,10,4], [5,13,6,12], [13,9,14,8], [17,11,18,10], [11,17,12,16], [19,15,20,14]  & 
7 & -2
\\ \hline
10n25 & 
[3,1,4,20], [1,9,2,8], [9,3,10,2], [4,13,5,14], [14,5,15,6], [6,15,7,16], [12,7,13,8], [17,11,18,10], [11,19,12,18], [19,17,20,16]  & 
19 & 0
\\ \hline
11n156 & 
[5,1,6,22], [1,18,2,19], [2,10,3,9], [16,3,17,4], [11,5,12,4], [13,6,14,7], [7,20,8,21], [15,9,16,8], [10,18,11,17], [21,12,22,13], [19,15,20,14]  & 
19 & 0
\\ \hline
11n162 & 
[5,1,6,22], [1,18,2,19], [2,10,3,9], [16,3,17,4], [11,5,12,4], [21,7,22,6], [7,13,8,12], [15,9,16,8], [10,18,11,17], [13,21,14,20], [19,15,20,14]  & 
19 & -2
\\ \hline
11n71 & 
[3,1,4,22], [1,7,2,6], [7,3,8,2], [13,5,14,4], [5,13,6,12], [8,19,9,20], [20,9,21,10], [15,11,16,10], [11,17,12,16], [17,15,18,14], [18,21,19,22]  & 
17 & -2
\\ \hline
11n74 & 
[3,1,4,22], [1,7,2,6], [7,3,8,2], [4,13,5,14], [12,5,13,6], [8,19,9,20], [20,9,21,10], [15,11,16,10], [11,17,12,16], [17,15,18,14], [18,21,19,22]  & 
17 & 0
\\ \hline
12n225 & 
[3,1,4,24], [1,7,2,6], [7,3,8,2], [4,13,5,14], [12,5,13,6], [8,22,9,21], [20,10,21,9], [15,11,16,10], [11,17,12,16], [17,15,18,14], [18,23,19,24], [22,19,23,20]  & 
17 & -2
\\ \hline
12n571 & 
[3,1,4,24], [1,11,2,10], [11,3,12,2], [4,15,5,16], [16,5,17,6], [6,17,7,18], [18,7,19,8], [8,19,9,20], [14,9,15,10], [21,13,22,12], [13,23,14,22], [23,21,24,20]  & 
23 & 2
\\ \hline
12n573 & 
[3,1,4,24], [1,11,2,10], [11,3,12,2], [4,15,5,16], [18,5,19,6], [6,17,7,18], [16,7,17,8], [8,19,9,20], [14,9,15,10], [21,13,22,12], [13,23,14,22], [23,21,24,20]  & 
23 & 0
\\ \hline
12n612 & 
[3,1,4,24], [1,12,2,13], [11,2,12,3], [19,5,20,4], [5,21,6,20], [6,16,7,15], [22,8,23,7], [17,9,18,8], [9,19,10,18], [13,11,14,10], [23,14,24,15], [16,21,17,22]  & 
13 & -2
\\ \hline
12n630 & 
[3,1,4,24], [1,14,2,15], [13,2,14,3], [4,9,5,10], [16,6,17,5], [21,7,22,6], [7,21,8,20], [8,16,9,15], [10,17,11,18], [22,12,23,11], [12,20,13,19], [18,23,19,24]  & 
3 & 0
\\ \hline
12n671 & 
[5,1,6,24], [1,7,2,6], [7,3,8,2], [3,11,4,10], [11,5,12,4], [8,15,9,16], [14,9,15,10], [19,13,20,12], [13,21,14,20], [21,17,22,16], [17,23,18,22], [23,19,24,18]  & 
23 & -6
\\ \hline
12n722 & 
[5,1,6,24], [1,9,2,8], [9,3,10,2], [3,11,4,10], [11,5,12,4], [6,17,7,18], [16,7,17,8], [19,13,20,12], [13,21,14,20], [21,15,22,14], [15,23,16,22], [23,19,24,18]  & 
23 & -6
\\ \hline
12n726 & 
[5,1,6,24], [1,11,2,10], [9,3,10,2], [3,9,4,8], [11,5,12,4], [6,17,7,18], [16,7,17,8], [19,13,20,12], [13,23,14,22], [21,15,22,14], [15,21,16,20], [23,19,24,18]  & 
23 & -2
\\ \hline
12n759 & 
[5,1,6,24], [1,17,2,16], [2,10,3,9], [20,4,21,3], [13,5,14,4], [23,7,24,6], [7,15,8,14], [21,8,22,9], [10,19,11,20], [18,11,19,12], [12,17,13,18], [15,23,16,22]  & 
5 & -2
\\ \hline
12n814 & 
[5,1,6,24], [1,10,2,11], [9,2,10,3], [20,4,21,3], [4,20,5,19], [11,7,12,6], [16,8,17,7], [8,16,9,15], [12,23,13,24], [18,14,19,13], [14,22,15,21], [22,18,23,17]  & 
11 & -2
\\ \hline
12n815 & 
[5,1,6,24], [1,10,2,11], [9,2,10,3], [20,4,21,3], [4,20,5,19], [11,7,12,6], [16,8,17,7], [8,16,9,15], [23,13,24,12], [13,18,14,19], [21,14,22,15], [17,22,18,23]  & 
11 & 0
\\ \hline
12n817 & 
[5,1,6,24], [1,10,2,11], [9,2,10,3], [3,19,4,18], [19,5,20,4], [11,7,12,6], [7,17,8,16], [17,9,18,8], [12,21,13,22], [22,13,23,14], [14,23,15,24], [20,15,21,16]  & 
11 & 0
\\ \hline
12n818 & 
[5,1,6,24], [1,10,2,11], [9,2,10,3], [3,19,4,18], [19,5,20,4], [11,7,12,6], [7,17,8,16], [17,9,18,8], [12,23,13,24], [22,13,23,14], [14,21,15,22], [20,15,21,16]  & 
11 & -2
\\ \hline
12n838 & 
[5,1,6,24], [20,2,21,1], [2,11,3,12], [3,19,4,18], [9,4,10,5], [6,16,7,15], [22,8,23,7], [13,9,14,8], [19,11,20,10], [12,18,13,17], [23,14,24,15], [16,21,17,22]  & 
13 & 0
\\ \hline
12n868 & 
[5,1,6,24], [1,21,2,20], [2,11,3,12], [10,3,11,4], [21,5,22,4], [19,6,20,7], [7,18,8,19], [13,9,14,8], [9,17,10,16], [17,13,18,12], [14,23,15,24], [22,15,23,16]  & 
1 & 0
\\ \hline
12n869 & 
[5,1,6,24], [1,21,2,20], [2,11,3,12], [10,3,11,4], [21,5,22,4], [19,6,20,7], [7,18,8,19], [13,9,14,8], [9,17,10,16], [17,13,18,12], [23,15,24,14], [15,23,16,22]  & 
13 & -2
\\ \hline
13n1281 & 
[3,1,4,26], [1,7,2,6], [7,3,8,2], [4,15,5,16], [14,5,15,6], [8,21,9,22], [22,9,23,10], [10,23,11,24], [24,11,25,12], [17,13,18,12], [13,19,14,18], [19,17,20,16], [20,25,21,26]  & 
19 & 2
\\ \hline
13n1284 & 
[3,1,4,26], [1,7,2,6], [7,3,8,2], [15,5,16,4], [5,15,6,14], [8,21,9,22], [22,9,23,10], [10,23,11,24], [24,11,25,12], [17,13,18,12], [13,19,14,18], [19,17,20,16], [20,25,21,26]  & 
19 & 0
\\ \hline
13n2402 & 
[3,1,4,26], [1,9,2,8], [9,3,10,2], [4,15,5,16], [16,5,17,6], [6,17,7,18], [14,7,15,8], [23,11,24,10], [11,25,12,24], [19,13,20,12], [13,21,14,20], [21,19,22,18], [25,23,26,22]  & 
21 & -2
\\ \hline
13n3527 & 
[3,1,4,26], [1,13,2,12], [13,3,14,2], [4,18,5,17], [18,6,19,5], [6,20,7,19], [24,8,25,7], [8,15,9,16], [14,9,15,10], [21,11,22,10], [11,23,12,22], [16,26,17,25], [23,21,24,20]  & 
23 & -6
\\ \hline
13n3661 & 
[3,1,4,26], [1,15,2,14], [15,3,16,2], [4,9,5,10], [12,5,13,6], [6,23,7,24], [22,7,23,8], [8,13,9,14], [19,11,20,10], [11,19,12,18], [16,21,17,22], [24,17,25,18], [20,25,21,26]  & 
3 & 2
\\ \hline
13n3665 & 
[3,1,4,26], [1,15,2,14], [15,3,16,2], [9,5,10,4], [5,13,6,12], [6,23,7,24], [22,7,23,8], [13,9,14,8], [10,19,11,20], [18,11,19,12], [21,17,22,16], [17,25,18,24], [25,21,26,20]  & 
3 & -2
\\ \hline
13n3666 & 
[3,1,4,26], [1,15,2,14], [15,3,16,2], [9,5,10,4], [5,13,6,12], [6,23,7,24], [22,7,23,8], [13,9,14,8], [19,11,20,10], [11,19,12,18], [21,17,22,16], [17,25,18,24], [25,21,26,20]  & 
3 & -4
\\ \hline
13n4025 & 
[5,1,6,26], [1,21,2,20], [2,10,3,9], [22,4,23,3], [11,5,12,4], [15,7,16,6], [7,15,8,14], [19,8,20,9], [10,21,11,22], [23,12,24,13], [13,18,14,19], [25,17,26,16], [17,25,18,24]  & 
5 & 0
\\ \hline
13n4393 & 
[5,1,6,26], [1,10,2,11], [9,2,10,3], [3,17,4,16], [17,5,18,4], [11,7,12,6], [7,21,8,20], [21,9,22,8], [12,25,13,26], [24,13,25,14], [14,20,15,19], [22,16,23,15], [18,24,19,23]  & 
11 & -4
\\ \hline
13n4658 & 
[5,1,6,26], [18,1,19,2], [2,11,3,12], [3,20,4,21], [9,4,10,5], [6,16,7,15], [24,8,25,7], [13,9,14,8], [19,10,20,11], [12,21,13,22], [25,14,26,15], [16,23,17,24], [22,17,23,18]  & 
13 & 4
\\ \hline
13n4660 & 
[5,1,6,26], [1,19,2,18], [2,11,3,12], [20,4,21,3], [9,4,10,5], [15,6,16,7], [7,24,8,25], [13,9,14,8], [10,20,11,19], [21,13,22,12], [25,14,26,15], [23,17,24,16], [17,23,18,22]  & 
13 & 0
\\ \hline
13n4672 & 
[5,1,6,26], [1,19,2,18], [2,11,3,12], [20,4,21,3], [9,4,10,5], [17,6,18,7], [7,22,8,23], [13,9,14,8], [10,20,11,19], [21,13,22,12], [23,14,24,15], [15,24,16,25], [25,16,26,17]  & 
13 & 2
\\ \hline
13n4675 & 
[5,1,6,26], [1,19,2,18], [2,11,3,12], [20,4,21,3], [9,4,10,5], [17,6,18,7], [7,22,8,23], [13,9,14,8], [10,20,11,19], [21,13,22,12], [25,14,26,15], [15,24,16,25], [23,16,24,17]  & 
13 & 0
\\ \hline
13n4690 & 
[5,1,6,26], [1,19,2,18], [2,11,3,12], [20,4,21,3], [9,4,10,5], [23,7,24,6], [7,17,8,16], [13,9,14,8], [10,20,11,19], [21,13,22,12], [25,14,26,15], [15,24,16,25], [17,23,18,22]  & 
13 & -2
\\ \hline
13n4695 & 
[5,1,6,26], [1,19,2,18], [2,11,3,12], [20,4,21,3], [9,4,10,5], [25,7,26,6], [7,15,8,14], [13,9,14,8], [10,20,11,19], [21,13,22,12], [15,23,16,22], [23,17,24,16], [17,25,18,24]  & 
13 & -4
\\ \hline
13n4698 & 
[5,1,6,26], [1,19,2,18], [2,11,3,12], [20,4,21,3], [9,4,10,5], [25,7,26,6], [7,15,8,14], [13,9,14,8], [10,20,11,19], [21,13,22,12], [15,25,16,24], [23,17,24,16], [17,23,18,22]  & 
13 & -2
\\ \hline
13n966 & 
[3,1,4,26], [1,7,2,6], [7,3,8,2], [13,5,14,4], [5,13,6,12], [8,21,9,22], [22,9,23,10], [15,11,16,10], [11,17,12,16], [17,15,18,14], [18,23,19,24], [24,19,25,20], [20,25,21,26]  & 
17 & 0
\\ \hline
13n968 & 
[3,1,4,26], [1,7,2,6], [7,3,8,2], [4,13,5,14], [12,5,13,6], [8,21,9,22], [22,9,23,10], [15,11,16,10], [11,17,12,16], [17,15,18,14], [18,25,19,26], [24,19,25,20], [20,23,21,24]  & 
17 & 0
\\ \hline
\caption{Knots which mirror satisfies $s\neq \wt{s}$.}\label{el:bat}
  \end{longtable}

\printindex

\bibliographystyle{amsalpha}
\def\MR#1{}
\bibliography{bib}

\providecommand{\bysame}{\leavevmode\hbox to3em{\hrulefill}\thinspace}
\providecommand{\MR}{\relax\ifhmode\unskip\space\fi MR }
\providecommand{\MRhref}[2]{%
  \href{http://www.ams.org/mathscinet-getitem?mr=#1}{#2}
}
\providecommand{\href}[2]{#2}
\begin{thebibliography}{MWW22}

\bibitem[AB24]{AB}
Antonio Alfieri and Keegan Boyle, \emph{Strongly invertible knots, invariant
  surfaces, and the {A}tiyah-{S}inger signature theorem}, Michigan Math. J.
  \textbf{74} (2024), no.~4, 845--861. \MR{4792563}

\bibitem[BDMS25]{BDMS2}
Maciej Borodzik, Irving Dai, Abhishek Mallick, and Matthew Stoffregen,
  \emph{Strongly invertible links via {M}orse theory}, 2025, in preparation.

\bibitem[BF14]{BorodzikFriedl}
Maciej Borodzik and Stefan Friedl, \emph{On the algebraic unknotting number},
  Trans. London Math. Soc. \textbf{1} (2014), no.~1, 57--84. \MR{3296484}

\bibitem[BGS96]{BGS}
Alexander Beilinson, Victor Ginzburg, and Wolfgang Soergel, \emph{Koszul
  duality patterns in representation theory}, J. Amer. Math. Soc. \textbf{9}
  (1996), no.~2, 473--527. \MR{1322847}

\bibitem[BI22]{BI}
Keegan Boyle and Ahmad Issa, \emph{Equivariant 4-genera of strongly invertible
  and periodic knots}, J. Topol. \textbf{15} (2022), no.~3, 1635--1674.
  \MR{4461855}

\bibitem[Bla10]{blanchet}
Christian Blanchet, \emph{An oriented model for {K}hovanov homology}, J. Knot
  Theory Ramifications \textbf{19} (2010), no.~2, 291--312. \MR{2647055}

\bibitem[Blo10]{bloom}
Jonathan~M. Bloom, \emph{Odd {K}hovanov homology is mutation invariant}, Math.
  Res. Lett. \textbf{17} (2010), no.~1, 1--10. \MR{2592723}

\bibitem[BLS17]{baldwin-levine-sarkar}
John~A Baldwin, Adam~Simon Levine, and Sucharit Sarkar, \emph{Khovanov homology
  and knot floer homology for pointed links}, Journal of Knot Theory and Its
  Ramifications \textbf{26} (2017), no.~02, 1740004.

\bibitem[BN05]{bar-natan-tangle}
Dror Bar-Natan, \emph{Khovanov's homology for tangles and cobordisms}, Geom.
  Topol. \textbf{9} (2005), 1443--1499. \MR{2174270}

\bibitem[BO25]{Khocomp}
Maciej Borodzik and Jacek Olesiak, \emph{Khocomp}, Available at
  \url{https://github.com/mcboro/Khocomp}, 2025, C++ program to compute
  equivariant Khovanov homology.

\bibitem[BPS21]{borodzik-politarczyk-silvero}
Maciej Borodzik, Wojciech Politarczyk, and Marithania Silvero, \emph{Khovanov
  homotopy type, periodic links and localizations}, Mathematische Annalen
  \textbf{380} (2021), no.~3, 1233--1309.

\bibitem[BPY25]{bpyozgyur}
Maciej Borodzik, Wojciech Politarczyk, and Ramazan Yozgyur,
  \emph{Khovanov-{R}ozansky $\mathfrak{sl}_n$-homology for periodic links},
  2025, pp.~339--414.

\bibitem[Bre72]{Bredon}
Glen~E. Bredon, \emph{Introduction to compact transformation groups}, Pure and
  Applied Mathematics, Vol. 46, Academic Press, New York-London, 1972.
  \MR{0413144}

\bibitem[CK99]{ChaKo}
Jae~Choon Cha and Ki~Hyoung Ko, \emph{On equivariant slice knots}, Proc. Amer.
  Math. Soc. \textbf{127} (1999), no.~7, 2175--2182. \MR{1605928}

\bibitem[CP21]{CP}
Anthony Conway and Mark Powell, \emph{Characterisation of homotopy ribbon
  discs}, Adv. Math. \textbf{391} (2021), Paper No. 107960, 29. \MR{4300918}

\bibitem[CP23]{ConwayPowell}
\bysame, \emph{Embedded surfaces with infinite cyclic knot group}, Geom. Topol.
  \textbf{27} (2023), no.~2, 739--821. \MR{4589564}

\bibitem[Cra04]{crainic}
Marius Crainic, \emph{On the perturbation lemma, and deformations}, arXiv
  preprint math/0403266 (2004).

\bibitem[CY25]{chen-yang}
Daren Chen and Hongjian Yang, \emph{The flip map and involutions on khovanov
  homology}, 2025, preprint, arxiv:2506.00824.

\bibitem[DHM23]{DHM}
Irving Dai, Matthew Hedden, and Abhishek Mallick, \emph{Corks, involutions, and
  {H}eegaard {F}loer homology}, J. Eur. Math. Soc. (JEMS) \textbf{25} (2023),
  no.~6, 2319--2389. \MR{4592871}

\bibitem[DLS23]{dunfield-lipshitz-schuetz}
Nathan~M Dunfield, Robert Lipshitz, and Dirk Schuetz, \emph{Local equivalence
  and refinements of {R}asmussen's s-invariant}, arXiv preprint
  arXiv:2312.09114 (2023).

\bibitem[DMS23]{DMS}
Irving Dai, Abhishek Mallick, and Matthew Stoffregen, \emph{Equivariant knots
  and knot {F}loer homology}, J. Topol. \textbf{16} (2023), no.~3, 1167--1236.
  \MR{4638003}

\bibitem[DN06]{ND}
James~F. Davis and Swatee Naik, \emph{Alexander polynomials of equivariant
  slice and ribbon knots in {$S^3$}}, Trans. Amer. Math. Soc. \textbf{358}
  (2006), no.~7, 2949--2964. \MR{2216254}

\bibitem[FLL22]{feller-lewark-lobb}
Peter Feller, Lukas Lewark, and Andrew Lobb, \emph{Squeezed knots}, 2022,
  preprint, arXiv:2202.12289.

\bibitem[FQ90]{FQ}
Michael~H. Freedman and Frank Quinn, \emph{Topology of 4-manifolds}, Princeton
  Mathematical Series, vol.~39, Princeton University Press, Princeton, NJ,
  1990. \MR{1201584}

\bibitem[Hay21]{Hayden}
Kyle Hayden, \emph{{C}orks, covers, and complex curves}, 2021, preprint,
  arXiv:2107.06856.

\bibitem[HM17]{HM}
Kristen Hendricks and Ciprian Manolescu, \emph{Involutive {H}eegaard {F}loer
  homology}, Duke Math. J. \textbf{166} (2017), no.~7, 1211--1299.

\bibitem[HMZ18]{HMZ}
Kristen Hendricks, Ciprian Manolescu, and Ian Zemke, \emph{A connected sum
  formula for involutive {H}eegaard {F}loer homology}, Selecta Math. (N.S.)
  \textbf{24} (2018), no.~2, 1183--1245.

\bibitem[HS24]{hayden-sundberg}
Kyle Hayden and Isaac Sundberg, \emph{Khovanov homology and exotic surfaces in
  the 4-ball}, J. Reine Angew. Math. \textbf{809} (2024), 217--246.
  \MR{4726569}

\bibitem[Jac04]{jacobsson}
Magnus Jacobsson, \emph{An invariant of link cobordisms from {K}hovanov
  homology}, Algebr. Geom. Topol. \textbf{4} (2004), 1211--1251. \MR{2113903}

\bibitem[Kho00]{khovanov}
Mikhail Khovanov, \emph{A categorification of the {J}ones polynomial}, Duke
  Math. J. \textbf{101} (2000), no.~3, 359--426. \MR{1740682}

\bibitem[KY79]{KojimaYamasaki}
Sadayoshi Kojima and Masayuki Yamasaki, \emph{Some new invariants of links},
  Invent. Math. \textbf{54} (1979), no.~3, 213--228. \MR{553219}

\bibitem[Lee05]{lee}
Eun~Soo Lee, \emph{An endomorphism of the {K}hovanov invariant}, Advances in
  Mathematics \textbf{197} (2005), no.~2, 554--586.

\bibitem[LLP23]{levine2023new}
Adam~Simon Levine, Tye Lidman, and Lisa Piccirillo, \emph{New constructions and
  invariants of closed exotic 4-manifolds}, arXiv preprint arXiv:2307.08130
  (2023).

\bibitem[LS14]{Lipshitz-Sarkar-refinement}
Robert Lipshitz and Sucharit Sarkar, \emph{A refinement of {R}asmussen's
  {$s$}-invariant}, Duke Math. J. \textbf{163} (2014), no.~5, 923--952.
  \MR{3189434}

\bibitem[LS22]{lipshitz-sarkar-mixed}
\bysame, \emph{A mixed invariant of nonorientable surfaces in equivariant
  {K}hovanov homology}, Transactions of the American Mathematical Society
  \textbf{375} (2022), no.~12, 8807--8849.

\bibitem[LS24]{LSinvertible}
\bysame, \emph{Khovanov homology of strongly invertible knots and their
  quotients}, Frontiers in geometry and topology, Proc. Sympos. Pure Math.,
  vol. 109, Amer. Math. Soc., Providence, RI, [2024] \copyright 2024,
  pp.~157--182. \MR{4772951}

\bibitem[LW21]{lobb-watson}
Andrew Lobb and Liam Watson, \emph{A refinement of {K}hovanov homology}, Geom.
  Topol. \textbf{25} (2021), no.~4, 1861--1917. \MR{4286365}

\bibitem[MB84]{MorganBass}
John~W. Morgan and Hyman Bass, \emph{The {S}mith conjecture}, Pure Appl. Math.,
  vol. 112, Academic Press, Orlando, FL, 1984, pp.~3--6. \MR{758460}

\bibitem[Mer24]{Merz}
Alice Merz, \emph{The {A}lexander and {M}arkov theorems for strongly involutive
  links}, 2024, preprint, arXiv: 2406.13592.

\bibitem[MO10]{MOlink}
Ciprian Manolescu and Peter Ozsv\'ath, \emph{Heegaard {F}loer homology and
  integer surgeries on links}, 2010, preprint, arXiv:1011.1317v3.

\bibitem[MP23]{MillerPowell}
Allison~N. Miller and Mark Powell, \emph{Strongly invertible knots, equivariant
  slice genera, and an equivariant algebraic concordance group}, J. Lond. Math.
  Soc. (2) \textbf{107} (2023), no.~6, 2025--2053. \MR{4598178}

\bibitem[Mur71]{Murasugi}
Kunio Murasugi, \emph{On periodic knots}, Comment. Math. Helv. \textbf{46}
  (1971), 162--174. \MR{292060}

\bibitem[MWW22]{MWW}
Scott Morrison, Kevin Walker, and Paul Wedrich, \emph{Invariants of 4-manifolds
  from {K}hovanov-{R}ozansky link homology}, Geom. Topol. \textbf{26} (2022),
  no.~8, 3367--3420. \MR{4562565}

\bibitem[Nai94]{Naik}
Swatee Naik, \emph{Periodicity, genera and {A}lexander polynomials of knots},
  Pacific J. Math. \textbf{166} (1994), no.~2, 357--371. \MR{1313460}

\bibitem[ORS13]{ORS}
Peter~S. Ozsv\'ath, Jacob Rasmussen, and Zolt\'an Szab\'o, \emph{Odd {K}hovanov
  homology}, Algebr. Geom. Topol. \textbf{13} (2013), no.~3, 1465--1488.
  \MR{3071132}

\bibitem[Ras10]{Rasmussen}
Jacob Rasmussen, \emph{Khovanov homology and the slice genus}, Inventiones
  Mathematicae \textbf{182} (2010), no.~2, 419--447.

\bibitem[Sak86]{Sakuma}
Makoto Sakuma, \emph{On strongly invertible knots}, Algebraic and topological
  theories ({K}inosaki, 1984), Kinokuniya, Tokyo, 1986, pp.~176--196.
  \MR{1102258}

\bibitem[San25]{Sano}
Taketo Sano, \emph{Involutive {K}hovanov homology and equivariant knots}, 2025,
  preprint, arxiv:2404.08568.

\bibitem[Sch25]{schutz-integral}
Dirk Sch\"utz, \emph{On an integral version of the {R}asmussen invariant},
  Michigan Math. J. \textbf{75} (2025), no.~1, 65--88. \MR{4873797}

\bibitem[SS22]{swann-sundberg}
Isaac Sundberg and Jonah Swann, \emph{Relative {K}hovanov-{J}acobsson classes},
  Algebr. Geom. Topol. \textbf{22} (2022), no.~8, 3983--4008. \MR{4562563}

\bibitem[SZ24]{stoffregen-zhang}
Matthew Stoffregen and Melissa Zhang, \emph{Localization in {K}hovanov
  homology}, Geom. Topol. \textbf{28} (2024), no.~4, 1501--1585. \MR{4777698}

\bibitem[Vog20]{vogel}
Pierre Vogel, \emph{Functoriality of {K}hovanov homology}, J. Knot Theory
  Ramifications \textbf{29} (2020), no.~4, 2050020, 66. \MR{4096813}

\bibitem[Wal69]{Waldhausen}
Friedhelm Waldhausen, \emph{\"{U}ber {I}nvolutionen der {$3$}-{S}ph\"{a}re},
  Topology \textbf{8} (1969), 81--91. \MR{236916}

\bibitem[Wat17]{Watson}
Liam Watson, \emph{Khovanov homology and the symmetry group of a knot}, Adv.
  Math. \textbf{313} (2017), 915--946. \MR{3649241}

\bibitem[Weh10]{wehrli}
Stephan~M. Wehrli, \emph{Mutation invariance of {K}hovanov homology over {$\Bbb
  F_2$}}, Quantum Topol. \textbf{1} (2010), no.~2, 111--128. \MR{2657645}

\bibitem[Wei94]{Weibel_book}
Charles~A. Weibel, \emph{An introduction to homological algebra}, Cambridge
  Studies in Advanced Mathematics, vol.~38, Cambridge University Press,
  Cambridge, 1994. \MR{1269324}

\bibitem[Wig16]{wigderson}
Yuval Wigderson, \emph{The {B}ar-{N}atan theory splits}, Journal of Knot Theory
  and Its Ramifications \textbf{25} (2016), no.~04, 1650014.

\end{thebibliography}

\end{document}